\numberwithin{equation}{section} % number the equations as 1.1, 1.2, ... instead of just 1, 2, ... 
\numberwithin{table}{section} % do the same for the tables
\theoremstyle{definition}
\newtheorem{definition}[equation]{Definition}
\newtheorem{remark}[equation]{Remark}
\newtheorem{example}[equation]{Example}
\newtheorem{organization}[equation]{Organization}
\newtheorem{acknowledgements}[equation]{Acknowledgements}
\theoremstyle{plain} % the environments below are formatted in italics
\newtheoremstyle{morespacebeforeandafter}
  {0.3in} % Space above
  {0.3in} % Space below
  {} % Body font
  {} % Indent amount
  {\bfseries} % Theorem head font
  {.} % Punctuation after theorem head
  {.5em} % Space after theorem head
  {} % Theorem head spec (can be left empty, meaning `normal')
\theoremstyle{morespacebeforeandafter}
\newtheorem{lemma}[equation]{Lemma}
\newtheorem{proposition}[equation]{Proposition}
\newtheorem{theorem}[equation]{Theorem}
\newtheorem{corollary}[equation]{Corollary}
\def\FF{\mathbb{F}}
\def\ZZ{\mathbb{Z}}
\def\sV{\mathscr{V}}
\begin{document}

\title{Complex cobordism with involutions and geometric orientations}

\author{Jack Carlisle}

% \subjclass[2010]{Primary: 05C??. Secondary: 05C??}

% \keywords{Cobordism, Topology, Formal group laws} 

\maketitle

\begin{abstract} 
We calculate the cobordism ring $\Omega^{C_2}_*$ of stably almost complex manifolds with involution, and investigate the $C_2$-spectrum $\Omega_{C_2}$ which represents it. We introduce the notion of a geometrically oriented $C_2$-spectrum, which extends the notion of a complex oriented $C_2$-spectrum, and of which $\Omega_{C_2}$ is the universal example. Examples, in addition to $\Omega_{C_2}$,  include the Eilenberg-Maclane spectrum $H \underline{\ZZ}_{C_2}$ and the connective cover $k_{C_2}$ of $C_2$-equivariant $K$-theory. On the algebraic side, we define and study filtered $C_2$-equivariant formal group laws, which are the algebraic structures determined by geometrically oriented $C_2$-spectra. We prove some of the fundamental properties of filtered $C_2$-equivariant formal group laws, as well as a universality statement for the filtered $C_2$-equivariant formal group law determined by $\Omega_{C_2}$.
\end{abstract}

\tableofcontents

\section{Introduction} \label{introduction} 

If $E$ is a commutative ring spectrum, then a complex orientation of $E$ is a cohomology class $x \in \tilde{E}^2(\mathbf{CP}^\infty)$ whose restriction to $\mathbf{CP}^1 \subset \mathbf{CP}^\infty$ corresponds to the unit $1 \in E_0 \cong \tilde{E}^2(\mathbf{CP}^1)$. Such a complex orientation of $E$ determines a well-behaved analogue of chern classes in $E$-cohomology. Important examples of complex oriented spectra include the Eilenberg-Maclane spectrum $H \ZZ$, the complex $K$-theory spectrum $K$, and the complex cobordism spectrum $MU$. In fact, $MU$ is the universal complex oriented spectrum, which means that complex orientations of $E$ correspond to ring spectrum maps $MU \to E$. For this reason, the spectrum $MU$ plays a distinguished role in stable homotopy theory.

Algebraically, complex orientations correspond to {\it formal group laws}. More precisely, a complex orientation of $E$ determines a formal group law $F_E(y,z) \in E_*[[y,z]]$, which encodes much of the structure of the spectrum $E$. For example, the formal group law associated to $H \ZZ$ is $F_{H\ZZ}(y,z) = y + z$, and the formal group law associated to $K$ is $F_{K}(y,z) = y + z - vyz$, where $v \in  K_*$ is the Bott element. In his celebrated theorem, Quillen \cite{Quillen} proved that the formal group law $F_{MU}(y,z) \in MU_*[[y,z]]$ associated to the complex cobordism spectrum $MU$ is universal, which means that formal group laws over a commutative ring $A$ correspond to ring homomorphisms $MU_* \to A$. This result has served as an organizing principle for homotopy theory.

Seemingly unrelated to complex orientations and formal group laws is the geometric complex cobordism ring $\Omega_*$. Elements of $\Omega_*$ are represented by stably almost complex manifolds, and we declare $[M] = 0$ in $\Omega_*$ if there is a stably almost complex manifold $W$ with boundary $\partial W = M$. By work of Pontrjagin and Thom (\cite{Pontrjagin1}, \cite{Pontrjagin2}, \cite{Thom}), there is a ring isomorphism $\Omega_* \overset{\cong}{\longrightarrow} MU_*$ which, combined with Quillen's theorem on the universality of $MU_*$, provides a fascinating link between the topology of manifolds and the algebraic geometry of formal groups.

There is a $G$-equivariant analogue of this story when $G$ is an abelian compact Lie group. In \cite{CGK1} and \cite{CGK2}, the authors develop the theory of complex oriented $G$-spectra, and their associated $G$-equivariant formal group laws. They prove that the $G$-equivariant Thom spectrum $MU_G$, which has been studied extensively ( \cite{GreenleesMay1}, \cite{Loffler1}, \cite{Loffler2}, \cite{May}, \cite{tomDieck}), satisfies the desired homotopical universal property, namely that complex orientations of a $G$-spectrum $E_G$ correspond to ring $G$-spectrum maps $MU_G \to E_G$. Moreover, $MU^G_*$ satisfies the expected algebraic universal property, namely that $G$-equivariant formal group laws over a commutative ring $A$ correspond to ring homomorphisms $MU^G_* \to A$. This was first proved by Hanke and Wiemeler \cite{HW} in the case $G = C_2$, and later by Hausmann \cite{Hausmann} for any abelian compact Lie group $G$ using methods from global homotopy theory \cite{Schwede2}.

There is however, one major result which does not generalize to the $G$-equivariant setting. The {\it geometric} cobordism ring $\Omega^G_*$, whose elements are cobordism classes of stably almost complex $G$-manifolds, does not coincide with the {\it stable }cobordism ring $MU^G_*$ when $G$ is non-trivial. This is related to the fact that transversality is not a generic property in the equivariant setting, so one can not construct an inverse to the equivariant Pontrjagin-Thom map $\Omega^G_* \to MU^G_*$. For this reason, calculating the geometric complex cobordism ring $\Omega^G_*$ has proved difficult. In particular, the stable cobordism ring $MU^G_*$ has been calculated in many cases (see \cite{abramskriz}, \cite{Kriz}, \cite{Sinha}, \cite{Strickland}), but there have been no explicit calculations of $\Omega^G_*$ for $G$ a non-trivial group. Prior to the current work, it is known only that $\Omega^G_*$ is a free $MU_*$-module concentrated in even degrees when $G$ is abelian \cite{Comezana}, and when $G = D_{2p}$ is the dihedral group of order $2p$ \cite{Uribe}.

Since $G$-equivariant geometric complex cobordism is so poorly understood, we restrict to the case $G = C_2$ where we aim to develop a complete picture. One major accomplishment of the present paper is a complete calculation of the $C_2$-equivariant geometric complex cobordism ring $\Omega^{C_2}_*$ (Theorem \ref{geometriccobordism}). Our calculation is based on the observation that there is a $C_2$-spectrum $\Omega_{C_2}$ whose coefficient ring naturally coincides with the geometric cobordism ring $\Omega^{C_2}_*$. We call $\Omega_{C_2}$ the {\it geometric }cobordism spectrum, as opposed to the {\it stable }cobordism spectrum $MU_{C_2}$. We extend our calculation of $\Omega^{C_2}_*$ by calculating the entire $RO(C_2)$-graded coefficients of $\Omega_{C_2}$ (Theorem \ref{fullcobordism}), which are much more complicated than the $\ZZ$-graded part $\Omega^{C_2}_*$. Of particular importance is a certain subring $\Omega^{C_2}_\diamond \subset \Omega^{C_2}_\star$, which we call the {\it extended coefficients} of $\Omega_{C_2}$, or the {\it good range} of $\Omega^{C_2}_\star$ (see section \ref{background} for definition). This subring of $\Omega^{C_2}_\star$ is especially well-behaved, and plays a prominent role in our theory of ``geometric orientations". 

Motivated by our analysis of $\Omega_{C_2}$, we develop a theory of {\it geometrically oriented} $C_2${\it -spectra}, which extends the theory of complex-oriented $C_2$-spectra. A geometric orientation of $E_{C_2}$ is a ring $C_2$-spectrum map $\Omega_{C_2} \to E_{C_2}$, subject to several mild flatness hypotheses (see Definition \ref{quasiorientation}). Our theory of geometrically oriented $C_2$-spectra is interesting because of the wealth of naturally occuring examples. For instance, the Eilenberg-Maclane $C_2$-spectrum $H \underline{\ZZ}_{C_2}$ is geometrically oriented, as is the connective cover $k_{C_2}$ of $C_2$-equivariant $K$-theory. We establish a connection between geometrically oriented $C_2$-spectra and thom isomorphisms for certain $C_2$-equivariant complex vector bundles (see \ref{thomisos} for a precise statement). We also develop a close link between the theory of geometrically oriented $C_2$-spectra and that of complex oriented $C_2$-spectra. More precisely, we prove that by inverting an element $\tau \in E^{C_2}_{\star}$, one can ``stabilize" a geometrically oriented $C_2$-spectrum $E_{C_2}$ to obtain a complex oriented $C_2$-spectrum $\widehat{E}_{C_2}$. We illustrate the general theory by calculating the extended coefficient ring and stabilization of the geometrically oriented $C_2$-spectra $\Omega_{C_2}$, $k_{C_2}$, and $H\underline{\ZZ}_{C_2}$ (Theorem \ref{stable}). For completeness, we calculate the full $RO(C_2)$-graded coefficients of $\Omega_{C_2}$ and $k_{C_2}$ in section \ref{appendixA}.

On the algebraic side, we develop a theory of {\it filtered} $C_2${\it -equivariant formal group laws}, which are the algebraic structures determined by geometrically oriented $C_2$-spectra (Definition \ref{filtered}). The algebraic structure present on a filtered $C_2$-equivariant formal group law is incredibly rich. In particular, any ``complete flag", by which we mean a sequence of $1$s and $\sigma$s with each occuring infinitely many times, determines a direct sum decomposition of the filtered $C_2$-equivariant formal group law.  These direct sum decompositions are related by change of basis matrices, whose entries are represented geometrically by $C_2$-equivariant projective spaces. For this reason, the classes $[\mathbf{CP}(m+n\sigma)] \in \Omega^{C_2}_*$ play a priveleged role in our theory. We analyze these classes and their interaction with filtered $C_2$-equivariant formal group laws in section \ref{changeofbasis}. Finally, we prove an algebraic universality statement for the filtered $C_2$-equivariant formal group law determined by the universal geometrically oriented $C_2$-spectrum $\Omega_{C_2}$ (Theorem \ref{universality}).

\begin{organization} In section 2, we establish notation and make the definitions necessary to state our main theorems, which we do in section 3. In section 4, we calculate the geometric cobordism ring $\Omega^{C_2}_*$, as well as the good range $\Omega^{C_2}_\diamond$ of the $RO(C_2)$-graded coefficients of $\Omega_{C_2}$. In section 5, we introduce our new notion of geometrically oriented $C_2$-spectra, and illustrate the theory by calculating the good range of the $RO(C_2)$-graded coefficients of $H \underline{\ZZ}_{C_2}$ and $k_{C_2}$. On the algebraic side, we introduce the notion of filtered $C_2$-equivariant formal group laws, which are the algebraic structures associated to geometrically oriented $C_2$-spectra, and prove a universality statement for the filtered $C_2$-equivariant formal group law associated to $\Omega_{C_2}$. In section 6, we calculate the full $RO(C_2)$-graded coefficients of $k_{C_2}$ and $\Omega_{C_2}$, which is more difficult than our calculations of $k^{C_2}_\diamond$ and $\Omega^{C_2}_\diamond$. Finally, in the Appendix, we prove a technical lemma needed in section 4, and we give our new definition of ``homological" $C_2$-equivariant formal group laws. We prove a version of Cartier duality in this setting, which confirms that our definition is compatible with the original ``cohomological" formulation of $C_2$-equivariant formal group laws given in \cite{CGK1}.
\end{organization}

\begin{acknowledgements}
I would like to thank my thesis advisor, Igor Kriz, without whose guidance, direction, and inspiration this work would not have been possible.
\end{acknowledgements}

\section{Definitions and background}\label{background}

In this section, we make the definitions necessary to state our results. We begin by recalling some basic notions from representation theory and $C_2$-equivariant homotopy theory. Let $C_2$ be the group of order $2$. We write $\mathbf{R}$ and $\mathbf{R}^\alpha$ for the trivial and sign representations of $C_2$, so the real representation ring of $C_2$ is 
\[RO(C_2) = \ZZ[\alpha]/(\alpha^2 - 1),\]
 where $1 = [\mathbf{R}]$ and $\alpha = [\mathbf{R}^\alpha]$. We write $\mathbf{C}$ and $\mathbf{C}^\sigma$ for the complex trivial and sign representations of $C_2$, so the complex representation ring of $C_2$ is 
 \[R(C_2) = \ZZ[\sigma]/(\sigma^2 - 1),
 \] where $1 = [\mathbf{C}]$ and $\sigma = [\mathbf{C}^\sigma]$. We consider $R(C_2)$ as a subgroup of $RO(C_2)$ by the assignment $m+ n\sigma \mapsto 2m + 2n\alpha$.
 We work primarily with complex $C_2$-representations, and in many cases omit the adjective ``complex". If $V$ is a $C_2$-representation, we write $\dim V$ for the complex dimension of $V$ and $|V| = 2 \dim V$ for the real dimension of $V$. For $m,n \in  \{ 0 , 1 , 2, \dots , \infty \}$, we write $\mathbf{C}^{m,n}$ or $\mathbf{C}^{m+n\sigma}$ for the $C_2$-representation 
 \[
\underbrace{ \mathbf{C} \oplus \cdots \oplus \mathbf{C}}_{m \text{ times}} \oplus \underbrace{\mathbf{C}^{\sigma} \oplus \cdots \oplus \mathbf{C}^{\sigma}}_{n \text{ times}}.
 \]
We write $S^V$ for the one-point compactification of $V$, which is a based $C_2$-space with basepoint $\infty \in S^V$. We write $\mathbf{CP}(V)$ for the $C_2$-space of one-dimensional subspaces of $V$.

Next, we recall some basic notions from $C_2$-equivariant stable homotopy theory. We work in the category $\text{Sp}_{C_2}$ of $C_2$-spectra indexed on the complete complex $C_2$-universe $U = \mathbf{C}^{\infty,\infty}$ in the sense of \cite{LMS}. There are many other point-set models for the category of spectra and $C_2$-spectra, such as orthogonal spectra and symmetric spectra \cite{symmetricspectra}. For a comparison, see \cite{orthogonalspectra}. Our results are independent of the particular point-set model of $C_2$-spectra used, so it is of no substantial consequence that we choose to work in the aforementioned category. A $C_2$-spectrum $E_{C_2}$ assigns to each finite-dimensional subrepresentation $V \subset U$ a based $C_2$-space $E_{C_2}(V)$, together with a coherent family of maps 
\begin{equation}\label{structuremaps}
S^{W-V} \wedge E_{C_2}(V) \to E_{C_2}(W)
\end{equation}
for each inclusion of finite-dimensional sub-representations $V \subset W$ of $U$, where $W-V$ is the orthogonal complement of $V$ in $W$. The maps adjoint to \ref{structuremaps} are required to be homeomorphisms. If they are not, we obtain the definition of a  $C_2$-prespectrum. The inclusion of $C_2$-prespectra into $C_2$-spectra has a left adjoint called ``spectrification", so we can associate to any $C_2$-prespectrum $E_{C_2}$ a spectrum which, by a mild but common abuse of notation, we also denote $E_{C_2}$.

The primary algebraic invariant of a $C_2$-spectrum $E_{C_2}$ is the $C_2$-Mackey functor $\underline{\pi}_*(E_{C_2})$, which we can think of as genuine $C_2$-equivariant analogue of an abelian group. See \cite{thevenaz} for a thorough treatment of Mackey functors. It suffices for our purposes to know that a $C_2$-equivariant Mackey functor $\underline{M}$ is a diagram of the form 
\[ \begin{tikzcd} 
\underline{M}(C_2/e) \ar[rr, bend right = 20,swap,"\text{tr}"] \ar[loop left,"\gamma"] & & \underline{M}(C_2/C_2) \ar[ll, bend right = 20,swap,"\text{res}"] 
\end{tikzcd} \]
such that $\gamma \circ \gamma= 1$, $\gamma \circ \text{res} = \text{res}$, $\text{tr}\circ \gamma = \text{tr}$, and $\text{res} \circ \text{tr} = 1 + \gamma$.
If $E_{C_2}$ is a $C_2$-spectrum, then for any $m \in \ZZ$ we have a  Mackey functor $\underline{M} = \underline{\pi}_{m}(E_{C_2})$ satisfying  
\[
\underline{M}(C_2/C_2) = E^{C_2}_m = [ S^{m} , E_{C_2}]^{C_2}\text{, and}
\]
\[
\underline{M}(C_2/e) = E_m = [C_2/e_+ \wedge S^{m} , E_{C_2}]^{C_2},
\]
where $[X_{C_2},Y_{C_2}]^{C_2}$ denotes the abelian group of maps from $X_{C_2}$ to $Y_{C_2}$ in the $C_2$-equivariant stable homotopy category $\text{Ho}(\text{Sp}_{C_2})$. We can define Mackey functors $\underline{\pi}_{m+n\alpha}(E_{C_2})$ for $m + n \alpha \in RO(C_2)$ similarly, and we write $\underline{\pi}_{\star}(E_{C_2})$ for the  $RO(C_2)$-graded homotopy Mackey functor of $E_{C_2}$.  Since the underlying homotopy groups of the $C_2$-spectra with which we work in this paper are well understood, we focus on calculating the value $E^{C_2}_\star$ of $\underline{\pi}_\star(E_{C_2})$ at $C_2/C_2$. In the present paper, it will be natural to consider the subgroup  $E^{C_2}_\diamond \subset E^{C_2}_\star$ given by 
\[
E^{C_2}_\diamond =  \bigoplus_{m \in \ZZ \text{ and }n \geq 0} \pi^{C_2}_{m-n\sigma}(E_{C_2}).
\]
We call $E^{C_2}_\diamond$ the {\it extended coefficient ring } of $E^{C_2}_*$, or the {\it good range} of $E^{C_2}_\star$.

We now define the geometric and stable cobordism spectra $\Omega_{C_2}$ and $MU_{C_2}$, which are our primary objects of study. If $V$ and $\sV$ are unitary $C_2$-representations, let $\text{Gr}^\sV(V)$
be the $C_2$-space of complex $\dim V$-dimensional subspaces of  $V \oplus \sV$. Let 
\begin{align*}
MU^\sV_{C_2}(V) & = \text{Thom}\left( \xi^\sV(V) \to \text{Gr}^\sV(V) \right)
\end{align*}
be the Thom space of the tautological vector bundle $\xi^{\sV}(V)$ over $\text{Gr}^\sV(V)$. For a fixed $C_2$-representation $\sV$, $MU^{\sV}_{C_2}$ is a $C_2$-prespectrum indexed on $U$, with structure maps
\[ S^{W-V} \wedge MU^\sV_{C_2}(V)  \to MU^\sV_{C_2}(W)\]
induced by the vector bundle maps $ \underline{W-V} \oplus \xi^\sV(V)  \to \xi^\sV(W)$. If $\sV$ is a $C_2$-universe, then $MU^\sV_{C_2}$ is 
%an algebra over the linear isometries operad $\sL^\sV$, whose $n$th space $\sL^\sV(n)$ is the space of linear isometric embeddings $\sV^{\oplus n} \to \sV$. 
%The structure maps 
%\[
%\sL^\sV(n) \rtimes MU^\sV_{C_2} \wedge \dots \wedge MU^\sV_{C_2} \to MU^\sV_{C_2}
%\]
%are induced by the maps
%\[\begin{tikzcd}
%\sL^\sV(n) \times \text{Gr}^\sV(V_1) \times \dots \times \text{Gr}^\sV(V_n) \ar[r] &  \text{Gr}^{\sV}(V_1 \oplus \dots \oplus V_n)\\
%\left( f , H_1  , \dots, H_n \right) \ar[r,mapsto] & (\text{id}_{V_1 \oplus \dots \oplus V_n} \oplus f )(H_1 \oplus \dots \oplus H_n),
%\end{tikzcd} \]
%where we have used the shuffle isomorphism
%\[
%(V_1 \oplus \sV) \oplus \dots \oplus (V_n \oplus \sV) \cong V_1 \oplus \dots \oplus V_n \oplus \sV^{\oplus n}
%\]
%to consider $H_1 \oplus \cdots \oplus H_n$ as a subspace of $ V_1 \oplus \dots \oplus V_n \oplus \sV^{\oplus n}$.
a commutative ring spectrum, by which we mean a commutative monoid in the stable homotopy category $\text{Ho}(\text{Sp}_{C_2})$.

\begin{definition}
We define the $C_2${\it -equivariant geometric complex cobordism spectrum} $\Omega_{C_2}$ by 
\[\Omega_{C_2} = MU^{\mathbf{C}^\infty}_{C_2},\]
 and we define the $C_2${\it -equivariant stable complex cobordism spectrum} $MU_{C_2}$ by 
 \[ MU_{C_2} = MU^{\mathbf{C}^{\infty,\infty}}_{C_2}.\] 
 \end{definition}
 The inclusion of $C_2$-universes $\mathbf{C}^\infty \to \mathbf{C}^{\infty,\infty}$ induces a map $\Omega_{C_2} \to MU_{C_2}$, giving $MU_{C_2}$ the structure of an $\Omega_{C_2}$-algebra. In section \ref{thomspectra}, we will prove that the $C_2$-spectrum $\Omega_{C_2}$ represents geometric $C_2$-equivariant complex cobordism, in that the $\ZZ$-graded coefficient ring of $\Omega_{C_2}$ coincides with the ring of cobordism classes of stably almost complex $C_2$-manifolds with involution. In the unoriented case, the analogous fact holds for any finite abelian group $G$, by work of Conner and Floyd \cite{connerfloyd}. In the complex case, this is not known in general.

Next we review the notion of (non-equivariant) formal group laws. We refer the reader to \cite{hazewinkel} and \cite{ravenel} for more information about formal group laws. If $A$ is a commutative ring, then a formal group law over $A$ is a power series $F(y,z) \in A[[y,z]]$ satisfying the expected identity, associativity, and commutativity axioms. For example, if $A$ is any commutative ring, we have the additive formal group law $F(y,z) = y + z$ over $A$, and the multiplicative formal group law $F(y,z) = y + z - yz$ over $A$. If $E$ is a complex oriented spectrum, then $E^*(\mathbf{CP}^\infty) = E^*[[x]]$ and $E^*(\mathbf{CP}^\infty \times \mathbf{CP}^\infty) = E^*[[y,z]]$,
and the pullback of $x$ along the multiplication map $\mathbf{CP}^\infty \times \mathbf{CP}^\infty \to \mathbf{CP}^\infty$ is a formal group law $F_E(y,z) \in E^*[[y,z]]$ over the coefficient ring $E^* \cong E_*$. For example, it turns out that $F_{H\ZZ}(y,z) = y + z$ is the additive formal group law, and $F_{K}(y,z) = y + z - vyz$, where $v \in K_{2}$ is the Bott element. More shockingly, Quillen proved that
\[
F_{MU}(y,z) = \sum_{i,j \geq 0} a_{i,j}y^iz^j \in MU^*[[y,z]]
\]
is the universal formal group law. The elements $a_{i,j} \in MU_*$ generate $MU_*$ as a ring, and it is often convenient to think of $MU_*$ in terms of the presentation $\ZZ[a_{i,j} :  i , j \geq 0]/ \sim$, where we kill the relations enforced by the identity, associativity, and commutativity axioms of a formal group law.

 In our presentation of $\Omega^{C_2}_*$, we reference certain elements $c_{i,j} \in MU_*$ which are related to the elements $a_{i,j} \in MU_*$ by formal group theoretic data. Although the definition of the elements $c_{i,j}$ is provided in our theorem statement, we define these elements in detail here for the reader's convenience. If $u$ and $x$ are variables, we can expand the power series $F_{MU}(u,x) \in MU_*[[u,x]]$ in the variable $x$ as follows,
\begin{align*}
F_{MU}(u,x)
& = u + (\sum a_{1,j}u^j)x + (\sum a_{2,j}u^j) x^2 + (\sum a_{3,j}u^j)x^3 + \cdots \in MU_*[[u]][[x]]
\end{align*} 
The constant term in this power series is $u$, so in the ring $\left( u^{-1} MU_*[[u]] \right)[[x]] = MU_*((u))[[x]]$, the element $F_{MU}(u,x)$ is a unit. Its multiplicative inverse is some power series
\[
\dfrac{1}{F_{MU}(u,x)} = d_0+ d_1x + d_2x^2 + \cdots \in MU_*((u))[[x]]
\]
whose coefficients $d_0,d_1,d_2,\dots \in MU_*((u))$ are Laurent series' in $u$. We define $c_{i,j} \in MU_*$ to be the coefficient of $u^j$ in $d_i$, so that 
\[d_i = \sum_{j \in \ZZ} c_{i,j}u^j \in MU_*((u)).\]
We note that $c_{i,j} = 0$ if $j < -i-1$.

Next, we establish the notation necessary to define {\it filtered }$C_2${\it -equivariant formal group laws}. The category of (coassociative, cocommutative, counital) $A$-coalgebras is symmetric monoidal under the tensor product $ \otimes_A$ with unit $A$. We say $D$ is an $A$-Hopf algebra if $D$ is a group object in the category of $A$-coalgebras. An example of such an object is the group algebra $A[G]$ of a finite abelian group $G$. The multiplication and antipode on the group object $A[G]$ are induced by the multiplication and inverse map on the group $G$. We will be interested in the case $G = C_2^\vee$ is the Pontrjagin dual of $C_2$. 

If $x$ is an  $A$-linear functional on $D$, we write $\langle d , x \rangle $ for the value of $ x$ at $d \in D$, and we write $\cap x$ for the comultiplication-by-$x$ map
\[ \begin{tikzcd} 
D \ar[r,"\Delta"] & D \otimes D \ar[r,"1 \otimes x "] & D \otimes A \cong D.
\end{tikzcd} \]
 If $D$ is an $A$-Hopf algebra equipped with an $A$-Hopf algebra map $A[C_2^\vee] \to D$, we write $x^\sigma \in \text{Hom}_A(D,A)$ for the functional $\langle d , x^\sigma \rangle = \langle \sigma d , x \rangle$, and for any $m,n \geq 0$, we write $x^{m+n\sigma} \in \text{Hom}_A(D,A)$ for the functional
\[
\langle d , x^{m+n\sigma} \rangle = \langle \Delta d , \underbrace{x \otimes \cdots \otimes x}_{m\text{ times}} \otimes \underbrace{x^{\sigma} \otimes \cdots \otimes x^\sigma}_{n\text{ times}}\rangle.
\]
We can now state our definition of $C_2$-equivariant formal group laws, and refer the reader to \ref{AppendixD} for further discussion.
\begin{definition}
A (homological) $C_2$-equivariant formal group law $(A,D)$ consists of a commutative ring $A$, an $A$-Hopf algebra $D$, a morphism  of $A$-Hopf algebras $A[C_2 ^\vee] \to D$, and an $A$-linear functional $x$ on $D$, such that 
\begin{enumerate}
\item the sequence 
\[ \begin{tikzcd} 
0 \ar[r] & A \ar[r,"\eta"] & D \ar[r,"\cap x"] & D \ar[r] & 0
\end{tikzcd} \]
is exact, and 
\item if $d \in D$, then there exists $m,n \geq 0 $ such that 
\[
d \cap x^{m+n\sigma} = 0.
\]
\end{enumerate}
\end{definition}

The final algebraic preliminary we need is the Rees construction, which arises in our calculation of $\Omega^{C_2}_\diamond$. Suppose $A$ is a commutative ring with an increasing filtration
\[
F_\bullet A = (F_0A \subseteq F_1A \subseteq F_2A \subseteq \cdots  \subseteq A) .
\]
The Rees algebra $\text{Rees}(A) \subseteq A[t^{\pm 1}]$ is the subring of $A[t^{\pm 1}]$ consisting of polynomials $\sum f_i t^i$ such that $f_i = 0$ if $i < 0$ and $f_i \in F_iA$ if $i \geq 0$. It is informative to think of $\text{Rees}(A)$ as a deformation with parameter $t$, generic fiber
\[
\text{Rees}(A)/(t - 1) = A
\]
and special fiber
\[
\text{Rees}(A)/(t - 0) = \text{Gr}_\bullet A = \bigoplus_{n \geq 0} F_nA/F_{n-1}A.
\]
%\begin{definition}
%A {\it filtered $C_2$-equivariant formal group law} $(F_\bullet A,F_\bullet D)$ consists of a $C_2$-equivariant formal group law $(A,D)$, and a filtration $F_\bullet A$ of $A$ and $F_\bullet{D}$ of $D$, such that 
%\begin{enumerate}
%\item $\text{Im}(\Omega^{C_2}_* \to A) \subseteq F_0A$,
%\item $F_nA$ is generated over $F_0A$ by $1,\dots,u^n$, and 
%\item For any complete flag $(\rho_i)_{i=1}^\infty$, 
%\[
%F_nD = \bigoplus_{i=1}^\infty F_{\ell_i + n} A \{ \Pi_{\rho_1 + \cdots + \rho_i}\} \subset \bigoplus_{i=1}^\infty A \{\Pi_{\rho_1 + \cdots + \rho_i}\}= D
%\]
%where $\ell_i$ is the number of copies of $\sigma$ in $(\rho_1 + \cdots + \rho_{i-1}) \rho_i^{-1}$.
%\end{enumerate} 
%\end{definition}

%The stable cobordism ring, on the other hand, has been calculated in some non-trivial cases. The first such result was Kriz' identification of $MU^{C_p}_*$ as the pullback of a certain diagram of rings. Strickland used this result to calculate an explicit presentation of $MU^{C_2}_*$ in terms of generators and relations. More recently, Hu has calculated an explicit presentation of $MU^G_*$ in the case $G = C_{p^n}$ is a cyclic group of prime power order.

\section{Statement of results}

Having developed the necessary background and notation, we can now state our results. Our first major result is a calculation of the ring $\Omega^{C_2}_*$ of stably almost complex manifolds with involution. We give a presentation of $\Omega^{C_2}_*$ as an algebra over the non-equivariant complex cobordism ring $MU_*$, whose structure is well known.

\begin{theorem} \label{geometriccobordism}There is an isomorphism of graded rings 
\begin{align*}
\Omega^{C_2}_* & \cong MU_*\left[d_{i,j},q_j \right] /I
\end{align*}
for $i \geq 1$ and $j \geq 0$, where 
\begin{itemize} 
\item $I \subset MU_*[d_{i,j},q_j]$ is the ideal generated by the relations 
\begin{align*} 
d_{i,j+1}(d_{k,\ell} - c_{k,\ell}) & =  (d_{i,j} - c_{i,j}) d_{k,\ell + 1}  \\
d_{i,j+1}(q_\ell - p_\ell) & = (d_{i,j}-c_{i,j})q_{\ell +1} \\
q_{j+1}(q_\ell - p_\ell) & =  ( q_j - p_j)q_{\ell + 1} \\
q_0 & = 0,
\end{align*} 
for $i,k \geq 1$ and $j,\ell \geq 0$,
\item  $c_{i,j} \in MU_*$ %$MU_{2(i+j+1)}$
is the coefficient of $u^jx^i$ in $\dfrac{1}{F_{MU}(u,x)}\in MU_*((u))[[x]]$,
\item $p_j \in MU_*$ is the coefficient of $x^j$ in $F_{MU}(x,x) \in MU_*[[x]]$, and 
\item  $|d_{i,j}| = |c_{i,j}| = 2(i+j+1)$, and $|q_j| = |p_j| = 2j-2$.
\end{itemize}
\end{theorem}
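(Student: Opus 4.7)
The plan is to deduce the presentation of $\Omega^{C_2}_*$ from two ingredients: the equivariant Pontrjagin--Thom identification $\pi^{C_2}_*(\Omega_{C_2}) \cong \Omega^{C_2}_*$, and an explicit choice of geometric generators given by $C_2$-equivariant projective spaces $\mathbf{CP}(V)$, whose fixed-point data will be governed precisely by the formal-group-theoretic quantities $c_{i,j}$ and $p_j$.

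First I would establish $\Omega^{C_2}_* \cong \pi^{C_2}_*(\Omega_{C_2})$. The crucial structural point is that $\Omega_{C_2} = MU^{\mathbf{C}^\infty}_{C_2}$ is Thom-indexed only on the trivial universe $\mathbf{C}^\infty$, so stabilization is by trivial representations alone and Pontrjagin--Thom transversality is only demanded along fixed loci, where it reduces to ordinary non-equivariant transversality (the same mechanism behind the Conner--Floyd result in the unoriented case). Granted this, representing classes geometrically, I would identify the generators with $[\mathbf{CP}(V)]$'s for appropriate mixed representations $V$: the degree $|d_{i,j}| = 2(i+j+1)$ matches a choice of the shape $d_{i,j} \leftrightarrow [\mathbf{CP}((i+1)+(j+1)\sigma)]$ (possibly modified by an $MU_*$-correction), while $|q_j| = 2j-2$ matches $q_j \leftrightarrow [\mathbf{CP}(j\sigma)]$ for $j \geq 1$. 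The series $c_{i,j}$ and $p_j$ then enter naturally as the ``restriction to fixed components'' data: an equivariant $\mathbf{CP}(m + n\sigma)$ has fixed locus $\mathbf{CP}(\mathbf{C}^m) \sqcup \mathbf{CP}(\mathbf{C}^{n\sigma})$ with sign-twisted normal bundles, and the universal power series $1/F_{MU}(u,x)$ and $F_{MU}(x,x) = [2](x)$ compute precisely these normal contributions.

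The multiplicative relations should then be verified by constructing explicit $C_2$-cobordisms. The ``ratio form'' of the relations,
\[
\frac{d_{i,j+1}}{d_{i,j} - c_{i,j}} \;=\; \frac{d_{k,\ell+1}}{d_{k,\ell} - c_{k,\ell}} \;=\; \frac{q_{\ell+1}}{q_\ell - p_\ell},
\]
asserts the existence of a single universal mediator $t$ controlling the transition $\mathbf{CP}(V) \rightsquigarrow \mathbf{CP}(V \oplus \mathbf{C}^\sigma)$, which is exactly the equivariant Euler class of $\mathbf{C}^\sigma$ once one passes to the extended coefficients; these identities can be exhibited by projective bundle cobordisms over an interval. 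The relation $q_0 = 0$ simply encodes that $\mathbf{CP}(\mathbf{C}^\sigma)$ is a single fixed point which bounds in this grading.

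The main obstacle is completeness, namely ruling out any further relations. The approach I would take is to factor the computation through the extended coefficient ring $\Omega^{C_2}_\diamond$ introduced in Section \ref{background}, which should admit a clean Rees-algebra description as a flat deformation of a filtered version of $MU_*$ with parameter $t$ of grading $2 - 2\sigma$. From the Rees presentation one reads off an explicit additive basis, which is forced to be correct by Comeza\~na's freeness of $\Omega^{C_2}_*$ over $MU_*$ in even degrees. The $\ZZ$-graded ring $\Omega^{C_2}_*$ is then extracted as the appropriate specialization of $\Omega^{C_2}_\diamond$ along the deformation parameter, and the presentation $MU_*[d_{i,j}, q_j]/I$ drops out by comparing generators and relations before and after specialization. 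The delicate point --- and the step I expect to require the most care --- is checking that this specialization introduces no spurious torsion or hidden relations beyond those in $I$, which is where an additive basis coming from the $\mathbf{CP}(m+n\sigma)$'s has to be matched precisely against the Rees-algebra basis of $\Omega^{C_2}_\diamond$.
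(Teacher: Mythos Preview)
Your proposal has a genuine gap in the identification of generators. The elements $d_{i,j}$ and $q_j$ are \emph{not} cobordism classes of projective spaces, and in fact the paper's Section~\ref{changeofbasis} shows that the relationship is quite intricate: for instance $\pi_{1+\sigma} = -q_2$, $\pi_{2+\sigma} = d_{1,0} - a_{1,1}q_2$, and $\pi_{2+2\sigma}$ is a degree-three polynomial in the $d_{i,j}$ and $q_j$ with an $MU_*$-correction. Your guess $q_j \leftrightarrow [\mathbf{CP}(j\sigma)]$ cannot be right in any case, since $C_2$ acts trivially on $\mathbf{CP}(j\sigma)$ (the sign action on $\mathbf{C}^{j\sigma}$ descends to the identity on projective space), so this class lies in the image of $MU_*$. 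The generators $d_{i,j}, q_j$ are defined purely algebraically: they arise from a new polynomial basis for $\Phi MU^{C_2}_*$ chosen so that the map $\Phi\Omega^{C_2}_* \to \Phi MU^{C_2}_*$ becomes the evident inclusion $MU_*[u^{-1},d_i] \hookrightarrow MU_*[u^{\pm 1},d_i]$, and then $d_{i,j}, q_j$ are the corresponding generators in Strickland-style coordinates on $MU^{C_2}_*$.

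Your completeness strategy is also circular as stated. In the paper, the Rees-algebra description of $\Omega^{C_2}_\diamond$ is \emph{deduced from} the presentation of $\Omega^{C_2}_*$ together with analogous pullback squares for $\Omega^{C_2}_{*-n\sigma}$, not the other way around; you cannot specialize from $\Omega^{C_2}_\diamond$ to recover $\Omega^{C_2}_*$ without already knowing both. The paper's actual argument is: (i) realize $\Omega^{C_2}_*$ as the pullback of $MU^{C_2}_* \to \Phi MU^{C_2}_* \leftarrow \Phi\Omega^{C_2}_*$, which identifies it with the subring of $MU^{C_2}_*$ whose image in $MU_*[u^{\pm 1},d_i]$ has $u$-degree $\leq 0$; (ii) show this subring is exactly the $MU_*$-subalgebra generated by the $d_{i,j}, q_j$; (iii) compute the relations as the elimination ideal $J \cap MU_*[d_{i,j},q_j]$ via an explicit Gr\"obner-type argument (Lemma~\ref{eliminationlemma}). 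The relations you wrote down are then immediate from the $MU^{C_2}_*$-relations $u d_{i,j+1} = d_{i,j} - c_{i,j}$ and $u q_{j+1} = q_j - p_j$ by clearing the common factor $u$, and the elimination lemma shows there are no others.
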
 

Our next major result is a calculation of the extended geometric complex cobordism ring $\Omega^{C_2}_\diamond$, which is the ``good range" of the $RO(C_2)$-graded coefficients of $\Omega_{C_2}$. This ring ends up playing a major role in our new theory of geometric orientations.

\begin{theorem} Let $\Omega_{C_2}$ denote the $C_2$-equivariant geometric complex cobordism spectrum. 
\begin{enumerate}
\item The extended coefficient ring $\Omega^{C_2}_\diamond$ is given by 
\begin{align*}
\Omega^{C_2}_\diamond & =  \dfrac{\Omega^{C_2}_*[\mu,\tau]}{\begin{matrix}\tau(d_{i,j}-c_{i,j}) = \mu d_{i,j+1} \\ \tau(q_j - p_j) = \mu q_{j+1}\end{matrix} }\hspace{0.3in} i \geq 1 \text{ and }j \geq 0,
\end{align*}
where $|\mu| = - \sigma$, and $|\tau| = 2 - \sigma$. Additively,
\[
\Omega^{C_2}_{*-n\sigma} = \widetilde{\Omega}^{C_2}_*(S^{n\sigma}) \cong  \dfrac{\Omega^{C_2}_* \{ 1 , \dots , u^n\}}{\begin{matrix} u^k(d_{i,j}-c_{i,j}) = u^{k+1}d_{i,j+1} \\ u^k(q_j - p_j) = u^{k+1}q_{j+1} \end{matrix} }\hspace{0.3in} i \geq 1 \text{ and }j \geq 0,
\]
where $ 0 \leq k < n$.
\item If we define the {\it euler filtration } of $MU^{C_2}_*$ by letting $F_n MU^{C_2}_*$ be the $\Omega^{C_2}_*$-submodule generated by $1 , \dots , u^n \in MU^{C_2}_*$, then the map
\[ \Omega^{C_2}_\diamond \to MU^{C_2}_\star = MU^{C_2}_*[\tau^{ \pm 1}]\]
identifies $\Omega^{C_2}_\diamond \cong \text{Rees}(MU^{C_2}_*)$ with the Rees algebra of the euler filtration of $MU^{C_2}_*$.
\item The associated graded of $MU^{C_2}_*$ with respect to the euler filtration is 
\[
\text{gr}_\bullet MU^{C_2}_* =  \Omega^{C_2}_*[\mu]/(\mu d_{i,j}, \mu q_j),  \hspace{0.3in}i,j\geq 1.
\]
Additively,
\[ 
\text{gr}_nMU^{C_2}_* \cong 
\begin{cases}
\Omega^{C_2}_* & n = 0\\ 
MU_*[d_1,d_2,\dots] & n > 0.
\end{cases} 
\]
\end{enumerate}
\end{theorem}
Finally, we complete our calculation of the full $RO(C_2)$-graded coefficients of $\Omega_{C_2}$.

\begin{theorem} \label{fullcobordism}
The $RO(C_2)$-graded coefficients of $\Omega_{C_2}$ are listed below. 
\begin{enumerate} 
\item 
\[
\Omega^{C_2}_{*+2n-2n\alpha} = \dfrac{\Omega^{C_2}_* \{ 1 , \dots , u^n\}}{\begin{matrix} u^k(d_{i,j}-c_{i,j}) = u^{k+1}d_{i,j+1} \\ u^k(q_j - p_j) = u^{k+1}q_{j+1} \end{matrix} } \hspace{0.3in} \begin{matrix} i \geq 1 \text{ and }j \geq 0\\ 0 \leq k < n \end{matrix}
\]
\item 
\[ \Omega^{C_2}_{* -2n + 2n\alpha} =  MU_*\{q_1\} \oplus \Omega^{C_2}_* \cap (u^n)   \oplus MU_{*-1}[u]/\left(u^n, \sum_{\ell = 0}^{n-1} p_{j+\ell} u^\ell, \sum_{\ell = 0}^{n-1}  d_{i,j+\ell} u^\ell\right)\]
We provide generators for the ideal $\Omega^{C_2}_* \cap (u^n) \subset \Omega^{C_2}_*$ in Proposition \ref{generators}.
\item \[\Omega^{C_2}_{* + (2n+1) - (2n+1)\alpha} \cong \Omega^{C_2}_{*+2n - 2n\alpha}/q_1\]
\item \[\Omega^{C_2}_{* + (2n+1)\alpha  - (2n+1)} \cong \Omega^{C_2}_{*+2n\alpha  - 2n}/q_1.\]
\end{enumerate}
\end{theorem}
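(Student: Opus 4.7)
The plan is to compute $\Omega^{C_2}_\star$ along the diagonal $\ZZ(1-\alpha)$ by a case analysis on the $\alpha$-coefficient, using the long exact sequences derived from the fundamental cofiber sequence $C_{2+} \to S^0 \xrightarrow{a_\alpha} S^\alpha$ smashed with $\Omega_{C_2}$. Part (1) should require no new work: the identification $R(C_2) \hookrightarrow RO(C_2)$ sending $\sigma \mapsto 2\alpha$ makes the set of gradings $\{*+2n-2n\alpha : * \in \ZZ\}$ coincide with $\{*-n\sigma : * \in \ZZ\}$ after reindexing, and the formula on the right is precisely the extended coefficient ring computation stated in the previous theorem.

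For part (2), the plan is to iterate the long exact sequence
\[
\cdots \to MU_{|V|+1} \to \Omega^{C_2}_V \xrightarrow{a_\alpha} \Omega^{C_2}_{V-\alpha} \to MU_{|V|} \to \Omega^{C_2}_{V-1} \to \cdots
\]
inductively on the $\alpha$-coefficient, taking $\Omega^{C_2}_*$ from Theorem \ref{geometriccobordism} as the base case $n=0$ and crossing through the origin into the bad range. The three summands of the stated decomposition should correspond transparently to this computation: the piece $MU_*\{q_1\}$ will arise from the image of the transfer $\pi^e_*(\Omega_{C_2}) = MU_* \to \Omega^{C_2}_V$; the piece $\Omega^{C_2}_*\cap(u^n)$ collects classes from the good range which survive the iterated $a_\alpha$-multiplication into the bad range; and the quotient piece $MU_{*-1}[u]/(\ldots)$ records the connecting homomorphisms carrying the relations $u^k(d_{i,j}-c_{i,j}) = u^{k+1}d_{i,j+1}$ and $u^k(q_j-p_j) = u^{k+1}q_{j+1}$ of the extended ring into obstructions in the bad range.

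Parts (3) and (4) should then follow from a single additional application of the cofiber sequence. For part (3), the long exact sequence at grading $V = *+2n-2n\alpha$ reads
\[
MU_* \xrightarrow{\tr} \Omega^{C_2}_{*+2n-2n\alpha} \xrightarrow{a_\alpha} \Omega^{C_2}_{*+2n-(2n+1)\alpha} \to MU_{*-1},
\]
and the plan is to show that the kernel of $a_\alpha$ is precisely the ideal $(q_1)$ inside the good range, while the final map vanishes in the relevant degrees. This will give the stated isomorphism after reindexing $*$ to $*+1$. Part (4) is analogous, applied to the bad range output of part (2) instead of the good range input.

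The main obstacle will be part (2), and specifically the identification of the summand $\Omega^{C_2}_*\cap(u^n)$ and the precise form of the obstruction piece. This requires tracking how the relations defining $\Omega^{C_2}_\diamond$ extend across the parity change in the $\alpha$-grading, and the explicit generators of $\Omega^{C_2}_*\cap(u^n)$, whose presentation is deferred to Proposition \ref{generators}, will need to be understood via the geometric classes $[\mathbf{CP}(m+n\sigma)]$ and their interaction with the Euler class $a_\alpha$. Confirming that the resulting extension splits as a direct sum, rather than merely exhibiting a filtration with the stated subquotients, will likely be the most delicate step.
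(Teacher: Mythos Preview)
Your plan for parts (1), (3), and (4) matches the paper exactly: part (1) is the extended-coefficient calculation already done, and parts (3)--(4) follow from one application of the cofiber sequence $C_{2+}\to S^0\to S^\alpha$, using that the transfer image in $\Omega^{C_2}_{*\pm 2n\alpha}$ is $MU_*\{q_1\}$.

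For part (2) your route diverges from the paper's. You propose to iterate the $S^\alpha$ cofiber sequence step by step, tracking classes and relations through $2n$ long exact sequences. The paper instead uses the cofiber sequence $S(2n\alpha)_+\to S^0\to S^{2n\alpha}$ in one shot and maps the resulting long exact sequence to the Borel/Tate side, where everything is already known: applying $\Omega_{C_2}^*(-)$ to the comparison diagram with $S(\infty\alpha)_+$ produces a five-term ladder whose bottom row involves only $MU_*[[u]]/[2]u$ and $MU_*[[u]]/([2]u,u^n)$, and then the elementary Lemma~\ref{halemma} reads off the even and odd parts of $\Omega^{C_2}_{*+2n\alpha}$ directly as the kernel and cokernel of a single explicit map $\Omega^{C_2}_*\oplus MU_*[[u]]/[2]u \to MU_*[[u]]/[2]u$. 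This immediately gives the direct-sum splitting you flagged as the delicate step, with no extension problem to resolve; the even part is identified via a pullback square and the odd part by quotienting by the images of the generators $d_{i,j},q_j$.

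Your iterative approach is not wrong in principle, but the extension problems you anticipate are real obstacles that the paper's comparison-to-completion method simply bypasses. Also, your remark that the generators of $\Omega^{C_2}_*\cap(u^n)$ will be understood via the classes $[\mathbf{CP}(m+n\sigma)]$ does not match the paper: Proposition~\ref{generators} describes that ideal purely in terms of the augmentation ideal $J$ and linear conditions on the coefficients $c_{i,j},p_j$, with no reference to projective spaces.
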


Next, we develop our theory of geometric orientations, which illuminates the relationship between geometric cobordism, $C_2$-equivariant complex orientations, and $C_2$-equivariant formal group laws. Since complex orientations are represented by maps from $MU_{C_2}$, it is natural to ask: what structure on a commutative ring $C_2$-spectrum $E_{C_2}$ is determined by a ring spectrum map $\Omega_{C_2} \to E_{C_2}$? We propose the following definition, which includes several flatness hypotheses which provide us with necessary algebraic control.

\begin{definition} \label{quasiorientation}
Suppose $E_{C_2}$ is a commutative ring $C_2$-spectrum. We say a ring $C_2$-spectrum map $\Omega_{C_2} \to E_{C_2}$ is a {\it geometric orientation} of $E_{C_2}$  if 
\begin{enumerate}
\item the transfer $\text{tr}_e^{C_2}:E_* \to E^{C_2}_*$ is injective, and 
\item $\tau \in \Omega^{C_2}_\diamond$ maps to a non-zero divisor in $E^{C_2}_\diamond$.
\end{enumerate}
\end{definition}
If we have specified such a map $\Omega_{C_2} \to E_{C_2}$, we say $E_{C_2}$ is {\it geometrically oriented}. There are many interesting examples of geometrically oriented $C_2$-spectra.
\begin{proposition} \label{examples}
The following $C_2$-spectra are geometrically oriented.
\begin{enumerate}
\item The Eilenberg-Maclane spectrum $H \underline{R}_{C_2}$ associated to a commutative ring $R$ with no $2$-torsion.
\item The connective cover $k_{C_2}$ of $C_2$-equivariant $K$-theory.
\item The geometric cobordism spectrum $\Omega_{C_2}$.
\end{enumerate}
\end{proposition}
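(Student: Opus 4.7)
The plan is to construct, for each of the three spectra, a ring $C_2$-spectrum map from $\Omega_{C_2}$ and verify the two flatness conditions. For condition (1) I will use the Mackey-functor identity $\text{res} \circ \text{tr} = 1 + \gamma$, combined with the observation that the Weyl action $\gamma$ on the underlying coefficients is trivial for each of $\Omega_{C_2}$, $k_{C_2}$, and $H\underline{R}_{C_2}$ (in each case $C_2$ acts trivially on the underlying non-equivariant spectrum). Thus $\text{res} \circ \text{tr}$ equals multiplication by $2$, so condition (1) reduces to $2$ being a non-zero divisor on $E_*$. This is automatic for $E_* = MU_*$ and $E_* = k_* = \ZZ[v]$, and accounts exactly for the no $2$-torsion hypothesis on $R$.

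The case $E_{C_2} = \Omega_{C_2}$ uses the identity map, and condition (2) follows by inspection of the presentation of $\Omega^{C_2}_\diamond$ recorded in Theorem \ref{stable}: $\tau$ appears as a polynomial generator and only in linear terms on the right-hand side of the relations, so it is manifestly a non-zero divisor. The substantive work lies in the remaining two cases, where I must produce a geometric orientation from scratch. The input in both cases is a compatible system of Thom classes for the tautological bundles $\xi^{\mathbf{C}^\infty}(V) \to \text{Gr}^{\mathbf{C}^\infty}(V)$ used to build $\Omega_{C_2}$. For $k_{C_2}$ these come from an equivariant Atiyah--Bott--Shapiro orientation applied to complex $C_2$-bundles indexed in the trivial universe summand; for $H\underline{R}_{C_2}$ they come from the Bredon-cohomological Thom isomorphism for complex $C_2$-bundles in the trivial universe with constant $\underline{R}$-coefficients. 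Multiplicativity of these Thom classes under Whitney sum, together with compatibility with the prespectrum structure maps appearing in the definition of $\Omega_{C_2}$, assembles them into the required ring $C_2$-spectrum maps. Regularity of $\tau$ in $k^{C_2}_\diamond$ and $H\underline{R}_{C_2}^\diamond$ then follows from the explicit calculations in Theorem \ref{stable}, where $\tau$ again appears as a polynomial generator not entangled in any torsion relation.

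The main obstacle will be lifting the Thom-class construction, for $k_{C_2}$ and $H\underline{R}_{C_2}$, from a statement in the homotopy category to an honest multiplicative map of $C_2$-spectra compatible with the prespectrum structure maps of $\Omega_{C_2}$ at the point-set level. For $k_{C_2}$ this is largely classical once one fixes a multiplicative complex orientation of equivariant $K$-theory in the trivial universe, but for $H\underline{R}_{C_2}$ the no $2$-torsion hypothesis is strictly necessary --- the case $R = \FF_2$ already fails condition (1). It is precisely this ``partial'' orientability, available for bundles in the trivial universe but unavailable in the full complete universe, that the notion of geometric orientation is designed to capture.
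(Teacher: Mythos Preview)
Your approach is correct in outline, and your verification of condition (1) via $\text{res}\circ\text{tr}=1+\gamma=2$ is cleaner and more uniform than what the paper does (the paper simply reads condition (1) off the known coefficients in each case). Your verification of condition (2) by inspection of the presentations in Theorem~\ref{stable} also matches the paper.

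The substantive difference is in how the ring maps $\Omega_{C_2}\to H\underline{R}_{C_2}$ and $\Omega_{C_2}\to k_{C_2}$ are produced. You propose to build them by hand from compatible families of equivariant Thom classes for the tautological bundles $\xi^{\mathbf{C}^\infty}(V)$, and you correctly flag the assembly of these into a multiplicative map of $C_2$-spectra as the main obstacle. The paper sidesteps this entirely with a connective-cover argument: both $H\underline{R}_{C_2}$ and $k_{C_2}$ are connective covers of complex oriented $C_2$-spectra (namely $F(EC_{2+},HR)$ and $K_{C_2}$), so the composite $\Omega_{C_2}\to MU_{C_2}\to \widehat{E}_{C_2}$, which exists by the universality of $MU_{C_2}$ for complex orientations, lifts uniquely through $\tau_{\ge 0}\widehat{E}_{C_2}=E_{C_2}$ because $\Omega_{C_2}$ is itself connective. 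This makes multiplicativity automatic and eliminates all point-set bookkeeping.

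Your route would work in principle --- the required Thom classes certainly exist, since the map does --- but the direct construction you gesture at (``Bredon-cohomological Thom isomorphism for complex $C_2$-bundles in the trivial universe'') is not an off-the-shelf result. The bundles $\xi^{\mathbf{C}^\infty}(V)$ are genuinely $C_2$-equivariant over $C_2$-spaces, and over fixed points their fibers can be nontrivial representations (e.g.\ the fiber over the point $\mathbf{C}^\sigma\subset\mathbf{C}^\sigma\oplus\mathbf{C}^\infty$ in $\text{Gr}^{\mathbf{C}^\infty}(\mathbf{C}^\sigma)$ is $\mathbf{C}^\sigma$ itself). Since $H\underline{R}_{C_2}$ is \emph{not} complex oriented as a $C_2$-spectrum, you cannot simply invoke a generic equivariant Thom isomorphism here, and a bespoke argument would be needed. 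The connective-cover trick is both shorter and more robust, and is worth internalizing: it is the mechanism by which any connective cover of a complex oriented $C_2$-spectrum automatically acquires a candidate geometric orientation.
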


The following result explains how our theory of geometric orientations relates to thom isomorphisms for $C_2$-equivariant vector bundles.

\begin{proposition}
Suppose $E_{C_2}$ is a geometrically oriented $C_2$-spectrum. If $\psi \to X/C_2$ is a complex vector bundle over the orbits of a $C_2$-space $X$, and $\xi = p^*\psi$ is the pullback of $\psi \to X/C_2$ along the projection map $p:X \to X/C_2$, then there is a thom isomorphism
\[
E_{C_2}^*(X) = \widetilde{E}_{C_2}^{*+2\dim \xi}(X^\xi).
\]
\end{proposition}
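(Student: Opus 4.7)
The plan is to construct, for each bundle of the form $\xi = p^*\psi$, a universal Thom class in $E_{C_2}$-cohomology via the geometric orientation, and then prove the Thom isomorphism by a Mayer--Vietoris argument that reduces to the case of a trivial bundle.

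First, I would observe that a rank $n$ complex $C_2$-equivariant vector bundle on $X$ is of the form $p^*\psi$ if and only if it is classified by a $C_2$-map $f : X \to \text{Gr}^{\mathbf{C}^\infty}(\mathbf{C}^n)$; since both $\mathbf{C}^n$ and $\mathbf{C}^\infty$ carry the trivial $C_2$-representation, this Grassmannian is $BU(n)$ with trivial action, and every $C_2$-map into it factors uniquely through the orbit projection $p : X \to X/C_2$. By definition $\Omega_{C_2}(\mathbf{C}^n)$ is the Thom space of the tautological bundle over $\text{Gr}^{\mathbf{C}^\infty}(\mathbf{C}^n)$, so the adjoint of the spectrification structure map composed with the geometric orientation $\Omega_{C_2} \to E_{C_2}$ produces a universal Thom class $u_n \in \widetilde{E}_{C_2}^{\,2n}(\Omega_{C_2}(\mathbf{C}^n))$. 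For any $\xi = p^*\psi$ classified by $f$, pullback along the Thomified classifying map $f^\xi : X^\xi \to \Omega_{C_2}(\mathbf{C}^n)$ then yields a Thom class $u_\xi = (f^\xi)^* u_n \in \widetilde{E}_{C_2}^{\,2n}(X^\xi)$.

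Next I would define the candidate Thom isomorphism
\[
\Phi_\xi : E_{C_2}^*(X) \longrightarrow \widetilde{E}_{C_2}^{\,*+2n}(X^\xi), \qquad a \longmapsto a \cdot u_\xi,
\]
where the product uses the module structure on $\widetilde{E}_{C_2}^{\,*}(X^\xi)$ coming from the Thom diagonal $X^\xi \to X_+ \wedge X^\xi$. In the base case where $\xi = X \times \mathbf{C}^n$ is trivial with trivial fiber action, one has $X^\xi \simeq \Sigma^{\mathbf{C}^n} X_+ \simeq \Sigma^{2n} X_+$ and $u_\xi$ is the image of the unit of $E_{C_2}$ under this suspension, so $\Phi_\xi$ is the ordinary suspension isomorphism. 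For general $\xi$, I would choose an open cover $\{U_i\}$ of $X/C_2$ over which $\psi$ trivializes; the preimages $V_i = p^{-1}(U_i)$ form a $C_2$-invariant cover of $X$ over which $\xi$ trivializes $C_2$-equivariantly with trivial fiber action, reducing each piece to the base case. The Mayer--Vietoris sequence for $E_{C_2}^*$ on the $V_i$ maps via cup product with $u_\xi$ and its restrictions to the Mayer--Vietoris sequence for $\widetilde{E}_{C_2}^{\,*+2n}$ on the corresponding Thom subspaces of $X^\xi$, and induction on the cardinality of the cover combined with the five lemma deduces the Thom isomorphism for $X$ from the trivial case; a standard $\varprojlim^1$ argument handles the extension from finite to paracompact $X$.

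The main obstacle is checking that $u_n$ restricts on each fiber to the canonical generator of $\widetilde{E}_{C_2}^{\,2n}(S^{\mathbf{C}^n}) \cong E_{C_2}^0$, since this is what identifies $\Phi_\xi$ with the suspension isomorphism in the trivial-bundle case. This restriction is automatic here because every fiber of $\xi = p^*\psi$ carries the trivial representation $\mathbf{C}^n$ and $u_n$ was built from the unit map of $E_{C_2}$; in particular, the flatness hypotheses in the definition of geometric orientation are not needed for the Thom isomorphism itself, and will come into play only for the algebraic formal-group-law structure developed elsewhere in the paper.
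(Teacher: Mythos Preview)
Your proposal is correct and follows essentially the same approach as the paper: both construct the Thom class by pulling back from the universal example $\Omega_{C_2}(\mathbf{C}^n)$ via the classifying map through $\text{Gr}^{\mathbf{C}^\infty}(\mathbf{C}^n)$, then define the Thom map using the Thom diagonal, and verify it is an isomorphism via the fiberwise unit condition. The only difference is that the paper states the final step tersely (``this follows from the fact that $t(\xi)$ restricts to the unit on each fiber''), leaving the underlying Mayer--Vietoris/five-lemma reduction implicit, whereas you spell it out; your added observation that the flatness hypotheses play no role here is also accurate.
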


In section \ref{thomspectra}, we prove that inverting the element $\tau \in \Omega^{C_2}_\diamond$ determines an equivalence $\Omega_{C_2}[1/\tau] \simeq MU_{C_2}$. 
Because of this equivalence, we can associate to any geometrically oriented $C_2$-spectrum $E_{C_2}$ the complex oriented $C_2$-spectrum $\widehat{E}_{C_2}=E_{C_2}[1/\tau]$. Moreover, the coefficients of $\widehat{E}^{C_2}_*$ can be identified as 
\[
\widehat{E}^{C_2}_* \cong E^{C_2}_\diamond / (\tau - 1).
\]
This suggests that the fundamental algebraic invariant of a geometrically oriented $C_2$-spectrum $E_{C_2}$ is its extended coefficient ring $E^{C_2}_\diamond$, since calculating this ring allows us to determine the associated complex oriented $C_2$-spectrum $\widehat{E}_{C_2}$. We illustrate the general theory by calculating the extended coefficient ring and stabilization of the geometrically oriented $C_2$-spectra $H \underline{R}_{C_2}$, $k_{C_2}$, and $\Omega_{C_2}$ from Proposition \ref{examples}

\begin{theorem}\label{stable} We calculate the extended coefficient ring and stabilization of the geometrically oriented $C_2$-spectra $H\underline{R}_{C_2}$, $k_{C_2}$, and $\Omega_{C_2}$ below.
\begin{enumerate} 
\item 
If $R$ is a commutative ring with no $2$-torsion, then the Eilenberg Maclane spectrum $H \underline{R}_{C_2}$ associated to the constant $C_2$-Mackey functor $\underline{R}$ is geometrically oriented. The extended coefficient ring of $H \underline{R}_{C_2}$ is 
\[
H \underline{R}^{C_2}_\diamond = R[\mu,\tau]/(2\mu)
\] 
where $|\mu| = -\sigma$ and $|\tau| = 2-\sigma$. The stabilization of $H \underline{R}_{C_2}$ is  Borel cohomology with coefficients in $R$,
\[
H \underline{R}_{C_2}[1/\tau] \simeq F(EC_{2+},HR).
\]
\item The connective cover $k_{C_2}$ of $C_2$-equivariant $K$-theory is geometrically oriented. The extended coefficient ring of $k_{C_2}$ is 
\[
k^{C_2}_\diamond = \dfrac{R(C_2)[ v , \mu, \tau]}{\begin{matrix} \tau(\sigma - 1) = v\mu\\
\mu(\sigma + 1) = 0 \end{matrix} } 
\]
where $|v| = 2$, $|\mu| = -\sigma$, and $|\tau| = 2 - \sigma$. The stabilization of $k_{C_2}$ is Greenlees' (\cite{Greenlees2}) equivariant connective $K$-theory
\[
k_{C_2}[1/\tau] \simeq ku_{C_2}.
\]
\item The geometric cobordism spectrum $\Omega_{C_2}$ is geometrically oriented. The extended coefficient ring of $\Omega_{C_2}$ is 
\begin{align*}
\Omega^{C_2}_\diamond & =  \dfrac{\Omega^{C_2}_*[\mu,\tau]}{\begin{matrix}\tau(d_{i,j}-c_{i,j}) = \mu d_{i,j+1} \\ \tau(q_j - p_j) = \mu q_{j+1}\end{matrix} }\hspace{0.3in} i \geq 1 \text{ and }j \geq 0,
\end{align*}
where $|\mu| = -\sigma$ and $|\tau| = 2 - \sigma$. The stabilization of $\Omega_{C_2}$ is the stable complex cobordism spectrum,
\[
\Omega_{C_2}[1/\tau] \simeq MU_{C_2}.
\]
\end{enumerate}
\end{theorem}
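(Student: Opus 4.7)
The plan is to handle the three parts separately, noting that part (3) is essentially already packaged: the extended coefficient ring of $\Omega_{C_2}$ was just computed in the second theorem of section 3, and the stabilization equivalence $\Omega_{C_2}[1/\tau] \simeq MU_{C_2}$ is established in the forthcoming section \ref{thomspectra} (as mentioned in the paragraph preceding the theorem). So the real work is parts (1) and (2), and each has two components: computing $E^{C_2}_\diamond$ and identifying $E_{C_2}[1/\tau]$.

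For each spectrum, I would first verify geometric orientation (as required by Proposition \ref{examples}, which I would need to establish or invoke). For $H\underline{R}_{C_2}$ with $R$ having no $2$-torsion, the ring map $\Omega_{C_2} \to H\underline{R}_{C_2}$ sends a stably complex $C_2$-manifold to its fundamental class; transfer injectivity follows from the absence of $2$-torsion (since the composite $\mathrm{res} \circ \mathrm{tr}$ is multiplication by $1 + \gamma$, and on $HR_*$ both the identity and $\gamma$ act as the identity, giving multiplication by $2$). The image of $\tau$ is identified with the orientation class $u_\sigma \in H\underline{R}^{C_2}_{2-\sigma}$, which is a non-zero-divisor. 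For $k_{C_2}$, I would use the fact that a stably complex $C_2$-manifold has a canonical class in equivariant $K$-homology given by pushforward of its structure sheaf; the orientation class $\tau$ then maps to the $K$-theoretic Bott-type class for the sign representation, and transfer injectivity uses the known structure of connective equivariant $K$-theory.

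Next, to compute $E^{C_2}_\diamond = \bigoplus_{n \geq 0} \pi^{C_2}_{*-n\sigma}(E_{C_2})$ for each of $H\underline{R}_{C_2}$ and $k_{C_2}$, I would proceed by induction on $n$ using the cofiber sequences
\[
S(n\sigma)_+ \longrightarrow S^0 \longrightarrow S^{n\sigma}
\qquad \text{and} \qquad
S^{(n-1)\sigma} \xrightarrow{a_\sigma} S^{n\sigma} \longrightarrow C_{2+} \wedge S^n,
\]
together with the known underlying homotopy ($HR_*$ or $ku_*$) and the well-understood $C_2$-fixed points of these Eilenberg–Maclane and $K$-theoretic spectra. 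For $H\underline{R}_{C_2}$ this recovers the standard RO($C_2$)-graded computation of the constant Mackey functor and yields $R[\mu,\tau]/(2\mu)$ directly in the good range: the relation $2\mu = 0$ is a consequence of $2 a_\sigma = 0$ in $\pi^{C_2}_{-\sigma}(HR)$. For $k_{C_2}$ the induction is more delicate; here one uses that $k^{C_2}_0 = R(C_2)$, that $v \in k^{C_2}_2$ is the Bott class, and that the attaching map $a_\sigma$ in the cofiber sequence above acts by the geometric Euler class $\mu$. The relation $\tau(\sigma - 1) = v\mu$ follows by comparing the two obvious lifts of $\sigma - 1$ to $\pi^{C_2}_{2-\sigma}$, and $\mu(\sigma + 1) = 0$ from the Mackey functor identity $\mathrm{tr} \circ \mathrm{res} = 1 + \sigma$ combined with vanishing of $\mu$ on the underlying spectrum.

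Finally, for the stabilization identifications I would argue that once $\tau$ is inverted, $E_{C_2}[1/\tau]$ is a complex oriented $C_2$-spectrum (via the composite $MU_{C_2} \simeq \Omega_{C_2}[1/\tau] \to E_{C_2}[1/\tau]$) whose coefficients are $E^{C_2}_\diamond/(\tau - 1)$. For $H\underline{R}_{C_2}$, this quotient becomes $R[\mu]/(2\mu)$ with $|\mu| = -2$ (since $\tau$ equates $\sigma$ with $2$), matching $H^*(BC_2; R) = (F(EC_{2+}, HR))^{C_2}_*$; one then identifies the two complex oriented $C_2$-spectra by checking that the induced formal group laws coincide (both are additive over $R$). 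For $k_{C_2}$, setting $\tau = 1$ yields the presentation $R(C_2)[v,\mu]/(v\mu = \sigma - 1,\, (\sigma+1)\mu = 0)$ which one can identify with Greenlees's coefficient ring for $ku_{C_2}$ \cite{Greenlees2}, and again the formal group laws match (both giving the multiplicative $C_2$-equivariant formal group law).

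The main obstacle I expect is the inductive RO($C_2$)-graded computation of $k^{C_2}_\diamond$: unlike the Eilenberg–Maclane case, where $HR_*$ is concentrated in degree zero and the spectral sequence collapses, the presence of the Bott class $v$ in $k_*$ produces non-trivial extensions at each inductive step, and the interaction between $\mu$, $v$, and the module structure over $R(C_2)$ must be tracked carefully. Establishing the precise relations $\tau(\sigma - 1) = v\mu$ and $\mu(\sigma+1) = 0$, as opposed to merely $\ZZ$-linear consequences of them, is where a careful spectral sequence argument (or a direct geometric identification of the classes involved) will be needed.
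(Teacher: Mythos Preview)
Your outline for part (3) and for the computations of $E^{C_2}_\diamond$ in parts (1) and (2) is broadly in line with the paper: both you and the paper run the induction on $n$ via cofiber sequences of representation spheres. One refinement worth noting for $k_{C_2}$ is that the paper does not attack the extension problems head-on. Instead it uses the comparison map $k_{C_2} \to K_{C_2}$ and proves, by induction, that $k^{C_2}_{*+|n\sigma|-n\sigma} \hookrightarrow K^{C_2}_* = R(C_2)[v^{\pm 1}]$ with image $J^n v^{-n} \oplus \cdots \oplus J v^{-1} \oplus R(C_2)[v]$, where $J = (\sigma-1)$. This sidesteps exactly the extension issues you flag in your last paragraph: once everything is identified inside $K^{C_2}_*$, the relations $\tau(\sigma-1)=v\mu$ and $\mu(\sigma+1)=0$ are read off directly, with $\mu = \sigma-1 \in J = k^{C_2}_{-\sigma}$ and $\tau = 1 \in R(C_2) = k^{C_2}_{2-\sigma}$.

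There is, however, a genuine gap in your plan for the stabilization equivalences. Matching coefficient rings and equivariant formal group laws does \emph{not} produce an equivalence of $C_2$-spectra; you need an actual map. The paper supplies one in each case and then checks it is an equivalence. For $H\underline{R}_{C_2}$, the completion map $H\underline{R}_{C_2} \to F(EC_{2+},H\underline{R}_{C_2}) \simeq F(EC_{2+},HR)$ already sends $\tau$ to a unit, so it factors through $H\underline{R}_{C_2}[1/\tau] \to F(EC_{2+},HR)$; this is a non-equivariant equivalence (since $\tau$ is underlying the unit), and your computation $H\underline{R}^{C_2}_\diamond/(\tau-1) \cong R[u]/(2u)$ shows it is an isomorphism on $\pi^{C_2}_*$. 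For $k_{C_2}$, the paper uses Greenlees' homotopy pullback description of $ku_{C_2}$ (as the pullback of $K_{C_2} \to F(EC_{2+},K) \leftarrow F(EC_{2+},ku)$) together with the canonical map $k_{C_2}\to K_{C_2}$ and the completion map $k_{C_2} \to F(EC_{2+},ku)$ to build a multiplicative map $k_{C_2} \to ku_{C_2}$, and then argues exactly as above. Without these explicit maps your argument is incomplete: two complex oriented $C_2$-spectra can share a formal group law without being equivalent.
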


Next, we develop the algebraic side of our theory. While complex oriented $C_2$-spectra determine $C_2$-equivariant formal group laws, we demonstrate that geometrically oriented $C_2$-spectra determine {\it filtered} $C_2${\it -equivariant formal group laws}. This is the main algebraic definition of the present paper, and is an extension of the notion of $C_2$-equivariant formal group laws as defined in \cite{CGK1}.

Recall that a (homological) $C_2$-equivariant formal group law over a commutative ring $A$ consists, in particular, an $A$-Hopf algebra $D$, and that if $E_{C_2}$ is a complex oriented $C_2$-spectrum, then $(A,D) = (E^{C_2}_*,E^{C_2}_*(\mathbf{CP}^\infty_{C_2}))$ carries the structure of a $C_2$-equivariant formal group law. If $E_{C_2}$ is a geometrically oriented $C_2$-spectrum with stabilization $E_{C_2}[1/\tau] \simeq \widehat{E}_{C_2}$, then the $C_2$-spectra $\widehat{E}_{C_2}$ and $\widehat{E}_{C_2} \wedge \mathbf{CP}^{\infty}_{C_2+}$ are filtered by certain $RO(C_2)$-graded suspensions of $E_{C_2}$ and $E_{C_2} \wedge  \mathbf{CP}^{\infty}_{C_2+}$, respectively. On the algebraic side, this is reflected in a filtration of the $C_2$-equivariant formal group law $(\widehat{E}^{C_2}_* , \widehat{E}^{C_2}_*(\mathbf{CP}^\infty_{C_2}))$. This filtration is structurally rich when viewed in terms of certain geometrically defined $\widehat{E}^{C_2}_*$-module bases of $\widehat{E}^{C_2}_*(\mathbf{CP}^\infty_{C_2})$. For any $m,n \geq 0$, we define 
\begin{align*} 
\Pi_{m+n\sigma} & = [\mathbf{CP}(m+n\sigma) \longrightarrow  \mathbf{CP}^{\infty}_{C_2}] \in  MU^{C_2}_*(\mathbf{CP}^\infty_{C_2}).
\end{align*}
Since any $C_2$-equivariant formal group law $(A,D)$ is equipped with a map 
\[ (MU^{C_2}_*,MU^{C_2}_*(\mathbf{CP}^{\infty}_{C_2})) \to (A,D), \]
 this determines elements $\Pi_{m+n\sigma} \in D$ for any $C_2$-equivariant formal group law $(A,D)$. We prove in section \ref{changeofbasis} that the elements $\Pi_{\rho_1 + \cdots + \rho_i}$ associated to a complete flag $(\rho_i)_{i=1}^\infty$ form a free $A$-module basis for $D$. We will now define a filtered $C_2$-equivariant formal group law, which axiomatizes the properties of the filtrations 
\[ \begin{tikzcd} 
E^{C_2}_* \subset  E^{C_2}_{*+|\sigma| -\sigma} \subset \cdots \subset \widehat{E}^{C_2}_*
\end{tikzcd} \]
and
\[ \begin{tikzcd} 
E^{C_2}_*(\mathbf{CP}^\infty_{C_2}) \subset E^{C_2}_{*+|\sigma|-\sigma}(\mathbf{CP}^\infty_{C_2}) \subset \cdots \subset \widehat{E}^{C_2}_*(\mathbf{CP}^\infty_{C_2}).
\end{tikzcd} \]

\begin{definition} \label{filtered}
A {\it filtered $C_2$-equivariant formal group law} $(F_\bullet A,F_\bullet D)$ consists of a $C_2$-equivariant formal group law $(A,D)$, and filtrations $F_\bullet A$ of $A$ and $F_\bullet{D}$ of $D$, such that 
\begin{enumerate}
\item $\text{Im}(\Omega^{C_2}_* \to A) \subseteq F_0A$,
\item $F_nA$ is generated over $F_0A$ by $1,\dots,u^n$, and 
\item For any complete flag $(\rho_i)_{i=1}^\infty$, we have
\[
F_nD = \left\{ \sum a_i \Pi_{\rho_1 + \cdots + \rho_i} \in D : a_i \in F_{n + \ell_i}A \right\},
\]
%\[
%F_nD = \bigoplus_{i=1}^\infty F_{\ell_i + n} A \{ \Pi_{\rho_1 + \cdots + \rho_i}\} \subset \bigoplus_{i=1}^\infty A \{\Pi_{\rho_1 + \cdots + \rho_i}\}= D
%\]
where $\ell_i$ is the number of copies of $\sigma$ in $(\rho_1 + \cdots + \rho_{i-1}) \rho_i^{-1}$.
\end{enumerate} 
\end{definition}
Our definition is motivated by the following fact.
\begin{theorem}
If $E_{C_2}$ is a geometrically oriented $C_2$-spectrum with stabilization $\widehat{E}_{C_2} = E_{C_2}[1/\tau]$, then the pair $ (F_\bullet \widehat{E}^{C_2}_*,F_\bullet \widehat{E}^{C_2}_*(\mathbf{CP}^\infty_{C_2}))$ defined by
\[
F_n\widehat{E}^{C_2}_* = E^{C_2}_{*+|n\sigma|-n\sigma}\text{, and}
\]
\[
F_n\widehat{E}^{C_2}_*(\mathbf{CP}^\infty_{C_2}) = E^{C_2}_{*+|n\sigma|-n\sigma}(\mathbf{CP}^\infty_{C_2})
\]
is a filtered $C_2$-equivariant formal group law.
\end{theorem}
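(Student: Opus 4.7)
I plan to verify the three axioms in Definition~\ref{filtered} in sequence, leveraging the $\Omega_{C_2}$-module structure on $E_{C_2}$ together with the two conditions of the geometric orientation.

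Axiom (1) is essentially formal: the ring $C_2$-spectrum map $\Omega_{C_2}\to E_{C_2}$ restricts on integer-graded homotopy to $\Omega^{C_2}_*\to E^{C_2}_*=F_0A$, so the composite $\Omega^{C_2}_*\to A$ lands in $F_0A$.

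For Axiom (2), the first task is to identify, under the inclusion $F_n\widehat{E}^{C_2}_*=E^{C_2}_{*+2n-n\sigma}\hookrightarrow\widehat{E}^{C_2}_*$ given by division by $\tau^n$ (injective because $\tau$ is a non-zero divisor in $E^{C_2}_\diamond$), the generator $u^k\in F_nA$ with the image of $\mu^k\tau^{n-k}\in\Omega^{C_2}_\diamond$. The claim then reduces to showing that $E^{C_2}_{*+2n-n\sigma}$ is generated over $E^{C_2}_*$ by $\{\mu^k\tau^{n-k}:0\le k\le n\}$. I would argue this by induction on $n$, with a trivial base case $n=0$. The inductive step analyzes how classes in $E^{C_2}_{*-n\sigma}$ decompose under multiplication by $\mu$ and $\tau$, using the long exact sequence in $\widetilde{E}^{C_2}_*$ coming from the equivariant cell structure of $S^{n\sigma}$ (built from $S^{(n-1)\sigma}$ by attaching a free cell). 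Here condition (1) of the geometric orientation (injectivity of the transfer) controls the boundary maps of the long exact sequence, and yields the required generation by exactness, after combining with the $\tau$-grading bookkeeping that translates $\mu$-surjectivity back to statements about $E^{C_2}_{*+2n-n\sigma}$ inside $\widehat{E}$.

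For Axiom (3), I would combine the argument for Axiom (2) with the flag-basis theorem from Section~\ref{changeofbasis}, which provides the $A$-basis $\{\Pi_{\rho_1+\cdots+\rho_i}\}$ for $D=\widehat{E}^{C_2}_*(\mathbf{CP}^\infty_{C_2})$. An element of $F_nD=E^{C_2}_{*+2n-n\sigma}(\mathbf{CP}^\infty_{C_2})$, expressed in the flag basis, must have its $\Pi_{\rho_1+\cdots+\rho_i}$-coefficient lying in $F_{n+\ell_i}A$ for the $RO(C_2)$-grading to balance, with the shift $\ell_i$ matching the count of $\sigma$-factors appearing in $(\rho_1+\cdots+\rho_{i-1})\rho_i^{-1}$. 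The converse inclusion, that every such combination actually lies in $F_nD$, is established by the same cell-structure and transfer-injectivity method, now applied to the $C_2$-cell filtration of $\mathbf{CP}^\infty_{C_2}$ by the finite projective spaces $\mathbf{CP}(\rho_1+\cdots+\rho_i)$.

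I expect Axiom (3) to be the main obstacle. The combinatorial identification of $\ell_i$ as the correct filtration shift, uniform in the choice of complete flag $(\rho_i)_{i=1}^\infty$, requires careful $RO(C_2)$-graded bookkeeping and hinges on the change-of-flag matrices analyzed in Section~\ref{changeofbasis}. A natural plan of attack is to first verify Axiom (3) in the universal case $E_{C_2}=\Omega_{C_2}$, where the computation can be read off explicitly from the results of Sections~\ref{changeofbasis} and~\ref{appendixA}, and then to transfer the verification to general $E_{C_2}$ via the $\Omega_{C_2}$-algebra structure on $E_{C_2}$ induced by the geometric orientation.
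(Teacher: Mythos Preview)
Your proposal is essentially correct and tracks the paper's own argument: the paper's proof simply defers Axioms (2) and (3) to Propositions~\ref{eulerfiltration} and~\ref{filtration}, which carry out exactly the inductive cell-structure arguments on $S^{n\sigma}$ and on the flag filtration of $\mathbf{CP}^\infty_{C_2}$ that you outline.

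One correction worth flagging concerns the mechanism for Axiom (3). You invoke ``the same cell-structure and transfer-injectivity method'' used for Axiom (2), but the cofibers in the flag filtration of $\mathbf{CP}^\infty_{C_2}$ are representation spheres $S^{V_{i-1}\rho_i^{-1}}$, not free cells, so transfer injectivity is not the right lever. The paper instead proves collapse of the resulting spectral sequence by mapping to the corresponding spectral sequence for $\widehat{E}_{C_2}$-homology (which collapses because $\widehat{E}_{C_2}$ is complex oriented) and using condition (2) of the geometric orientation---that $\tau$ is a non-zero-divisor in $E^{C_2}_\diamond$---to make this comparison injective on $E^1$-pages. This is also what produces the identification of the summands with $F_{n+\ell_i}A$, since $E^{C_2}_{*+|n\sigma|-n\sigma}(S^{V_{i-1}\rho_i^{-1}})$ lands in the good range and embeds in $\widehat{E}^{C_2}_*$. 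Your alternative plan of first verifying Axiom (3) for $\Omega_{C_2}$ and then transferring via the $\Omega_{C_2}$-algebra structure is not the paper's route and would require a K\"unneth-type flatness statement that is not available in general.
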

Next, we prove the following universality statement, which asserts that the structure of a filtered $C_2$-equivariant formal group law $(F_\bullet A, F_\bullet D)$  is completely determined by $F_0A$ and the filtration on the universal equivariant formal group law $(MU^{C_2}_*,MU^{C_2}_*(\mathbf{CP}^\infty_{C_2}))$.

\begin{theorem} \label{universality}
If $(F_\bullet A, F_\bullet D)$ is a filtered $C_2$-equivariant formal group law, then 
\begin{align*}
F_nA & = F_nMU^{C_2}_* \cdot F_0A\text{, and }\\
F_nD & = F_nMU^{C_2}_* \cdot F_0D.
\end{align*}
\end{theorem}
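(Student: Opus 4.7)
The plan is to prove the identity for $A$ first, then leverage it together with the flag-basis axiom of Definition \ref{filtered} to handle the identity for $D$. Let $\varphi : MU^{C_2}_* \to A$ denote the classifying ring map of the underlying $C_2$-equivariant formal group law.

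For the first equality, recall that $F_nMU^{C_2}_*$ is by definition the $\Omega^{C_2}_*$-submodule of $MU^{C_2}_*$ generated by $1, u, \ldots, u^n$. Axiom (1) of Definition \ref{filtered} gives $\varphi(\Omega^{C_2}_*) \subseteq F_0A$, while axiom (2) says $F_nA = F_0A \cdot \{1, u, \ldots, u^n\}$; applying $\varphi$ to $F_nMU^{C_2}_*$ one obtains $\varphi(F_nMU^{C_2}_*) \subseteq F_nA$, hence $F_nMU^{C_2}_* \cdot F_0A \subseteq F_nA$. The reverse inclusion is immediate, since the generators $u^k$ of $F_nA$ over $F_0A$ already lie in $F_nMU^{C_2}_*$. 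A useful by-product is that $F_\bullet A$ is a multiplicative filtration, $F_nA \cdot F_mA = F_{n+m}A$, which follows directly from axiom (2).

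For the second equality, first reduce $F_nMU^{C_2}_* \cdot F_0D$ to $F_nA \cdot F_0D$: the inclusion $\subseteq$ uses $\varphi(F_nMU^{C_2}_*) \subseteq F_nA$, while $\supseteq$ uses the first identity $F_nA = F_nMU^{C_2}_* \cdot F_0A$ and absorbs the $F_0A$-factor into $F_0D$ via the easy fact that $F_0A \cdot F_0D \subseteq F_0D$. It therefore suffices to show $F_nD = F_nA \cdot F_0D$. Fix a complete flag $(\rho_i)$ and set $\Pi_i = \Pi_{\rho_1 + \cdots + \rho_i}$. The inclusion $F_nA \cdot F_0D \subseteq F_nD$ is immediate from multiplicativity: for $x \in F_nA$ and $\sum b_i \Pi_i \in F_0D$ with $b_i \in F_{\ell_i}A$, each product $xb_i$ lies in $F_{n+\ell_i}A$. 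For the reverse inclusion, given $d = \sum a_i \Pi_i \in F_nD$ with $a_i \in F_{n+\ell_i}A$, use axiom (2) to decompose $a_i = \sum_{k=0}^{n+\ell_i} c_{ik} u^k$ with $c_{ik} \in F_0A$, and split each monomial as $c_{ik} u^k \Pi_i = u^{\min(k,n)} \cdot \bigl(c_{ik} u^{k-\min(k,n)} \Pi_i\bigr)$. The first factor lies in $F_nA$, and the second lies in $F_0D$ because its $\Pi_i$-coefficient $c_{ik} u^{k-\min(k,n)}$ lies in $F_0A \subseteq F_{\ell_i}A$ when $k \le n$, and in $F_{k-n}A \subseteq F_{\ell_i}A$ when $k > n$, using $k - n \le \ell_i$.

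The argument is essentially bookkeeping with the generator descriptions. The only structural subtlety is that the $\sigma$-count $\ell_i$ appearing in axiom (3) and the euler filtration degree $k - n$ arising from the decomposition of $a_i$ match up exactly---this is the precise sense in which the flag-basis clause of Definition \ref{filtered} is tuned to the euler filtration of $MU^{C_2}_*$. I expect no serious obstacle; both filtrations are pinned down by explicit generators, and the work lies in recording multiplicativity of $F_\bullet A$ and tracking the final case split in the coefficient decomposition.
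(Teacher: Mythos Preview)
Your proof is correct and follows the same approach as the paper; the paper's own proof is a one-line remark that both equalities follow from $F_nA$ being generated over $F_0A$ by $1,\dots,u^n \in F_nMU^{C_2}_*$, and you have simply spelled out the bookkeeping for the $D$-case (the coefficient decomposition and the case split $k \le n$ versus $k > n$) that the paper leaves implicit.
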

Finally, we analyze the classes $\pi_{m+n\sigma} = [\mathbf{CP}(m+n\sigma)] \in \Omega^{C_2}_*$, which play a prominent role in our theory of filtered $C_2$-equivariant formal group laws. In the following theorem, we give an algebraic characterization of the elements $\pi_{m+n\sigma}$, and identify these classes in terms of our generators of $\Omega^{C_2}_*$ for some small values of $m$ and $n$. 

\begin{proposition}
The composite 
\[\Omega^{C_2}_* \to MU_* \to H_*(MU) =  \mathbb{Z}[b_i : i \geq 1]\]
maps $\pi_{m+n\sigma}$ to 
\[
(m+n)m_{m+n-1} = \text{coeff}_{x^{m+n-1}} \dfrac{1}{(1+b_1x+b_2x^2+\cdots)^{m+n}},
\]
and the composite 
\[
\Omega^{C_2}_* \to \Phi MU^{C_2}_* \to \widetilde{H}_*(MU \wedge BU_+) =  \mathbb{Z}[b_i,b_i': i \geq 1][u^{\pm 1}]
\]
maps $\pi_{m+n\sigma}$ to the sum
\begin{align*}
& \left(\text{coeff}_{x^m} \dfrac{1}{(1 + b_1x + b_2 x^2 + \cdots )^{m}(1+b'_1x + b_2'x^2 + \cdots)^n}\right)u^{-n}\\
+ & \left(\text{coeff}_{x^n} \dfrac{1}{(1 + b_1x + b_2 x^2 + \cdots )^{n}(1+b'_1x + b_2'x^2 + \cdots)^m}\right)u^{-m}.
\end{align*}
This characterizes the elements $\pi_{m+n\sigma}$.
\end{proposition}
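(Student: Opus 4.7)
The plan is to analyze $\pi_{m+n\sigma} = [\mathbf{CP}(m+n\sigma)]$ via its images under the two maps appearing in the statement. For the forgetful map $\Omega^{C_2}_* \to MU_*$, the $C_2$-manifold $\mathbf{CP}(m+n\sigma)$ forgets to the non-equivariant projective space $\mathbf{CP}^{m+n-1}$, so $\pi_{m+n\sigma} \mapsto [\mathbf{CP}^{m+n-1}]$. Combined with the classical formula for the Hurewicz image of $[\mathbf{CP}^k]$ in $H_*(MU) = \mathbb{Z}[b_1, b_2, \ldots]$---derived, for instance, from the identification of the $MU$-theoretic formal group logarithm in $H_*(MU)$ together with the standard Chern-class presentation of $T\mathbf{CP}^k$---this produces exactly the stated value $(m+n)\,m_{m+n-1}$, handling part one.

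For the geometric fixed-points map $\Omega^{C_2}_* \to \Phi MU^{C_2}_*$, I would first identify the fixed locus as $\mathbf{CP}(m+n\sigma)^{C_2} = \mathbf{CP}(\mathbf{C}^m) \sqcup \mathbf{CP}(\mathbf{C}^{n\sigma}) \cong \mathbf{CP}^{m-1} \sqcup \mathbf{CP}^{n-1}$, equipped with equivariant normal bundles $L^* \otimes \mathbf{C}^{n\sigma}$ over the first component and $L^* \otimes \mathbf{C}^{m\sigma}$ over the second, where $L$ denotes the tautological line bundle on the relevant component. A tom Dieck style geometric fixed-point formula then expresses the image of $\pi_{m+n\sigma}$ in $\Phi MU^{C_2}_*$ as the sum over these fixed components of their classes, packaged together with the above normal data, with the sigma-isotypic summands recorded by powers of $u$. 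Post-composing with the Hurewicz map into $\widetilde{H}_*(MU \wedge BU_+) = \mathbb{Z}[b_i, b_i'][u^{\pm 1}]$, where the $b_i$ track the tangent Chern data and the $b_i'$ track the normal Chern data, a standard Chern-class computation---using $T\mathbf{CP}^{k-1} \oplus \underline{\mathbf{C}} \cong L^* \otimes \mathbf{C}^k$ together with $c(L^* \otimes \mathbf{C}^j) = (1+H)^j$ and the residue/contour formulation of the Hurewicz map on a projective bundle---produces the stated coefficient extractions from $1/b(x)^m b'(x)^n$ and $1/b(x)^n b'(x)^m$, weighted by the appropriate powers of $u^{-1}$.

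The final assertion---that these two images characterize $\pi_{m+n\sigma}$---reduces to injectivity of the combined map $\Omega^{C_2}_* \to MU_* \times \Phi MU^{C_2}_*$, which I would deduce from the explicit presentation of $\Omega^{C_2}_*$ in Theorem \ref{geometriccobordism} combined with the isotropy separation pullback square for $MU^{C_2}$ and the freeness of $\Omega^{C_2}_*$ as an $MU_*$-module cited in the introduction; once injectivity is in hand, the two tabulated images pin down $\pi_{m+n\sigma}$ uniquely.

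The main obstacle I anticipate is the second step, namely the Chern-class bookkeeping. Getting the exponents right requires correctly splitting the restriction of the ambient $C_2$-equivariant tangent bundle of $\mathbf{CP}(m+n\sigma)$ to each fixed component into tangent plus normal parts, and then computing the image of each piece separately in $H_*(MU \wedge BU_+)$ using tangent Chern classes on one side and normal Chern classes on the other. The delicate points are (i) matching the precise coefficient extractions $\text{coeff}_{x^m}$ and $\text{coeff}_{x^n}$, which comes from the adjusted exponent in $1/b(x)^m b'(x)^n$ after absorbing the trivial line-bundle summand, and (ii) verifying that the $n\sigma$ normal directions over $\mathbf{CP}^{m-1}$ produce precisely the factor $u^{-n}$ under the normalization of $\Phi MU^{C_2}_*$ used in the paper, together with the manifest symmetry of the two contributions under $m \leftrightarrow n$, $b_i \leftrightarrow b_i'$ reflecting the $C_2$-swap between the two fixed components.
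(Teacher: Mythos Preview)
Your approach matches the paper's: forget to $[\mathbf{CP}^{m+n-1}]$ and invoke Mischenko's formula for the first map; for the second, identify the fixed locus with its normal data and push through the Hurewicz map $MU_*(BU)\to\widetilde H_*(MU\wedge BU_+)$, read as generalized characteristic numbers; then deduce the characterization from injectivity of the pair of maps. The paper's version of the last step is sharper than the pullback-square route you sketch: the kernel of $\Omega^{C_2}_*\to\Phi MU^{C_2}_*$ is exactly the free $MU_*$-module on $q_1$, and the augmentation sends $q_1\mapsto 2$, a nonzerodivisor in $MU_*$, so the two images jointly detect everything. One bookkeeping point to watch in step two is that the identification $\Phi MU^{C_2}_*\cong MU_*(BU)[u^{\pm1}]$ used here already absorbs the antipode of $BU$ (Lemma~\ref{geometricfixedpoints}), so the stable bundle recorded by the $b_i'$ is the \emph{negative} of the normal bundle of the fixed component; this is what yields the factor $(1+b_1'x+\cdots)^{-n}$ rather than its reciprocal.

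One correction to your closing sanity check: the two summands are exchanged under $m\leftrightarrow n$ \emph{alone}, with the $b_i$ and $b_i'$ held fixed. There is no $C_2$-swap of the fixed components---the $C_2$-action fixes each of $\mathbf{CP}(m)$ and $\mathbf{CP}(n\sigma)$ pointwise. The $m\leftrightarrow n$ symmetry instead reflects the equivariant isomorphism $\mathbf{CP}(m+n\sigma)\cong\mathbf{CP}(n+m\sigma)$ obtained by tensoring the ambient representation by $\sigma$.
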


\begin{example}
For several small values of $m$ and $n$, we express the element  \[\pi_{m+n\sigma} = [\mathbf{CP}(m+n\sigma)] \in \Omega^{C_2}_*\] in terms of our generators $d_{i,j},q_j \in \Omega^{C_2}_*$:
\begin{align*}
\pi_{1+\sigma} & = -q_2\\
\pi_{2+\sigma} & = d_{1,0} - a_{1,1}q_2\\
\pi_{2+2\sigma} & = 4d_{1,1}+2q_4 - 2q_2q_3 - q_2^3 + (6b_1^3 -18 b_1b_2 + 6b_3)q_1.
\end{align*}
\end{example}

%\bgroup
%\def\arraystretch{1.5}%  1 is the default, change whatever you need

%\begin{center}
%\begin{tabular}{ |c|c|c|c| }
% \hline
%  & geometrically oriented & Complex oriented & Complex oriented \\ 
% & $C_2$-spectrum & $C_2$-spectrum & (non-equivariant) spectrum  \\ 
% \hline
% Ordinary cohomology &  $H \underline{\ZZ}_{C_2}$ & $F(EC_{2+},H\ZZ)$  & $H \ZZ$  \\ \hline
% Connective K theory&   $k_{C_2}$ &  $ku_{C_2}$ &  $ku$  \\ \hline
%  Complex cobordism &$ \Omega_{C_2}$ & $MU_{C_2}$ & $MU$ \\ 
% \hline
%\end{tabular}
%\end{center}

%\egroup

\section{Equivariant cobordism}\label{thomspectra}
Our goal in this section is to calculate the ring $\Omega^{C_2}_*$ of stably almost complex $C_2$-manifolds with involution, and the good range $\Omega^{C_2}_\diamond$ of the $RO(C_2)$-graded coefficient ring of $\Omega_{C_2}$. In section \ref{equivariantthomspectra}, we prove that the map $\Omega_{C_2} \to MU_{C_2}$ induces an equivalence $\Omega_{C_2}[1/\tau] \simeq MU_{C_2}$, where $\tau \in \Omega^{C_2}_\diamond$ is an element in the $RO(C_2)$-graded coefficients of $\Omega_{C_2}$. This is a spectrum-level analogue of a result of Brocker and Hook in the unoriented case \cite{brockerhook}. We go on to prove that the $\ZZ$-graded coefficient ring of $\Omega_{C_2}$ coincides with the geometric cobordism ring of stably almost complex $C_2$-manifolds. In section \ref{pullbacks} we review known facts about the stable cobordism ring $MU^{C_2}_*$, and calculate a new presentation of this ring, which is essential to our calculation of $\Omega^{C_2}_*$ and $\Omega^{C_2}_\diamond$ in section \ref{extendedsection}.

\subsection{Equivariant Thom spectra}\label{equivariantthomspectra} In this section we prove that the stable cobordism spectrum $MU_{C_2}$ can be obtained from the geometric cobordism spectrum $\Omega_{C_2}$ by inverting an element $\tau$ in the $RO(C_2)$-graded coefficient ring of $\Omega_{C_2}$. Then, we prove that the $\ZZ$-graded coefficient ring $\Omega_{C_2}$ coincides with the geometric cobordism ring of stably almost complex $C_2$-manifolds with involution. 

Recall that the $C_2$-spectrum $\Omega_{C_2}$ assigns to the subrepresentation $\mathbf{C}^\sigma \subset \mathbf{C}^{\infty,\infty}$ of our chosen $C_2$-universe $\mathbf{C}^{\infty,\infty}$ the $C_2$-space
\[\Omega_{C_2}(\mathbf{C}^\sigma) = \text{Thom}\left( \xi^{\mathbf{C}^\infty}(\mathbf{C}^\sigma) \to \text{Gr}^{\mathbf{C}^\infty}(\mathbf{C}^\sigma) \right). \]
There is a point $* \in \text{Gr}^{\mathbf{C}^\infty}(\mathbf{C}^\sigma)$ corresponding to the line 
\[\mathbf{C} = \text{span}(0,1,0,0,\dots) \subset \mathbf{C}^\sigma \oplus \mathbf{C}^\infty,\]
and the inclusion $* \to \text{Gr}^{\mathbf{C}^\infty}(\mathbf{C}^\sigma)$ is covered by a vector bundle map $\mathbf{C} \to \xi^{\mathbf{C}^{\infty}}(\mathbf{C}^\sigma)$. Applying $\text{Thom}(-)$ to this vector bundle map gives us a map $S^2 \to \Omega_{C_2}(\mathbf{C}^\sigma)$, whose homotopy class we call the {\it weak thom class}, denoted
\[
\tau = [ S^2 \to \Omega_{C_2}(\mathbf{C}^\sigma)] \in \Omega^{C_2}_{2-\sigma}.
\]
The desired equivalence $\Omega_{C_2}[1/\tau] \simeq MU_{C_2}$ is a consequence of the following lemma, which identifies the defining diagram of $\Omega_{C_2}[1/\tau]$ with the geometrically defined filtration 
\[
\Omega_{C_2} = MU_{C_2}^{\mathbf{C}^{\infty}} \subset MU_{C_2}^{\mathbf{C}^{\infty+\sigma}}  \subset MU_{C_2}^{\mathbf{C}^{\infty+2\sigma}} \subset \cdots \subset MU_{C_2}.^{\mathbf{C}^{\infty+\infty\sigma}}  = MU_{C_2} 
\]

\begin{lemma}
For each $n \geq 0$, there is an equivalence 
\[ \Sigma^{n\sigma - |n\sigma|}\Omega_{C_2} \simeq MU^{\mathbf{C}^{\infty,n}}_{C_2}\]
 such that the following diagram commutes in $\text{Ho}(C_2 \text{Sp})$.
\begin{equation} \label{equivalencediagram} \begin{tikzcd}
\Omega_{C_2} \ar[r,"\tau"]  \ar[d,"="] & \Sigma^{\sigma - |\sigma|} \Omega_{C_2} \ar[r,"\tau"] \ar[d,"\simeq"] & \Sigma^{2\sigma - |2\sigma|} \Omega_{C_2} \ar[r,"\tau"] \ar[d,"\simeq"] & \dots \\
MU^{\mathbf{C}^\infty}_{C_2}  \ar[r]  & MU^{\mathbf{C}^{\infty,1}}_{C_2}  \ar[r]  & MU^{\mathbf{C}^{\infty,2}}_{C_2}  \ar[r] & \dots \\
\end{tikzcd} 
\end{equation}
\end{lemma}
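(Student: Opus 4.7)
The plan is to construct each equivalence $\phi_n : \Sigma^{n\sigma - |n\sigma|}\Omega_{C_2} \simeq MU^{\mathbf{C}^{\infty,n}}_{C_2}$ at the prespectrum level from an inclusion of Grassmannians, check by a deformation retract argument that each $\phi_n$ is a weak equivalence, and then verify commutativity of the diagram by recognizing that both multiplication by $\tau$ and the universe-inclusion map arise from ``absorbing one more $\mathbf{C}^\sigma$-factor into the background.''

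I would begin by fixing a $C_2$-equivariant isometric embedding $\mathbf{C}^{n\sigma} \oplus \mathbf{C}^\infty \hookrightarrow \mathbf{C}^{\infty,n}$. For each finite $V \subset U$, the map $L \mapsto L \oplus \mathbf{C}^{n\sigma}$ defines a $C_2$-equivariant closed embedding
\[
\text{Gr}^{\mathbf{C}^\infty}(V) \hookrightarrow \text{Gr}^{\mathbf{C}^{\infty,n}}(V \oplus \mathbf{C}^{n\sigma}),
\]
covered by the bundle map $\xi^{\mathbf{C}^\infty}(V) \oplus \underline{\mathbf{C}^{n\sigma}} \to \xi^{\mathbf{C}^{\infty,n}}(V \oplus \mathbf{C}^{n\sigma})$. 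Applying $\text{Thom}(-)$ yields based $C_2$-maps $S^{n\sigma} \wedge \Omega_{C_2}(V) \to MU^{\mathbf{C}^{\infty,n}}_{C_2}(V \oplus \mathbf{C}^{n\sigma})$ that intertwine with the prespectrum structure maps; spectrifying and re-indexing by $V \mapsto V \oplus \mathbf{C}^{n\sigma}$ assembles them into the desired $\phi_n$.

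To show each $\phi_n$ is an equivalence, I would check that for $V$ in a cofinal subfamily of finite subrepresentations of $U$, the above inclusion of Grassmannians admits an explicit $C_2$-equivariant deformation retract obtained by continuously rotating an arbitrary $(V + n\sigma)$-dimensional subspace of $(V \oplus \mathbf{C}^{n\sigma}) \oplus \mathbf{C}^{\infty,n}$ into one containing the distinguished $\mathbf{C}^{n\sigma}$-summand. This yields a weak equivalence on both underlying and $C_2$-fixed Thom-space homotopy types, which in the colimit gives the spectrum-level equivalence. For diagram commutativity, recall that $\tau$ is defined as the Thom space of the basepoint inclusion $* \hookrightarrow \text{Gr}^{\mathbf{C}^\infty}(\mathbf{C}^\sigma)$ classifying the line $\mathbf{C} \subset \mathbf{C}^\sigma \oplus \mathbf{C}^\infty$. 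Under $\phi_n$, multiplication by $\tau$ corresponds to the Thom-space map that enlarges $\mathbf{C}^{\infty,n}$ to $\mathbf{C}^{\infty,n+1}$ by adjoining one more $\mathbf{C}^\sigma$-summand in the background, which is precisely the bottom horizontal map; the diagram thus commutes at the prespectrum level by inspection.

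The principal obstacle is making the deformation retract argument precise with full $C_2$-equivariance and simultaneous control on both underlying and $C_2$-fixed homotopy types. I would handle this by exhibiting an explicit family of $C_2$-equivariant unitary transformations in the appropriate unitary group, parametrized by $t \in [0,1]$, that interpolate between the identity and a rotation absorbing the ``extra'' $\mathbf{C}^{n\sigma}$-direction into the background, and then verifying equivariance and cofinality by direct computation. Once that geometric lemma is in hand, the remaining structure-map compatibilities and the diagram chase reduce to routine bookkeeping with the definition of $\tau$.
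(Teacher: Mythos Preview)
Your construction does not produce the claimed equivalence. The map $L\mapsto L\oplus\mathbf{C}^{n\sigma}$ sends $\text{Gr}^{\mathbf{C}^\infty}(V)$, whose tautological bundle has rank $\dim V$, into $\text{Gr}^{\mathbf{C}^{\infty,n}}(V\oplus\mathbf{C}^{n\sigma})$, whose tautological bundle has rank $\dim V + n$. Already non-equivariantly this is the stabilization map $BU(\dim V)\to BU(\dim V + n)$, which is \emph{not} a weak equivalence for any finite $V$ when $n>0$; no deformation retract as you describe can exist, since the image (subspaces containing the fixed $\mathbf{C}^{n\sigma}$) has strictly smaller homotopy type than the ambient Grassmannian. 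Hence your levelwise argument fails outright.

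Worse, at the spectrum level your map assembles to $\Sigma^{n\sigma}\Omega_{C_2}\to\Sigma^{n\sigma}MU^{\mathbf{C}^{\infty,n}}_{C_2}$, because both the source prespectrum $V\mapsto S^{n\sigma}\wedge\Omega_{C_2}(V)$ and the target $V\mapsto MU^{\mathbf{C}^{\infty,n}}_{C_2}(V\oplus\mathbf{C}^{n\sigma})$ are models for $n\sigma$-fold suspensions. After desuspending you get a map $\Omega_{C_2}\to MU^{\mathbf{C}^{\infty,n}}_{C_2}$ with no shift at all, and these spectra are genuinely different (compare their geometric fixed points). The shift $n\sigma-|n\sigma|$ in the statement has to come from somewhere, and your construction never produces it.

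The paper's fix is to swap the roles of $\mathbf{C}^{n\sigma}$ and $\mathbf{C}^n$ on the two sides. Using the isomorphism of $C_2$-representations $\mathbf{C}^{n\sigma}\oplus\mathbf{C}^\infty\cong\mathbf{C}^{\infty,n}$, one gets for each $V$ an inclusion
\[
\text{Gr}^{\mathbf{C}^\infty}(V\oplus\mathbf{C}^{n\sigma})\;\hookrightarrow\;\text{Gr}^{\mathbf{C}^{\infty,n}}(V\oplus\mathbf{C}^n)
\]
where \emph{both} tautological bundles have rank $\dim V + n$ and the ambient spaces differ only by a finite trivial summand added to an already-infinite complement. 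That inclusion \emph{is} a levelwise $C_2$-homotopy equivalence. The resulting Thom map identifies $\Omega_{C_2}(V\oplus\mathbf{C}^{n\sigma})$ with $MU^{\mathbf{C}^{\infty,n}}_{C_2}(V\oplus\mathbf{C}^n)$; as $V$ varies the left side models $\Sigma^{n\sigma}\Omega_{C_2}$ while the right models $\Sigma^{|n\sigma|}MU^{\mathbf{C}^{\infty,n}}_{C_2}$, and this discrepancy is exactly the shift you need.
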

\begin{proof}
For any $n \geq 0$ and $C_2$-representation $V$, the embedding
\[V \oplus \mathbf{C}^{n\sigma} \oplus  \mathbf{C}^{\infty} \cong V \oplus 0 \oplus \mathbf{C}^{\infty,n} \to V \oplus \mathbf{C}^n \oplus \mathbf{C}^{\infty,n}\]
induces a homotopy equivalence
\[ \begin{tikzcd} 
\text{Gr}^{\mathbf{C}^\infty}(V\oplus \mathbf{C}^{n\sigma}) \ar[r,"\simeq"] & \text{Gr} ^{\mathbf{C}^{\infty,n}}(V\oplus \mathbf{C}^n).
\end{tikzcd} \]
Applying $\text{Thom}(-)$ to the induced map of vector bundles yields a homotopy equivalence
\[ \begin{tikzcd}
\Omega_{C_2}(V\oplus \mathbf{C}^{n\sigma}) \ar[r,"\simeq"] &  MU_{C_2}^{\mathbf{C}^{\infty,n}}(V \oplus \mathbf{C}^n).
\end{tikzcd} \]
The spectrum $V \mapsto MU^{\mathbf{C}^\infty}_{C_2}(V \oplus \mathbf{C}^{n\sigma})$ is a model for $\Sigma^{n\sigma}\Omega_{C_2}$, and the spectrum $V \mapsto MU^{\mathbf{C}^{\infty,n}}_{C_2}( V \oplus \mathbf{C}^{n})$ is a model for $\Sigma^{2n}MU^{\mathbf{C}^{\infty,n}}_{C_2}$, so these maps determine an equivalence $\Sigma^{n\sigma} \Omega_{C_2} \simeq \Sigma^{|n\sigma|} MU^{\mathbf{C}^{\infty,n}}_{C_2}$. Smashing with $S^{-|n\sigma|}$ yields the desired equivalence 
\[\Sigma^{n\sigma - |n\sigma|} \Omega_{C_2} \simeq MU_{C_2}^{\mathbf{C}^{\infty,n}}.\]
 The homotopy commutativity of diagram \ref{equivalencediagram} follows from the homotopy commutativity of the following diagram of based $C_2$-spaces.
\[
\begin{tikzcd}[column sep = 18.0]
 \Omega_{C_2}(V \oplus \mathbf{C}^{n\sigma} ) \wedge S^2 \ar[dd,swap,"\simeq  \wedge 1"] \ar[r,"1 \wedge \tau"] & \Omega_{C_2}(V \oplus \mathbf{C}^{n\sigma} ) \wedge \Omega_{C_2}(\mathbf{C}^\sigma)\ar[r] & MU_{C_2}^{\mathbf{C}^\infty \oplus \mathbf{C}^\infty}(V \oplus \mathbf{C}^{(n+1)\sigma}) \ar[d,"\simeq"] \\
 & & \Omega_{C_2}(V \oplus \mathbf{C}^{(n+1)\sigma}) \ar[d,"\simeq"]  \\
MU_{C_2}^{\mathbf{C}^{\infty,n}}(V \oplus \mathbf{C}^n) \wedge S^2 \ar[r,"i \wedge 1"] & MU_{C_2}^{\mathbf{C}^{\infty,n+1}}(V \oplus \mathbf{C}^n) \wedge S^2 \ar[r] & MU_{C_2}^{\mathbf{C}^{\infty,n+1}}(V \oplus \mathbf{C}^{n+1})  \\
\end{tikzcd}
\]
 \end{proof}

\begin{corollary}
The map $\Omega_{C_2} \to MU_{C_2}$ induces an equivalence
\[
\Omega_{C_2}[1/\tau] \simeq MU_{C_2}.
\] 
\end{corollary}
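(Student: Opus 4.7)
The plan is to take the sequential homotopy colimit of both rows of diagram \ref{equivalencediagram} in the previous lemma. By definition of localization at an element of the $RO(C_2)$-graded coefficient ring, the homotopy colimit of the top row
\[ \Omega_{C_2} \xto{\tau} \Sigma^{\sigma - |\sigma|} \Omega_{C_2} \xto{\tau} \Sigma^{2\sigma - |2\sigma|} \Omega_{C_2} \xto{\tau} \cdots \]
is $\Omega_{C_2}[1/\tau]$. The lemma produces a levelwise equivalence of this tower with the bottom row, and the commutativity of the squares (also asserted by the lemma) means these levelwise equivalences assemble into an equivalence of homotopy colimits.

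It remains to identify the homotopy colimit of the bottom row with $MU_{C_2}$. Since $\mathbf{C}^{\infty,\infty} = \bigcup_{n \geq 0} \mathbf{C}^{\infty,n}$ as an exhaustive filtration by sub-$C_2$-universes, and the Thom space construction $V \mapsto MU_{C_2}^\sV(V)$ depends on $\sV$ only through $\text{Gr}^\sV(V)$ and its tautological bundle—both of which commute with filtered unions because Grassmannians of a fixed dimension are themselves colimits over finite-dimensional subrepresentations—we obtain a level equivalence $\mathrm{hocolim}_n MU^{\mathbf{C}^{\infty,n}}_{C_2} \simeq MU^{\mathbf{C}^{\infty,\infty}}_{C_2} = MU_{C_2}$.

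Combining the two identifications yields an equivalence $\Omega_{C_2}[1/\tau] \simeq MU_{C_2}$. A brief check is needed to confirm that this equivalence agrees, up to homotopy, with the map induced from the canonical $\Omega_{C_2} \to MU_{C_2}$ of the introduction by inverting $\tau$; but this reduces to comparing the level-zero instance of the lemma's identification $\Sigma^{0}\Omega_{C_2} \simeq MU^{\mathbf{C}^\infty}_{C_2}$ with the identity, which is immediate from the construction of those equivalences out of the inclusion $\mathbf{C}^\infty \to \mathbf{C}^{\infty,n}$. The only mild obstacle is bookkeeping: one must ensure that the identifications $\Sigma^{n\sigma - |n\sigma|}\Omega_{C_2} \simeq MU^{\mathbf{C}^{\infty,n}}_{C_2}$ are compatible with passage to the colimit along the canonical inclusions $MU^{\mathbf{C}^{\infty,n}}_{C_2} \to MU_{C_2}$, but this is essentially what the commutativity of diagram \ref{equivalencediagram} in the lemma already records.
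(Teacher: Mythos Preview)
Your proposal is correct and follows essentially the same approach as the paper's proof: both identify $\Omega_{C_2}[1/\tau]$ as the homotopy colimit of the top row of the lemma's diagram, use the levelwise equivalences to pass to the bottom row, and recognize that colimit as $MU_{C_2}$. Your version is somewhat more detailed in justifying the last identification and in checking compatibility with the canonical map $\Omega_{C_2} \to MU_{C_2}$, but the argument is the same.
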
 
\begin{proof}
\begin{align*}
\Omega_{C_2}[1/\tau] & = \text{hocolim} \left(
\Omega _{C_2}\to \Sigma^{\sigma - |\sigma|} \Omega_{C_2} \to \Sigma^{2\sigma - |2\sigma|} \Omega_{C_2} \to \dots
\right)\\
& \simeq \text{hocolim} \left(
MU_{C_2}^{\mathbf{C}^\infty} \to MU_{C_2}^{\mathbf{C}^{\infty,1}} \to M_{C_2}U^{\mathbf{C}^{\infty,2}}  \to \dots
\right)\\
& = MU_{C_2}
\end{align*}
\end{proof}

The final goal of this section is to identify the $\ZZ$-graded coefficient ring of $\Omega_{C_2}$ with the geometric cobordism ring of stably almost complex $C_2$-manifolds with involution, which we temporarily denote $\Omega^{C_2,\text{geo}}_*$. We refer the reader to \cite{Hanke} for a detailed discussion of the geometric cobordism ring $\Omega^{C_2,\text{geo}}_*$ and the equivariant Pontrjagin-Thom construction. We mention only that an element $[M] \in \Omega^{C_2,\text{geo}}_n$ is represented by an $n$-dimensional  $C_2$-manifold $M$ with a complex structure on $TM \oplus \underline{\mathbf{R}}^k$ for some $k \geq 0$. The equivariant Pontrjagin-Thom construction determines a ring map $\Omega^{C_2,\text{geo}}_* \to \Omega^{C_2}_*$, and we will show that this is an isomorphism.

\begin{proposition}
The equivariant Pontrjagin-Thom map 
\[
\Omega^{C_2,\text{geo}}_* \to \Omega^{C_2}_*
\]
is an isomorphism.
\end{proposition}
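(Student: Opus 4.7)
My plan is to construct an explicit inverse to the Pontrjagin-Thom map using equivariant transversality, exploiting the specific structure of $\Omega_{C_2} = MU^{\mathbf{C}^\infty}_{C_2}$. The essential point is that the Grassmannians $\text{Gr}^{\mathbf{C}^\infty}(V)$ parametrize complex subspaces of $V \oplus \mathbf{C}^\infty$, so that the trivial filler $\mathbf{C}^\infty$ provides unlimited room in the trivial direction. This is what distinguishes $\Omega_{C_2}$ from $MU_{C_2}$ geometrically, and is what makes it plausible that the Pontrjagin-Thom map can be inverted before inverting $\tau$, rather than only after.

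For the forward direction, given a stably almost complex $n$-dimensional $C_2$-manifold $M$ with complex structure on $TM \oplus \underline{\mathbf{C}}^k$, I would embed $M$ equivariantly in a large complex $C_2$-representation $W$ via Mostow-Palais. After a trivial complex stabilization, the normal bundle $\nu_M$ acquires a complex $C_2$-bundle structure, and the stabilization can be arranged so that $\nu_M$ embeds in $M \times (V \oplus \mathbf{C}^\infty)$ for an appropriate $V = \mathbf{C}^{a+b\sigma}$, with $b$ large enough to accommodate the $\sigma$-summands of the fiber representations. Classifying $\nu_M$ in this way yields a map $M \to \text{Gr}^{\mathbf{C}^\infty}(V)$ covered by a map of vector bundles, and the usual Pontrjagin-Thom collapse produces the desired class $S^W \to \text{Thom}(\nu_M) \to \Omega_{C_2}(V)$.

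For the inverse, consider a representative $f \colon S^{n+|V|} \to \Omega_{C_2}(V) = \text{Thom}(\xi^{\mathbf{C}^\infty}(V))$. I would equivariantly perturb $f$ so that it is transverse to the zero section $\text{Gr}^{\mathbf{C}^\infty}(V)$; the preimage $M = f^{-1}(\text{Gr}^{\mathbf{C}^\infty}(V))$ is then a closed $C_2$-submanifold of dimension $n$, and $\nu_M = f^*\xi^{\mathbf{C}^\infty}(V)$ carries a canonical complex structure. Because $TS^{n+|V|}|_M = TM \oplus \nu_M$ is stably trivial as a real $C_2$-bundle, $TM$ inherits a compatible stable almost complex structure. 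Analogous transversality applied to equivariant null-homotopies of $f$ shows that $[f]\mapsto [M]$ is well-defined at the level of cobordism, and the two assignments are mutually inverse by the usual double-check.

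The principal obstacle is justifying equivariant transversality, which fails in general and is the basic reason that $\Omega^{G,\text{geo}}_* \ne MU^G_*$ for nontrivial $G$. The resolution in our specific setting is that a fixed subspace $W \in \text{Gr}^{\mathbf{C}^\infty}(V)^{C_2}$ is a complex $C_2$-subrepresentation of $V \oplus \mathbf{C}^\infty$, and any mismatch between the $C_2$-representation type of $\nu_{f^{-1}(\text{Gr})}$ and $\xi^{\mathbf{C}^\infty}(V)$ at fixed points can be removed by enlarging $V$: adding trivial complex summands to $V$ creates enough extra room in $S^{n+|V|}$ to realize every complex $C_2$-subrepresentation of $V \oplus \mathbf{C}^\infty$ as a possible normal fiber. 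I would formalize this either by a direct representation-theoretic analysis in equivariant local charts, or by appealing to the equivariant transversality framework of Hanke \cite{Hanke}, which is designed precisely for the Thom spectra $MU^{\sV}_{C_2}$ with $\sV = \mathbf{C}^\infty$.
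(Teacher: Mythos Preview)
Your approach is genuinely different from the paper's. The paper never attempts to invert the Pontrjagin--Thom map by transversality; instead it applies both homology theories to the isotropy separation sequence $EC_{2+}\to S^0\to \widetilde{EC}_2$, observes that the map is an isomorphism on the free part (where transversality is unobstructed), and then computes both geometric fixed point theories explicitly as $\bigoplus_{n\ge 0} MU_{*-2n}(BU(n))$ and checks the map is an isomorphism there. The 5-lemma finishes. This bypasses the delicate equivariant transversality analysis entirely and is considerably shorter.

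Your direct approach is in principle workable, and you correctly isolate the relevant structural feature of $\xi^{\mathbf{C}^\infty}(V)$: a $C_2$-fixed fiber is a subrepresentation of $V\oplus\mathbf{C}^\infty$, so its $\sigma$-isotypic part is bounded by that of $V$. But your diagnosis of the obstruction and its resolution is off. First, the source sphere is $S^{n+V}$, not $S^{n+|V|}$; the $\alpha$-summands in $V$ are essential. Second, the obstruction to equivariant transversality is not a ``mismatch between the $C_2$-representation type of $\nu_{f^{-1}(\mathrm{Gr})}$ and $\xi^{\mathbf{C}^\infty}(V)$'' --- once $f$ is transverse these are equal by construction. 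The actual obstruction is whether the $\alpha$-isotypic part of the source tangent space $T_pS^{n+V}\cong \mathbf{R}^n\oplus V$ can surject onto the $\alpha$-isotypic part of the fiber $\xi_{f(p)}$. Since both have $\alpha$-dimension at most $2b$ when $V=\mathbf{C}^{a+b\sigma}$, this condition is \emph{automatically} satisfied; no enlargement of $V$ is needed, and in any case adding \emph{trivial} summands to $V$ does nothing to the $\alpha$-isotypic count on either side. So your stated mechanism for removing the obstruction is wrong, even though the conclusion that transversality holds is correct. You should replace that paragraph with the observation that the $\alpha$-isotypic parts already match, and then invoke the stratified transversality argument (Wasserman/Petrie) to pass from transversality of $f^{C_2}$ to equivariant transversality of $f$. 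The citation of \cite{Hanke} is also imprecise: Hanke treats related questions but does not package a transversality theorem for $MU^{\mathbf{C}^\infty}_{C_2}$ that you can simply quote.
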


\begin{proof}
Geometric cobordism defines a $\ZZ$-graded homology theory on $C_2$-spaces, and the Pontrjagin-Thom construction defines a natural transformation from geometric cobordism to $\Omega_{C_2}$-homology. We can evaluate each homology theory on the cofiber sequence 
\[
EC_{2+} \to S^0 \to \widetilde{EC}_2,
\]
which yields the following diagram whose rows are exact.
\[
\begin{tikzcd}
\dots \ar[r] & [-10 pt] \Omega^{C_2,\text{geo}}_*(EC_{2}) \ar[r] \ar[d]  & [-10 pt] \Omega^{C_2,\text{geo}}_* \ar[r] \ar[d] & \widetilde{\Omega}^{C_2,\text{geo}}_*(\widetilde{EC}_2) \ar[d] \ar[r] & [-10 pt]\Omega^{C_2,\text{geo}}_{*-1}(EC_{2}) \ar[r] \ar[d] & [-10 pt]\dots \\
\dots \ar[r] & [-10 pt] \Omega^{C_2}_*(EC_{2}) \ar[r]  &  [-10 pt] \Omega^{C_2}_* \ar[r] & \widetilde{\Omega}^{C_2}_*(\widetilde{EC}_2)  \ar[r] & [-10 pt] \Omega^{C_2}_{*-1}(EC_{2}) \ar[r]  & [-10 pt] \dots 
\end{tikzcd}
\]
The map $\Omega^{C_2,\text{geo}}_*(EC_{2}) \to \Omega^{C_2}_*(EC_{2})$ is an isomorphism since equivariant transversality holds in the presence of  free group actions. By the 5-lemma, it suffices to prove that the map $\widetilde{\Omega}^{C_2,\text{geo}}_*(\widetilde{EC}_{2}) \to \widetilde{\Omega}^{C_2}_*(\widetilde{EC}_{2})$ is also an isomorphism. The geometric fixed point ring $\widetilde{\Omega}^{C_2\text{,geo}}_*(\widetilde{EC}_2) $ is isomorphic to the ring of ``local fixed point data"
\[
\widetilde{\Omega}^{C_2,\text{geo}}_*(\widetilde{EC}_2) \cong \bigoplus_{n \geq 0} MU_{*-2n}(BU(n)).
\]
Elements of this ring are pairs $(F,\xi)$ where $F$ is a manifold and $\xi$ is a vector bundle over $F$. 
The isomorphism $\widetilde{\Omega}^{C_2,\text{geo}}_*(\widetilde{EC}_2) \cong \bigoplus_{n \geq 0} MU_{*-2n}(BU(n))$ takes $[M \to \widetilde{EC}_2]$ to 
\[\bigoplus_i [M^{C_2}_i \to BU(n_i)],\]
 where $M^{C_2}_i$ are the components of the fixed point locus $M^{C_2} \subset M$, and the map $M^{C_2}_i \to BU(n_i)$ classifies the normal bundle $\nu\mid_{M^{C_2}_i}^M$. On the other hand, one can calculate $\widetilde{\Omega}^{C_2}_*(\widetilde{EC}_2) = \Phi \Omega^{C_2}_*$ by calculating the geometric fixed point spectrum $\Phi \Omega^{C_2} = (\Omega_{C_2} \wedge \widetilde{EC}_2)^{C_2}$ of $\Omega_{C_2}$. This can be done at the level of $C_2$-spaces, since $\Omega_{C_2}$ comes from an inclusion $C_2$-prespectrum (see the proof of Lemma \ref{geometricfixedpoints} for further detail). We find that
\[\Phi \Omega^{C_2} \simeq \bigvee_{n \geq 0} \Sigma^{2n} MU \wedge BU(n)_+,\]
so by explicit computation, the map $\widetilde{\Omega}^{C_2,\text{geo}}_*(\widetilde{EC}_{2}) \to \widetilde{\Omega}^{C_2}_*(\widetilde{EC}_{2})$ is an isomorphism.
\end{proof}

\subsection{Calculation of $MU^{C_2}_*$} \label{pullbacks}

In this section we review known facts about the stable cobordism ring $MU^{C_2}_*$, and calculate a new presentation of this ring which will be convenient for our calculation of $\Omega^{C_2}_*$ in the following section. The ring $MU^{C_2}_*$ can be calculated using the Tate diagram. We review the construction of the Tate diagram briefly, and refer the reader to \cite{GreenleesMay2} for a more thorough exposition.  Let $EC_{2}$ be a free $C_2$-space which is non-equivariantly contractible, and consider the cofiber sequence 
\[
EC_{2+} \to S^0 \to \widetilde{EC}_{2}
\]
where the first map collapses $EC_2$ to the non-basepoint of $S^0$. The Tate diagram for $MU_{C_2}$  is
\[ \begin{tikzcd} 
MU_{C_2} \wedge EC_{2+} \ar[r] \ar[d,"\simeq"] & MU_{C_2} \ar[r] \ar[d] & EC_{2+} \wedge \widetilde{EC}_2 \ar[d] \\
 F(EC_{2+} , MU_{C_2}) \wedge EC_{2+} \ar[r]  & F(EC_{2+}, MU_{C_2}) \ar[r] & F(EC_{2+} , MU_{C_2}) \wedge \widetilde{EC}_{2+},
\end{tikzcd} \] 
where both rows are cofiber sequences of $C_2$-spectra, and the vertical maps are obtained from the collapse map $EC_{2+} \to S^0$ by applying $F(-,MU_{C_2})$. We are primarily interested in the right hand square, which at the level of coefficients is 
\[\begin{tikzcd} 
MU^{C_2}_* \ar[r] \ar[d] & \Phi MU^{C_2}_* \ar[d] \\
MU^{hC_2}_* \ar[r] & MU^{tC_2}_*.
\end{tikzcd} \]
The upper right, bottom left, and bottom right corners are the coefficients of the geometric, homotopy, and Tate fixed points of $MU_{C_2}$, respectively. In \cite{Kriz}, Kriz proves that this square is a pullback of rings, and identified the Tate square for $MU^{C_2}_*$ with
\[
\begin{tikzcd}
MU^{C_2}_* \ar[r] \ar[d] & MU_*(BU)[u^{\pm 1}] \ar[d] \\
\dfrac{MU_*[[u]]}{[2]u}\ar[r] & \dfrac{MU_*((u))}{[2]u},
\end{tikzcd}
\]
where $|u| = -2$ and 
\begin{equation}\label{twoseries} [2]u = \sum_{i,j \geq 0} a_{i,j}u^{i+j} = p_0 + p_1u + p_2 u ^2 + \dots \end{equation}

is the $2$-series of the universal formal group law over $MU_*$. The vertical map $MU_*(BU)[u^{\pm 1}] \to MU_*((u))/[2]u$ is given as follows. We know that $MU_*(BU(1)) = MU_* \{ \beta_0 , \beta_1 , \dots \}$ where $\{ \beta_0 , \beta_1, \dots\}$ is dual to $\{1 , x , x^2 , \dots \} \subset MU^*(BU(1)) = MU^*[[x]]$, and that
\[
MU_*(BU) = MU_*[b_1',b_2',\dots] 
\]
where $b_i'$ is the image of $\beta_i \in MU_*(BU(1))$ under the map induced by the inclusion $BU(1) \to BU$. We use the symbols $b_i'$ to distinguish these elements from the coefficients $b_i$ of the exponential series of the universal logarithm. The vertical map $MU_*[b_i'][u^{\pm 1}] \to MU_*[[u]]/[2]u$ is determined by 
\[
ub_i' \mapsto \sum_{j \geq 0} a_{i,j}u^j,
\]
which is the coefficient of $x^i$ in $F_{MU}(u,x) = u + ub_1' x + ub_2' x^2 + \cdots \in MU_*[[u,x]]$.

For reasons that will become clear in the next section, it is convenient for us to use a different presentation of $\Phi MU^{C_2}_*$ in our calculation. More precisely, we will choose a new polynomial basis for $\Phi MU^{C_2}_*$, and emulate Strickland's calculation of $MU^{C_2}_*$ using this new polynomial basis. For any $ i \geq 0$, define $d_i \in MU_*[b_i'][u^{\pm 1}]$ to be such that $d_0 + d_1 x + d_2x^2 + \cdots \in MU_*[b_i'][u^{\pm 1}] [[x]]$ is the multiplicative inverse of $u + ub_1' x +  ub_2' x^2 + \cdots \in MU_*[b_i'][u^{\pm 1}][[x]]$, i.e. such that 
\begin{equation}
(u+ ub'_1 x + ub'_2 x^2 + \cdots )(1 + d_1x + d_2 x^2 + \cdots )  = 1 .
\end{equation}\label{definitionofds}
For instance, we have $d_0=u^{-1}$, $d_1 = -u^{-1}b_1'$, and $d_2 = -u^{-1}(b_1')^2 +u^{-1}b_2'$. Under the identification $\Phi MU^{C_2}_* = MU_*[d_0^{\pm 1}, d_1, d_2 , \dots]$, the map to the Tate fixed points $MU^{tC_2}_* = MU_*((u))/[2]u$ is given by 
\[
d_i \mapsto \sum_{j \in \ZZ} c_{i,j}u^j
\]
which is the coefficient of $x^i$ in $\dfrac{1}{F(u,x)} \in MU_*((u))[[x]]$.

We can now use the pullback square
\[\begin{tikzcd} 
MU^{C_2}_* \ar[r] \ar[d] & MU_*[u^{\pm 1}, d_1, d_2, \dots ] \ar[d]  & d_i \ar[d,mapsto] \\
MU_*[[u]]/[2]u \ar[r] & MU_*((u))/[2]u & \sum_{j \in \ZZ} c_{i,j} u^j.
\end{tikzcd} \]
to calculate $MU^{C_2}_*$. For $i \geq 1$ and $j \geq 0$, let $u,d_{i,j},q_j$ be variables with $|u| = -2$, $|d_{i,j}| = 2(i+j+1)$, and $|q_j| = 2j-2$. Let $J \subset MU_*[u,d_{i,j},q_j ]$ be the ideal generated by the relations
\begin{align*}
d_{i,j}  - c_{i,j} & = ud_{i,j+1}\\
q_j  - p_j & = uq_{j+1}\\
q_0 & = 0
\end{align*}
for $i \geq 1$ and $j \geq 0$. Define $\phi:MU_*[u,d_{i,j},q_j] / J \to MU_*[u^{\pm 1},d_i]$ by 
\begin{equation}\label{generatorimages}
\phi(d_{i,j}) = u^{-j}d_i - \sum_{\ell < j} c_{i,\ell} u^{\ell - j}, \hspace{0.2in}
\phi(q_j) = - \sum_{\ell <j } p_\ell u^{\ell - j } ,
\end{equation}
and $\phi(u) = u$. Define  $\chi: MU_*[u,d_{i,j}, q_j] / J \to MU_*[[u]]/[2]u$ by 
\begin{align*}
\chi(d_{i,j}) = \sum_{\ell \geq 0} c_{i,j+\ell} u^\ell, \hspace{0.2in} 
\chi(q_j) = \sum_{\ell \geq 0} d_{j+\ell} u^\ell,
\end{align*}
and $\chi(u) = u$.

\begin{proposition}
There is an isomorphism of graded rings 
\[
MU^{C_2}_* \cong MU_*[d_{i,j},q_j,u]/J.
\]
\end{proposition}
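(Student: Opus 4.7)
The plan is to identify $R := MU_*[u, d_{i,j}, q_j]/J$ with the pullback description $MU^{C_2}_* \cong MU_*[u^{\pm 1}, d_1, d_2, \ldots] \times_{MU_*((u))/[2]u} MU_*[[u]]/[2]u$ recalled earlier in this subsection from Kriz. The maps $\phi$ and $\chi$ defined above are the candidate projections to the geometric and Borel factors, and the goal is to show that the induced ring map $\Psi : R \to MU^{C_2}_*$ is an isomorphism.

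The first step is to verify that $\phi$ and $\chi$ are well-defined, which amounts to checking that each generating relation of $J$ maps to zero. For $\phi$, the four relations unwind to direct telescoping identities in the Laurent ring $MU_*[u^{\pm 1}, d_i]$. For $\chi$, the interesting case is $\chi(q_0) = \sum_{\ell \geq 0} p_\ell u^\ell = [2]u \equiv 0$ in $MU_*[[u]]/[2]u$, while the remaining three relations follow from similar telescopings. The second step is to confirm that the two composites $R \to MU_*((u))/[2]u$ agree: on $d_{i,j}$ both yield $\sum_{\ell \geq 0} c_{i,j+\ell} u^\ell$, and on $q_j$ both yield $-u^{-j}\sum_{\ell < j} p_\ell u^\ell$ (for the $\chi$ side, rewrite $\sum_{\ell \geq 0} p_{j+\ell}u^\ell = u^{-j}\sum_{k \geq j} p_k u^k$ and use $\sum_{k \geq 0} p_k u^k \equiv 0$ modulo $[2]u$). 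Together these steps furnish the ring map $\Psi : R \to MU^{C_2}_*$.

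For surjectivity, the key observation is that inverting $u$ in $R$ collapses the defining relations to $d_{i,j+1} = u^{-1}(d_{i,j}-c_{i,j})$ and $q_{j+1} = u^{-1}(q_j - p_j)$ with $q_0 = 0$, so every generator becomes expressible in terms of $u^{\pm 1}$, $d_{i,0} = d_i$, and the $p_\ell$'s. This shows $\phi : R[u^{-1}] \xrightarrow{\cong} MU_*[u^{\pm 1}, d_i]$. Given $(\phi_0, \chi_0)$ in the pullback, one first lifts $\phi_0$ via this isomorphism; the image of such a lift in $MU_*[[u]]/[2]u$ may differ from $\chi_0$ by a tail supported in strictly negative $u$-powers, but the Tate-matching condition forces this discrepancy to be a $[2]u$-divisible combination, which is precisely what the $q_j$ generators realize. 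For injectivity, the same $R[u^{-1}]$-isomorphism forces $\ker \Psi$ to be $u$-torsion; the $u$-torsion of $R$ is generated by the $q_j$'s, and intersecting with $\ker \chi$ then kills the kernel.

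I expect the main obstacle to be the careful bookkeeping needed for the surjectivity/injectivity analysis, which hinges on controlling the $u$-torsion of $R$. Genuine $u$-torsion appears already from $uq_1 = q_0 - p_0 = 0$ (since $F_{MU}(0,0) = 0$ forces $p_0 = 0$), and more generally the $q_j$ generators are engineered so that $R$ produces exactly the $u$-torsion predicted by $[2]u$-divisibility in the Borel factor of the pullback. Verifying that the relations in $J$ create neither too much nor too little $u$-torsion is the technical heart of the argument, and is where I would emulate the bookkeeping in Strickland's calculation of $MU^{C_2}_*$.
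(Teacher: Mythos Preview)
Your strategy matches the paper's: both identify $R = MU_*[u,d_{i,j},q_j]/J$ with the Tate pullback by checking that $\phi$ and $\chi$ are well-defined, that the square commutes, and then verifying the induced map is an isomorphism by analysis modeled on Strickland. The difference is in how the isomorphism is established. Rather than arguing surjectivity and injectivity of $\Psi$ directly, the paper reduces to three clean conditions: (1) $\phi$ becomes an isomorphism after inverting $u$; (2) $\chi$ becomes an isomorphism after $u$-completion; (3) $R$ has bounded $u$-torsion (in fact $u$ is regular on $R/q_1$). These three facts are a general criterion for a commutative square of this shape to be a pullback, and each is verified by a short explicit computation.

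Your sketch contains condition (1) essentially verbatim, and your closing paragraph correctly anticipates that (3) is the technical crux. What is missing is condition (2), and its absence shows up as a gap in your surjectivity argument. You lift $\phi_0$ to an element of $R[u^{-1}]$, but $\chi$ is only defined on $R$ (the target $MU_*[[u]]/[2]u$ does not receive a map from $R[u^{-1}]$, since $u$ is a zero-divisor there), so the phrase ``the image of such a lift in $MU_*[[u]]/[2]u$'' does not parse. Producing a preimage in $R$ itself requires knowing that $\chi$ is surjective modulo high powers of $u$, which is exactly the content of the $u$-completion statement. The paper handles this cleanly: after completion the relations force $d_{i,j} = \sum_{\ell \geq 0} c_{i,j+\ell}u^\ell$ and $q_j = \sum_{\ell \geq 0} p_{j+\ell}u^\ell$, so $R^\wedge_u$ is already $MU_*[[u]]/[2]u$. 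Also, your injectivity sketch says ``the $u$-torsion of $R$ is generated by the $q_j$'s''; more precisely it is generated by $q_1$ alone (from $uq_1 = q_0 - p_0 = 0$), and the paper's formulation ``$u$ is regular on $R/q_1$'' is the right way to say this.
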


\begin{proof}
Since the Tate square for $MU^{C_2}_*$ is a pullback of rings, it suffices to show that
\begin{equation}\label{homotopicalsquare}
 \begin{tikzcd}
R \ar[r,"\phi"] \ar[d,swap,"\chi"] & MU_*[u^{\pm 1} , d_1,d_2, \dots ] \ar[d,"\psi"] \\
\dfrac{MU_*[[u]]}{[2]u} \ar[r] & \dfrac{MU_*((u))}{[2]u}
\end{tikzcd} 
\end{equation}
commutes and is a pullback of rings, where $R = MU_*[u,d_{i,j},q_j] / J$. That the diagram commutes is easily verified from the definitions of $\phi$, $\chi$, and $\psi$. To prove that the square is a pullback, it suffices to show that
\begin{enumerate}
\item $\phi$ is an isomorphism after inverting $u$,
\item $\chi$ is an isomorphism after $u$-completion, and 
\item $R$ has bounded $u$-torsion. 
\end{enumerate} 
The proofs of these facts are direct analogues of the arguments in (\cite{Strickland}, Theorem 4), but we include them for completeness. \\

Proof of 1.: Since $\phi(u) = u$ is a unit in $MU_*[u^{\pm 1} , d_i]$, we have an induced map 
\[
\overline{\phi}:u^{-1} R \to MU_*[u^{\pm 1},d_i].
\]
and its inverse is given by $u \mapsto u$, and $d_i \mapsto d_{i,0}+ \sum_{j<0} c_{i,j}u^j$.\\

Proof of 2.: Since $MU_*[[u]]/[2]u$ is complete at $u$, we have an induced map 
\[\widehat{\chi}:R^\wedge_u \to \dfrac{MU_*[[u]]}{[2]u}.\]
If we define $\rho: MU_*[[u]] \to R^\wedge_u$ by $u \mapsto u$, then the composite $\widehat{\chi} \circ \rho$ is the quotient map $MU_*[[u]] \to MU_*[[u]]/[2]u$, so $\widehat{\chi}$ is surjective. By induction on $m \geq 1$, we have the equalities 
\begin{align*}
d_{i,j} - \sum_{ \ell = 0}^{m-1} c_{i,j+\ell}u^\ell & = d_{i,j+m} u^m\\
q_j  - \sum_{\ell = 0}^{m-1} p_{j+\ell} u^\ell & =q_{j+m} u^m
\end{align*}
in $R$. This implies the equalities $d_{i,j} = \sum_{ \ell \geq 0} c_{i,j+\ell} u^\ell$ and $q_j = \sum_{\ell \geq 0} p_{j+ \ell} u^\ell$ in $R^\wedge_u$, so $\rho$ is surjective. Since $q_0 = \sum_{\ell \geq 0} p_\ell u^\ell = [2]u = 0$ in $R^\wedge_u$, $\rho$ factors through a map $\overline{\rho}: MU_*[[u]]/[2]u\to R^\wedge_u$. Since $\overline{\rho}$ is surjective and $\widehat{\chi} \circ \overline{\rho} = 1$, we deduce that $\overline{\rho}$ is an isomorphism with inverse $\widehat{\chi}$.\\

Proof of 3.: It suffices to prove that $u$ is a regular element of $R/q_1$. This is true since we can write $R/q_1 = \varinjlim C_k $where $C_k$ is the ring 
\[ MU_*[u,d_{i,k},q_k : i \geq1]/(q_0, \sum_{\ell = 0}^{k-1} p_{\ell +1} u^\ell + q_k u^k),\]
and $u$ is a regular element of each $C_k$.
\end{proof}

%The map $\psi$ is described explicitly in the following lemma.
%\begin{lemma}
%The map 
%\[ \begin{tikzcd} 
%MU_*[ d_1, d_2, \dots][u^{\pm 1}]  \ar[r,"\psi"] &  MU_*((u)) / [2]u
%\end{tikzcd} \] is given by $\psi(u) = u$ and 
%\[
%\psi(d_i) = \sum_{j \in \ZZ} c_{i,j} u^j.
%\]
%\end{lemma}
%\begin{proof} 

%Recall that $\psi = \xi \circ i_*$. The map $\xi$ factors as the composite
%\[\begin{tikzcd} %
%MU_*[ \beta_1, \beta_2, \dots][u^{\pm 1}] \ar[r,"\overline{\xi}"] & MU_*((u)) \ar[r] & MU_*((u))/[2]u
%\end{tikzcd} \]%
%where $\overline{\xi}(u\beta_i) = \sum_{j \geq 0} a_{i,j}u^j$.  The induced homomorphism 
%\[
%\overline{\xi}_*: MU_*[u^{\pm 1},\beta_i][[x]] \to MU_*((u))[[x]]
%\]
%satisfies
%\[
%\overline{\xi}_*(u + u\beta_1x + u\beta_2 x^2 + \dots) = \sum_{i \geq 0} (\sum_{j \geq 0} a_{i,j} u^j)x^i = F(x,u).
%\]
%This implies that $\overline{\xi}_*$ must map 
%\begin{align*}
%(u+ uX_1x + uX_2x^2+ \dots)^{-1} & = u^{-1}(1 + \overline{X}_1 x + \overline{X}_2x^2 + \overline{X}_3x^3 + \dots )\\
%& = u^{-1}+ i_*(d_1) x + i_*(d_2)x^2 + i_*(d_3)x^3 + \dots 
%\end{align*}
%to the power series $F(x,u)^{-1}$. By definition of the elements $c_{i,j} \in MU_*$, we deduce that $\psi(d_i) = \sum_{j \in \ZZ} %c_{i,j}u^j$.
%\end{proof}

\subsection{Calculation of $\Omega^{C_2}_*$ and $\Omega^{C_2}_\diamond$} \label{extendedsection}
In this section we calculate the geometric cobordism ring $\Omega^{C_2}_*$, as well as the extended coefficient ring $\Omega^{C_2}_\diamond$ of the $C_2$-spectrum $\Omega_{C_2}$. Recall that the inclusion of $C_2$-universes $\mathbf{C}^\infty \to \mathbf{C}^{\infty,\infty}$ induces a ring spectrum map 
\[
\Omega_{C_2} = MU_{C_2}^{\mathbf{C}^\infty} \to  MU_{C_2}^{\mathbf{C}^{\infty,\infty}} = MU_{C_2}.
\]
This induces a map on geometric fixed points, which leads to the diagram 
\begin{equation} \label{pullbacksquare} \begin{tikzcd}
\Omega^{C_2}_* \ar[r] \ar[d] & \Phi \Omega^{C_2}_* \ar[d] \\
MU^{C_2}_* \ar[r] & \Phi MU^{C_2}_*.
\end{tikzcd}
\end{equation}
\begin{lemma}
The square \ref{pullbacksquare} is a pullback of rings.
\end{lemma}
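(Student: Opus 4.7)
The plan is to realize the square as the homotopy-group consequence of a homotopy pullback of $C_2$-spectra, and then use even-concentration in all four corners to force the Mayer--Vietoris obstructions to vanish.

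First I would note that the underlying non-equivariant spectra of $\Omega_{C_2}$ and $MU_{C_2}$ are both the complex cobordism spectrum $MU$, and the universe-inclusion map $\Omega_{C_2} \to MU_{C_2}$ is an equivalence on underlying spectra. Since smashing with $EC_{2+}$ depends only on underlying homotopy types of $C_2$-spectra, the induced map $\Omega_{C_2} \wedge EC_{2+} \to MU_{C_2} \wedge EC_{2+}$ is an equivalence of $C_2$-spectra. Hence in the map of cofiber sequences induced by $EC_{2+} \to S^0 \to \widetilde{EC}_2$, the leftmost vertical arrow is an equivalence, so the square with corners $\Omega_{C_2}$, $\Omega_{C_2} \wedge \widetilde{EC}_2$, $MU_{C_2}$, $MU_{C_2} \wedge \widetilde{EC}_2$ is a homotopy pullback of $C_2$-spectra. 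Applying the $C_2$-fixed-point functor, which preserves homotopy pullbacks of genuine $C_2$-spectra, yields a homotopy pullback of ordinary spectra whose homotopy groups are the four entries of the square \ref{pullbacksquare}.

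Next I would promote this homotopy pullback of spectra to a pullback of homotopy groups using the Mayer--Vietoris long exact sequence. The sole obstruction is the connecting map $\partial \colon \pi_{n+1} \Phi MU^{C_2} \to \pi_n \Omega^{C_2}_*$, which I would kill by a parity argument. In each of the four corners, the homotopy groups are concentrated in even degrees: $\pi_* MU^{C_2}_*$ is even by classical work of L\"offler, tom Dieck, and Comezana; $\pi_* \Omega^{C_2}_*$ is even by Comezana; $\pi_* \Phi MU^{C_2}_*$ is the polynomial ring $MU_*[u^{\pm 1}, d_1, d_2, \ldots]$ on even-degree generators as reviewed in Section \ref{pullbacks}; and $\pi_* \Phi \Omega^{C_2}_* \cong \bigoplus_{n \geq 0} MU_{*-2n}(BU(n))$ is even via the identification $\Phi \Omega^{C_2} \simeq \bigvee_{n} \Sigma^{2n} MU \wedge BU(n)_+$ established in Section \ref{equivariantthomspectra}. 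For odd $n$ all four terms vanish and the statement is trivial; for even $n$, the vanishing of the adjacent odd-degree terms in the Mayer--Vietoris sequence forces $\partial = 0$ and produces a short exact sequence exhibiting $\pi_n \Omega^{C_2}_*$ as the pullback $\pi_n MU^{C_2}_* \times_{\pi_n \Phi MU^{C_2}_*} \pi_n \Phi \Omega^{C_2}_*$. Since every map in the square is a ring map, this degreewise pullback is a pullback of graded rings.

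The main potential obstacle is justifying the even-concentration inputs. The polynomial presentations of $\Phi MU^{C_2}_*$ and $\Phi \Omega^{C_2}_*$ are transparent, and the evenness of $MU^{C_2}_*$ is well established; the crucial external input is Comezana's theorem that $\Omega^{G}_*$ is concentrated in even degrees for any abelian compact Lie group $G$, which is cited in the introduction and here applied to $G = C_2$. Note that invoking Comezana's evenness result here is not circular, as his argument does not depend on any explicit presentation of $\Omega^{C_2}_*$.
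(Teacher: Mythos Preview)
Your proof is correct and complete, but it proceeds somewhat differently from the paper's argument. Both proofs begin identically: set up the map of isotropy-separation cofiber sequences and observe that the map on $EC_{2+}$-terms is an equivalence (you phrase this as obtaining a homotopy pullback of spectra, which is equivalent). The divergence is in the algebraic input used to pass from the long exact sequence to a pullback on homotopy. The paper invokes Comezana's result that $\Omega^{C_2}_* \to MU^{C_2}_*$ is \emph{injective} and feeds this into the elementary diagram-chase Lemma~\ref{halemma}. You instead invoke Comezana's result that $\Omega^{C_2}_*$ is \emph{concentrated in even degrees} (together with the evident evenness of the other three corners) and run the Mayer--Vietoris sequence for the homotopy pullback, killing the connecting map by parity. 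Both inputs come from \cite{Comezana}, but they are logically independent: your argument never uses injectivity of the comparison map, and the paper's argument never uses evenness of the geometric fixed points. Your route is a bit more symmetric and avoids the bespoke Lemma~\ref{halemma}; the paper's route is slightly more economical in that it needs evenness information about only one of the four corners (implicitly, via injectivity).
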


\begin{proof}
Our square sits in the diagram
\[
\begin{tikzcd}
\dots \ar[r] & [-10 pt] \Omega^{C_2}_*(EC_{2}) \ar[r] \ar[d]  & [-10 pt] \Omega^{C_2}_* \ar[r,"\phi_{\Omega}"] \ar[d] & \Phi\Omega^{C_2}_* \ar[d] \ar[r] & [-10 pt]\Omega^{C_2}_{*-1}(EC_{2}) \ar[r] \ar[d] & [-10 pt]\dots \\
\dots \ar[r] & [-10 pt] MU^{C_2}_*(EC_{2}) \ar[r]  &  [-10 pt] MU^{C_2}_* \ar[r,"\phi_{MU}"] & \Phi MU^{C_2}_* \ar[r] & [-10 pt] MU^{C_2}_{*-1}(EC_{2}) \ar[r]  & [-10 pt] \dots 
\end{tikzcd}
\]
whose rows are exact. The map $\Omega^{C_2}_*(EC_{2}) \to MU^{C_2}_*(EC_{2})$ is an isomorphism since $\Omega_{C_2}$ and $MU_{C_2}$ are split $C_2$-spectra and $\Omega_{C_2} \to MU_{C_2}$ is a non-equivariant equivalence. It is proved in \cite{Comezana} that $\Omega^{C_2}_* \to MU^{C_2}_*$ is injective, so Lemma \ref{halemma} implies that the square is a pullback.
\end{proof}

Having realized $\Omega^{C_2}_*$ as the pullback of the diagram \ref{pullbacksquare}, our next goal is to calculate
\[
\Phi \Omega^{C_2}_* \to \Phi MU^{C_2}_*,
\]
which we do in the following lemma.
\begin{lemma}\label{geometricfixedpoints}
There is a ring isomorphism 
\[
\Phi \Omega^{C_2}_* \cong MU_*[u^{-1} , d_1, d_2, \dots ]
\]
such that the following diagram commutes.
\[ \begin{tikzcd} 
\Phi \Omega^{C_2}_* \ar[r,"\cong" ] \ar[d] & MU_*[u^{-1} , d_1, d_2, \dots ] \ar[d] \\
\Phi MU^{C_2}_* \ar[r,"\cong"] & MU_*[u^{\pm 1} , d_1, d_2, \dots ]
\end{tikzcd} \] 
\end{lemma}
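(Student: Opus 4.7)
The plan is to compute $\Phi\Omega^{C_2}$ directly from the defining $C_2$-prespectrum of $\Omega_{C_2}$, and then identify the resulting coefficient ring with the polynomial subring $MU_*[u^{-1}, d_1, d_2, \ldots]$ of $\Phi MU^{C_2}_*$ under the map induced by the inclusion of $C_2$-universes $\mathbf{C}^\infty \hookrightarrow \mathbf{C}^{\infty,\infty}$.

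First I would carry out a fixed-point analysis on the Grassmannians, in the spirit of the computation already used in the preceding proposition. A $C_2$-fixed $(m+n)$-dimensional subspace $W \subseteq \mathbf{C}^{m+\infty,n}$ splits uniquely as $W = W^+ \oplus W^-$ with $W^+ \subseteq \mathbf{C}^{m+\infty}$ and $W^- \subseteq \mathbf{C}^n$, so setting $k = \dim W^-$ gives
\[ \text{Gr}^{\mathbf{C}^\infty}(\mathbf{C}^{m,n})^{C_2} = \bigsqcup_{k=0}^n \text{Gr}(m+n-k, \mathbf{C}^{m+\infty}) \times \text{Gr}(k, \mathbf{C}^n). \]
Since the $C_2$-fixed part of the tautological bundle at such a $W$ is just $W^+$, passing to Thom spaces and spectrifying produces the splitting $\Phi\Omega^{C_2} \simeq \bigvee_{k \geq 0} \Sigma^{2k} MU \wedge BU(k)_+$ already established in the previous proposition. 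Additively this gives $\Phi\Omega^{C_2}_* \cong \bigoplus_{k \geq 0} MU_{*-2k}(BU(k))$, and each summand is the $k$-fold symmetric power $\mathrm{Sym}^k_{MU_*} MU_*(BU(1))$.

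Next I would compare with the known description $\Phi MU^{C_2}_* = MU_*[u^{\pm 1}, b'_1, b'_2, \ldots]$, which decomposes as $\bigoplus_{k \in \mathbb{Z}} u^{-k} \cdot MU_*[b'_j]$. The map of universes sends the $k$-th summand of $\Phi \Omega^{C_2}_*$ into the $u^{-k}$-piece, induced by $BU(k) \hookrightarrow BU$ on homology. The inverse-series identity
\[ (d_0 + d_1 x + d_2 x^2 + \cdots)^{-1} = u(1 + b'_1 x + b'_2 x^2 + \cdots) \]
forces $d_0 = u^{-1}$ and $d_i \in u^{-1} \cdot MU_*[b'_j]$ for $i \geq 1$, so any monomial $d_{i_1} \cdots d_{i_k}$ lies in $u^{-k} \cdot MU_*[b'_j]$. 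The polynomial subring $MU_*[d_0, d_1, \ldots] = MU_*[u^{-1}, d_1, d_2, \ldots]$ therefore decomposes as $\bigoplus_{k \geq 0} u^{-k} R_k$, where $R_k$ is the $MU_*$-submodule spanned by symmetric degree-$k$ products of the $d_i$'s. Matching $R_k$ to the image of $\mathrm{Sym}^k MU_*(BU(1)) = MU_*(BU(k))$ in $u^{-k} \cdot MU_*[b'_j]$ then yields the desired ring isomorphism together with the commutativity of the diagram.

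The main obstacle is verifying that $R_k$ really coincides with the $u^{-k}$-rescaled image of $MU_*(BU(k)) \to MU_*(BU)$ as $MU_*$-modules, and that the resulting identification is a ring homomorphism. This reduces to the standard fact $MU_*(BU(k)) = \mathrm{Sym}^k MU_*(BU(1))$ together with the observation that $\Omega_{C_2} \to MU_{C_2}$ is a map of ring $C_2$-spectra, so the induced map on geometric fixed points is a ring map; the explicit inverse-series formula for $d_i$ then makes the degree-$k$ matching $\beta_{i_1} \cdots \beta_{i_k} \mapsto d_{i_1} \cdots d_{i_k}$ (up to an upper-triangular change of basis) transparent.
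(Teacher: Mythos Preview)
Your computation of $\Phi\Omega^{C_2}$ as $\bigvee_{k\ge 0}\Sigma^{2k}MU\wedge BU(k)_+$ is fine and matches the paper. The gap is in the second step: you assert that the map $\Phi\Omega^{C_2}_*\to\Phi MU^{C_2}_*$ is, on the $k$-th summand, induced by the \emph{plain inclusion} $BU(k)\hookrightarrow BU$. That is not what happens. As the paper computes, the induced map on geometric fixed point spectra is the composite
\[
BU(k)\longrightarrow BU \xrightarrow{\ i\ } BU,
\]
where $i$ classifies the additive inverse of the universal stable bundle, i.e.\ induces the Hopf-algebra antipode on $MU_*(BU)=MU_*[b_j']$. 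This extra antipode is exactly what converts the generators $u^{-1}b_i'$ into the $d_i$, since $d_i = u^{-1}\chi(b_i')$ by the defining inverse-series identity.

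This matters for the conclusion you want. Under the plain inclusion $BU(k)\hookrightarrow BU$ the image of $\Phi\Omega^{C_2}_*$ in $MU_*[b_j'][u^{\pm 1}]$ is the subring $MU_*[u^{-1},\,u^{-1}b_1',\,u^{-1}b_2',\ldots]$, and this is \emph{not} the same subring as $MU_*[u^{-1},d_1,d_2,\ldots]$. For instance $d_2=u^{-1}\bigl((b_1')^2-b_2'\bigr)$ has $u$-degree $-1$, but the $u^{-1}$-graded piece of $MU_*[u^{-1},u^{-1}b_i']$ is only $u^{-1}\cdot MU_*\{1,b_1',b_2',\dots\}$, which does not contain $u^{-1}(b_1')^2$. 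So your proposed ``upper-triangular change of basis $\beta_{i_1}\cdots\beta_{i_k}\mapsto d_{i_1}\cdots d_{i_k}$'' cannot be produced from the plain inclusion; it is precisely the antipode, and you have to explain why the geometry forces it to appear. Once that is done (as in the paper, by tracking how the $\sigma$-isotypic piece of the tautological bundle is stabilized differently in the two Thom spectra), the rest of your outline goes through.
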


\begin{proof}
If $E_{C_2}$ is an inclusion $C_2$-spectrum, which means that all of the adjoint structure maps $E_{C_2}(V) \to \Omega^{W-V} E_{C_2}(W)$ are inclusions of based $C_2$-spaces, then the geometric fixed point spectrum can be calculated at the level of $C_2$-spaces using the formula
\[
(E_{C_2} \wedge \widetilde{EC}_2)^{C_2} (V) = \text{colim}_{W \supset V} \Omega^{(W-V)^{C_2}} E_{C_2}(W)^{C_2}.
\]
Since both $\Omega_{C_2}$ and $MU_{C_2}$ are inclusion $C_2$-prespectra, we can use this formula to calculate that 
\[
\Phi \Omega^{C_2} \simeq \bigvee_{n \geq 0} \Sigma^{2n}MU \wedge  BU(n)_+,
\]
\[
\Phi MU^{C_2} \simeq \bigvee_{n \in \ZZ} \Sigma^{2n}MU \wedge  BU_+,
\]
and the map $\Phi \Omega^{C_2} \to \Phi MU^{C_2}$ is induced by the composites 
\[
BU(n) \to BU \overset{i}{\longrightarrow} BU
\]
where $i$ is the map classifying the additive inverse of stable vector bundles. We have 
\begin{align*}
\Phi\Omega^{C_2}_* & \cong \bigoplus_{n \geq 0} MU_*(BU(n))u^{-n}\\
& \cong \bigoplus_{n \geq 0} MU_* \{ \beta_{i_1} \dots \beta_{i_n} : 0 \leq i_1 \leq \dots \leq i_n \} u^{-n}\\
& \cong MU_*[u^{-1}, u^{-1}b_i': i \geq 1] 
\end{align*}
and the geometric fixed point map $\Phi\Omega^{C_2}_* \to \Phi MU^{C_2}_*$ corresponds to the composite
\[
\begin{tikzcd}
MU_*[u^{-1},u^{-1}b_i'] \subset  MU_*[b_i' ][u^{\pm 1}] \ar[r,"i_*"] & MU_*[b_i' ][u^{\pm 1}] .
\end{tikzcd}
\]
 The $H$-space structure of $BU$ gives $MU_*(BU) = MU_*[b_i']$ the structure of a Hopf algebra over $MU_*$, and the map $i:BU \to BU$ induces its antipode. Since the coproduct on $MU_*(BU)$ satisfies $\Delta b_n' = \sum_{i+j = n} b_i' \otimes b_j'$, it follows that $i_*(ub_i') = d_i$ is the coefficient of $x^i$ in the formal power series 
\[
\dfrac{1}{u + ub_1'x + ub_2'x^2 + \cdots }= u^{-1} + d_1x +d_2x^2 + \cdots .
\]
We deduce that the geometric fixed point map $\Phi \Omega^{C_2}_* \to MU^{C_2}_*$ corresponds to the inclusion 
\[
MU_*[u^{-1} , d_1, d_2, \dots ] \to MU_*[u^{\pm 1}, d_1, d_2, \dots ].
\]
\end{proof} 

We can now deduce the structure of $\Omega^{C_2}_*$ from the pullback square
\[ \begin{tikzcd}
\Omega^{C_2}_* \ar[r] \ar[d] & MU_*[u^{-1},d_i] \ar[d] \\
MU^{C_2}_* \ar[r,"\phi"] & MU_*[u^{\pm 1} ,d_i] .
\end{tikzcd} \]

\begin{theorem} There is an isomorphism of graded rings 
\begin{align*}
\Omega^{C_2}_* & \cong MU_*[d_{i,j},q_j]/I
\end{align*}
where $I$ is generated by the relations  
\begin{align*}
d_{i,j+1}(d_{k,\ell} - c_{k,\ell}) & = (d_{i,j} - c_{i,j})d_{k,\ell+1}\\
d_{i,j+1}(q_\ell - p_\ell) & = (d_{i,j} - c_{i,j})q_{\ell+1}\\
q_{j+1}(q_\ell - p_\ell) & = (q_j - p_j)q_{\ell + 1}\\
q_0 &= 0
\end{align*}
for $i,k \geq 1$ and $j , \ell \geq 0$. 
\end{theorem}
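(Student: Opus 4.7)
The plan is to identify $\Omega^{C_2}_*$ with $R := MU_*[d_{i,j},q_j]/I$ by combining the pullback square just established with the presentation $MU^{C_2}_* \cong MU_*[u,d_{i,j},q_j]/J$ from the previous subsection. Under these identifications, $\Omega^{C_2}_*$ is the preimage $\phi^{-1}(MU_*[u^{-1},d_i]) \subset MU^{C_2}_*$, so the theorem reduces to showing that this subring is presented by $I$.

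First I would verify that the four relation families defining $I$ hold in $MU^{C_2}_*$. Each one follows by pairwise elimination of $u$ from the $J$-relations $ud_{i,j+1} = d_{i,j}-c_{i,j}$, $uq_{j+1}=q_j-p_j$, and $q_0=0$; for example,
\[
d_{i,j+1}(d_{k,\ell}-c_{k,\ell}) = d_{i,j+1}(u d_{k,\ell+1}) = (ud_{i,j+1})d_{k,\ell+1} = (d_{i,j}-c_{i,j})d_{k,\ell+1},
\]
and the other three are analogous. This gives a ring map $\Theta: R \to MU^{C_2}_*$. Using the formulas (\ref{generatorimages}) for $\phi$, each $\phi(\Theta(d_{i,j}))$ and $\phi(\Theta(q_j))$ visibly lies in $MU_*[u^{-1},d_i]$, so $\Theta$ factors through a ring map $\widetilde\Theta: R \to \Omega^{C_2}_*$.

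The main task is to show $\widetilde\Theta$ is an isomorphism. My approach is to prove that the square
\[
\begin{tikzcd}
R \ar[r,"\Psi"] \ar[d,"\Theta"] & MU_*[u^{-1},d_i] \ar[d,"j"] \\
MU^{C_2}_* \ar[r,"\phi"] & MU_*[u^{\pm 1},d_i]
\end{tikzcd}
\]
is cartesian, where $\Psi(d_{i,j}) = u^{-j}d_i - \sum_{\ell<j}c_{i,\ell}u^{\ell-j}$ and $\Psi(q_j) = -\sum_{\ell<j}p_\ell u^{\ell-j}$; the pullback universal property will then identify $R$ with $\Omega^{C_2}_*$. Injectivity of $\Theta$ can be obtained by checking that $\Psi$ itself is injective, inspecting the leading $u^{-j}$-coefficients of the generator images.

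The hardest step is surjectivity: given $a \in MU^{C_2}_*$ with $\phi(a) \in MU_*[u^{-1},d_i]$, I need to express $a$ as a polynomial in the $d_{i,j}$ and $q_j$ alone. The intended route is a normal-form argument: using the $J$-relations, one absorbs any $u$-factor that multiplies a higher-index $d_{i,j+1}$ or $q_{j+1}$ into a lower-index generator, so that $a = \Theta(r) + u\cdot s$ for some $r \in R$ and $s \in MU_*[u]$; the constraint that $\phi(a)$ has no positive $u$-powers then forces $u s = 0$, giving $a \in \Theta(R)$. This reduction mirrors the Strickland-style arguments from the previous subsection, with the bounded $u$-torsion of $MU^{C_2}_*$ established there providing the key technical input ensuring the reduction terminates.
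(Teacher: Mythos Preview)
Your overall strategy matches the paper's: realize $\Omega^{C_2}_*$ as the subring $\phi^{-1}(MU_*[u^{-1},d_i])$ of $MU^{C_2}_*$, check that the listed relations hold there, and use a normal-form reduction in $MU_*[u,d_{i,j},q_j]/J$ for surjectivity. One small correction to your surjectivity sketch: after absorbing $u$'s you cannot force $s\in MU_*[u]$, only $us\in u\cdot MU_*[u,d_{i,0}:i\ge 1]$ (there is no $J$-relation that rewrites $u d_{i,0}$). The paper's version of this step writes $f=f_1+f_2$ with $f_2$ a sum of terms $bu^\ell d_{i_1,0}\cdots d_{i_k,0}$, $\ell\ge 1$, and then observes that such a nonzero $f_2$ has $\deg_u\phi(f_2)>0$; your conclusion is the same once $s$ is placed in the correct ring.

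The genuine gap is your injectivity argument. You propose to deduce injectivity of $\Theta$ from injectivity of $\Psi:R\to MU_*[u^{-1},d_i]$, but $\Psi$ is \emph{not} injective: $\Psi(q_1)=-\sum_{\ell<1}p_\ell u^{\ell-1}=-p_0u^{-1}=0$ (since $p_0=0$), while $q_1\neq 0$ in $R$ (the augmentation $d_{i,j}\mapsto c_{i,j}$, $q_j\mapsto p_j$ sends $q_1$ to $p_1=2$). More generally, $\phi:MU^{C_2}_*\to MU_*[u^{\pm 1},d_i]$ has kernel $MU_*\{q_1\}$, so no map factoring through $\phi$ can detect $q_1$. ``Inspecting leading $u^{-j}$-coefficients'' cannot work here either, since all $\Psi(q_j)$ for $j\ge 2$ share leading power $u^{-1}$. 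The bounded $u$-torsion statement you cite was used to establish the presentation of $MU^{C_2}_*$, not to control the elimination of $u$.

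What is actually needed---and what the paper does---is to prove directly that $I$ equals the elimination ideal $J\cap MU_*[d_{i,j},q_j]$, so that $R\hookrightarrow MU_*[u,d_{i,j},q_j]/J=MU^{C_2}_*$. This is a nontrivial syzygy computation: one must show that every polynomial relation among the $d_{i,j},q_j$ forced by $J$ is already a consequence of the ``cross-multiplication'' relations $(F/u)G=F(G/u)$ for $F,G\in\{d_{i,j}-c_{i,j},\,q_j-p_j\}$. The paper isolates this as a separate commutative-algebra lemma (essentially a Buchberger-style leading-term reduction with respect to a monomial order having $u$ largest), and that lemma carries the entire weight of injectivity. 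Your proposal does not contain a substitute for this step.
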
 
\begin{proof}
First, we claim that the map 
\[
\Omega^{C_2}_* \to MU^{C_2}_* = MU_*[u,d_{i,j}, q_j]/J
\]
identifies $\Omega^{C_2}_*$ with the $MU_*$-subalgebra of $MU_*[u,d_{i,j},q_j]/J$ generated by $d_{i,j},q_j$ for $i \geq 1$ and $ j \geq 0$. If, for any $f \in MU_*[u^{\pm 1} , d_i]$, we define $\deg_uf$ to be the highest power of $u$ that occurs in $f$, then $MU_*[u^{-1}, d_i] \subset MU_*[u^{\pm 1}, d_i]$ is the inclusion of all elements with $u$-degree $\leq 0$, so the pullback of $\phi$ along this inclusion is
\[
\Omega^{C_2}_* = \{ f \in MU^{C_2}_* : \text{deg}_u\phi(f) \leq 0\}.
\]
Recall that the ideal $J$ is generated by the relations 
\begin{align*}
d_{i,j}  - c_{i,j} & = ud_{i,j+1}\\
q_j  - p_j & = uq_{j+1}\\
q_0 & = 0
\end{align*}
for $i \geq 1$ and $j \geq 0$. 
If $f \in MU^{C_2}_*$, then using these relations, we can write  $f = f_1 + f_2$, where $f_1$ is a polynomial in $\{d_{i,j},q_j\}$, and $f_2$ is a sum of terms of the form $b u^\ell d_{i_1,0} \dots d_{i_k,0}$ where $b \in MU_*$, $\ell \geq 1$, and $i_1,\dots,i_k \geq 1$.
If $f_2 \neq 0$, then 
\[ \deg_u\phi(f) = \deg_u\phi(f_1 + f_2) = \deg_u\phi(f_2) > 0,\]
 so we deduce that if $f \in \Omega^{C_2}_*$ then $f$ can be written as a polynomial in $\{d_{i,j},q_j\}$.

It follows that 
\[\Omega^{C_2}_* \cong MU_*[d_{i,j},q_j]/I\] 
where $I = J \cap MU_*[d_{i,j},q_j]$ is the elimination ideal of $u$. To complete our calculation of $\Omega^{C_2}_*$, we must find generators of the elimination ideal $I$. The relations in $J \subset MU_*[u,d_{i,j},q_j]$ assert that the elements $d_{i,j}- c_{i,j}$ and  $q_j - p_j$ are divisible by the euler class $u$. In particular, for any $F,G \in \{ d_{i,j} - c_{i,j} , q_j - p_j\}$, we have the relation
\[
(F/u) G = F ( G/u)
\]
in $\Omega^{C_2}_*$. These are precisely the relations listed in the statement of the theorem, and  we will prove that these generate the ideal $I$. In order to do this, we need a technical lemma (Lemma \ref{eliminationlemma}) from commutative algebra, which is essentially an application of Buchberger's algorithm. Using the notation of Lemma \ref{eliminationlemma}, the result holds by setting
\[R = MU_*[d_{i,0}, q_0 : i \geq 1] / (q_0),\]
\[ \{ x_1 , x_2, x_3, \dots \} =  \{d_{i,j+1},q_{j+1} : i  \geq 1\text{ and } j \geq 0 \} \]
\[
\{ \pi_1 , \pi_2, \pi_3, \dots \} = \begin{Bmatrix} c_{i,j} - d_{i,j} & i \geq 1 \text{ and }  \\ p_j - q_{j} & j \geq 0 \end{Bmatrix} .
\]
where, if $x_k = d_{i,j+1}$ then $\pi_k = c_{i,j} - d_{i,j}$, and if $x_k = q_{j+1}$ then $\pi_k = p_j - q_j$.
\end{proof}

%\section{Calculation of $mU^{C_2}_\times$}
Having calculated $\Omega^{C_2}_*$, our next goal is to calculate the extended coefficient ring $\Omega^{C_2}_\diamond$. This amounts to calculating $\Omega^{C_2}_{*-n\sigma}$ for each $n \geq 0$. We begin by evaluating $\Omega^{C_2}_{*-n\sigma}(-) \to MU^{C_2}_{*-n\sigma}(-)$ on the cofiber sequence 
\[ EC_{2+}  \to S^{0} \to \widetilde{EC}_{2}  \]
which yields the following diagram whose rows are exact.
\begin{equation} \label{extendeddiagram}
\begin{tikzcd}
\dots \ar[r] & \Omega^{C_2}_{*-n\sigma} (EC_{2})\ar[r] \ar[d] & \Omega^{C_2}_{*-n\sigma} \ar[r] \ar[d] & MU_*[u^{-1},d_i] \ar[r] \ar[d] &  \dots \\
\dots \ar[r] & MU^{C_2}_{*-n\sigma} (EC_{2}) \ar[r] & MU^{C_2}_{*-n\sigma} \ar[r,"\phi_n"] & MU_*[u^{\pm 1},d_i] \ar[r] & \dots
\end{tikzcd}
\end{equation}

\begin{lemma} The square 
\[
\begin{tikzcd}
\Omega^{C_2}_{*-n\sigma} \ar[r] \ar[d] & MU_*[u^{- 1},d_1,d_2,\dots] \ar[d] \\
MU^{C_2}_{*-n\sigma} \ar[r,"\phi_n"] & MU_*[u^{\pm 1}, d_1,d_2,\dots]
\end{tikzcd}
\]
in the diagram above is a pullback of $MU_*$-modules.
\end{lemma}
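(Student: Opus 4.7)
The plan is to apply Lemma \ref{halemma} to diagram \ref{extendeddiagram}, following exactly the same strategy used for the $n=0$ pullback square earlier in this section. Recall that Lemma \ref{halemma} takes as input a commuting ladder of long exact sequences together with (i) an isomorphism at one position and (ii) injectivity at the middle position, and outputs the pullback property for the adjacent square.

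For (i), I would argue that $\Omega^{C_2}_{*-n\sigma}(EC_2) \to MU^{C_2}_{*-n\sigma}(EC_2)$ is an isomorphism for every $n \geq 0$ and every grading. This is the $RO(C_2)$-graded analogue of the argument used in the $n=0$ case: both $\Omega_{C_2} \wedge EC_{2+}$ and $MU_{C_2} \wedge EC_{2+}$ are free $C_2$-spectra, so for any $V \in RO(C_2)$ their $V$-graded $C_2$-homotopy coincides with the $|V|$-graded non-equivariant homotopy of the orbit spectrum. Since $\Omega_{C_2} \to MU_{C_2}$ is a non-equivariant equivalence and each spectrum is split, the induced map of orbit spectra is again an equivalence, and the comparison is an isomorphism in every $RO(C_2)$-degree.

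For (ii), I would establish injectivity of $\Omega^{C_2}_{*-n\sigma} \to MU^{C_2}_{*-n\sigma}$ via the four-lemma applied to the ladder. The rightmost vertical map in diagram \ref{extendeddiagram} is identified by Lemma \ref{geometricfixedpoints} as the inclusion $MU_*[u^{-1}, d_1, d_2, \dots] \hookrightarrow MU_*[u^{\pm 1}, d_1, d_2, \dots]$; this description is independent of $n$ because $u$ is invertible on geometric fixed points, so suspension by $-n\sigma$ amounts only to a grading shift by a power of $u$ which does not alter the underlying $MU_*$-module map. This inclusion is patently injective, and combined with the isomorphism from step (i) applied one slot earlier in the long exact sequence, the four-lemma forces injectivity of the middle vertical.

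With both ingredients in place, Lemma \ref{halemma} delivers the pullback property, and since every map in sight is $MU_*$-linear we obtain a pullback of $MU_*$-modules. The only real obstacle is correctly identifying the right-hand column of diagram \ref{extendeddiagram} as $n$ varies, but once this is reduced to Lemma \ref{geometricfixedpoints} together with the $u$-invertibility on geometric fixed points, the proof is a direct generalization of the $n=0$ case already handled.
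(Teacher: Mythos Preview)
Your overall strategy is sound and closely mirrors the $n=0$ case, but there is a small gap in step (ii) and a genuine difference from the paper's argument worth noting.

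\textbf{The gap.} The four-lemma for injectivity at position $C$ requires not only that the maps at positions $B$ and $D$ be injective, but also that the map one step further back (at position $A$) be \emph{surjective}. In your ladder this $A$-position is the map on geometric fixed points in degree $*+1$, namely $MU_{*+1}[u^{-1},d_i] \to MU_{*+1}[u^{\pm 1},d_i]$, which is certainly not surjective. You can repair this by observing that $\Phi\Omega^{C_2}_*$, $\Phi MU^{C_2}_*$, and $MU_{*}(BC_2)$ are all concentrated in even degrees, so the long exact sequences split by parity into short exact sequences in even degrees (where the four-lemma hypothesis becomes $0\to 0$, trivially surjective) and are identically zero in odd degrees. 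You should make this explicit.

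\textbf{Comparison with the paper.} The paper does not argue abstractly via the four-lemma. Instead it identifies $\phi_n$ concretely: using that the Thom class $\tau^{-n}\in MU^{C_2}_{n\sigma-2n}$ and the Euler class $u^n\in MU^{C_2}_{-2n}$ have the same geometric fixed points, one obtains a commutative square expressing $\phi_n$ as $u^{-n}\cdot\phi$ composed with the Thom isomorphism $MU^{C_2}_{*-n\sigma}\cong MU^{C_2}_{*-2n}$. From this explicit description it is immediate that $\phi_n$ and $\iota$ together surject onto $MU_*[u^{\pm 1},d_i]$, which is the input the paper feeds into Lemma~\ref{halemma}. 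Your abstract argument establishes the pullback but does not produce this identification of $\phi_n$; the paper needs it immediately afterward to compute $\Omega^{C_2}_{*-n\sigma}$ as the set of elements of $u$-degree $\leq n$ and to obtain the Rees algebra description of $\Omega^{C_2}_\diamond$. So even after patching the gap, you would still have to supply the Thom-class computation separately to proceed.
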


\begin{proof}
Recall that the thom class $\tau^{-n}  \in MU^{C_2}_{n\sigma - 2n}$ is represented by the map $S^{n\sigma} \to MU(\mathbf{C}^n)$ associated to the vector bundle $\mathbf{C}^{n\sigma} \to *$, and the element $u^n \in MU^{C_2}_{-2n}$ is represented by the composite $S^0 \subset S^{n\sigma} \to MU(\mathbf{C}^n)$. Since the map $MU^{C_2}_*\to MU^{C_2}_*(\widetilde{EC}_{2})$ is given on representatives by taking fixed points, and since 
\[
(S^0 \subset S^{n\sigma} \to MU(\mathbf{C}^n))^{C_2} = (S^{n\sigma } \to MU(\mathbf{C}^n) )^{C_2},
\]
we deduce that the following diagram commutes:
\[
\begin{tikzcd}
MU^{C_2}_{* - n\sigma} \ar[d,swap,"\tau^{-n}"] \ar[r,"\phi_n"] & MU_*[u^{\pm 1},d_1,d_2,\dots] \ar[d,"u^n"]  \\
 MU^{C_2}_{* - 2n} \ar[r,"\phi"] &  MU_*[u^{\pm 1},d_1,d_2,\dots].
\end{tikzcd}
\]
This square sits inside of diagram \ref{extendeddiagram} whose rows are exact. Since $EC_{2+} \wedge S^{n\sigma}$ is free as a based $C_2$-space, and $\Omega_{C_2}$ and $MU_{C_2}$ are split $C_2$-spectra, the map $\Omega^{C_2}_{*-n\sigma} (EC_{2}) \to MU^{C_2}_{*-n\sigma} (EC_{2})$ is an isomorphism. Our square fits into the commutative diagram
\[ \begin{tikzcd} 
 \Omega^{C_2}_{*-n\sigma} \ar[r] \ar[d] & MU_*[u^{-1} , d_1,d_2,\dots] \ar[d,"\iota"] \\
MU^{C_2}_{*-n\sigma} \ar[r,"\phi_n"] \ar[d,swap,"\tau^{-n}"] \ar[d,"\cong"] & MU_*[u^{\pm 1},d_1,d_2,\dots] \ar[d,"u^n"] \ar[d,swap,"\cong"]  \\
MU^{C_2}_{*-2n} \ar[r,"\phi"] & MU_*[u^{\pm 1}, d_1,d_2,\dots] .
\end{tikzcd} \]
and from this description of the map $MU^{C_2}_{*-n\sigma} \to MU_*[u^{\pm 1},d_1,d_2,\dots ]$ it is clear that the maps $\phi_n$ and $\iota$ mutually surject, so by Lemma \ref{halemma} the diagram is a pullback of $MU_*$-modules.
\end{proof}

Combining these results, we can calculate the extended coefficient ring $\Omega^{C_2}_\diamond$ of the $C_2$-spectrum $\Omega_{C_2}$.

\begin{theorem} Let $\Omega_{C_2}$ denote the $C_2$-equivariant geometric complex cobordism spectrum. 
\begin{enumerate}
\item The extended coefficient ring $\Omega^{C_2}_\diamond$ is given by 
\begin{align*}
\Omega^{C_2}_\diamond & =  \dfrac{\Omega^{C_2}_*[\mu,\tau]}{\begin{matrix}\tau(d_{i,j}-c_{i,j}) = \mu d_{i,j+1} \\ \tau(q_j - p_j) = \mu q_{j+1}\end{matrix} }\hspace{0.3in} i \geq 1 \text{ and }j \geq 0,
\end{align*}
where $|\mu| = - \sigma$, and $|\tau| = 2 - \sigma$. Additively,
\[
\Omega^{C_2}_{*-n\sigma} = \widetilde{\Omega}^{C_2}_*(S^{n\sigma}) \cong  \dfrac{\Omega^{C_2}_* \{ 1 , \dots , u^n\}}{\begin{matrix} u^k(d_{i,j}-c_{i,j}) = u^{k+1}d_{i,j+1} \\ u^k(q_j - p_j) = u^{k+1}q_{j+1} \end{matrix} }\hspace{0.3in} i \geq 1 \text{ and }j \geq 0,
\]
where $ 0 \leq k < n$.
\item If we define the {\it euler filtration } of $MU^{C_2}_*$ by letting $F_n MU^{C_2}_*$ be the $\Omega^{C_2}_*$-submodule generated by $1 , \dots , u^n \in MU^{C_2}_*$, then the map
\[ \Omega^{C_2}_\diamond \to MU^{C_2}_\star = MU^{C_2}_*[\tau^{ \pm 1}]\]
identifies $\Omega^{C_2}_\diamond \cong \text{Rees}(MU^{C_2}_*)$ with the Rees algebra of the euler filtration of $MU^{C_2}_*$.
\item The associated graded of $MU^{C_2}_*$ with respect to the euler filtration is 
\[
\text{gr}_\bullet MU^{C_2}_* =  \Omega^{C_2}_*[\mu]/(\mu d_{i,j}, \mu q_j),  \hspace{0.3in}i,j\geq 1.
\]
Additively,
\[ 
\text{gr}_nMU^{C_2}_* \cong 
\begin{cases}
\Omega^{C_2}_* & n = 0\\ 
MU_*[d_1,d_2,\dots] & n > 0.
\end{cases} 
\]
\end{enumerate}
\end{theorem}

\begin{proof}
We have shown that $\Omega^{C_2}_{*-n\sigma}$ sits in the following pullback square:
\[ \begin{tikzcd}
\Omega^{C_2}_{*-n\sigma} \ar[r] \ar[dd] & MU_*[u^{-1},d_1,d_2, \dots]  \ar[d] \\
& MU_*[u^{\pm 1},d_1, d_2, \dots] \ar[d,"u^n"] \\
MU^{C_2}_{*-2n} \ar[r] & MU_*[u^{\pm 1},d_1,d_2,\dots] 
\end{tikzcd} \]
so $\Omega^{C_2}_{*-n\sigma} \subset MU^{C_2}_*$ consists of the elements whose image in $MU_*[u^{\pm 1},d_i]$ have $u$-degree $\leq n$. Using our presentation of $MU^{C_2}_*$, one can check that any such element must be of the form $b_ 0+ b_1 u + \dots + b_nu^n$, so $\Omega^{C_2}_{*-n\sigma}$ is generated over $\Omega^{C_2}_*$ by $1 , \dots , u^n$. We apply lemma \ref{eliminationlemma} from Appendix \ref{appendixC} to obtain the presentation
\[
\Omega^{C_2}_{*-n\sigma} = \dfrac{\Omega^{C_2}_*\{1,\dots,u^n\}}{ \begin{matrix} u^{k-1} (d_{i,j}-c_{i,j}) = u^k d_{i,j+1} \\ u^{k-1}(q_j-p_j) = u^kq_{j+1} \end{matrix} }
\]
where $i \geq 1$, $j\geq 0$, and $1 \leq k < n$. We can then calculate 
\begin{align*}
Gr_nMU^{C_2}_* = \Omega^{C_2}_{*-n\sigma} / \Omega^{C_2}_{* - (n-1)\sigma}  & \cong \dfrac{\Omega^{C_2}_* \{ u^n\}}{\begin{matrix} u^nd_{i,j+1}= 0 \\ u^nq_{j+1} = 0 \end{matrix} } \\
& \cong MU_*[d_{i,0} : i \geq 1] \{u^n\},
\end{align*}
where we have used our presentation of $\Omega^{C_2}_*$ to deduce that there are no relations between the elements $d_{i,0}$.
\end{proof} 

\section{Geometric orientations}

In this section we introduce our new theory of geometrically oriented $C_2$-spectra. In section \ref{defs} we define geometric orientations, provide some examples of geometrically oriented $C_2$-spectra, and prove some of the fundamental properties of such spectra. In sections \ref{ordinary} and \ref{ktheory}, we investigate the examples $H \underline{\ZZ}_{C_2}$ and $k_{C_2}$, which are the ``additive" and ``multiplicative" geometrically oriented $C_2$-spectra. In section \ref{projectivespacessection} we define filtered $C_2$-equivariant formal group laws, which are the algebraic structures determined by geometrically oriented $C_2$-spectra. Finally, in section \ref{changeofbasis}, we investigate the $C_2$-equivariant projective spaces $[\mathbf{CP}(m+n\sigma)] \in \Omega^{C_2}_*$ which control the various direct sum decompositions of a filtered $C_2$-equivariant formal group law.

\subsection{Definition and basic properties}\label{defs}

In this section we develop the foundations of our theory of geometrically oriented $C_2$-spectra. We begin by reviewing the definition of a complex oriented $C_2$-spectrum. Let $U= \mathbf{C}^{\infty,\infty}$ be a complete complex $C_2$-universe, so that $\mathbf{CP}^\infty_{C_2} = \mathbf{CP}(U)$ is the classifying space for $C_2$-equivariant line bundles. For any $m,n \geq 0$, we write $\mathbf{CP}(m+n\sigma) \subset \mathbf{CP}^\infty_{C_2}$ for the sub-projective space associated to the subrepresentation $\mathbf{C}^{m,n} \subset \mathbf{C}^{\infty,\infty}$. We equip $\mathbf{CP}^\infty_{C_2}$ with the basepoint $* = \mathbf{CP}(1) \in \mathbf{CP}^\infty_{C_2}$. For each  $\rho \in \{1,\sigma\}$, we have an inclusion
\[
(S^{\rho^{-1}}, *) \simeq (\mathbf{CP}(1+\rho),\mathbf{CP}(1)) \subset (\mathbf{CP}^\infty_{C_2},\mathbf{CP}(1)) = (\mathbf{CP}^\infty_{C_2}, *).
\]
so if $E_{C_2}$ is complex stable, meaning that we have specified an equivalence $\Sigma^{\sigma - |\sigma|} E_{C_2} \simeq E_{C_2}$, we can restrict a class $x \in E_{C_2}^2(\mathbf{CP}^\infty_{C_2},\mathbf{CP}(1))$ to a class in $ E_{C_2}^2(S^{\rho^{-1}},*) \cong E^0_{C_2}$. We now recall the definition of a complex orientation of a $C_2$-spectrum.

\begin{definition}
If $E_{C_2}$ is a complex stable commutative ring $C_2$-spectrum, then a {\it complex orientation} of $E_{C_2}$ is a cohomology class $x \in E_{C_2}^2(\mathbf{CP}^\infty_{C_2},\mathbf{CP}(1))$ which restricts to $1$ in
\[ E_{C_2}^0 \cong E_{C_2}^2(\mathbf{CP}(1+1),\mathbf{CP}(1)),\]
and some unit in 
\[E_{C_2}^0 \cong E_{C_2}^2(\mathbf{CP}(1 + \sigma), \mathbf{CP}(1)).\]
\end{definition}

Many of our favorite $C_2$-spectra are complex oriented, such as $F(EC_{2+},H\ZZ)$, $K_{C_2}$, and $MU_{C_2}$. In \cite{CGK2}, Cole, Greenlees and Kriz prove that a complex orientation of $E_{C_2}$ is uniquely determined by a commutative ring spectrum map $MU_{C_2} \to E_{C_2}$. It is then natural to ask: what structure is afforded to a $C_2$-spectrum $E_{C_2}$ equipped with a commutative ring spectrum map $\Omega_{C_2} \to E_{C_2}$? We propose the following definition, which includes several flatness hypotheses in order to maintain algebraic control.

\begin{definition}
Suppose $E_{C_2}$ is a commutative ring $C_2$-spectrum. We say a commutative ring spectrum map $\Omega_{C_2} \to E_{C_2}$ is a {\it geometric orientation} of $E_{C_2}$  if 
\begin{enumerate}
\item the transfer $\text{tr}_e^{C_2}:E_* \to E^{C_2}_*$ is injective, and 
\item $\tau \in \Omega^{C_2}_\diamond$ maps to a non-zero-divisor in $E^{C_2}_\diamond$.
\end{enumerate}
\end{definition}
If we have specified such a map $\Omega_{C_2} \to E_{C_2}$, we say $E_{C_2}$ is {\it geometrically oriented}. Many important $C_2$-spectra which fail to be complex oriented are in fact geometrically oriented. We list some examples of geometrically oriented $C_2$-spectra below, and we will investigate these further in the following sections.
\begin{example}
The universal example of a geometrically oriented $C_2$-spectrum is $\Omega_{C_2}$ itself, which is geometrically oriented by the identity map.
\end{example} 
\begin{example}
Suppose $R$ is a commutative ring with no $2$-torsion. Then the $C_2$-equivariant Eilenberg-Maclane spectrum $H \underline{R}_{C_2}$ associated to the constant Mackey functor $\underline{R}$ is geometrically oriented.
\end{example} 
\begin{example}
The connective cover $k_{C_2}= \tau_{\geq 0} K_{C_2}$ of $C_2$-equivariant $K$-theory is geometrically oriented.
\end{example} 

Before discussing the relationship between our new theory of geometric orientations, and the classical theory of complex orientations, we make note of the following result, which establishes the connection between geometric orientations and thom isomorphisms for certain complex vector bundles.

\begin{proposition}\label{thomisos}
Suppose $E_{C_2}$ is a geometrically oriented $C_2$-spectrum. If $\psi \to X/C_2$ is a complex vector bundle over the orbits of a $C_2$-space $X$, and $\xi = p^*\psi$ is the pullback of $\psi \to X/C_2$ along the projection map $p:X \to X/C_2$, then there is a thom isomorphism
\[
E_{C_2}^*(X) = \widetilde{E}_{C_2}^{*+2\dim \xi}(X^\xi).
\]
\end{proposition}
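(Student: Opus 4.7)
The plan is to construct the Thom class using the geometric orientation $\Omega_{C_2}\to E_{C_2}$, and then verify the isomorphism by reducing to the case where $\psi$ is trivial.

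Since $\psi$ has complex rank $n = \dim \xi$, it is classified by a map $\bar f:X/C_2\to BU(n)$. Precomposition with the orbit quotient $p:X\to X/C_2$ yields a $C_2$-equivariant map $f:X\to BU(n)$, where $BU(n)$ carries the trivial $C_2$-action, and tautologically $\xi\cong f^*\gamma_n$ as equivariant bundles. Passing to Thom spaces gives a based $C_2$-map $X^\xi \to BU(n)^{\gamma_n}$. The crucial observation is that $BU(n)^{\gamma_n}$, with trivial $C_2$-action, is precisely the space $\Omega_{C_2}(\mathbf{C}^n)$ appearing in the definition of $\Omega_{C_2}$: indeed, since $\mathbf{C}^n\oplus\mathbf{C}^\infty$ is a trivial $C_2$-representation, $\mathrm{Gr}^{\mathbf{C}^\infty}(\mathbf{C}^n)=BU(n)$ with trivial action, and the tautological bundle is $\gamma_n$. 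Stabilizing produces a map $\Sigma^\infty_{C_2} X^\xi \to \Sigma^{2n}\Omega_{C_2}$, whose composition with the geometric orientation yields the Thom class $t_\xi\in\tilde{E}^{2n}_{C_2}(X^\xi)$, and multiplication by it defines the candidate Thom map $\Phi_\xi:E^*_{C_2}(X_+)\to\tilde{E}^{*+2n}_{C_2}(X^\xi)$.

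To verify $\Phi_\xi$ is an isomorphism, I would first handle the trivial case: if $\psi$ is trivial, then $\xi\cong X\times\mathbf{C}^n$ as an equivariant bundle with trivial action on the fibre, hence $X^\xi\simeq X_+\wedge S^{2n}$ with $S^{2n}$ trivially acted upon. The restriction of $t_\xi$ to the fibre $S^{2n}$ comes by naturality from the structure map $S^{\mathbf{C}^n}\to\Omega_{C_2}(\mathbf{C}^n)$ of the prespectrum $\Omega_{C_2}$, which is precisely the unit of $\Omega_{C_2}$. Hence $\Phi_\xi$ reduces to the suspension isomorphism in this case. To globalize, cover $X/C_2$ by open sets $\{U_\alpha\}$ trivializing $\psi$; since $\xi$ is pulled back from the orbits, $\xi$ trivializes equivariantly over each $p^{-1}(U_\alpha)$, and one combines the resulting local Thom isomorphisms via a Mayer-Vietoris / cellular induction on $X/C_2$, using the five lemma at each step. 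The compatibility of Thom classes on overlaps is automatic from the functorial construction of $t_\xi$.

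The main potential obstacle is confirming that $t_\xi$ depends only on $\xi$ and not on the auxiliary choice of classifying data. This relies on the fact that any two $C_2$-equivariant classifying maps $X\to BU(n)$ for $\xi$ — with $BU(n)$ carrying the trivial $C_2$-action, as is forced by the factorization of the classifying map through the orbit space — are $C_2$-equivariantly homotopic, so the induced Thom classes agree. I note in passing that the flatness hypotheses of Definition \ref{quasiorientation} are not actually invoked in this proof: the Thom isomorphism is a direct consequence of the ring map $\Omega_{C_2}\to E_{C_2}$, while the flatness conditions serve to control the algebra of extended coefficients in subsequent applications.
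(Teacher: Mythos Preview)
Your argument is correct and follows essentially the same route as the paper: construct the Thom class by observing that $\xi$ is classified by a map factoring through $X/C_2 \to \mathrm{Gr}^{\mathbf{C}^\infty}(\mathbf{C}^n) = BU(n)$ with trivial $C_2$-action, take Thom spaces to land in $\Omega_{C_2}(\mathbf{C}^n)$, push forward along the orientation, and then verify the isomorphism by checking that the Thom class restricts to the unit on each fibre. The paper phrases the final step as a direct fibre check (restriction of $t(\xi)$ to $S^{\xi_x}\cong S^{2n}$ gives $1\in E_{C_2}^0$) and leaves the passage from this to a global isomorphism implicit, whereas you spell it out via local trivializations over the orbit space and Mayer--Vietoris; these are two presentations of the same standard argument. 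Your closing remark that the flatness hypotheses of Definition~\ref{quasiorientation} play no role here is also correct and worth noting --- the paper's proof likewise never invokes them.
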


\begin{proof}
Suppose we have a rank $n$ vector bundle $\xi = p^*\psi$ as above. Then since $\psi \to X/C_2$ is a $C_2$-equivariant complex vector bundle over a $C_2$-space with trivial $C_2$-action, the vector bundle $\psi$ is classified by a map $X/C_2 \to \text{Gr}^{\mathbf{C}^\infty}( \mathbf{C}^n)$, and the pullback $\xi = p^*\psi$ is classified by the composite 
\[
X\to X/C_2 \to \text{Gr}^{\mathbf{C}^\infty}(\mathbf{C}^n) .
\]
Taking thom spaces on the corresponding map of vector bundles yields 
\[
X^\xi \to \text{Thom}(\xi^{\mathbf{C}^\infty}(\mathbf{C}^n) \to \text{Gr}^{\mathbf{C}^\infty}( \mathbf{C}^n)) = \Omega_{C_2}(\mathbf{C}^n)
\]
which determines a thom class $t(\xi) \in \widetilde{\Omega}_{C_2}^{2n}(X^\xi)$. Since $E_{C_2}$ is geometrically oriented, we can push forward the class $t(\xi)$ to a class $t(\xi) \in \widetilde{E}_{C_2}^{2n}(X^\xi)$. We can now make use of the Thom diagonal $\delta: X^\xi \to X_+ \wedge X^\xi$. More precisely, we claim that the map $E_{C_2}^*(X) \to \widetilde{E}_{C_2}^{*+2n}(X^\xi)$ which send the class $\omega \in E_{C_2}^*(X)$ to the class of the composite 
\[ \begin{tikzcd} 
X^\xi \ar[r,"\delta"] & X_+ \wedge X^\xi \ar[r, "\omega \wedge t(\xi)"] & E_{C_2} \wedge E_{C_2} \ar[r] & E_{C_2},
\end{tikzcd} \]
is an isomorphism. This follows from the fact that for any point $x \in X$, if we let $\xi_x$ denote the fiber of the vector bundle $\xi$ over $x \in X$, the restriction of $t (\xi) \in \widetilde{E}_{C_2}^{2n}(X^\xi)$ to $\widetilde{E}_{C_2}^{2n}(S^{\xi_x}) \cong \widetilde{E}_{C_2}^{2n}(S^{2n}) \cong E_{C_2}^0$ corresponds to the unit $1 \in E_{C_2}^0$.
\end{proof}

Having established the connection between geometrically oriented $C_2$-spectra and thom isomorphisms for vector bundles, we will now investigate the connection betweeen geometric orientations and complex orientations.  A key observation is that we can associate to any geometrically oriented $C_2$-spectrum $E_{C_2}$ a complex oriented $C_2$-spectrum $\widehat{E}_{C_2}$ in the following way. 
 If $E_{C_2}$ is a geometrically oriented $C_2$-spectrum, then the element $\tau \in \Omega^{C_2}_\diamond$ maps to some element in $\tau \in E^{C_2}_\diamond$. Since $\Omega_{C_2}[1/\tau] \simeq MU_{C_2}$, and $MU_{C_2}$ classifies $C_2$-equivariant complex orientations, inverting the class $\tau \in E^{C_2}_\diamond$ yields a complex oriented $C_2$-spectrum
 \begin{align*} 
 \widehat{E}_{C_2} & = E_{C_2}[1/\tau]  = \text{hocolim}\left( E_{C_2} \overset{\tau}{\longrightarrow}  \Sigma^{\sigma-|\sigma| } E_{C_2} \overset{\tau}{\longrightarrow}  \Sigma^{2\sigma - |2\sigma|} E_{C_2} \overset{\tau}{\longrightarrow} \cdots \right)
 \end{align*}
which we call the {\it stabilization} of $E_{C_2}$. The $C_2$-spectrum $\widehat{E}_{C_2} $ inherits a multiplicative structure from that of $E_{C_2}$, and is filtered by the defining diagram
\[ \begin{tikzcd} 
E_{C_2} \ar[r,"\tau"] & \Sigma^{\sigma-|\sigma| } E_{C_2} \ar[r,"\tau"] & \Sigma^{2\sigma - |2\sigma|} E_{C_2} \ar[r,"\tau"] & \cdots \ar[r] & \widehat{E}_{C_2}.
\end{tikzcd} \]
Since we have assumed that multiplication by $\tau$ is injective in the good range $E^{C_2}_\diamond \subset E^{C_2}_\star$, this determines a filtration of the coefficients of $\widehat{E}_{C_2}$:
\[
E^{C_2}_* \subset E^{C_2}_{*+|\sigma| -\sigma} \subset E^{C_2}_{*+|2\sigma| -2\sigma} \subset \cdots \subset \widehat{E}^{C_2}_*.
\]
We call this the {\it euler filtration} of $\widehat{E}^{C_2}_*$, for reasons that will become clear in Theorem \ref{eulerfiltration}. From this perspective, $E^{C_2}_\diamond$ is the {\it Rees Algebra} of the filtered ring $\widehat{E}^{C_2}_*$, which interpolates between the ``generic fiber" $\widehat{E}^{C_2}_*$, and the ``special fiber"  $\text{gr}_\bullet \widehat{E}^{C_2}_*$, as depicted below.
\[ \begin{tikzcd} 
 & E^{C_2}_\diamond \ar[dl,swap, "/\tau - 1"] \ar[dr,"/\tau - 0"] & \\
 \widehat{E}^{C_2}_*  & & \text{gr}_\bullet\widehat{E}^{C_2}_*
\end{tikzcd} \]
We can think of a map $E^{C_2}_\diamond \to A$ as a deformation of the $C_2$-equivariant formal group law determined by $\widehat{E}^{C_2}_* = E^{C_2}_\diamond/(\tau - 1) \to A/(\tau - 1)$. Having established some basic properties of geometrically oriented spectra, we turn our attention to the examples $H \underline{R}_{C_2}$ and $k_{C_2}$.

\subsection{Ordinary cohomology}\label{ordinary}

The simplest example of a geometrically oriented $C_2$-spectrum is the Eilenberg-Maclane spectrum $H\underline{R}$ associated to a commutative ring $R$ with no $2$-torsion. Recall that the constant Mackey functor $\underline{R}$ is defined by 
 \[ \underline{R}(C_2/e) = R =  \underline{R}(C_2/C_2).\]
  The restriction $\text{res}_e^{C_2}$ is the identity, and the transfer $\text{tr}^{C_2}_e$  is multiplication by $2$. The Eilenberg-Maclane spectrum $H \underline{R}$ represents $\underline{R}$ in the sense that 
\[\underline{\pi}_n(H\underline{R}) = 
\begin{cases}
\underline{R} & n = 0\\
0 & n \neq 0.
\end{cases}
\]
In the following theorem, we calculate the stabilization of the geometrically oriented $C_2$-spectrum $H\underline{R}_{C_2}$. Note that the $RO(C_2)$-graded coefficient ring of $H \underline{R}_{C_2}$ is well known, see for instance \cite{Lewis}.

\begin{theorem} If $R$ is a commutative ring with no $2$-torsion, then the Eilenberg-Maclane spectrum $H \underline{R}$ is geometrically oriented. The extended coefficient ring of $H \underline{R}$ is 
\[
H \underline{R}^{C_2}_\diamond = R[\mu,\tau]/(2\mu)
\] 
where $|\mu| = -\sigma$ and $|\tau| = 2-\sigma$. The stabilization of $H \underline{R}$ is  
\[
H \underline{R}[1/\tau] \simeq F(EC_{2+},HR).
\]
\end{theorem}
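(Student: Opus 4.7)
The theorem has three assertions: that $H\underline{R}_{C_2}$ admits a geometric orientation, that its extended coefficient ring is $R[\mu,\tau]/(2\mu)$, and that inverting $\tau$ yields Borel cohomology. I would handle them in that order, since the coefficient computation both verifies the second axiom of Definition \ref{quasiorientation} and makes the stabilization identification transparent.

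The first task is to exhibit a ring $C_2$-spectrum map $\Omega_{C_2} \to H\underline{R}_{C_2}$ by specifying compatible Thom classes in $H\underline{R}_{C_2}$ for the tautological bundles $\xi^{\mathbf{C}^\infty}(V) \to \text{Gr}^{\mathbf{C}^\infty}(V)$ that define $\Omega_{C_2}$. Because the ``extra'' universe direction used in the construction of these Grassmannians is the trivial $\mathbf{C}^\infty$, building the Thom classes reduces to assembling standard Bredon Euler and orientation classes for one-dimensional complex $C_2$-representations, and the no-$2$-torsion hypothesis on $R$ is precisely what makes these classes assemble coherently. The first flatness condition in Definition \ref{quasiorientation} asks that $\text{tr}^{C_2}_e : R \to R$ be injective; as this map is multiplication by $2$, the hypothesis on $R$ is exactly what is needed. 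The second condition, that $\tau$ is a non-zero-divisor in $H\underline{R}^{C_2}_\diamond$, follows from the explicit presentation. To compute the latter, I would evaluate $\pi^{C_2}_{*-n\sigma}(H\underline{R}_{C_2})$ for each $n \geq 0$ using the standard $C_2$-CW structure on $S^{n\sigma}$ with one free cell in each real dimension $1, \ldots, n$ glued to a fixed $0$-cell. The resulting Bredon cellular chain complex has terms $R$ with alternating differentials $0$ and multiplication-by-$2$, so the no-$2$-torsion hypothesis collapses the homology to a free $R$ summand in the top dimension together with $R/2$ summands below it. Assembling over $n$, with $\mu$ arising from the Euler class $S^0 \to S^\sigma$ of the sign representation and $\tau$ from the Thom class of $\mathbf{C}^\sigma$ in $H\underline{R}_{C_2}$, yields the presentation $R[\mu,\tau]/(2\mu)$, from which $\tau$ is manifestly a non-zero-divisor.

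For the stabilization, the collapse $EC_{2+} \to S^0$ induces a ring $C_2$-spectrum map $H\underline{R}_{C_2} \to F(EC_{2+},HR)$, and the target has $\pi^{C_2}_\star F(EC_{2+},HR) = H^{-|\star|}(BC_2;R)$, so its $RO(C_2)$-grading collapses to the underlying $\mathbb{Z}$-grading. Since $\tau$ restricts non-equivariantly to $1 \in R$, it maps to a unit, and so the map factors through $H\underline{R}_{C_2}[1/\tau] \to F(EC_{2+},HR)$. To verify this factorization is an equivalence I would compare $\pi^{C_2}_\star$ on both sides: inverting $\tau$ in $R[\mu,\tau]/(2\mu)$ and writing $\mu' := \mu\tau^{-1}$ of degree $-2$ gives $R[\mu']/(2\mu')$, which matches the standard computation $H^{-*}(BC_2;R) = R \oplus (R/2)\mu' \oplus (R/2)(\mu')^2 \oplus \cdots$ for rings without $2$-torsion. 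The main obstacle will be the opening construction of the ring map $\Omega_{C_2} \to H\underline{R}_{C_2}$ together with the identification of its geometrically defined $\tau$ with the standard $RO(C_2)$-graded orientation class of $\mathbf{C}^\sigma$; once that bookkeeping is pinned down, the remaining steps are routine.
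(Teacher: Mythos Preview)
Your proposal is correct, and for the coefficient computation and the stabilization argument it matches the paper's approach closely. The genuine difference lies in how the ring map $\Omega_{C_2} \to H\underline{R}_{C_2}$ is produced. You propose to build it by hand via Thom classes for the bundles $\xi^{\mathbf{C}^\infty}(V)$, and you correctly flag this bookkeeping as the main obstacle. The paper sidesteps that obstacle entirely: it observes that the completion map exhibits $H\underline{R}_{C_2}$ as the connective cover of $F(EC_{2+},HR)$, and the latter is complex oriented in the sense of \cite{CGK1}, hence receives a ring map from $MU_{C_2}$. Precomposing with $\Omega_{C_2} \to MU_{C_2}$ gives $\Omega_{C_2} \to F(EC_{2+},HR)$, and since $\Omega_{C_2}$ is connective this factors uniquely through the connective cover $H\underline{R}_{C_2}$. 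This construction is both shorter and makes the compatibility with the stabilization map automatic, whereas your approach would require a separate check that the hand-built Thom classes agree with those coming from the complex orientation of Borel cohomology. One small slip in your outline: since $\sigma$ is the \emph{complex} sign representation in this paper (so $S^{n\sigma} = S^{2n\alpha}$ has real dimension $2n$), the standard $C_2$-CW structure has free cells in real dimensions $1,\dots,2n$, not $1,\dots,n$; this does not affect the conclusion.
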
 

\begin{proof} 
The $RO(C_2)$-graded coefficients of $H \underline{R}_{C_2}$ are well known, and can be calculated using the Tate diagram. The completion map
\[
H \underline{R}_{C_2} \to F(EC_{2+} , H \underline{R}_{C_2}) \simeq F(EC_{2+} , HR)
\]
exhibits $H \underline{R}_{C_2}$ as the connective cover of $F(EC_{2+},HR)$, which is complex oriented. Since $H\underline{R}_{C_2}$ is connective, this determines a geometric orientation $\Omega_{C_2} \to H \underline{R}_{C_2}$. Since the completion map takes $\tau \in H\underline{R}^{C_2}_{2-\sigma}$ to a unit in $F(EC_{2+},HR)^{C_2}_\star$, there is an induced map $H\underline{R}_{C_2}[1/\tau] \to F(EC_{2+},HR)$ which we claim is an equivalence. It is a non-equivariant equivalence since $H \underline{R}_{C_2} \to F(EC_{2+},HR)$ is a non-equivariant equivalence and $\tau$ is non-equivariantly homotopic to $1 \in H\underline{R}_0^{\{e\}}$. Moreover,
 \begin{align*} 
 H \underline{R}[1/\tau]^{C_2}_*  = H \underline{R}^{C_2}_\diamond / (\tau - 1) & =  R [ \mu,\tau ]/(2\mu, \tau - 1) \\
 & \cong R[u]/(2u)\\
 &= F(EC_{2+},HR)^{C_2}_*,
 \end{align*}
 so $H \underline{R}_{C_2}[1/\tau] \to F(EC_{2+},HR)$ induces an isomorphism on $\pi^{C_2}_*(-)$.
\end{proof}

\subsection{Connective $K$-theory}\label{ktheory}
Our next important example of a geometrically oriented $C_2$-spectrum is connective $C_2$-equivariant $K$-theory. Recall that if $E_{C_2}$ is a $C_2$-spectrum, then the connective cover $\tau_{ \geq 0} E_{C_2}$ is a $C_2$-spectrum equipped with a map $\tau_{ \geq 0} E_{C_2} \to E_{C_2}$ such that $\underline{\pi}_n(E_{C_2}) = 0$ if $n < 0$, and $\underline{\pi}_n(\tau_{ \geq 0}E_{C_2}) \to \underline{\pi}_n(E_{C_2})$ is an isomorphism for $n \geq 0$. Connective covers are unique up to canonical isomorphism in the $\text{Ho}(\text{Sp}_{C_2})$. In the case $E_{C_2} = K_{C_2}$, we define $k_{C_2} = \tau_{ \geq 0} K_{C_2}$ to be the connective $C_2$-equivariant $K$-theory spectrum.

There is another important $C_2$-equivariant analogue $ku_{C_2}$ of connective $K$-theory, which was defined and studied by Greenlees in \cite{Greenlees1}, \cite{Greenlees2}, and \cite{Greenlees3}. While $ku_{C_2}$ is not actually the connective cover of $K_{C_2}$, the $C_2$-spectrum $ku_{C_2}$ enjoys many desirable properties: it is complex stable, complex oriented, and Greenlees proves that the coefficient ring 
\begin{align*}
ku^{C_2}_* & = R(C_2)[v,v^{-1}J] \\
& \cong  \ZZ[u,v]/(2u + v u^2)
\end{align*}
classifies multiplicative $C_2$-equivariant formal group laws in the sense of \cite{CGK1}. We write $J \subset R(C_2)$ for the augmentation ideal of $R(C_2)$, which is generated by the element $\sigma - 1 \in R(C_2)$. For this reason, it is the spectrum $ku_{C_2}$ whose properties mirror those of non-equivariant connective $K$-theory. 

The present theory of geometric orientations provides a new and interesting link between these two $C_2$-equivariant analogues of connective $K$-theory. In the next theorem, we calculate the extended coefficient ring of the geometrically oriented $C_2$-spectrum $k_{C_2}$, and prove that the stabilization of $k_{C_2}$ is Greenlees' spectrum $ku_{C_2}$.

\begin{theorem}The connective cover $k_{C_2} = \tau_{ \geq 0} K_{C_2}$ of $C_2$-equivariant $K$-theory is geometrically oriented. The extended coefficient ring of $k_{C_2}$ is 
\[
k^{C_2}_\diamond = \dfrac{R(C_2)[ v , \mu, \tau]}{\begin{matrix} \tau(\sigma - 1) = v\mu\\
\mu(\sigma + 1) = 0 \end{matrix} } 
\]
where $|v| = 2$, $|\mu| = -\sigma$, and $|\tau| = 2 - \sigma$. The stabilization of $k_{C_2}$ is Greenlees' equivariant connective $K$-theory
\[
k_{C_2}[1/\tau] \simeq ku_{C_2}.
\]
\end{theorem}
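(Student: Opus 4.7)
The plan is to build the geometric orientation from a complex orientation of $K_{C_2}$, compute the extended coefficients by the same pullback strategy used in Section~\ref{extendedsection}, and then identify the stabilization with $ku_{C_2}$ via Greenlees' universal property for multiplicative $C_2$-equivariant formal group laws.

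First I would construct the ring $C_2$-spectrum map $\Omega_{C_2}\to k_{C_2}$. Since $K_{C_2}$ is complex oriented, there is a ring map $MU_{C_2}\to K_{C_2}$, which I precompose with $\Omega_{C_2}\to MU_{C_2}$ to obtain $\Omega_{C_2}\to K_{C_2}$. By Theorem~\ref{geometriccobordism} the ring $\Omega^{C_2}_*$ is concentrated in non-negative degrees, and non-equivariantly $\Omega_*=MU_*$ is also connective, so $\Omega_{C_2}$ is a connective $C_2$-spectrum; hence the map factors uniquely through the connective cover $k_{C_2}=\tau_{\geq 0}K_{C_2}$. To verify the flatness conditions, I note that in $K^{C_2}_*=R(C_2)[\beta^{\pm 1}]$ the transfer acts as multiplication by $1+\sigma$ on each Bott-power, which is injective into $R(C_2)$; this restricts to injectivity of $k_*\to k^{C_2}_*$ in non-negative degrees. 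The non-zero-divisor condition on $\tau$ will then follow from the explicit presentation produced below.

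Next I would compute $k^{C_2}_\diamond$ by adapting the pullback argument of Section~\ref{extendedsection}. Smashing the isotropy-separation cofiber sequence $EC_{2+}\to S^0\to\widetilde{EC}_2$ with $S^{n\sigma}$ and applying $k_{C_2}$-homotopy produces a six-term exact sequence whose Borel piece is controlled by the non-equivariant connective $K$-theory $k_*=ku_*$ (since $EC_{2+}$-smashing forces the Borel behavior) and whose geometric fixed-point piece is $\tau_{\geq 0}\Phi K^{C_2}_*$. The key additional input is the analogous computation for $K_{C_2}$, which together with the injectivity of the transfer fits $k^{C_2}_{*-n\sigma}$ into a pullback of $R(C_2)$-modules. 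Tracking the classes, $k^{C_2}_{*-n\sigma}$ is generated as an $R(C_2)[v]$-module by $1,u,\ldots,u^n$ (with $u=\sigma-1$), subject to the relations $u^2=-2u$ inherited from $R(C_2)$ and the filtration relations $u^k\tau=u^{k-1}v\mu$ that record how the Bott-like class $\mu=v^{-1}u\cdot\tau$ and the class $\tau$ interact. Assembling over all $n$ and combining with $\mu(\sigma+1)=0$ (which is just $u(u+2)=0$ multiplied on the $\mu$ side), I would read off the claimed presentation.

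Finally, the stabilization $k_{C_2}[1/\tau]\simeq ku_{C_2}$ follows by a universal-property comparison. Inverting $\tau$ in the geometric orientation produces a complex orientation $MU_{C_2}=\Omega_{C_2}[1/\tau]\to k_{C_2}[1/\tau]$, and the associated equivariant formal group law is multiplicative since its non-equivariant restriction is the classical multiplicative law of $ku$ and the equivariant Euler class is $\sigma-1=v\mu$ (by the computed relation). By Greenlees' universality of $ku_{C_2}$ as the $C_2$-spectrum classifying multiplicative equivariant formal group laws (\cite{Greenlees2}), this determines a ring $C_2$-spectrum map $ku_{C_2}\to k_{C_2}[1/\tau]$. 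On underlying non-equivariant spectra both sides are $ku$, and on $\pi^{C_2}_\star$ the map is an isomorphism by the preceding computation together with the identification $k^{C_2}_\diamond/(\tau-1)\cong R(C_2)[v,\mu]/(v\mu-(\sigma-1),\mu(\sigma+1))\cong ku^{C_2}_*$; so the map is an equivalence of Mackey-functor homotopy groups, hence a $C_2$-equivalence.

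The main obstacle I anticipate is step two: the Euler filtration on $ku^{C_2}_*$ is not free as an $R(C_2)[v]$-module (unlike the $MU^{C_2}_*$ case, where $1,u,u^2,\ldots$ are genuinely independent), because the relation $u(u+2)=0$ collapses high powers of $u$. One must therefore verify that the pullback produces \emph{exactly} the relations $\tau(\sigma-1)=v\mu$ and $\mu(\sigma+1)=0$ and no hidden extra relations; this is the place where the particular geometry of $k_{C_2}$ rather than $MU_{C_2}$ enters the calculation. Once the presentation is established, the Rees-algebra interpretation of Theorem~\ref{geometriccobordism}(2) renders the comparison with $ku^{C_2}_*$ essentially formal.
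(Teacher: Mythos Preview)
Your approach diverges from the paper's in both the computation and the identification with $ku_{C_2}$, and the divergence in step two contains a real gap. The pullback strategy of Section~\ref{extendedsection} worked for $\Omega_{C_2}$ because the comparison map $\Omega_{C_2}\to MU_{C_2}$ is a \emph{non-equivariant equivalence}, so the $EC_{2+}$-smashed terms in the two long exact sequences coincide and Lemma~\ref{halemma} applies. Your proposed comparison $k_{C_2}\to K_{C_2}$ is not a non-equivariant equivalence (it is $ku\to KU$), so the Borel pieces do not match and you do not get a pullback square. Relatedly, the claim that the geometric fixed-point piece is $\tau_{\geq 0}\Phi K^{C_2}_*$ is unjustified: geometric fixed points do not commute with connective covers. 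The natural fix is to compare $k_{C_2}$ with $ku_{C_2}$ instead, since that map \emph{is} a non-equivariant equivalence; but you have not yet constructed it, and your step three runs in the wrong direction for this purpose (Greenlees' universality gives a ring map $ku^{C_2}_*\to A$ for any multiplicative equivariant formal group law over $A$, not a spectrum map out of $ku_{C_2}$).

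The paper avoids all of this by a much more elementary route. It first builds the map $k_{C_2}\to ku_{C_2}$ directly from the homotopy pullback square defining $ku_{C_2}$ (using the canonical map $k_{C_2}\to K_{C_2}$ and the completion map $k_{C_2}\to F(EC_{2+},ku)$). It then computes $k^{C_2}_{*-n\sigma}$ by induction on $n$, applying $k^{C_2}_{*-n\sigma}(-)\to K^{C_2}_{*-n\sigma}(-)$ to the cofiber sequence $S(\sigma)_+\to S^0\to S^\sigma$ and using the auxiliary sequence $C_{2+}\to S(\sigma)_+\to\Sigma C_{2+}$ to control the free part; this shows $k^{C_2}_{*+|n\sigma|-n\sigma}\hookrightarrow K^{C_2}_*$ with image $J^nv^{-n}\oplus\cdots\oplus Jv^{-1}\oplus R(C_2)[v]$, from which the presentation is read off immediately. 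The stabilization is then checked by computing $k^{C_2}_\diamond/(\tau-1)\cong\ZZ[u,v]/(2u+vu^2)=ku^{C_2}_*$ and observing the map is a non-equivariant equivalence. Your worry in the final paragraph about the Euler filtration not being free is a red herring: once you have the image-in-$K^{C_2}_*$ description, the relations are forced by the ideal structure of $J\subset R(C_2)$.
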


\begin{proof}
The  $C_2$-spectrum $ku_{C_2}$ lies in a homotopy pullback square
\[ \begin{tikzcd} 
ku_{C_2} \ar[d] \ar[r] & F(EC_{2+},ku)\ar[d] \\
K_{C_2} \ar[r] & F(EC_{2+},K).
\end{tikzcd} \]
The canonical map $k_{C_2} \to K_{C_2}$ and the completion map $k_{C_2} \to F(EC_{2+},k_{C_2})  \simeq F(EC_{2+},ku)$ induce a multiplicative map $k_{C_2} \to ku_{C_2}$, and we will prove that this induces an equivalence $k_{C_2}[1/\tau] \simeq ku_{C_2}$ by calculating the extended coefficient ring of $k_{C_2}$.

We claim that the map $k^{C_2}_{*+|n\sigma| - n\sigma} \to K^{C_2}_*$ is injective with image
\[
J^nv^{-n} \oplus \cdots \oplus Jv^{-1} \oplus R(C_2)[v] \subset R(C_2)[v^{\pm 1}] = K^{C_2}_*.
\]
We prove this claim by induction on $n$, and the base case $n=0$ follows from the definition of connective cover. Applying $k^{C_2}_{*-n\sigma}(-) \to K^{C_2}_{*-n\sigma}(-)$ to the cofiber sequence 
\[
S(\sigma)_+\to S^0 \to S^{\sigma}.
\]
yields the diagram

\begin{equation} \label{lesktheory}
\begin{tikzcd} [column sep = 15.0]
\cdots \ar[r] & k^{C_2}_{*-n\sigma}(S(\sigma)) \ar[r] \ar[d] & k^{C_2}_{*-n\sigma} \ar[r] \ar[d]  & k^{C_2}_{*-(n+1)\sigma} \ar[r] \ar[d] & k^{C_2}_{*-1-n\sigma} (S(\sigma)) \ar[r] \ar[d] &  \cdots \\
\cdots \ar[r] & \mathbb{Z}[v^{\pm 1}] \{2 + uv\} \ar[r]  & R(C_2)[v^{\pm 1}] \ar[r,"u"] &R(C_2)[v^{\pm 1}] \ar[r] & \mathbb{Z}[v^{\pm 1}]  \ar[r] & \cdots \\
\end{tikzcd} 
\end{equation}
whose rows are exact. By applying $k^{C_2}_{*-n\sigma}(-) \to K^{C_2}_{*-n\sigma}(-) $ to the cofiber sequence 
\[
C_{2+}\to S(\sigma)_+ \to \Sigma C_{2+}
\]
we can deduce that $k^{C_2}_{m-n\sigma}(S(\sigma)) \to K^{C_2}_{m-n\sigma}(S(\sigma))$ is an isomorphism for $m \geq 2n$ and $k^{C_2}_{*-n\sigma}(S(\sigma)_+) = 0$ for $m < 2n$. Exactness of the rows in \ref{lesktheory} implies that $k^{C_2}_{m-(n+1)\sigma} \to K^{C_2}_{m - (n+1)\sigma}$ is an isomorphism if $m \geq 2(n+1)$, and $k^{C_2}_{m - (n+1)\sigma} = 0$ if $m$ is odd or $m < 0$. If $0 < 2k \leq 2(n+1)$, then our diagram is 
\[
\begin{tikzcd}
\cdots \ar[r] & 0 \ar[r] \ar[d] & J^{n-k} \ar[r] \ar[d]  & k^{C_2}_{2k-(n+1)\sigma} \ar[r] \ar[d] & 0 \ar[r] \ar[d] &  \cdots \\
\cdots \ar[r] & \mathbb{Z}\{ 1+ \sigma\} \ar[r]  & R(C_2) \ar[r,"\sigma - 1"] &R(C_2) \ar[r] & \mathbb{Z}  \ar[r] & \cdots \\
\end{tikzcd} 
\]
and exactness implies that $k^{C_2}_{2k - (n+1)\sigma} = J^{n+1- k}$. The presentation 
\[
k^{C_2}_\diamond \cong  \dfrac{R(C_2)[v,\mu,\tau]}{\begin{matrix} \tau(\sigma - 1) = v \mu \\  \mu(\sigma + 1) = 0\end{matrix}} 
\]
is obtained by setting $\mu = \sigma- 1 \in J = k^{C_2}_{-\sigma}$, and $\tau = 1 \in R(C_2) = k^{C_2}_{2-\sigma}$. 

Next, we'll prove that the map $k_{C_2}[1/\tau] \to ku_{C_2}$ is an equivalence. This map is a non-equivariant equivalence since $k_{C_2} \to ku_{C_2}$ is a non-equivariant equivalence and $\tau$ is non-equivariantly homotopic to $1 \in k_0^{\{e\}}$. We have 
\begin{align*}
k[1/\tau]^{C_2}_* = k^{C_2}_\diamond/(\tau - 1) & = \ZZ[u,v]/(2u + v u^2)\\
& = ku^{C_2}_*,
\end{align*} 
 so $k_{C_2}[1/\tau] \to ku_{C_2}$ induces an isomorphism on $\pi^{C_2}_*(-)$. We deduce that $k_{C_2}[1/\tau] \to ku_{C_2}$ is an equivalence.
\end{proof}

\subsection{Filtered $C_2$-equivariant formal group laws}\label{projectivespacessection}
In this section we introduce and develop the theory of {\it filtered }$C_2${\it -equivariant formal group laws}, which are the algebraic objects determined by geometrically oriented $C_2$-spectra. Since a filtered $C_2$-equivariant formal group law is a $C_2$-equivariant formal group law equipped with additional structure, we begin by recalling the definition of a $C_2$-equivariant formal group law.

\begin{definition}
A $C_2${\it -equivariant formal group law} $(A,D)$ consists a commutative ring $A$,
an $A$-Hopf algebra $D$, a morphism $A[C_2^\vee] \to D$ of $A$-Hopf algebras, and an $A$-linear functional $x$ on $ D$,
such that 
\begin{enumerate}
\item The sequence 
\[ \begin{tikzcd} 
0 \ar[r] & A \ar[r,"\eta"] & D \ar[r,"\cap x"] & D \ar[r] & 0
\end{tikzcd} \]
is exact, and 
\item if $d \in D$, then there exist $m,n \geq 0$ such that 
\[
d \cap x^{m+n\sigma} = 0.
\]
\end{enumerate}
\end{definition}

If $E_{C_2}$ is a complex oriented $C_2$-spectrum, then the pair $(E^{C_2}_*,E^{C_2}_*(\mathbf{CP}^\infty_{C_2}))$ carries the structure of a $C_2$-equivariant formal group law. The morphism $E^{C_2}_*[C_2^\vee] \to E^{C_2}_*(\mathbf{CP}^\infty_{C_2})$ is obtained by applying $E^{C_2}_*(-)$ to the inclusion
\[
C_2^\vee = \mathbf{CP}(1) \amalg \mathbf{CP}(\sigma) \to  \mathbf{CP}^\infty_{C_2}
\]
and the linear functional $x$ is the map $E^{C_2}_*(\mathbf{CP}^\infty_{C_2}) \to E^{C_2}_*$ obtained by pairing with the complex orientation $x \in \widetilde{E}^2_{C_2}(\mathbf{CP}^\infty_{C_2})$. We will show that when $\widehat{E}_{C_2}$ is the stabilization of some geometrically oriented $C_2$-spectrum $E_{C_2}$, then the $C_2$-equivariant formal group law $(\widehat{E}^{C_2}_*, \widehat{E}^{C_2}_*(\mathbf{CP}^\infty_{C_2}))$ is afforded the additional data of a filtration 
\begin{align*}
F_n\widehat{E}^{C_2}_* & = E^{C_2}_{*+|n\sigma| -n\sigma}\\
F_n\widehat{E}^{C_2}_*(\mathbf{CP}^\infty_{C_2}) & = E^{C_2}_{*+|n\sigma|-n\sigma}(\mathbf{CP}^\infty_{C_2}).
\end{align*}
The interaction of this filtration with the algebraic structure of the $C_2$-equivariant formal group law $(\widehat{E}^{C_2}_*,\widehat{E}^{C_2}_*(\mathbf{CP}^\infty_{C_2}))$ is surprisingly rich and deep. We call the resulting structure a {\it filtered }$C_2${\it -equivariant formal group law}. In order to properly axiomatize this filtration, we must first prove some structural results about $C_2$-equivariant formal group laws. The proposition below, which characterizes the additive and comultiplicative structure of a $C_2$-equivariant formal group law $(A,D)$, is proved in Appendix \ref{AppendixD}.
\begin{proposition} Suppose $(A,D)$ is a $C_2$-equivariant formal group law. Then we can associate to any sequence $\rho_1, \dots , \rho_n \in  C_2^\vee$ an element $\beta(\rho_1, \dots , \rho_n) \in D$, and these elements satisfy the following properties:
\begin{enumerate} \item 
\[
\Delta \beta(\rho_1, \dots , \rho_n) = \sum_{i=1}^n \beta(\rho_1, \dots , \rho_i) \otimes \beta(\rho_i , \dots , \rho_n),
\]
\item If $(\rho_i)_{i=1}^\infty$ is a complete flag, then 
\[
\langle \beta(\rho_1, \dots , \rho_i) , x^{\rho_1 + \cdots + \rho_{j-1}}\rangle = \begin{cases} 1 & i=j \\ 0 & i \neq j.\end{cases}
\]
\item The set $\{ \beta(\rho_1, \dots , \rho_i) : i \geq 1\}$ is a free $A$-module basis for $D$.
\end{enumerate} 
\end{proposition}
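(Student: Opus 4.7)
My approach is to build the elements $\beta(\rho_1, \dots, \rho_n)$ by induction on $n$ and then verify the three properties in order. Set $\beta(\rho_1) := 1_D$ for any $\rho_1$, and given $\beta(\rho_2, \dots, \rho_n)$, choose any $d \in D$ with $d \cap x^{\rho_1} = \beta(\rho_2, \dots, \rho_n)$; such a lift exists because $\cap x^{\rho_1}$ is surjective (axiom~(1) when $\rho_1 = 1$, and the identity $\cap x^\sigma = \sigma \circ (\cap x) \circ \sigma$ when $\rho_1 = \sigma$, which in turn follows from the defining relation $\langle d, x^\sigma\rangle = \langle \sigma d, x\rangle$ together with the fact that $\sigma \in D$ is group-like). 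Since $\ker(\cap x^{\rho_1}) = \eta(A)$ and $\epsilon \circ \eta = \mathrm{id}$, the normalization $\beta(\rho_1, \dots, \rho_n) := d - \eta\epsilon(d)$ yields the unique element of $D$ whose $\cap x^{\rho_1}$-image is $\beta(\rho_2, \dots, \rho_n)$ and whose counit vanishes.

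Property~(2) is proved by induction on $i$ for any complete flag, using the identity $\langle d \cap x^{\rho_1}, x^{\rho_2 + \cdots + \rho_{j-1}}\rangle = \langle d, x^{\rho_1 + \cdots + \rho_{j-1}}\rangle$. This identity follows from coassociativity and cocommutativity of $\Delta$, together with the product formula $x^{m+n\sigma} \cdot x^{m'+n'\sigma} = x^{(m+m')+(n+n')\sigma}$ in $\Hom_A(D,A)$. Applied to $d = \beta(\rho_1, \dots, \rho_i)$ (using the defining property $\beta(\rho_1, \dots, \rho_i) \cap x^{\rho_1} = \beta(\rho_2, \dots, \rho_i)$), the pairing for $(\rho_j)$ reduces to the pairing for the shorter sequence $\beta(\rho_2, \dots, \rho_i)$ against the shifted flag $(\rho_{j+1})$, which equals $\delta_{i-1,j-1} = \delta_{i,j}$ by induction. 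The base case $i=1$ holds because $\epsilon(1) = 1$ and $\langle 1, x\rangle = \langle 1, x^\sigma\rangle = 0$, consequences of $1 \in \ker(\cap x)$ together with $\sigma$ being group-like.

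For property~(3), introduce the filtration $D_n := \{d \in D : d \cap x^{\rho_1 + \cdots + \rho_n} = 0\}$. The cap-associativity relation $(d \cap x^\rho) \cap x^{\rho'} = d \cap x^{\rho + \rho'}$, together with axiom~(2) of a $C_2$-equivariant formal group law and completeness of the flag, gives $D = \bigcup_n D_n$. Axiom~(1) restricts to a short exact sequence $0 \to A \to D_n \xrightarrow{\cap x^{\rho_1}} D_{n-1} \to 0$, so each $D_n$ is free of rank $n$ by induction. The pairing $\Phi \colon D_n \to A^n$ with $f_j := x^{\rho_1 + \cdots + \rho_{j-1}}$ sends $\beta(\rho_1, \dots, \rho_i)$ to the $i$-th standard basis vector by~(2), so $\Phi$ is surjective. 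Injectivity of $\Phi$ follows by the same reduction as in (2): if $\Phi(d) = 0$ then $d \cap x^{\rho_1} \in D_{n-1}$ has vanishing pairings against the shifted-flag functionals, hence vanishes by induction on $n$, so $d \in \eta(A)$, and then $\epsilon(d) = 0$ forces $d = 0$.

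For the comultiplication formula, write $b_i := \beta(\rho_1, \dots, \rho_i)$; both sides live in $D \otimes_A D$, which admits dual bases $\{b_i \otimes b_j\}$ and $\{f_i \otimes f_j\}$ by~(3), so it suffices to compare the pairings against $f_i \otimes f_j$. The LHS evaluates to $\langle b_n, f_i \cdot f_j\rangle = \langle b_n, x^{(m_i+m_j)+(n_i+n_j)\sigma}\rangle$, where $m_k, n_k$ count $1$'s and $\sigma$'s among $\rho_1, \dots, \rho_{k-1}$; by completeness this functional equals $f_{J(i,j)}$ for the unique index with $(m_{J(i,j)}, n_{J(i,j)}) = (m_i + m_j, n_i + n_j)$, and~(2) yields $\delta_{n, J(i,j)}$. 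On the RHS only the $k = i$ summand survives the pairing with $f_i$, leaving $\langle \beta(\rho_i, \dots, \rho_n), f_j\rangle$; applying the shifted-flag version of~(2) with the same completeness argument gives the same indicator. I expect the main technical obstacle to be the interplay between the product structure on $\Hom_A(D,A)$ and the combinatorial bookkeeping across the original and shifted flags, which pervades both the duality identity and the comultiplication formula; once these pieces and the filtration $D_n$ are in place, the rest unfolds by careful but routine verification.
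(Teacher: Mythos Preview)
Your overall strategy---construct the $\beta$'s recursively, prove duality by induction, deduce freeness via a filtration, then verify the coproduct formula by pairing---matches the paper's, but there are two genuine errors.

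First, the base case $\beta(\rho_1) = 1_D$ is wrong. The identity $\cap x^\sigma = \sigma \circ (\cap x) \circ \sigma$ that you correctly derive shows $\ker(\cap x^\sigma) = \sigma \cdot \eta(A) = A\sigma$, \emph{not} $\eta(A)$; so when $\rho_1 = \sigma$ your normalization $d - \eta\epsilon(d)$ neither kills the actual ambiguity nor preserves the condition $d \cap x^\sigma = \beta(\rho_2,\dots,\rho_n)$, since $1 \cap x^\sigma = \langle 1, x^\sigma\rangle \cdot 1 \neq 0$. Indeed your claim $\langle 1, x^\sigma\rangle = 0$ is false: by definition $\langle 1, x^\sigma\rangle = \langle \sigma, x\rangle$, which is the Euler class $u$ and is nonzero already in the universal example $(MU^{C_2}_*, MU^{C_2}_*(\mathbf{CP}^\infty_{C_2}))$. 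Consequently property~(2) fails for any complete flag with $\rho_1 = \sigma$. The fix---and the paper's choice---is $\beta(\rho_1) := \rho_1$, the image under $A[C_2^\vee] \to D$; then $\ker(\cap x^{\rho_1}) = A\rho_1$, the correct normalization is $d - \epsilon(d)\rho_1$, and $\langle \rho_1, x^{\rho_1}\rangle = \langle 1, x\rangle = 0$ as required.

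Second, in your comultiplication argument the claim that $f_i \cdot f_j = f_{J(i,j)}$ for some index $J(i,j)$ is false: the functionals $f_k = x^{\rho_1+\cdots+\rho_{k-1}}$ correspond to the lattice points on a single staircase path in $\ZZ_{\geq 0}^2$, and the sum of two such points need not lie on that path (for the flag $1,\sigma,1,\sigma,\dots$ one has $f_2 \cdot f_2 = x^{2}$, which is no $f_k$). The paper avoids this by expanding the \emph{second} tensor factor in the basis coming from the shifted flag $(\rho_i,\rho_{i+1},\dots)$, so that the relevant products become $f_i \cdot x^{\rho_i+\cdots+\rho_{i+j-2}} = f_{i+j-1}$ and property~(2) applies directly. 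Your approach can also be repaired without this trick: using $\beta(\rho_i,\dots,\rho_n) = b_n \cap f_i$ (iterated defining relation) and cap-associativity, the RHS pairs with $f_i \otimes f_j$ to give $\langle b_n \cap f_i, f_j\rangle = \langle b_n, f_i \cdot f_j\rangle$, which is exactly the LHS---no need to identify $f_i \cdot f_j$ as some $f_k$.
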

It turns out that the filtration afforded to $(A,D) = (\widehat{E}^{C_2}_*, \widehat{E}^{C_2}_*(\mathbf{CP}^\infty_{C_2}))$ is much clearer when viewed in terms of a different, geometrically defined $A$-module basis of $D$. For any $m,n\geq 0$ we have elements
\begin{align*}
\pi_{m+n\sigma} & = [\mathbf{CP}(m+n\sigma)] \in \Omega^{C_2}_*\text{, and}\\
\Pi_{m+n\sigma} & = [\mathbf{CP}(m+n\sigma) \to \mathbf{CP}^\infty_{C_2}] \in \Omega^{C_2}_*(\mathbf{CP}^\infty_{C_2}).
\end{align*}
which map to elements $\pi_{m+n\sigma} \in MU^{C_2}_*$ and $\Pi_{m+n\sigma} \in MU^{C_2}_*(\mathbf{CP}^{\infty}_{C_2})$.\footnote{The class of the map $\mathbf{CP}(m+n\sigma) \to \mathbf{CP}^\infty_{C_2}$ is well defined since the space of equivariant linear isometric embeddings $m+n\sigma \to \mathbf{C}^{\infty,\infty}$ is connected.} Since the pair $(MU^{C_2}_*,MU^{C_2}_*(\mathbf{CP}^\infty_{C_2}))$ is the universal $C_2$-equivariant formal group law, this determines elements $\pi_{m+n\sigma} \in A$ and $\Pi_{m+n\sigma} \in D$ for every equivariant formal group law $(A,D)$. The following theorem asserts that $\{ \Pi_{\rho_1 + \cdots + \rho_i} : i \geq 1\}$ is an $A$-module basis of $D$, and identifies the coefficients of the change of basis matrix from $\{ \Pi_{\rho_1 + \cdots + \rho_i}: i \geq 1 \}$ to the canonical basis $\{ \beta(\rho_1, \dots , \rho_i) : i \geq 1\}$. 

\begin{theorem}\label{geometricbasistheorem}
Suppose $(A,D)$ is a $C_2$-equivariant formal group law.
\begin{enumerate}
\item If $\rho_1, \dots , \rho_n \in C_2^\vee$, then
\[
\Pi_{\rho_1 + \cdots + \rho_n}  = \sum_{i=1}^n \pi_{\rho_i + \cdots + \rho_n} \beta(\rho_1, \dots ,\rho_i).
\]
\item If $(\rho_1,\rho_2,\dots)$ is a complete flag, then the set $\{ \Pi_{\rho_1 + \cdots + \rho_i} : i \geq 1\}$ is a free $A$-module basis for $D$. 
\end{enumerate}
\end{theorem}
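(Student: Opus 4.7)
The plan is to first reduce part (1) to the universal case $(A,D) = (MU^{C_2}_*, MU^{C_2}_*(\mathbf{CP}^\infty_{C_2}))$: once the identity is verified there, it propagates to arbitrary $(A,D)$ by naturality of the classifying map of the equivariant formal group law. In the universal case, with $V = \rho_1 + \cdots + \rho_n$, I would exploit the duality established in the preceding proposition, which identifies $\{\beta(\rho_1,\ldots,\rho_i)\}$ as the free basis dual to the system of functionals $\{x^{\rho_1+\cdots+\rho_{j-1}}\}$ under the Kronecker pairing. Consequently, the identity in (1) reduces to checking
\[
\langle \Pi_V,\, x^{\rho_1+\cdots+\rho_{j-1}}\rangle \;=\; \pi_{\rho_j+\cdots+\rho_n}
\]
for each $1 \leq j \leq n$.

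For that pairing, I would view $\Pi_V = f_*[\mathbf{CP}(V)]$, with $f:\mathbf{CP}(V) \to \mathbf{CP}^\infty_{C_2}$ the classifying inclusion, and apply the projection formula to rewrite the pairing as $p_*\bigl(f^* x^{\rho_1+\cdots+\rho_{j-1}} \cap [\mathbf{CP}(V)]\bigr)$, where $p:\mathbf{CP}(V) \to \mathrm{pt}$. The pullback $f^* x^{\rho_i}$ is the equivariant Euler class of $L^*\otimes \rho_i$, where $L$ denotes the tautological line bundle on $\mathbf{CP}(V)$, so $f^* x^{\rho_1+\cdots+\rho_{j-1}}$ is the Euler class of $\bigoplus_{i=1}^{j-1} L^*\otimes \rho_i$. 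The isotypical projection $V \twoheadrightarrow \rho_1 + \cdots + \rho_{j-1}$, restricted fiberwise to the tautological subline, furnishes a canonical section of this bundle whose vanishing locus is precisely the subvariety of lines contained in $\rho_j + \cdots + \rho_n$, i.e.\ $\mathbf{CP}(\rho_j + \cdots + \rho_n) \hookrightarrow \mathbf{CP}(V)$. A $C_2$-equivariant transversality verification---identifying the normal bundle of this inclusion with the restriction of $\bigoplus_{i=1}^{j-1} L^*\otimes \rho_i$---realizes the cap product as the fundamental class of $\mathbf{CP}(\rho_j+\cdots+\rho_n)$, so $p_*$ returns $\pi_{\rho_j+\cdots+\rho_n}$, which establishes (1).

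Part (2) then follows formally. The coefficients from (1) assemble into an infinite matrix $T$ with $T_{ij} = \pi_{\rho_j+\cdots+\rho_i}$ for $j \leq i$ and $T_{ij} = 0$ otherwise, and the expansion reads $\Pi_{\rho_1+\cdots+\rho_i} = \sum_{j \leq i} T_{ij}\,\beta(\rho_1,\ldots,\rho_j)$. Each $\mathbf{CP}(\rho_i)$ is a single $C_2$-fixed point, so $T_{ii} = \pi_{\rho_i} = 1 \in A$, making $T$ lower-unitriangular. Such a matrix admits a lower-unitriangular inverse computed by a finite row-by-row recursion, which in turn expresses every $\beta(\rho_1,\ldots,\rho_i)$ as a \emph{finite} $A$-linear combination of $\Pi_{\rho_1+\cdots+\rho_j}$'s. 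Together with the free-basis property of the $\beta$'s, this shows that $\{\Pi_{\rho_1+\cdots+\rho_i} : i \geq 1\}$ is a free $A$-module basis for $D$.

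The main technical obstacle I anticipate is the equivariant transversality argument in the middle paragraph: one must confirm that the canonical section built from the isotypical projection cuts out $\mathbf{CP}(\rho_j+\cdots+\rho_n)$ cleanly as a $C_2$-manifold and that the normal bundle is correctly identified, so that the cap product really computes the expected fundamental class. The remainder of the argument is essentially formal once the duality between the $\beta$'s and the monomials in $x$ is in hand.
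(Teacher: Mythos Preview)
Your overall strategy matches the paper's: expand $\Pi_{\rho_1+\cdots+\rho_n}$ in the $\beta$-basis using the duality from the preceding proposition, reduce part (1) to the pairing identity $\langle \Pi_{V\oplus W}, x^W\rangle = \pi_V$, and deduce (2) from the lower-unitriangularity of the change-of-basis matrix. The paper records exactly this pairing identity as Proposition~\ref{pairingrelation} and then derives the theorem in two lines.

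Where you differ is in proving that pairing identity. You argue via the projection formula and equivariant Euler classes, identifying $f^*x^{\rho_1+\cdots+\rho_{j-1}}$ with the Euler class of $\bigoplus_{i<j} L^*\otimes\rho_i$ and reading off the cap product as the zero locus $\mathbf{CP}(\rho_j+\cdots+\rho_n)$ of the linear section coming from the isotypical projection. The paper instead writes down an explicit based homeomorphism $\mathbf{CP}(V\oplus W)/\mathbf{CP}(W)\cong \text{Th}(\nu\to\mathbf{CP}(V))$ with $\nu=\bigoplus_i\rho_i^{-1}\gamma(V)$ (Lemma~\ref{projectivespacelemma}), shows via an explicit bordism that $\Pi_{V\oplus W}$ maps to the disk-bundle fundamental class (Lemma~\ref{homologylemma}), and checks that $x^W$ corresponds to the Thom class $\tau(\nu)$ (Lemma~\ref{cohomologylemma}); the pairing is then computed by the Pontrjagin--Thom construction. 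Your worry about equivariant transversality is in fact harmless here---your section is fiberwise linear, so its zero locus is automatically a smooth sub-$C_2$-manifold with the correct normal bundle---but the paper's Thom-space route makes this issue disappear entirely by never invoking transversality, at the cost of the explicit point-set Lemmas~\ref{projectivespacelemma}--\ref{cohomologylemma}. Both approaches encode the same geometry; yours is more conceptual, the paper's more self-contained.
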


We prove this theorem in section \ref{changeofbasis}. Having developed the necessary background, we can now state our main algebraic definition.

\begin{definition}
A {\it filtered $C_2$-equivariant formal group law} $(F_\bullet A,F_\bullet D)$ consists of a $C_2$-equivariant formal group law $(A,D)$, together with a filtration $F_\bullet A$ of $A$ and $F_\bullet{D}$ of $D$ such that 
\begin{enumerate}
\item $\text{Im}(\Omega^{C_2}_* \to A) \subseteq F_0A$,
\item $F_nA$ is generated over $F_0A$ by $1,\dots,u^n \in A$, and 
\item For any complete flag $(\rho_i)_{i=1}^\infty$, 
\[
F_nD = \left\{ \sum a_i \Pi_{\rho_1 + \cdots + \rho_i} \in D : a_i \in F_{n + \ell_i}A \right\},
\]
where $\ell_i$ is the number of copies of $\sigma$ in $(\rho_1 + \cdots + \rho_{i-1})\rho_i^{-1}$.
\end{enumerate} 
\end{definition} 
One of the main theorems of this section is that every geometrically oriented $C_2$-spectrum determines a filtered $C_2$-equivariant formal group law which refines the $C_2$-equivariant formal group law associated to the complex oriented $C_2$-spectrum $\widehat{E}_{C_2}$.
\begin{theorem}
If $E_{C_2}$ is a geometrically oriented $C_2$-spectrum with stabilization $\widehat{E}_{C_2} = E_{C_2}[1/\tau]$, then the pair $ (F_\bullet \widehat{E}^{C_2}_*,F_\bullet \widehat{E}^{C_2}_*(\mathbf{CP}^\infty_{C_2}))$ defined by
\[
F_nE^{C_2}_* = E^{C_2}_{*+|n\sigma|-n\sigma}\text{, and}
\]
\[
F_nE^{C_2}_*(\mathbf{CP}^\infty_{C_2}) = E^{C_2}_{*+|n\sigma|-n\sigma}(\mathbf{CP}^\infty_{C_2})
\]
is a filtered $C_2$-equivariant formal group law.
\end{theorem}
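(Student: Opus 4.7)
The plan is to verify each of the three conditions in the definition of a filtered $C_2$-equivariant formal group law. Throughout I write $A = \widehat{E}^{C_2}_*$, $D = \widehat{E}^{C_2}_*(\mathbf{CP}^\infty_{C_2})$, and $u = \mu\tau^{-1} \in A$ for the image in $\widehat{E}_{C_2}$ of the euler class of $\mathbf{C}^\sigma$. Condition (1) is essentially tautological: the geometric orientation $\Omega_{C_2} \to E_{C_2}$ yields $\Omega^{C_2}_* \to E^{C_2}_* = F_0A$ on $C_2$-coefficients, and composing with the injection $F_0A \hookrightarrow A$ --- well-defined because $\tau$ is a non-zero-divisor on $E^{C_2}_\diamond$ by hypothesis --- gives the required factorization through $F_0A$.

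For condition (2), the inclusion $F_0A \cdot \{1, u, \dots, u^n\} \subseteq F_nA$ is immediate: since $\mu^k\tau^{n-k} \in E^{C_2}_{2(n-k)-n\sigma}$ for $0 \leq k \leq n$, the element $u^k = \tau^{-n}(\mu^k\tau^{n-k})$ lies in $\im(E^{C_2}_{*+2n-n\sigma} \to \widehat{E}^{C_2}_*) = F_nA$, and this subgroup is stable under multiplication by $F_0A = E^{C_2}_*$. The reverse inclusion reduces to the claim that $E^{C_2}_{m-n\sigma}$ is generated as an $E^{C_2}_*$-module by $\{\mu^k\tau^{n-k} : 0 \leq k \leq n\}$. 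I would prove this by the same pullback-square technique used to compute $\Omega^{C_2}_{*-n\sigma}$ in Section \ref{extendedsection}: apply $E^{C_2}_{*-n\sigma}(-)$ to the cofiber sequence $EC_{2+} \to S^0 \to \widetilde{EC}_2$, use the splitness of $E_{C_2}$ together with the injectivity of $\tr_e^{C_2}$ to control the free summand, and push forward the geometric fixed point identification of Lemma \ref{geometricfixedpoints} along $\Omega_{C_2} \to E_{C_2}$ to control $\Phi E^{C_2}_{*-n\sigma}$.

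For condition (3), the plan is to first verify the formula for the canonical complete flag $(1,1,1,\dots)$, in which case every $\ell_i = 0$ and the claim reduces to $F_nD$ being freely generated over $F_nA$ by $\{\Pi_i\}$; this is parallel to (2) applied to $E_{C_2} \wedge \mathbf{CP}^\infty_{C_2+}$. For a general complete flag $(\rho_i)$, I would then apply Theorem \ref{geometricbasistheorem} to express $\Pi_{\rho_1 + \cdots + \rho_i}$ in terms of the canonical basis; the resulting change of basis is triangular with coefficients among the geometric classes $\pi_{m+n\sigma}$, and the shift $\ell_i$ in the filtration level is precisely the $\sigma$-weight introduced by these coefficients when translating between the canonical flag and $(\rho_i)$.

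The main obstacle is the generation statement in (2) for arbitrary geometrically oriented $E$. It is automatic for $\Omega_{C_2}$ itself and easy for the examples of Proposition \ref{examples}, but the general argument must carefully leverage both axioms of a geometric orientation while avoiding a circular appeal to (2) when computing $\Phi E_{C_2}$. Once this is in hand, the remaining work for (3) is largely change-of-basis bookkeeping with Theorem \ref{geometricbasistheorem}.
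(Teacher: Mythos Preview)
Your handling of axiom (1) matches the paper, but your routes to (2) and (3) diverge from the paper's and each has a real problem.

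For axiom (2), the paper does \emph{not} use the Tate/isotropy cofiber sequence $EC_{2+} \to S^0 \to \widetilde{EC}_2$. Instead, Proposition~\ref{eulerfiltration} argues by induction on $n$ using the cofiber sequence $S(\sigma)_+ \to S^0 \to S^\sigma$. The base case is a two-line spectral sequence coming from the cell structure $S^0 \subset S^\alpha \subset S^{2\alpha} = S^\sigma$: the only possible $d^1$ is the transfer $E_* \to E^{C_2}_*$, which is injective by hypothesis, so the sequence collapses and $E^{C_2}_{*-\sigma}$ is generated by $1$ and $u$. The inductive step smashes $\tau^n$ with $S(\sigma)_+ \to S^0 \to S^\sigma$ and uses that $S(\sigma)$ is $C_2$-free together with $\tau$ being a nonequivariant equivalence. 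This completely sidesteps the obstacle you flagged: one never needs to know $\Phi E^{C_2}_*$ for general $E$, and pushing forward Lemma~\ref{geometricfixedpoints} along $\Omega_{C_2} \to E_{C_2}$ would not have given you that anyway.

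For axiom (3), your plan contains a genuine error: the sequence $(1,1,1,\dots)$ is \emph{not} a complete flag, since by definition both $1$ and $\sigma$ must occur infinitely often. In particular, the classes $\Pi_i = [\mathbf{CP}^{i-1} \to \mathbf{CP}^\infty_{C_2}]$ do not form an $A$-basis of $D$ (geometrically, $\mathbf{CP}^\infty \subset \mathbf{CP}^\infty_{C_2}$ misses the other fixed component $\mathbf{CP}(\infty\sigma)$), so there is no ``canonical'' flag to change bases from. The paper's Proposition~\ref{filtration} instead treats an arbitrary complete flag $(\rho_i)$ directly: apply $E^{C_2}_{*+|n\sigma|-n\sigma}(-)$ to the skeletal filtration $\mathbf{CP}(\rho_1) \subset \mathbf{CP}(\rho_1+\rho_2) \subset \cdots$ of $\mathbf{CP}^\infty_{C_2}$, whose associated graded consists of spheres $S^{V_{i-1}\rho_i^{-1}}$, and compare with the same spectral sequence for $\widehat{E}_{C_2}$ to see collapse and read off the claimed direct-sum decomposition. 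The shifts $\ell_i$ fall out of the $\sigma$-content of $V_{i-1}\rho_i^{-1}$, with no change-of-basis bookkeeping required.
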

\begin{proof}
It is clear that axiom (1) of a filtered $C_2$-equivariant formal group law is satisfied since the composite $\Omega_{C_2} \to MU_{C_2} \to \widehat{E}_{C_2}$ lifts across the stabilization map $E_{C_2} \to \widehat{E}_{C_2}$. We prove axiom (2) in Proposition \ref{eulerfiltration} and we prove axiom (3) in Proposition \ref{filtration}.
\end{proof}

\begin{proposition}\label{eulerfiltration}
If $E_{C_2}$ is a geometrically oriented $C_2$-spectrum, then for any $n \geq 0$, the $E^{C_2}_*$-module $E^{C_2}_{*+|n\sigma| - n\sigma} \subset \widehat{E}^{C_2}_*$ is generated by the euler classes 
\[\{u^k \in \widehat{E}^{C_2}_{-2k} : 0 \leq k \leq n\}. \]In particular, the $E^{C_2}_*$-algebra $\widehat{E}^{C_2}_*$ is generated by the euler class $u \in \widehat{E}^{C_2}_{-2}$.
\end{proposition}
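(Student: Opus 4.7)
The plan is to induct on $n$, the case $n=0$ being immediate since $F_0\widehat{E}^{C_2}_*=E^{C_2}_*$. For the inductive step, I will identify the filtration quotient $F_n\widehat{E}^{C_2}_s/F_{n-1}\widehat{E}^{C_2}_s$ and show that it is cyclic, generated by the class of $\mu^n$ (which stabilizes to $u^n$); this is enough to close the induction when combined with the inductive description of $F_{n-1}$. The argument hinges on combining two complementary cofiber sequences.

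First I will exploit the $\tau$-Bockstein cofiber sequence $\Sigma^{2-\sigma}E_{C_2}\xrightarrow{\tau} E_{C_2} \to E_{C_2}/\tau$. Because $\tau$ is a non-zero-divisor in $E^{C_2}_\diamond$ by the definition of geometric orientation, the associated LES collapses, at every $a$ in the good range, to a short exact sequence $0 \to E^{C_2}_{a+\sigma-2} \xrightarrow{\tau} E^{C_2}_a \to (E_{C_2}/\tau)^{C_2}_a \to 0$; taking $a=s+2n-n\sigma$ gives $F_n\widehat{E}^{C_2}_s/F_{n-1}\widehat{E}^{C_2}_s \cong (E_{C_2}/\tau)^{C_2}_{s+2n-n\sigma}$. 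Next I will exploit the Euler cofiber sequence $(C_2)_+ \to S^0 \to S^\sigma$: since the weak Thom class $\tau$ restricts non-equivariantly to $1\in E_0$, the cofiber $E_{C_2}/\tau$ is underlying-contractible, so the underlying terms in the LES obtained by smashing $(C_2)_+ \to S^0 \to S^\sigma$ with $E_{C_2}/\tau$ vanish and collapse it to isomorphisms $\mu\cdot:(E_{C_2}/\tau)^{C_2}_a \xrightarrow{\cong} (E_{C_2}/\tau)^{C_2}_{a-\sigma}$ for every $a$. Iterating yields an isomorphism $\mu^n:(E_{C_2}/\tau)^{C_2}_{s+2n} \xrightarrow{\cong} (E_{C_2}/\tau)^{C_2}_{s+2n-n\sigma}$, so every element of the target is the class of $\mu^n y$ for some $y \in E^{C_2}_{s+2n}$.

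Combining the two identifications produces $F_n\widehat{E}^{C_2}_s = \tau F_{n-1}\widehat{E}^{C_2}_s + \mu^n\cdot E^{C_2}_{s+2n}$, which together with the inductive hypothesis gives the generating set $\{\mu^k\tau^{n-k}\}_{0\le k\le n}$; these stabilize to $\{u^k\}_{0\le k\le n}$ in $\widehat{E}^{C_2}_*$, closing the induction. The main obstacle I anticipate is purely bookkeeping: translating fluidly between the $RO(C_2)$-grading on $E^{C_2}_\diamond$ and the $\ZZ$-grading on $\widehat{E}^{C_2}_*$, and tracking the fact that the inclusion $F_{n-1}\subset F_n$ of subgroups of $\widehat{E}^{C_2}_s$ is realized by multiplication by $\tau$ in the pre-stabilized ring. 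The essential conceptual input beyond the hypotheses of geometric orientation is the underlying-contractibility of $E_{C_2}/\tau$, since this is what allows the Euler class to act invertibly on $(E_{C_2}/\tau)^{C_2}$ and thereby makes the filtration quotients cyclic.
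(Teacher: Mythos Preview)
Your overall strategy is elegant---the use of $E_{C_2}/\tau$ and the observation that $\mu$ acts invertibly on its fixed points is a clean way to organize the induction---but there is a genuine gap. The step ``so every element of the target is the class of $\mu^n y$ for some $y \in E^{C_2}_{s+2n}$'' requires the map $E^{C_2}_{s+2n} \to (E_{C_2}/\tau)^{C_2}_{s+2n}$ to be surjective. By the long exact sequence for the $\tau$-Bockstein, this amounts to injectivity of $\tau : E^{C_2}_{s+2n-3+\sigma} \to E^{C_2}_{s+2n-1}$, but the source has \emph{positive} $\sigma$-part and therefore lies outside the good range $E^{C_2}_\diamond$; the non-zero-divisor hypothesis says nothing there. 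A red flag is that your argument never invokes the injective-transfer hypothesis: as the paper remarks immediately after its proof, the conclusion fails for $H\underline{\mathbb{F}_2}_{C_2}$, where the transfer is zero but $\tau$ \emph{is} a non-zero-divisor in the good range. So hypothesis~(1) must enter somewhere, and in your argument it never does.

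The fix is small but forces a restructuring. If you iterate $\mu$ only $n-1$ times, you land in $(E_{C_2}/\tau)^{C_2}_{s+2n-\sigma}$, and now the surjection from $E^{C_2}_{s+2n-\sigma}$ \emph{does} follow from the good-range hypothesis (the relevant $\tau$ has source $E^{C_2}_{s+2n-3}$, which lies in $E^{C_2}_\diamond$). But this only reduces you to knowing that $E^{C_2}_{*-\sigma}$ is already generated over $E^{C_2}_*$ by $\{1,u\}$, i.e.\ the case $n=1$, and that case must be established separately using the transfer hypothesis. The paper does exactly this: it treats $n=1$ as the genuine base case via the cell structure $S^0 \subset S^\alpha \subset S^{2\alpha} = S^\sigma$, where injectivity of the transfer collapses the associated spectral sequence and kills the potential extra generator; only then does it run an inductive step comparing the cofiber sequence $S(\sigma)_+ \to S^0 \to S^\sigma$ at levels $n$ and $0$, which is structurally close to a corrected version of your argument.
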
 

\begin{proof}
Note that for any $0 \leq k \leq n$, the submodule $E^{C_2}_{*+|n\sigma| - n\sigma} \subset \widehat{E}^{C_2}_*$ contains the euler class 
\[
u^k = \left[ S^0 \subset S^{k\sigma} \to \Sigma^{k\sigma}E_{C_2} \simeq \Sigma^{|k\sigma|} \Sigma^{k\sigma - |k\sigma|} E_{C_2} \to \Sigma^{|k\sigma|} \widehat{E}_{C_2} \right] \in \widehat{E}^{C_2}_{-|k\sigma|}
\]
associated to the $C_2$-representation $k \sigma$. We prove that these elements generate $E^{C_2}_{*+|n\sigma| - n\sigma}$ by induction on $n \geq 1$. For the base case, we prove that if  $E_{C_2}$ is geometrically oriented, then $E^{C_2}_{*-\sigma} = \widetilde{E}^{C_2}_*(S^\sigma)$ is generated over $E^{C_2}_*$ by $1$ and $u$. The $C_2$-space $S^{\sigma} = S^{2\alpha}$ has a cell structure 
\[ \begin{tikzcd} 
S^0 \ar[r] & S^\alpha \ar[r] \ar[d] & S^{2\alpha} \ar[d]  \\
 & \Sigma C_{2+} & \Sigma^2C_{2+}
\end{tikzcd} \]
and applying $E^{C_2}_*(-)$ yields a spectral sequence $\mathcal{E}$ converging to $E^{C_2}_{*-\sigma}$ with $\mathcal{E}^1$ page
\[
\mathcal{E}^1_{p,q} = 
\begin{cases} 
E^{C_2}_q & p = 0\\
E_{p+q} &  p = 1,2\\
0 & \text{else.}
\end{cases} 
\]
The differential $\mathcal{E}^1_{1,*} \to \mathcal{E}^1_{0,*-1}$ is the transfer, which is injective by assumption, so the differential $\mathcal{E}^1_{2,*} \to \mathcal{E}^1_{1,*-1}$ is zero and the spectral sequence collapses at the $\mathcal{E}^2$ page to 
\[
\mathcal{E}^2_{p,*} = \mathcal{E}^\infty_{p,*} = 
\begin{cases} 
E^{C_2}_*/\text{tr}_e^{C_2} & p = 0\\
E_* & p = 2\\
0 & \text{else.}
\end{cases} 
\]
The unit $1 \in E^{C_2}_*/\text{tr}_e^{C_2}$ represents  $u \in E^{C_2}_{*-\sigma}$. Since $E_{C_2}$ is an $\Omega_{C_2}$-algebra and $\Omega_{C_2}$ is a split $C_2$-spectrum, we know that $E_{C_2}$ is also a split $C_2$-spectrum, which implies that the restriction $E^{C_2}_* \to E_*$ is surjective. We deduce that $\mathcal{E}^{\infty}_{*,*}$ is generated by $1$ and $u$, hence so is the target $E^{C_2}_{*-\sigma}$.

Suppose next that $E^{C_2}_{* - n\sigma}$ is generated as a $E^{C_2}_*$-module by $1, \dots , u^n$. In order to prove that $E^{C_2}_{* - (n+1)\sigma}$ is generated as a $E^{C_2}_*$-module by $1 , \dots , u^{n+1}$, we smash the map $\tau^n : E \to \Sigma ^{n\sigma - |n\sigma|}E$ with the cofiber sequence $S(\sigma)_+ \to S^0 \to S^\sigma$ to obtain the following diagram whose rows are exact.

\[ \begin{tikzcd} [column sep = 10.5]
\cdots \ar[r] & E^{C_2}_*(S(\sigma)) \ar[d] \ar[r] & E^{C_2}_* \ar[r] \ar[d] & E^{C_2}_{* - \sigma} \ar[r] \ar[d] & E^{C_2}_{*-1}(S(\sigma)) \ar[d] \ar[r] & \cdots \\
\cdots \ar[r] & E^{C_2}_{*+|n\sigma| - n \sigma}(S(\sigma)) \ar[r] & E^{C_2}_{*+|n\sigma| - n \sigma} \ar[r] & E^{C_2}_{* + |n\sigma| - (n+1)\sigma} \ar[r] &  E^{C_2}_{*-1 +|n\sigma| - n\sigma}(S(\sigma)) \ar[r] & \cdots 
\end{tikzcd} \] 
Since $S(\sigma)$ is a free $C_2$-space and $\tau^n$ is a non-equivariant equivalence, the maps \[E^{C_2}_*(S(\sigma)) \to E^{C_2}_{*+|n\sigma| - n\sigma}(S(\sigma))\] are isomorphisms. By taking kernels and cokernels of the middle horizontal maps, we obtain the diagram

\[ \begin{tikzcd} 
 0 \ar[r] & K  \ar[d,"\cong"] \ar[r] & E^{C_2}_* \ar[r,"u"] \ar[d] & E^{C_2}_{* - \sigma} \ar[r] \ar[d] & C \ar[d,"\cong"] \ar[r] & 0\\
0 \ar[r] & K'  \ar[r] & E^{C_2}_{*+|n\sigma| - n \sigma} \ar[r,"u"] & E^{C_2}_{* + |n\sigma| - (n+1)\sigma} \ar[r] & C' \ar[r] & 0
\end{tikzcd} \] 
whose rows are exact. By our inductive hypothesis, we know that $1 \in E^{C_2}_{*-\sigma}$ maps to a $E^{C_2}_*$-module generator of $C$, hence the element $1 \in E^{C_2}_{*+|n\sigma| - (n+1)\sigma}$ maps to an $E^{C_2}_*$-module generator of $C'$. We deduce that $E^{C_2}_{*+|n\sigma| - (n+1)\sigma}$ is generated as an $E^{C_2}_*$-module by $1$ and 
\begin{align*}
\text{Im}\left( E^{C_2}_{*+|n\sigma| - n\sigma} \overset{u}{\longrightarrow} E^{C_2}_{*+|n\sigma| - (n+1)\sigma} \right) & = \text{Im} \left( E^{C_2}_* \{ 1, \dots , u^n \}  \overset{u}{\longrightarrow} E^{C_2}_{*+|n\sigma| - (n+1)\sigma} \right) \\
& =  \text{Im}\left( E^{C_2}_{*} \{ u , \dots , u^{n+1}\} \to E^{C_2}_{*+|n\sigma| - (n+1)\sigma}\right),
\end{align*}
from which it follows that $E^{C_2}_{*+|(n+1)\sigma| - (n+1)\sigma}$ is generated over $E^{C_2}_*$ by $1, \dots , u^{n+1}$. 
\end{proof}

\begin{remark}
We mention that the previous result does not necessarily hold for an $\Omega_{C_2}$-algebra $E_{C_2}$ failing the flatness hypotheses of a geometric orientation. For instance, the Eilenberg-Maclane spectrum $H \underline{\mathbb{F}_2}_{C_2}$ is an $\Omega_{C_2}$-algebra, but $H\underline{\mathbb{F}_2}^{C_2}_{*-\sigma}$ has rank $3$ over $\FF_2$, so it can not be generated by $\{1,u\}$ over $H\underline{\mathbb{F}_2}^{C_2}_* = \FF_2$.
\end{remark}

\begin{proposition}\label{filtration}
If $E_{C_2}$ is a geometrically oriented $C_2$-spectrum with stabilization $\widehat{E}_{C_2} = E_{C_2}[1/\tau]$, then for any complete flag $(\rho_i)_{i=1}^\infty$, the map 
 \[
 E^{C_2}_{*+|n\sigma|-n\sigma}(\mathbf{CP}^\infty_{C_2}) \to \widehat{E}^{C_2}_*(\mathbf{CP}^\infty_{C_2}) \cong \bigoplus_{i=1}^\infty \widehat{E}^{C_2}_*\{ \Pi_{\rho_1 + \cdots + \rho_i}\}
 \]
is injective, and identifies $E^{C_2}_{*+|n\sigma|-n\sigma}(\mathbf{CP}^\infty_{C_2})$ with 
\[
\left\{ \sum a_i \Pi_{\rho_1 + \cdots + \rho_i} \in \widehat{E}^{C_2}_*(\mathbf{CP}^\infty_{C_2}) : a_i \in E^{C_2}_{*+|n\sigma| -n\sigma} \subset \widehat{E}^{C_2}_* \right\},
\]
where $\ell_i$ is the number of copies of $\sigma$ in $(\rho_1 + \cdots + \rho_{i-1})\rho_i^{-1}$.
\end{proposition}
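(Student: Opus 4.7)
The plan is to use the skeletal filtration of $\mathbf{CP}^\infty_{C_2}$ coming from the complete flag, $* = \mathbf{CP}(\rho_1) \subset \mathbf{CP}(\rho_1+\rho_2) \subset \cdots$, whose successive quotients fit into cofiber sequences
\[
\mathbf{CP}(\rho_1+\cdots+\rho_{i-1})_+ \to \mathbf{CP}(\rho_1+\cdots+\rho_i)_+ \to S^{V_i},
\]
where $V_i \cong \rho_i^{-1}\rho_1 \oplus \cdots \oplus \rho_i^{-1}\rho_{i-1} \cong k_i + \ell_i\sigma$ with $k_i+\ell_i = i-1$ and $\ell_i$ as in the statement. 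A direct $RO(C_2)$-graded calculation with the suspension isomorphism gives
\[
E^{C_2}_{*+|n\sigma|-n\sigma}(S^{V_i}) \cong E^{C_2}_{(*-2(i-1))+|(n+\ell_i)\sigma|-(n+\ell_i)\sigma},
\]
which is precisely the degree required for a coefficient $a_i$ so that $a_i \Pi_{\rho_1+\cdots+\rho_i}$ contributes to total degree $*+|n\sigma|-n\sigma$.

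The core of the argument is an induction on $i$ showing that
\[
E^{C_2}_{*+|n\sigma|-n\sigma}(\mathbf{CP}(\rho_1+\cdots+\rho_i)_+) \;\cong\; \bigoplus_{j=1}^{i} E^{C_2}_{*+|(n+\ell_j)\sigma|-(n+\ell_j)\sigma}\{\Pi_{\rho_1+\cdots+\rho_j}\}.
\]
The base case $i=1$ is immediate since $\mathbf{CP}(\rho_1)$ is a point. For the inductive step, apply $E^{C_2}_{*+|n\sigma|-n\sigma}(-)$ to the cofiber sequence and show that the resulting long exact sequence breaks into split short exact sequences. The splitting is furnished by the class $[\mathrm{id}_{\mathbf{CP}(\rho_1+\cdots+\rho_i)}] \in \Omega^{C_2}_{2(i-1)}(\mathbf{CP}(\rho_1+\cdots+\rho_i)_+)$ pushed forward via $\Omega_{C_2} \to E_{C_2}$: under the collapse $\mathbf{CP}(\rho_1+\cdots+\rho_i)_+ \to S^{V_i}$, this class lands on the geometric fundamental class, which generates $E^{C_2}_{2(i-1)}(S^{V_i}) \cong E^{C_2}_{2(i-1)-V_i}$ as a free $E^{C_2}_*$-module via the Thom isomorphism for the representation $V_i$ provided by the geometric orientation (Proposition \ref{thomisos}).

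Passing to the colimit $i \to \infty$ is straightforward: equivariant homology commutes with the sequential colimit of $C_2$-CW inclusions $\mathbf{CP}(\rho_1+\cdots+\rho_i)_+ \to \mathbf{CP}^\infty_{C_2+}$, so the direct-sum decomposition persists in the limit. The resulting map to $\widehat{E}^{C_2}_*(\mathbf{CP}^\infty_{C_2}) = \bigoplus_i \widehat{E}^{C_2}_*\{\Pi_{\rho_1+\cdots+\rho_i}\}$ (freeness from Theorem \ref{geometricbasistheorem}) restricts on each summand to the inclusion $E^{C_2}_{*+|(n+\ell_j)\sigma|-(n+\ell_j)\sigma} \hookrightarrow \widehat{E}^{C_2}_*$, which is injective by the hypothesis that $\tau$ acts injectively in the good range; the image is exactly the submodule described. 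The main obstacle is verifying that the collapse map sends $[\mathrm{id}]$ to a free generator of $E^{C_2}_{2(i-1)}(S^{V_i})$; this reduces by multiplicativity of Thom classes to the one-dimensional case $V = \sigma$, where it is exactly the content of Proposition \ref{thomisos} applied to the tautological complex line bundle over $\mathbf{CP}(1+\sigma)/\mathbf{CP}(1)$.
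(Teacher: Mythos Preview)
Your overall strategy (use the flag filtration, identify the subquotient spheres, and try to split the resulting long exact sequences) matches the paper's, but the mechanism you propose for the splitting does not work. You claim that the image of $[\mathrm{id}_{\mathbf{CP}(\rho_1+\cdots+\rho_i)}]$ under the collapse map generates $\widetilde{E}^{C_2}_*(S^{V_i})$ as a free rank-one $E^{C_2}_*$-module, citing Proposition~\ref{thomisos}. That proposition only yields Thom isomorphisms for bundles of the form $p^*\psi$ with $\psi$ a bundle on $X/C_2$; over the point $X=\ast$ this forces the representation to be trivial. When $\ell_i>0$ the representation $V_i$ contains copies of $\sigma$ and is \emph{not} of this form, so no $\ZZ$-graded Thom isomorphism is available. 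Concretely, for $E_{C_2}=H\underline{\ZZ}_{C_2}$ one has $\widetilde{E}^{C_2}_*(S^{\sigma})\cong E^{C_2}_{*-\sigma}$, which in integer degrees is $\ZZ$ in degree $2$ and $\ZZ/2$ in degree $0$; this is certainly not free of rank one over $H\underline{\ZZ}^{C_2}_*=\ZZ$. Equivalently, under the suspension isomorphism the collapsed class lands in $E^{C_2}_{|\ell_i\sigma|-\ell_i\sigma}$ and is (up to units) $\tau^{\ell_i}$, which is not invertible in $E^{C_2}_\star$, so multiplication by it cannot furnish a section.

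The paper's proof avoids this obstacle by comparing to the $\widehat{E}_{C_2}$-spectral sequence for the same filtration. Since $\widehat{E}_{C_2}$ is genuinely complex-oriented, its spectral sequence collapses (Theorem~\ref{geometricbasistheorem} gives the free basis $\{\Pi_{\rho_1+\cdots+\rho_i}\}$). The map on $E^1$-pages is, in each column, the inclusion $E^{C_2}_{*+|(n+\ell_i)\sigma|-(n+\ell_i)\sigma}\hookrightarrow \widehat{E}^{C_2}_*$, which is injective by the $\tau$-regularity hypothesis; hence all differentials on the $E$-side vanish as well, and the comparison simultaneously resolves the extension problems. Your final paragraph in fact invokes exactly this injectivity, so the fix is to use it from the start (to kill the differentials) rather than attempting a direct $E^{C_2}_*$-module splitting that does not exist.
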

\begin{proof}
Choose a complete flag $(\rho_i)_{i+1}^\infty$ and set $V_i = \rho_1 + \cdots + \rho_i$. We can apply $E^{C_2}_{*+|n\sigma|-n\sigma}(-)$ to the diagram
\[ \begin{tikzcd} 
* \ar[r] & \mathbf{CP}(V_1)_+ \ar[r] \ar[d] & \mathbf{CP}(V_{2})_+ \ar[r] \ar[d] & \mathbf{CP}(V_{3})_+ \ar[r] \ar[d] & \mathbf{CP}(V_{4})_+ \ar[r] \ar[d]  &  \cdots \\
&  S^0 & S^{V_1 \rho_2^{-1}} &S^{V_2 \rho_3^{-1}} &S^{V_3 \rho_4^{-1}} & \cdots
\end{tikzcd} \]
which yields a spectral sequence $\mathcal{E}$ with signature 
\[
\mathcal{E}^1_{p,q} = E^{C_2}_{p + |n\sigma| - n\sigma}(S^{V_q \rho_{q+1}^{-1}}) \Rightarrow E^{C_2}_{p+q + |n\sigma| -n\sigma}(\mathbf{CP}^\infty_{C_2}).
\]
By mapping to the spectral sequence associated to the $\widehat{E}_{C_2}$-homology of this diagram, we deduce that the spectral sequence collapses, which leads to the desired direct sum decomposition.
\end{proof}

Our final major result of this section is a universality statement for the $C_2$-equivariant formal group law associated to the universal geometrically oriented $C_2$-spectrum $\Omega_{C_2}$.  This result asserts that the the filtration present on a filtered $C_2$-equivariant formal group law is completely determined by $F_0A$ and the filtration on the universal $C_2$-equivariant formal group law $(MU^{C_2}_*,MU^{C_2}_*(\mathbf{CP}^\infty_{C_2}))$.
\begin{theorem}
If $(F_\bullet A, F_\bullet D)$ is a filtered $C_2$-equivariant formal group law, then 
\begin{align*}
F_nA & = F_nMU^{C_2}_* \cdot F_0A\text{, and }\\
F_nD & = F_nMU^{C_2}_* \cdot F_0D.
\end{align*}
\end{theorem}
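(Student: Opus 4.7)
The plan is to verify both identities directly from the axioms of a filtered $C_2$-equivariant formal group law, using the description of the euler filtration $F_\bullet MU^{C_2}_*$ given earlier: namely, $F_n MU^{C_2}_*$ is the $\Omega^{C_2}_*$-submodule of $MU^{C_2}_*$ generated by $1, u, \ldots, u^n$. The first identity $F_n A = F_n MU^{C_2}_* \cdot F_0 A$ follows almost immediately from the axioms. By axiom (1) the image of $\Omega^{C_2}_*$ lies inside $F_0 A$, so products of elements of $F_n MU^{C_2}_*$ with elements of $F_0 A$ reduce to $F_0 A$-linear combinations of $1, u, \ldots, u^n$, which by axiom (2) is precisely $F_n A$; conversely, every element of $F_n A$ has this form and therefore lies in $F_n MU^{C_2}_* \cdot F_0 A$.

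For the second identity $F_n D = F_n MU^{C_2}_* \cdot F_0 D$, fix a complete flag $(\rho_i)_{i=1}^\infty$ and set $V_i = \rho_1 + \cdots + \rho_i$. By Theorem \ref{geometricbasistheorem}(2), $\{\Pi_{V_i}\}$ is a free $A$-module basis for $D$, and axiom (3) identifies $F_n D$ with the set of finite combinations $\sum a_i \Pi_{V_i}$ having $a_i \in F_{n+\ell_i} A$. The containment $F_n MU^{C_2}_* \cdot F_0 D \subseteq F_n D$ is immediate: a product $u^k \cdot \sum b_i \Pi_{V_i}$ with $k \leq n$ and $b_i \in F_{\ell_i} A$ rewrites as $\sum (u^k b_i) \Pi_{V_i}$, and $u^k b_i \in F_{k+\ell_i} A \subseteq F_{n+\ell_i} A$ because the euler filtration on $A$ is multiplicative, a direct consequence of axiom (2) together with the fact that $F_0 A$ is a subring.

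For the reverse containment, take $\omega = \sum a_i \Pi_{V_i} \in F_n D$ with $a_i \in F_{n+\ell_i} A$. Using axiom (2), expand $a_i = \sum_{j=0}^{n+\ell_i} c_{i,j} u^j$ with each $c_{i,j} \in F_0 A$, and regroup by setting $b_i^{(k)} = c_{i,k}$ for $k = 0, \ldots, n-1$ and $b_i^{(n)} = \sum_{j=0}^{\ell_i} c_{i,n+j} u^j$. Each $b_i^{(k)}$ lies in $F_{\ell_i} A$ by construction, so each partial sum $\sum_i b_i^{(k)} \Pi_{V_i}$ lies in $F_0 D$. Since $u^k \in F_n MU^{C_2}_*$ for $k \leq n$, the rewriting $\omega = \sum_{k=0}^n u^k \cdot \bigl(\sum_i b_i^{(k)} \Pi_{V_i}\bigr)$ exhibits $\omega$ as an element of $F_n MU^{C_2}_* \cdot F_0 D$.

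The main obstacle is really just the combinatorial bookkeeping in the regrouping step above: one must split each $a_i \in F_{n+\ell_i} A$ into pieces of the form $u^k \cdot (\text{element of } F_{\ell_i} A)$ with $k \leq n$, which is possible essentially because the extra $\ell_i + 1$ powers of $u$ beyond $u^{n-1}$ can be absorbed into a single coefficient $b_i^{(n)}$ at the top filtration level. Beyond this verification, the proof is purely formal once Theorem \ref{geometricbasistheorem} is available.
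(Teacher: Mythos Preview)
Your proof is correct and follows the same approach as the paper's: both reduce everything to the fact (axiom~(2)) that $F_nA$ is generated over $F_0A$ by the powers $1,u,\dots,u^n$, which all lie in the image of $F_nMU^{C_2}_*$. The paper's proof is a single sentence to this effect, while you have carefully spelled out the regrouping needed for the second identity; one small notational remark is that your use of $c_{i,j}$ for the expansion coefficients clashes with the paper's earlier $c_{i,j}\in MU_*$, so you may want to rename them.
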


\begin{proof}
Both equalities follow from the fact that $F_nA$ is generated over $F_0A$ by the elements $1,\dots , u^n \in F_nMU^{C_2}_*$.
\end{proof}

\subsection{Equivariant projective spaces}\label{changeofbasis}
In this section we identify the geometrically defined classes 
\[\pi_{m + n \sigma} = [ \mathbf{CP}(m+n\sigma)] \in \Omega^{C_2}_*\]
in terms of purely algebraic data (Proposition \ref{identification}). We describe a method for writing the classes $\pi_{m+n\sigma}$ in terms of our generators of $\Omega^{C_2}_*$, and illustrate this method for some small values of $m$ and $n$ (Proposition \ref{examples}). We then prove Theorem \ref{geometricbasistheorem}, which relates the geometrically defined classes $\pi_{m+n\sigma} \in \Omega^{C_2}_*$ and $\Pi_{m+n\sigma} \in \Omega^{C_2}_*(\mathbf{CP}^\infty_{C_2})$ to the algebraic structure of filtered $C_2$-equivariant formal group laws.

We have seen in the previous section that the filtration present on a filtered $C_2$-equivariant formal group law $(F_\bullet A,F_\bullet D)$ is controlled by the euler class $u \in A$ and the geometric classes $\pi_{m+n\sigma} \in A$. For this reason, we'd like to identify the classes $\pi_{m+n\sigma} \in \Omega^{C_2}_*$ in terms of our presentation 
\[
\Omega^{C_2}_* = MU_*[d_{i,j},q_j]/I
\] 
from Theorem \ref{geometriccobordism}, or at least identify these classes in terms of purely algebraic data. Before doing so, we review the non-equivariant case.

Consider the non-equivariant complex projective space $\mathbf{CP}(k) = \mathbf{CP}^{k-1}$ for some $k \geq 0$. We can detect the class $[\mathbf{CP}(k)] \in MU_*$ by applying the Hurewicz homomorphism
\[
MU_* \to H_*MU = \mathbb{Z}[b_1,b_2,\dots],
\]
which is injective. This map encodes characteristic numbers of stably almost complex manifolds, in that the composite
\[
MU_* \to H_*MU \cong H_*(BU) \cong  \text{Hom}(H^*(BU) , \mathbb{Z}) 
\]
is adjoint to the pairing 
\begin{align*}
H^*(BU) \otimes MU_* & \longrightarrow  \ZZ\\
c_I \otimes M & \mapsto \langle c_I(\nu) , [M] \rangle
\end{align*}
where $c_I(\nu)$ is the total chern class of the stable normal bundle $\nu$ of $M$, and $[M] \in H_*(M)$ is the fundamental class of $M$. Since the stable normal bundle $\nu$ of $\mathbf{CP}(k)$ is equal to $-k \{\gamma^1\}$,\footnote{We write $\{ \xi\}$ for the stable equivalence class of a vector bundle $\xi$, and we write $-\nu$ for the $\oplus$-inverse of a stable vector bundle $\nu$.} this implies that the image of $[\mathbf{CP}(k)]$ under the Hurewicz map is the coefficient of $x^{k-1}$ in the power series 
\[
\dfrac{1}{(1+b_1x+b_2x^2+\cdots)^k}.
\]
By the Lagrangian inversion formula, this is equal to $km_{k-1}$ where $x+m_1x^2+m_2x^3+\cdots$ is the functional inverse of $x + b_1x^2 + b_2x^3 + \cdots$. This calculation is originally due to Mischenko  \cite{Mischenko}.

Let's return to the $C_2$-equivariant setting, where we'd like to describe the classes $\pi_{m+n\sigma}=[\mathbf{CP}(m+n\sigma)] \in \Omega^{C_2}_*$ in terms of purely algebraic data. We can use the fact that any class in $\Omega^{C_2}_*$ is determined by its underlying class in $MU_*$, and its image in the geometric fixed point ring 
\[
\Phi MU^{C_2}_* = MU_*[b_1',b_2',\dots][u^{\pm 1}].
\]
This is because the kernel of $\Omega^{C_2}_* \to \Phi^{C_2}MU_*$ is a free $MU_*$-module on $q_1$, and the augmentation $\Omega^{C_2}_* \to MU_*$ maps $q_1$ to $2 \in MU_*$, which is not a zero divisor. We determine the image of $\pi_{m+n\sigma}$ in $MU_*$ and $\Phi MU^{C_2}_*$ in the following proposition. 
\begin{proposition} \label{identification}
The composite 
\[\Omega^{C_2}_* \to MU_* \to \mathbb{Z}[b_i : i \geq 1]\]
maps $\pi_{m+n\sigma}$ to 
\begin{equation}\label{underlyingclass}
(m+n)m_{m+n-1} = \text{coeff}_{x^{m+n-1}} \dfrac{1}{(1+b_1x+b_2x^2+\cdots)^{m+n}},
\end{equation}
and the composite 
\[
\Omega^{C_2}_* \to \Phi MU^{C_2}_* \to \mathbb{Z}[b_i,b_i': i \geq 1][u^{\pm 1}]
\]
maps $\pi_{m+n\sigma}$ to the sum
\begin{equation}\label{fixedpointclass}
 \left(\text{coeff}_{x^m} \dfrac{1}{(1 + b_1x + b_2 x^2 + \cdots )^{m}(1+b'_1x + b_2'x^2 + \cdots)^n}\right)u^{-n}
 \end{equation}
 \begin{equation*}
+ \left(\text{coeff}_{x^n} \dfrac{1}{(1 + b_1x + b_2 x^2 + \cdots )^{n}(1+b'_1x + b_2'x^2 + \cdots)^m}\right)u^{-m}.
\end{equation*}
\end{proposition}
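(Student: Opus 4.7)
The plan is to exploit the injectivity of the joint map
\[
\Omega^{C_2}_* \longrightarrow MU_* \oplus \Phi MU^{C_2}_*,
\]
which was already noted in the preamble to the proposition: the kernel of $\Omega^{C_2}_* \to \Phi MU^{C_2}_*$ is a free $MU_*$-module on $q_1$, and $q_1 \mapsto 2 \in MU_*$ is not a zero divisor. Post-composing with the (injective) Hurewicz maps $MU_* \hookrightarrow \mathbb{Z}[b_i]$ and $\Phi MU^{C_2}_* \hookrightarrow \mathbb{Z}[b_i, b_i'][u^{\pm 1}]$ preserves injectivity, so the pair of formulas in the statement uniquely determines $\pi_{m+n\sigma}$. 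It therefore suffices to verify each formula separately.

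For the underlying class, the forgetful map $\Omega^{C_2}_* \to MU_*$ sends $\pi_{m+n\sigma}$ to $[\mathbf{CP}^{m+n-1}]$, reducing the claim to the classical Mishchenko computation. The stable normal bundle of $\mathbf{CP}^k$ is $-(k+1)\overline{\gamma}$, so its Hurewicz image in $H_*MU$ is obtained by pairing the characteristic-class generating function $1/(1+b_1 x + b_2 x^2 + \cdots)^{k+1}$ against $[\mathbf{CP}^k]$, which reads off the coefficient of $x^k$; the Lagrange inversion formula identifies this with $(k+1)m_k$. Setting $k = m+n-1$ gives the first claim.

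For the fixed-point class, I would use the identification of Lemma \ref{geometricfixedpoints},
\[
\Phi \Omega^{C_2}_* \cong \bigoplus_{k \geq 0} MU_*(BU(k))\cdot u^{-k}, \qquad [M] \longmapsto \bigoplus_i [M^{C_2}_i \to BU(n_i)]\, u^{-n_i},
\]
with $n_i$ the complex rank of the normal bundle of the $i$th component of $M^{C_2}$. For $M = \mathbf{CP}(m+n\sigma)$ the fixed set splits as $\mathbf{CP}(\mathbf{C}^m) \sqcup \mathbf{CP}(\mathbf{C}^{n\sigma}) = \mathbf{CP}^{m-1} \sqcup \mathbf{CP}^{n-1}$, since an invariant line must lie entirely in one isotypic summand. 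At a line $L \subset \mathbf{C}^m$, the decomposition $T\mathbf{CP}(m+n\sigma)|_L = \mathrm{Hom}(L, \mathbf{C}^{m-1}) \oplus \mathrm{Hom}(L, \mathbf{C}^{n\sigma})$ identifies $T\mathbf{CP}^{m-1}$ as the fixed part and the normal bundle as $L^* \otimes \mathbf{C}^{n\sigma}$, which as a non-equivariant complex vector bundle is $n\overline{\gamma}_{m-1}$. Thus $F_1 = \mathbf{CP}^{m-1}$ contributes $[\mathbf{CP}^{m-1} \to BU(n)]\, u^{-n}$ via the classifying map of $n\overline{\gamma}_{m-1}$, and symmetrically $F_2 = \mathbf{CP}^{n-1}$ contributes $[\mathbf{CP}^{n-1} \to BU(m)]\,u^{-m}$ via the classifying map of $m\overline{\gamma}_{n-1}$.

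It remains to compute each Hurewicz image in $\widetilde{H}_*(MU \wedge BU_+) = \mathbb{Z}[b_i, b_i'][u^{\pm 1}]$ by writing each contribution as $(1 \wedge f_i)_* [F_i, \mathrm{id}]$. The image of $[\mathbf{CP}^{m-1}, \mathrm{id}] \in MU_*(\mathbf{CP}^{m-1})$ in $H_*(MU \wedge \mathbf{CP}^{m-1}_+)$ is obtained, by pairing against the basis $1, x, \ldots, x^{m-1} \in H^*\mathbf{CP}^{m-1}$, from the Mishchenko-style generating function $1/(1+b_1x+b_2x^2+\cdots)^m$ applied fiberwise to $-T\mathbf{CP}^{m-1} = -m\overline{\gamma}_{m-1}$. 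Applying $(f_1)_*$ to push forward into $H_*(BU)$ uses the Hopf-algebra antipode formula
\[
\chi_*(b_n') = \text{coeff}_{y^n}\, \frac{1}{1 + b_1' y + b_2' y^2 + \cdots}
\]
already established in the proof of Lemma \ref{geometricfixedpoints}, producing the additional factor $1/(1+b_1' x + b_2' x^2 + \cdots)^n$ corresponding to the classifying data $n\overline{\gamma}_{m-1}$. Multiplying the two generating functions and extracting the appropriate coefficient recovers the first summand, with $u^{-n}$ from the normal-bundle rank; the second summand follows by the same argument applied to $F_2$.

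The main obstacle will be the careful bookkeeping in combining the two generating functions inside $H^*(\mathbf{CP}^{m-1})[[x]] \otimes H_*MU \otimes H_*BU$: one must verify that the normal-bundle factor (from the $MU$-orientation of $F_1$) and the classifying-map factor (from $f_1$ and the antipode formula) multiply in the claimed form prior to integration, and that the final degree in the Laurent variable $u$ is consistent with the total degree $|\pi_{m+n\sigma}| = 2(m+n-1)$.
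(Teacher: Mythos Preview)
Your proposal is correct and follows essentially the same route as the paper: reduce the underlying class to Mischenko's classical computation, identify the fixed components $\mathbf{CP}^{m-1}\sqcup\mathbf{CP}^{n-1}$ with normal bundles $n\overline{\gamma}$ and $m\overline{\gamma}$, and then compute the Hurewicz image after passing through the antipode coming from $\Phi\Omega^{C_2}_*\to\Phi MU^{C_2}_*$ (Lemma~\ref{geometricfixedpoints}). The only cosmetic difference is that the paper packages the Hurewicz step via the characteristic-number pairing $[M,\xi]\otimes c_I\otimes c_J\mapsto\langle c_I(\nu)c_J(\xi),[M]\rangle$, which yields the generating function $1/[(1+b_1x+\cdots)^k(1+b_1'x+\cdots)^\ell]$ in one stroke, whereas you factor it as push-forward of $[\mathbf{CP}^{m-1},\mathrm{id}]$ followed by $(f_1)_*$; these are equivalent and the bookkeeping you flag is routine.
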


\begin{proof}
The augmentation maps $\pi_{m+n\sigma}$ to $[\mathbf{CP}(m+n)] \in MU_*$, which was determined in our non-equivariant discussion above. Thus, our main task is to determine the image of $\pi_{m+n\sigma}$ in the geometric fixed point ring. Geometrically, the class $\pi_{m+n\sigma} = [\mathbf{CP}(m+n\sigma)] $ maps in the geometric fixed points to
\[
\left[\mathbf{CP}(m) , -\{ \nu\mid_{\mathbf{CP}(m)}^{\mathbf{CP}(m+n)}\}\right]u^{-n}+ \left[\mathbf{CP}(n) , -\{\nu\mid_{\mathbf{CP}(n)}^{\mathbf{CP}(m+n)}\}\right]u^{-m} \in MU_*[b_i'][u^{\pm 1}],\]
where we have used the fact that elements of $MU_*[b_i'] = MU_*(BU)$ are represented by pairs $[M,\xi]$ of a stably almost complex manifold $M$ equipped with a stable vector bundle $\xi$. In order to detect the image of the classes $[\mathbf{CP}(k), -\{\nu\mid_{\mathbf{CP}(k)}^{\mathbf{CP}(k+\ell)}\}]$ in $ MU_*[b_i':i\geq 1]$, we can apply the Hurewicz homomorphism 
\[ MU_*[b_i'] =MU_*(BU) \to \widetilde{H}_*(MU \wedge BU_+) = \ZZ[b_i,b_i' : i \geq 1],\]
which is injective. Much like the case of the Hurewicz homomorphism $MU_* \to H_*(MU)$, we can think of $MU_*(BU) \to \widetilde{H}_*(MU \wedge BU_+)$ as encoding generalized characteristic numbers. More precisely, we have a pairing
\begin{align*}
 MU_*(BU) \otimes H^*(BU) \otimes H^*(BU) & \longrightarrow \ZZ\\
[M,\xi] \otimes c_I \otimes c_J & \longmapsto \langle c_I(\nu) c_J(\xi) , [M] \rangle
\end{align*}
where $\nu$ is the stable normal bundle of $M$ and $[M] \in H_{*}(M)$ is the fundamental class of $M$. This map is adjoint to the map 
\[
MU_*(BU) \to \text{Hom}(H^*(BU) \otimes H^*(BU) , \ZZ),
\]
which corresponds to the Hurewicz homomorphism under the isomorphism 
\begin{align*}
 \text{Hom}(H^*(BU) \otimes H^*(BU) , \ZZ) & \cong \text{Hom}(H^*(BU \times BU) , \ZZ)\\
& \cong   H_*(BU \times BU)\\
&  \cong \widetilde{H}_*(MU \wedge BU_+).
\end{align*} 
 The stable normal bundle of $\mathbf{CP}(k)$ is $\nu =  - k\{\gamma^1\}$, and so
\[
\{\nu \mid_{\mathbf{CP}(k)}^{\mathbf{CP}(k+\ell)} \} = - k \{ \gamma^1\} + (k+ \ell) \{ \gamma^1 \} = \ell\{\gamma^1\}.
\]
From this it follows that $[\mathbf{CP}(k) , -\{\nu \mid_{\mathbf{CP}(k)}^{\mathbf{CP}(k+\ell)} \}] = [\mathbf{CP}(k),-\ell \{\gamma^1\}]$. Since the direct sum of vector bundles corresponds to the product on $MU_*(BU)$, we can deduce that $[\mathbf{CP}(k),-\ell \{\gamma^1\}]$ maps via the Hurewicz homomorphism to the coefficient of $x^{k-1}$ in the power series 
\begin{align*}
 \dfrac{1}{(1 + b_1x + b_2 x^2 + \cdots )^{k}(1+b'_1x + b_2'x^2 + \cdots)^\ell} \in  \ZZ[b_i,b_i' : i \geq 1][[x]],
\end{align*}
which implies the result.
\end{proof}

We can use the previous proposition to express the classes $\pi_{m+n\sigma} \in MU_*[d_{i,j},q_j]/I$ in terms of the generators $d_{i,j}, q_j$ for some small values of $m$ and $n$. In order to do so, our first step is to construct a lift $\tilde{\pi}_{m+n\sigma} \in MU_*[d_{i,j},q_j]$ of the image of $\pi_{m+n\sigma}$ in $\mathbb{Z}[b_i,b_i'][u^{\pm 1}]$. We can do this using the formulas  \ref{underlyingclass}, \ref{fixedpointclass} , and the formula  \ref{generatorimages} of section \ref{pullbacks} .  Since the map $\Omega^{C_2}_* \to \Phi MU^{C_2}_*$ is not injective, the lift $\tilde{\pi}_{m+n\sigma}$ might not be the right one. However, if we have such a lift $\tilde{\pi}_{m+n\sigma}$, then since the kernel of $\Omega^{C_2}_* \to \mathbb{Z}[b_i,b_i'][u^{\pm 1}]$ is $MU_* \{q_1\}$, we can deduce that 
\[
\pi_{m+n\sigma} = \tilde{\pi}_{m+n\sigma} + \gamma q_1
\]
where $\gamma = (m+n)m_{m+n-1} - |\tilde{\pi}_{m+n\sigma}|$, and $|\tilde{\pi}_{m+n\sigma}|$ denotes the image of $\tilde{\pi}_{m+n\sigma}$ under the augmentation $\Omega^{C_2}_* \to MU_*$, which is determined by 
\[
d_{i,j} \mapsto c_{i,j}\text{,} \hspace{0.5in} q_j \mapsto p_j.
\]
We employ this strategy to calculate $\pi_{m+n\sigma}$ for some small values of $m$ and $n$. The validity of these equalities can be verified by considering the image of each side of the equation in $\mathbb{Z}[b_i]$ and $\mathbb{Z}[b_i,b_i'][u^{\pm 1}]$.

\begin{example}\label{examples}$(m,n) = (1,1)$
\[
\pi_{1+\sigma} = - q_2
\]
\end{example}
\begin{example}$(m,n) = (2,1)$
\[
\pi_{2+\sigma}  = d_{1,0} - a_{1,1}q_2
\]
\end{example}
\begin{example}$(m,n) = (2,2)$
\[
\pi_{2+2\sigma} = 4d_{1,1}+2q_4 - 2q_2q_3 - q_2^3 + (6b_1^3 -18 b_1b_2 + 6b_3)q_1
\]
\end{example}

Having analyzed the classes $\pi_{m+n\sigma} \in \Omega^{C_2}_*$, our next goal is to prove Theorem \ref{geometricbasistheorem}. In order to do so, we'll analyze the geometry of equivariant projective spaces, and the equivariant Pontrjagin-Thom construction. If $V$ is a $C_2$-representation, write $\gamma(V)$ for the tautological line bundle on $\mathbf{CP}(V)$. 
%\begin{lemma}
%The map 
%\begin{align*}
%(P(V \oplus 1) , P(1)) & \to (PV^{\xi(V)}, \infty)\\
%[\vec{v} : \lambda] & \mapsto 
%\begin{cases} \infty & \vec{v} = 0 \\
%([\vec{v}] , 0) & \vec{v} \neq 0 \text{ and } \lambda = 0\\
%([\vec{v}] , \frac{1}{\lambda}\vec{v}) & \vec{v} \neq 0 \text{ and } \lambda \neq 0.
%\end{cases} 
%\end{align*} 
%is an isomorphism of based $C_2$-spaces.
%\end{lemma}
Define a function $s: \mathbf{C} \to \mathbf{C}$ by 
\[ 
s(\lambda) = 
\begin{cases} 
0 & \lambda = 0 \\
\frac{1}{\lambda} & \lambda \neq 0. \\
\end{cases} 
\]
\begin{lemma} \label{projectivespacelemma}
Suppose $V$ is a $C_2$ representation and $W = \rho_1 \oplus \cdots \oplus \rho_k$ where each $\rho_i$ is irreducible. Define
\[
\nu = \bigoplus_{i=1}^k \rho_i^{-1} \gamma(V).
\]
Then the map
\begin{align*}
\mathbf{CP}(V \oplus W) / \mathbf{CP}(W)  & \to \text{Th}(\nu \to \mathbf{CP}(V)) \\
[\vec{v} : \lambda_1 : \cdots : \lambda_k] & \mapsto 
\begin{cases} \infty & \vec{v} = 0 \\
([\vec{v}] , s(\lambda_1)\vec{v}, \cdots , s(\lambda_k) \vec{v} ) & \vec{v} \neq 0.
\end{cases} 
\end{align*} 
is an isomorphism of based $C_2$-spaces.
\end{lemma}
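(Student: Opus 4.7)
The plan is to realize both sides of the claimed homeomorphism as the one-point compactification of a common underlying locally compact $C_2$-space, namely the total space of $\nu$. Since $\mathbf{CP}(V \oplus W)$ is compact Hausdorff with $\mathbf{CP}(W)$ closed, the quotient $\mathbf{CP}(V \oplus W)/\mathbf{CP}(W)$ is canonically the one-point compactification of the open complement $\mathbf{CP}(V \oplus W) \setminus \mathbf{CP}(W)$. Similarly, $\text{Th}(\nu)$ is, by definition, the one-point compactification of the total space of $\nu$. It therefore suffices to exhibit a $C_2$-equivariant homeomorphism between these open loci that is proper, so the extension to the respective infinity points is automatic.

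The key geometric identification is that a line $L \subset V \oplus W$ not contained in $W$ projects isomorphically onto a unique line $\ell \subset V$, so $L$ is the graph of a unique linear map $\ell \to W$. Decomposing $W = \bigoplus_i \rho_i$, this linear map splits into components $\ell \to \rho_i$, which are naturally sections of the $C_2$-equivariant line bundles summing to $\nu$ (after matching conventions for duality versus inversion, using $\rho_i^{-1} \cong \rho_i$ for $C_2$-characters). I would then verify that the explicit formula, on the open locus $\vec{v} \neq 0$, implements this identification: the expression $s(\lambda_i)\vec{v}$ is invariant under the simultaneous rescaling $(\vec{v}, \lambda_i) \mapsto (\alpha \vec{v}, \alpha \lambda_i)$ that preserves the projective point, since $s(\alpha \lambda_i)(\alpha \vec{v}) = s(\lambda_i)\vec{v}$. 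An explicit inverse is defined by $([\vec{v}], \mu_i \vec{v}) \mapsto [\vec{v} : s(\mu_1) : \cdots : s(\mu_k)]$, and since $s \circ s$ is the identity on $\mathbf{C}$, the two maps compose to the identity on their respective open loci.

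For equivariance, one uses that under the $C_2$-action, $\lambda_i \in \rho_i$ transforms by $g \cdot \lambda_i = \rho_i(g)\lambda_i$, so $s(\rho_i(g)\lambda_i) = \rho_i(g)^{-1}s(\lambda_i)$; the character factor $\rho_i(g)^{-1}$ exactly encodes the $C_2$-action on the $\rho_i^{-1}$-factor in the fiber of $\nu$, matching the action on the target. The main obstacle I anticipate is verifying properness, equivalently, checking that any sequence of points in $\mathbf{CP}(V \oplus W) \setminus \mathbf{CP}(W)$ approaching $\mathbf{CP}(W)$ has image tending to the basepoint $\infty \in \text{Th}(\nu)$. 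This requires careful analysis near the collapsed subspace: one must pass to an appropriate normalization of the projective representative (say one in which the fiber coordinates stay bounded) and then exhibit the blowup of norm in $\nu$ forcing the limit to lie at infinity in the Thom space. Once this properness is established, the one-point compactification machinery delivers the lemma.
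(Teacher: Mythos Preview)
The paper states this lemma without proof, so there is no argument to compare against directly. Your overall strategy---realizing both sides as one-point compactifications of a common locally compact $C_2$-space and reducing to a proper equivariant homeomorphism on the open complements---is exactly the right one, and your geometric description of $\mathbf{CP}(V\oplus W)\setminus\mathbf{CP}(W)$ in terms of graphs of linear maps $\ell\to W$ is the standard picture underlying this identification.

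There is, however, a genuine gap. You check well-definedness under projective rescaling, bijectivity (using that $s\circ s=\mathrm{id}$ as a set map), and equivariance, and you single out properness near $\mathbf{CP}(W)$ as the main remaining obstacle. What you never address is continuity of the formula on the \emph{interior} of the open locus, and this is precisely where the explicit formula is delicate: the function $s$ is discontinuous at $0$. Concretely, take a point $p=[\vec v:0:\lambda_2:\cdots:\lambda_k]$ with $\vec v\neq 0$. The formula assigns to $p$ the finite point $([\vec v],0,s(\lambda_2)\vec v,\ldots)\in E(\nu)$. But for nearby points $[\vec v:\epsilon:\lambda_2:\cdots:\lambda_k]$ with $\epsilon\neq 0$, the first fiber coordinate is $s(\epsilon)\vec v=\vec v/\epsilon$, whose norm blows up as $\epsilon\to 0$; in $\mathrm{Th}(\nu)$ these images tend to $\infty$, not to the image of $p$. (Already in the toy case $V=W=\mathbf{C}$ one sees $[1:0]\mapsto 0$ while $[1:\epsilon]\mapsto 1/\epsilon\to\infty$.) So the ``homeomorphism of open loci'' step in your plan does not go through for the formula as literally written, and no amount of properness analysis at the collapsed subspace repairs this.

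Your own geometric paragraph in fact points to the fix: the canonical continuous identification sends a line $L\not\subset W$ to the linear map $\ell\to W$ of which it is the graph, which naturally lives in $\bigoplus_i\mathrm{Hom}(\gamma(V),\rho_i)\cong\bigoplus_i\gamma(V)^{\ast}\otimes\rho_i$. You should either carry out the argument with that bundle (and then explain, if needed, the identification with $\bigoplus_i\rho_i^{-1}\gamma(V)$), or reinterpret the displayed formula so that the target fiber is $\gamma(V)^{\ast}$ rather than $\gamma(V)$. Either way, the continuity verification on the open locus is not a formality here and must be made explicit.
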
 

We continue our notation from the previous lemma in the following.

\begin{lemma}\label{homologylemma}
The composite
\[ \begin{tikzcd} 
MU^{C_2}_*(\mathbf{CP}(V \oplus W)) \ar[r] & MU^{C_2}_*(\mathbf{CP}(V \oplus W), \mathbf{CP}(W)) \ar[r,"\cong"] & MU^{C_2}_*\left(D(\nu),S(\nu)\right)
\end{tikzcd} \]
takes $\Pi_{V \oplus W} $ to $\left[D(\nu) \overset{id}{\longrightarrow} D(\nu)\right] \in MU^{C_2}_*(D(\nu),S(\nu))$.
\end{lemma}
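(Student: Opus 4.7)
I would prove this by tracking the class $\Pi_{V\oplus W} \in MU^{C_2}_*(\mathbf{CP}(V\oplus W))$, represented by the identity $\mathbf{CP}(V\oplus W) \overset{id}{\longrightarrow} \mathbf{CP}(V\oplus W)$, through the two arrows of the composite. The first arrow, induced at the level of spectra by the based collapse $\mathbf{CP}(V\oplus W)_+ \to \mathbf{CP}(V\oplus W)/\mathbf{CP}(W)$, sends $\Pi_{V\oplus W}$ to the class of the quotient $q : \mathbf{CP}(V\oplus W) \to \mathbf{CP}(V\oplus W)/\mathbf{CP}(W)$ in $\widetilde{MU}^{C_2}_*(\mathbf{CP}(V\oplus W)/\mathbf{CP}(W)) = MU^{C_2}_*(\mathbf{CP}(V\oplus W), \mathbf{CP}(W))$. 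Postcomposing with the homeomorphism $h$ of Lemma \ref{projectivespacelemma} will then identify this with the class of $h \circ q : \mathbf{CP}(V\oplus W) \to \text{Th}(\nu)$ in $\widetilde{MU}^{C_2}_*(\text{Th}(\nu)) = MU^{C_2}_*(D(\nu), S(\nu))$.

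The key observation will be to recognize $h \circ q$ as a Pontrjagin-Thom collapse map associated to the $C_2$-equivariant embedding $\mathbf{CP}(V) \hookrightarrow \mathbf{CP}(V\oplus W)$ with normal bundle $\nu$. By Lemma \ref{projectivespacelemma}, $h$ identifies the open submanifold $\mathbf{CP}(V\oplus W) \setminus \mathbf{CP}(W)$ with the total space $E(\nu)$, sending the zero section onto $\mathbf{CP}(V)$ and crushing $\mathbf{CP}(W)$ to the basepoint. Setting $N := h^{-1}(D(\nu)) \subset \mathbf{CP}(V\oplus W)$, the restriction $h|_N : N \to D(\nu)$ is a $C_2$-equivariant homeomorphism of stably almost complex manifolds with boundary, so the contribution of $N$ to $[h \circ q]$ will be precisely $[D(\nu) \overset{id}{\longrightarrow} D(\nu)]$. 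The complement $M' := \overline{\mathbf{CP}(V\oplus W) \setminus N^\circ}$ is sent by $h \circ q$ into the subspace $\{v \in E(\nu) : |v| \geq 1\} \cup \{\infty\}$ of $\text{Th}(\nu) = E(\nu)^+$.

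The main obstacle will be producing a $C_2$-equivariant null-bordism showing that the contribution of $M'$ vanishes in $\widetilde{MU}^{C_2}_*(\text{Th}(\nu))$. This can be accomplished by the $C_2$-equivariant radial dilation $(x,v) \mapsto (x, \phi(s) v)$ with $\phi(s) \to \infty$, which deformation retracts $\{|v| \geq 1\} \cup \{\infty\}$ onto the basepoint $\{\infty\}$; since $\nu$ is a complex $C_2$-vector bundle and the $C_2$-action on $\nu$ is $\mathbb{C}$-linear, this deformation will respect the stably almost complex structure $C_2$-equivariantly and yield the needed null-bordism of $M' \to \text{Th}(\nu)$. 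Once this is in place, only the piece $N$ contributes, giving the desired identification $[h \circ q] = [D(\nu) \overset{id}{\longrightarrow} D(\nu)]$.
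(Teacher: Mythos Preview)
Your approach is essentially the same as the paper's: both hinge on a radial dilation in $\text{Th}(\nu)$ to produce a bordism between $h\circ q$ and the identity on $D(\nu)$. The paper packages this as a single explicit map $F:\mathbf{CP}(V\oplus W)\times[0,1]\to\text{Th}(\nu)$ on the cylinder, while you phrase it as decomposing into the tubular neighborhood $N$ and its complement $M'$ and null-bording the latter; note that your radial dilation, viewed as a homotopy of the target precomposed with $h\circ q$, is exactly such a cylinder bordism, which is the cleanest way to avoid the minor awkwardness that $N$ and $M'$ are not closed and so do not individually represent classes.
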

\begin{proof}
We will construct maps fitting into the following commutative diagram. 
\[
\begin{tikzcd} 
\mathbf{CP}(V \oplus W) \ar[r] \ar[d,swap,"i_0"] & \mathbf{CP}(V \oplus W) / \mathbf{CP}(W) \ar[d,"\cong"] \\
\mathbf{CP}(V \oplus W) \times [0,1] \ar[r,"F"] & \text{Th}(\nu) \\
D(\nu) \ar[u,"i_1"] \ar[r] & D(\nu)/S(\nu) \ar[u,swap,"\cong"] 
\end{tikzcd} 
\]

The map $i_0$ is defined by $i_0(x) = (x,0)$. Using Lemma \ref{projectivespacelemma}, we can identify 
\[\mathbf{CP}(V \oplus W)\setminus \mathbf{CP}(W) \cong \text{Th}(\nu) \setminus \{ \infty \} = E(\nu) \]
with the total space of $\nu$, so we can consider $D(\nu) \subset E(\nu)$ as a subspace of $ \mathbf{CP}(V \oplus W)$. The map $i_1$ is then defined by $i_1(\vec{v}) = (\vec{v} , 1)$. We define the map $F$ by 

\[
F(x,t)  = 
\begin{cases}
\infty & x \in \mathbf{CP}(W) \text{ or } x \in E(\nu) \text{ has norm } |x| \geq 1/t \\
\tan(\frac{\pi}{2}t |x|) x & x \in E(\nu) \text{ has norm } |x| < 1/t.
\end{cases} 
\]
Then $F$ extends the quotient maps $\mathbf{CP}(V \oplus W) \to \text{Th}(\nu)$ and $D(\nu) \to \text{Th}(\nu)$, and $F$ sends the complement of $P(V \oplus W) \coprod D(\nu) $ in $\partial\left(\mathbf{CP}(V \oplus W) \times [0,1]\right)$ to the basepoint of $\text{Th}(\nu)$, so $F$ is a cobordism between $\mathbf{CP}(V \oplus W) \to \text{Th}(\nu)$ and $D(\nu) \to D(\nu)$. 
\end{proof}

\begin{lemma} \label{cohomologylemma}
The isomorphism 
\[MU^*_{C_2}(\mathbf{CP}(V \oplus W), \mathbf{CP}(W)) \cong MU^*_{C_2}(D(\nu),S(\nu))\]
takes $x^W$ to the thom class $\tau(\nu)$.
\end{lemma}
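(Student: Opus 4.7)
My plan is to induct on $k$, the number of irreducible summands of $W$, reducing the general case to the base case $W = \rho$ a single character via multiplicativity of both Thom classes and of the classes $x^W$. The base case is the heart of the argument and rests on the definition of a complex orientation; the inductive step is bookkeeping about how the isomorphism of Lemma \ref{projectivespacelemma} behaves under direct sums.

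For the base case $W = \rho$, the key input is that $x$ itself is a Thom class. Taking $V$ to be the ambient universe and $W = 1$ in Lemma \ref{projectivespacelemma} gives an identification $\mathbf{CP}^\infty_{C_2}/\mathbf{CP}(1) \cong \text{Th}(\gamma^\infty)$ under which $x \in \widetilde{MU}^2_{C_2}(\mathbf{CP}^\infty_{C_2})$ corresponds to the Thom class $\tau(\gamma^\infty)$; this is essentially the content of the definition of a complex orientation, since the Thom class of a line bundle is characterized by restricting to a unit on each fiber. Now $x^\rho$ is by definition $m_\rho^{*}x$, where $m_\rho \colon \mathbf{CP}^\infty_{C_2} \to \mathbf{CP}^\infty_{C_2}$ is the $\rho$-twist map classifying $\rho \otimes \gamma^\infty$ and sends $\mathbf{CP}(1)$ to $\mathbf{CP}(\rho)$. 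Naturality of Lemma \ref{projectivespacelemma} then gives a commutative square
\[
\begin{tikzcd}
\mathbf{CP}^\infty_{C_2}/\mathbf{CP}(\rho) \ar[r,"\cong"] \ar[d,"m_\rho"] & \text{Th}(\rho^{-1}\gamma^\infty) \ar[d] \\
\mathbf{CP}^\infty_{C_2}/\mathbf{CP}(1) \ar[r,"\cong"] & \text{Th}(\gamma^\infty)
\end{tikzcd}
\]
whose right vertical arrow is the map of Thom spaces covering $m_\rho$, so pulling back $\tau(\gamma^\infty)$ yields $\tau(\rho^{-1}\gamma^\infty)$. Restricting to the sub-pair $(\mathbf{CP}(V\oplus\rho), \mathbf{CP}(\rho))$ gives the conclusion for $k=1$.

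For the inductive step, write $W = W' \oplus \rho_k$, so that $\nu = \nu' \oplus \rho_k^{-1}\gamma(V)$ on $\mathbf{CP}(V)$. Multiplicativity of Thom classes for complex-oriented theories gives $\tau(\nu) = \tau(\nu') \cdot \tau(\rho_k^{-1}\gamma(V))$, where the product is taken under the fiberwise smash identification $\text{Th}(\nu) \simeq \text{Th}(\nu') \wedge_{\mathbf{CP}(V)} \text{Th}(\rho_k^{-1}\gamma(V))$. On the orientation side, the cup product structure on relative cohomology of $\mathbf{CP}^\infty_{C_2}$ together with $x^W = x^{W'}\cdot x^{\rho_k}$ matches this factorization. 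Combining with the inductive hypothesis and the base case then yields the result.

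The main obstacle is keeping track of the multiplicative structure on the relative cohomology groups $MU^{*}_{C_2}(\mathbf{CP}(V \oplus W), \mathbf{CP}(W))$ and verifying that the chain of identifications from Lemma \ref{projectivespacelemma} intertwines this multiplication with the fiberwise smash product of Thom spaces. In particular, one needs to identify the diagonal $\mathbf{CP}(V\oplus W) \to \mathbf{CP}(V\oplus W) \times \mathbf{CP}(V\oplus W)$ with the appropriate Thom diagonal $\text{Th}(\nu) \to \text{Th}(\nu') \wedge_{\mathbf{CP}(V)} \text{Th}(\rho_k^{-1}\gamma(V))$ and check that restriction to $\mathbf{CP}(W)$ is compatible under both. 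Once this compatibility is established, the inductive step reduces to the routine multiplicativity of Thom classes for direct sums of complex vector bundles.
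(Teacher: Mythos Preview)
Your proposal is correct and takes essentially the same approach as the paper: the paper writes down a single commutative diagram whose left column factors $x^W$ through the diagonal $\delta$ into $\bigwedge_i \mathbf{CP}(V\oplus\rho_i)/\mathbf{CP}(\rho_i)$, then through the twist identification $\mathbf{CP}(V\oplus\rho_i)/\mathbf{CP}(\rho_i)\cong\mathbf{CP}(\rho_i^{-1}V\oplus 1)/\mathbf{CP}(1)$ followed by $x$ on each factor, and whose right column is the corresponding multiplicative decomposition of $\tau(\nu)$---exactly your base case and inductive step unrolled into one diagram. One small cosmetic point: your map $m_\rho$ should send $\mathbf{CP}(\rho)$ to $\mathbf{CP}(1)$ for the vertical arrow in your square to make sense on quotients, which for $C_2$ is the same map since $\rho=\rho^{-1}$, but the paper's twist-by-$\rho_i^{-1}$ phrasing avoids this ambiguity.
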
 
\begin{proof} If we write $W = \rho_1 \oplus \cdots \oplus \rho_k$ where each $\rho_i$ is irreducible, then the diagram
\[ \begin{tikzcd} 
\mathbf{CP}(V \oplus W)/\mathbf{CP}(W) \ar[r] \ar[d,swap,"\delta"] & \text{Th}\left( \bigoplus_{i=1}^k \rho_i^{-1} \gamma(V) \right)  \ar[d] \\
 \bigwedge_{i=1}^k \mathbf{CP}(V \oplus \rho_i) / \mathbf{CP}(\rho_i) \ar[d,swap,"\cong" ] \ar[r,"\cong"] &\bigwedge_{i=1}^k\text{Th}\left( \rho_i^{-1} \gamma(V) \right) \ar[d,"\cong"]  \\
 \bigwedge_{i=1}^k\mathbf{CP}(\rho_i^{-1} V \oplus 1)/\mathbf{CP}(1)  \ar[r,"\cong"] \ar[d,swap,"\bigwedge_{i=1}^k x"] &   \bigwedge_{i=1}^k \text{Th}(\gamma(\rho_i^{-1} V)) \ar[d]  \\
 \bigwedge_{i=1}^k \Sigma^2 MU_{C_2}  \ar[r,"="] &  \bigwedge_{i=1}^k \Sigma^2MU_{C_2}
 \end{tikzcd} \]
 commutes, where the composite of the vertical arrows on the left is $x^W$, and the composite of the vertical arrows on the right is $\tau(\nu)$.
\end{proof} 

\begin{proposition} \label{pairingrelation}
If $V$ and $W$ are $C_2$ representations, then 
\[
\langle \Pi_{V \oplus W} , x^W \rangle = \pi_V.
\]
\end{proposition}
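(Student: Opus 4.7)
The plan is to combine Lemmas \ref{projectivespacelemma}, \ref{homologylemma}, and \ref{cohomologylemma} with the Thom isomorphism for $MU_{C_2}$. By naturality of the pairing across the map from the universal $C_2$-equivariant formal group law, it suffices to verify the identity in the case $(A,D) = (MU^{C_2}_*, MU^{C_2}_*(\mathbf{CP}^\infty_{C_2}))$, after which the general statement follows by base change.

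Writing $f: \mathbf{CP}(V \oplus W) \hookrightarrow \mathbf{CP}^\infty_{C_2}$ for the inclusion, we have $\Pi_{V \oplus W} = f_*[\mathbf{CP}(V \oplus W)]$. The projection formula together with the definition of the pairing recasts
\[
\langle \Pi_{V \oplus W}, x^W \rangle \;=\; \epsilon_*\bigl([\mathbf{CP}(V \oplus W)] \cap f^*x^W\bigr),
\]
where $\epsilon_*$ denotes pushforward to a point. Each factor $x^{\rho_i}$ of $x^W$ restricts trivially to $\mathbf{CP}(\rho_i) \subset \mathbf{CP}(W)$, so the product $f^*x^W$ vanishes on $\mathbf{CP}(W)$ and lifts to a relative class in $MU^*_{C_2}(\mathbf{CP}(V \oplus W), \mathbf{CP}(W))$. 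The cap product above may therefore be computed in the relative group $MU^{C_2}_*(\mathbf{CP}(V \oplus W), \mathbf{CP}(W))$.

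Via the homeomorphism $\mathbf{CP}(V \oplus W)/\mathbf{CP}(W) \cong \text{Th}(\nu)$ of Lemma \ref{projectivespacelemma}, Lemma \ref{cohomologylemma} identifies the relative lift of $x^W$ with the Thom class $\tau(\nu) \in MU^*_{C_2}(D(\nu), S(\nu))$, and Lemma \ref{homologylemma} identifies the image of $[\mathbf{CP}(V \oplus W)]$ in $MU^{C_2}_*(D(\nu), S(\nu))$ with $[D(\nu) \xrightarrow{\mathrm{id}} D(\nu)]$. The pairing therefore equals
\[
\epsilon_*\bigl([D(\nu) \xrightarrow{\mathrm{id}} D(\nu)] \cap \tau(\nu)\bigr).
\]
The cap product with $\tau(\nu)$ is the Thom isomorphism $MU^{C_2}_*(D(\nu), S(\nu)) \xrightarrow{\cong} MU^{C_2}_{*-|W|}(\mathbf{CP}(V))$, under which $[D(\nu) \xrightarrow{\mathrm{id}} D(\nu)]$ corresponds to the fundamental class $[\mathbf{CP}(V)] \in MU^{C_2}_*(\mathbf{CP}(V))$. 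Pushforward to a point sends this to $\pi_V = [\mathbf{CP}(V)] \in MU^{C_2}_*$, completing the identification.

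The main technical nuisance is bookkeeping the compatibility of the absolute Kronecker pairing with the relative lift of $x^W$ and with the three lemmas' identifications of relative (co)homology. This amounts to a routine diagram chase using naturality of the cap product and the long exact sequence of the pair $(\mathbf{CP}(V \oplus W), \mathbf{CP}(W))$; once arranged, every step is an application of one of the three lemmas or the standard Thom isomorphism.
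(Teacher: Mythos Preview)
Your proof is correct and follows essentially the same approach as the paper: both reduce the pairing $\langle \Pi_{V\oplus W}, x^W\rangle$ to $\langle [D(\nu)\to D(\nu)], \tau(\nu)\rangle$ via Lemmas~\ref{homologylemma} and~\ref{cohomologylemma}, and then identify this with $\pi_V$. The only difference is presentational---you phrase the last step as the Thom isomorphism sending $[D(\nu)]\cap\tau(\nu)$ to the fundamental class of $\mathbf{CP}(V)$, while the paper writes out the explicit Pontrjagin--Thom composite and reads off $\mathbf{CP}(V)$ as the preimage of the zero section; these are two descriptions of the same map. One small remark: your justification that $x^W$ vanishes on $\mathbf{CP}(W)$ (``each $x^{\rho_i}$ restricts trivially to $\mathbf{CP}(\rho_i)$'') is slightly imprecise---the clean statement is that $x^W=\prod x^{\rho_i}$ is the Euler class of $\bigoplus\rho_i^{-1}\gamma(W)$, which has a nowhere-zero section over $\mathbf{CP}(W)$---but this is exactly what Lemma~\ref{cohomologylemma} packages, so no gap results.
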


\begin{proof}
The Pontrjagin-Thom construction takes $\Pi_V = [\mathbf{CP}(V) \to \mathbf{CP}(V)] \in \Omega^{C_2}_*(\mathbf{CP}(V))$ to the class of a map $f:S^X \to MU_{C_2}(Y) \wedge \mathbf{CP}(V \oplus W)_+$ such that $\mathbf{CP}(V \oplus W) \subset X$ is the preimage of the zero section of $\xi(Y) \to MU_{C_2}(Y)$. By lemmas \ref{homologylemma} and \ref{cohomologylemma}, the element 
\[ \langle \Pi_{V \oplus W} , x^W \rangle = \langle [D(\nu) \to D(\nu)] , \tau(\nu) \rangle \in MU^{C_2}_*\]
 is represented by the composite 
\[ \begin{tikzcd} 
S^X \ar[r,"f"] & MU_{C_2}(Y) \wedge \mathbf{CP}(V \oplus W)_+  \ar[r] & MU_{C_2}(Y) \wedge \mathbf{CP}(V \oplus W)/\mathbf{CP}(W) \ar[d,"\cong"] \\
 & & MU_{C_2}(Y) \wedge \text{Th}(\nu) \ar[d,"id \wedge g"]  \\
 & & MU_{C_2}(Y) \wedge MU(Y') \ar[d] \\
  & & MU_{C_2}(Y \oplus Y'),
\end{tikzcd} \] 
where $g: \text{Th}(\nu) \to MU_{C_2}(Y')$ is obtained by applying $\text{Th}(-)$ to a vector bundle map $\nu \to \xi(Y')$. Since the isomorphism $\mathbf{CP}(V \oplus W) /\mathbf{CP}(W) \cong \text{Th}(\nu)$ identifies $\mathbf{CP}(V)$ with the zero section of $\nu$, this composite is a model for $\pi_V \in MU^{C_2}_*$.
\end{proof} 

We now give the proof of Theorem \ref{geometricbasistheorem}.

\begin{proof}
Result (1) follows from Proposition \ref{pairingrelation} since
\begin{align*}
\Pi_{\rho_1 + \cdots + \rho_n} & = \sum_{i=1}^n \langle \Pi_{\rho_1+\cdots+ \rho_n} , x^{\rho_1 + \cdots + \rho_{i-1}}\rangle \beta(\rho_1, \dots ,\rho_i) \\
& = \sum_{i=1}^n \pi_{\rho_i + \cdots+ \rho_n} \beta(\rho_1, \dots, \rho_i).
\end{align*} 
Result (2) follows from (1) since the matrix expressing $\{ \Pi_{\rho_1 + \cdots + \rho_i} : 1 \leq i \leq n\}$ in terms of the basis $\{ \beta(\rho_1, \dots , \rho_i) : 1 \leq i \leq n\}$ is invertible.
\end{proof} 

\section{$RO(C_2)$-graded calculations} \label{furthercalculations}

In developing our theory of geometrically oriented $C_2$-spectra, we calculated the extended coefficient rings of various $C_2$-spectra, most notably connective $K$-theory $k_{C_2}$, and geometric cobordism $\Omega_{C_2}$. While we only needed the extended coefficient ring of these geometrically oriented $C_2$-spectra to understand their stabilization and the structure of their associated filtered $C_2$-equivariant formal group law, it is of independent interest to understand the full $RO(C_2)$-graded coefficients of these spectra. The purpose of this section is to complete the calculation of $k^{C_2}_\star$ and $\Omega^{C_2}_\star$. The reader will see that  $k^{C_2}_\star$ and $\Omega^{C_2}_\star$ are much more complicated than the extended coefficient rings $k^{C_2}_\diamond $ and $\Omega^{C_2}_\diamond$. In particular, neither $k^{C_2}_\star$ nor $\Omega^{C_2}_\star$ is concentrated in even degrees.

\subsection{The $RO(C_2)$-graded coefficients of $k_{C_2}$}\label{appendixA}

We begin by calculating the $RO(C_2)$-graded coefficients of the connective cover $k_{C_2}$ of $C_2$-equivariant complex $K$-theory. We illustrate the Mackey functor structure explicitly, since it is no more difficult to do so. The labels $\Box, \circ, n,$ and $n/m$ in the statement of our calculation refer to the $C_2$-Mackey functors
\[ \begin{tikzcd} 
\Box = \underline{R} & &  \circ  & & n& & n/m  \\
 \mathbb{Z}[\sigma]/(\sigma^2 - 1) \ar[dd, "1\text{,} \sigma \mapsto 1" , swap, bend right = 20]  & &\mathbb{Z}\{1+ \sigma\}  \ar[dd,  "1+\sigma  \mapsto 2",swap,bend right =20] & & \mathbb{Z}  \ar[dd, bend right = 20] & & \mathbb{Z}/2^{n-m}\mathbb{Z} \ar[dd, bend right = 20] \\
& & & & & &   \\
 \ZZ \ar[uu, "1 \mapsto 1 + \sigma", swap, bend right = 20] & &  \mathbb{Z} \ar[uu, "1 \mapsto 1 + \sigma", swap, bend right = 20]  & &  0 \ar[uu, bend right = 20] & & 0 \ar[uu, bend right = 20]
\end{tikzcd} \]
where the value of each Mackey functor at $C_2/C_2$ is shown on top, and the value at $C_2/e$ is shown on bottom. The reader should think of the Mackey functor $n$ as the $n$th power $\underline{J}^n$ of the augmentation ideal $\underline{J} \subset \underline{R}$, and $n/m$ as the quotient Mackey functor $\underline{J}^n/\underline{J}^m$. 

\begin{theorem}
The $RO(C_2)$-graded coefficients of the connective cover $k_{C_2}$ of equivariant $K$-theory are depicted below.

\[
\begin{tikzpicture}[scale = 0.75]
\coordinate (Origin)   at (0,0);
    \coordinate (XAxisMin) at (-11,0);
    \coordinate (XAxisMax) at (9,0);
    \coordinate (YAxisMin) at (0,-7);
    \coordinate (YAxisMax) at (0,7);

    \draw[thin, gray] [xshift=7.5,yshift=7.5](-9.9,-9.9) grid [xshift=7.5,yshift=7.5](8.9,8.9);
    \draw [very thin, gray,-latex] (0,0) -- (0,10);
    \draw [very thin, gray,-latex] (0,0) -- (10,0);

    \node at (0,11) {$\alpha$};
    \node at (0,0) {$\Box$};
    \node at (2,0) {$\Box$};
    \node at (4,0) {$\Box$};
    \node at (6,0) {$\Box$};
      \node at (8,0) {$\Box$};
    \node at (0,2) {$\Box$};
    \node at (2,2) {$\Box$};
    \node at (4,2) {$\Box$};
    \node at (6,2) {$\Box$};
      \node at (8,2) {$\Box$};
    \node at (0,4) {$\Box$};
    \node at (2,4) {$\Box$};
    \node at (4,4) {$\Box$};
    \node at (6,4) {$\Box$};
      \node at (8,4) {$\Box$};
    \node at (0,6) {$\Box$};
    \node at (2,6) {$\Box$};
    \node at (4,6) {$\Box$};
    \node at (6,6) {$\Box$};
      \node at (8,6) {$\Box$};
    \node at (0,8) {$\Box$};
    \node at (2,8) {$\Box$};
    \node at (4,8) {$\Box$};
    \node at (6,8) {$\Box$};
      \node at (8,8) {$\Box$};
    
    \node at (0,1) {$1$};
    \node at (2,1) {$1$};
    \node at (4,1) {$1$};
    \node at (6,1) {$1$};
        \node at (8,1) {$1$};
    \node at (0,3) {$1$};
    \node at (2,3) {$1$};
    \node at (4,3) {$1$};
    \node at (6,3) {$1$};
        \node at (8,3) {$1$};
    \node at (0,5) {$1$};
    \node at (2,5) {$1$};
    \node at (4,5) {$1$};
    \node at (6,5) {$1$};
        \node at (8,5) {$1$};
    \node at (0,7) {$1$};
    \node at (2,7) {$1$};
    \node at (4,7) {$1$};
    \node at (6,7) {$1$};
        \node at (8,7) {$1$};

    \node at (11,0) {$1$};
    \node at (0,-2) {$1$};
    \node at (2,-2) {$\Box$};
    \node at (4,-2) {$\Box$};
    \node at (6,-2) {$\Box$};
        \node at (8,-2) {$\Box$};
        
    \node at (0,-4) {$2$};
    \node at (2,-4) {$1$};
    \node at (4,-4) {$\Box$};
    \node at (6,-4) {$\Box$};
        \node at (8,-4) {$\Box$};
        
    \node at (0,-6) {$3$};
    \node at (2,-6) {$2$};
    \node at (4,-6) {$1$};
    \node at (6,-6) {$\Box$};
        \node at (8,-6) {$\Box$};
        
            \node at (0,-8) {$4$};
    \node at (2,-8) {$3$};
    \node at (4,-8) {$2$};
    \node at (6,-8) {$1$};
        \node at (8,-8) {$\Box$};
    
    \node at (0,-1) {$1$};
    \node at (2,-1) {$1$};
    \node at (4,-1) {$1$};
    \node at (6,-1) {$1$};
            \node at (8,-1) {$1$};
            
    \node at (0,-3) {$2$};
    \node at (2,-3) {$1$};
    \node at (4,-3) {$1$};
    \node at (6,-3) {$1$};
            \node at (8,-3) {$1$};
            
    \node at (0,-5) {$3$};
    \node at (2,-5) {$2$};
    \node at (4,-5) {$1$};
    \node at (6,-5) {$1$};
            \node at (8,-5) {$1$};

    \node at (0,-7) {$4$};
    \node at (2,-7) {$3$};
    \node at (4,-7) {$2$};
    \node at (6,-7) {$1$};
            \node at (8,-7) {$1$};

     \node at (-3,4) {$1/2$};

     \node at (-3,6) {$1/3$};
     \node at (-5,6) {$2/3$};

     \node at (-3,8) {$1/4$};
     \node at (-5,8) {$2/4$};
     \node at (-7,8) {$3/4$};
     
          \node at (-3,5) {$1/2$};

     \node at (-3,7) {$1/3$};
     \node at (-5,7) {$2/3$};
     
     \node at (-2,2) {$\circ$};
          \node at (-2,4) {$\circ$};
               \node at (-2,6) {$\circ$};
                    \node at (-2,8) {$\circ$};
     \node at (-4,4) {$\circ$};
          \node at (-4,6) {$\circ$};
                    \node at (-4,8) {$\circ$};
          \node at (-6,6) {$\circ$};
                    \node at (-6,8) {$\circ$};  
                    \node at (-8,8) {$\circ$}; 
                    
         \node at (9,-8.8) {$\ddots$};
         \node at (-9,9.2) {$\ddots$};
        \node at (9,8.8) {\reflectbox{\rotatebox[origin=c]{180}{$\ddots$}}};             
\end{tikzpicture}
\]
\end{theorem}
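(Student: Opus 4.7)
Proof plan: The plan is to build out the full $RO(C_2)$-graded Mackey functor $\underline{\pi}_\star(k_{C_2})$ from the good range $k^{C_2}_\diamond$ already computed in the previous theorem, using inductive long exact sequences coming from two cofiber sequences of $C_2$-spaces. The first is $C_{2+} \to S^0 \to S^\alpha$, which lets me pass between even- and odd-$\alpha$ gradings; the second is $S(n\sigma)_+ \to S^0 \to S^{n\sigma}$, which lets me pass from the negative-$\sigma$ (good) range into the positive-$\sigma$ (upper) half of the plot. In each case the term involving the free $C_2$-space $C_{2+}$ or $S(n\sigma)_+$ becomes computable by non-equivariant means.

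First, smashing $C_{2+} \to S^0 \to S^\alpha$ with $k_{C_2}$ and applying $\pi^{C_2}_*$ yields a long exact sequence
\[
\cdots \to \pi_n(k) \xrightarrow{\tr} k^{C_2}_n \to k^{C_2}_{n-\alpha} \to \pi_{n-1}(k) \to \cdots,
\]
where I use the Wirthm\"uller isomorphism $\pi^{C_2}_n(k_{C_2} \wedge C_{2+}) \cong \pi_n(k) = \mathbb{Z}[v]$, concentrated in non-negative even degrees. Combined with the known even-$\alpha$ values, this reads off every odd-$\alpha$ row of the plot. Second, for the positive-$\sigma$ (upper) region, I would apply $\pi^{C_2}_\star(k_{C_2} \wedge -)$ to $S(n\sigma)_+ \to S^0 \to S^{n\sigma}$. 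Since $S(n\sigma)$ is a free $C_2$-space with $S(n\sigma)/C_2 \simeq \mathbf{RP}^{2n-1}$, the term $\pi^{C_2}_\star(k_{C_2} \wedge S(n\sigma)_+)$ reduces via induction/restriction to the classical $k_*(\mathbf{RP}^{2n-1})$; this determines $k^{C_2}_{\star + n\sigma}$ inductively from $k^{C_2}_\star$ for $n \geq 1$.

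The Mackey functor structure at each $RO(C_2)$-grading is then read off: the value at $C_2/e$ is the underlying $\pi_*(k) = \mathbb{Z}[v]$, concentrated in non-negative even degrees and zero otherwise; the value at $C_2/C_2$ is what the long exact sequences above compute; and the restriction and transfer maps are precisely those appearing in those long exact sequences, compatible with the multiplicative relations $\tau(\sigma - 1) = v\mu$ and $\mu(\sigma + 1) = 0$ from the good range.

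The main obstacle will be correctly identifying the quotient Mackey functors $\underline{J}^n/\underline{J}^m$ that populate the upper-left region of the plot. Unlike in the Bott-periodic $ku_{C_2}$, the connective truncation of $k_{C_2}$ forces multiplication by $v$ and the Euler class $u$ to fail injectivity in that range, producing $2$-primary torsion cokernels of the form $\mathbb{Z}/2^{n-m}$. Distinguishing the genuine quotient Mackey functors $\underline{J}^n/\underline{J}^m$ from mere direct sums of $\underline{J}^n$'s, and verifying the exact cokernel orders $2^{n-m}$, will require carefully tracking extensions in the long exact sequence from the cofiber $C_{2+} \to S^0 \to S^\alpha$ (which controls the relationship between adjacent rows), together with the action of $\mu$ and $v$ on these extensions via the relations above.
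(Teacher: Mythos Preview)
Your plan is sound and would work, but it takes a different and somewhat harder route than the paper. Both arguments share the use of $C_{2+} \to S^0 \to S^\alpha$ for the odd-$\alpha$ rows, and both feed the cofiber sequence $S(n\sigma)_+ \to S^0 \to S^{n\sigma}$ into a long exact sequence to reach the positive-$\sigma$ region. The difference is in how that long exact sequence is resolved. You propose to compute the free-orbit term directly as $k_*(\mathbf{RP}^{2n-1})$ and then solve the resulting extension problems for $k_{C_2}$ alone; as you correctly anticipate, pinning down the torsion quotients $\underline{J}^m/\underline{J}^n$ in the upper-left region is the crux of the work. The paper sidesteps this entirely by mapping the long exact sequence for $k_{C_2}$ to the one for $ku_{C_2}$ along $k_{C_2} \to ku_{C_2}$. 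Because this map is a non-equivariant equivalence, the $S(n\sigma)$-terms in the two rows are isomorphic; because $ku_{C_2}$ is complex stable, the $ku$-row is completely known. A diagram chase then identifies $\underline{k}_{-2m+n\sigma}$ and $\underline{k}_{-2m-1+n\sigma}$ directly as the kernel and cokernel of the explicit map $(\sigma-1)^n : \underline{R} \to \underline{J}^m$, which are visibly $\circ$ and $\underline{J}^m/\underline{J}^n$, with no extensions left to resolve. Your approach buys independence from the Greenlees spectrum $ku_{C_2}$ at the cost of computing $k_*(\mathbf{RP}^{2n-1})$ and tracking boundary maps by hand; the paper's comparison is shorter and produces the Mackey-functor labels immediately.
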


\begin{proof}
We apply $\underline{k}^{-*}(-) \to \underline{ku}^{-*}(-)$ to the cofiber sequence 
\[
S(n\sigma)_+  \to S^0 \to S^{n\sigma}
\]
which yields the diagram
\[
\begin{tikzcd}
\cdots \ar[r]  & \underline{k}^{-*-1}(S(n\sigma)) \ar[d] \ar[r] & \underline{k}_{*}  \ar[d]  \ar[r] & \underline{k}_{*+ n\sigma} \ar[d]  \ar[r] & \underline{k}^{-*} (S(n\sigma)) \ar[d] \ar[r] &  \cdots \\
\cdots \ar[r] &  \underline{ku}^{-*-1}(S(n\sigma)) \ar[r] & \underline{ku}_{*}  \ar[r,"u^n"] &  \underline{ku}_{*+n\sigma} \ar[r] & \underline{ku}^{-*} (S(n\sigma)) \ar[r] & \cdots \\
\end{tikzcd} 
\]
whose rows are exact. The map $\underline{k}^{-*}(S(n\sigma)) \to \underline{ku}^{-*}(S(n\sigma))$ is an isomorphism since $S(n\sigma)$ is free as a based $C_2$-space, and $k_{C_2} \to ku_{C_2}$ is a non-equivariant equivalence. Since $ku_{C_2}$ is complex stable, we know that $\underline{ku}_{*+n\sigma} \cong \underline{ku}_{*+ 2n}$. Exactness of the rows implies that $\underline{k}_{*+n\sigma} \to \underline{ku}_{*+n\sigma}$ is an isomorphism for $* \geq 0$, and $\underline{k}_{*+n\sigma} = 0$ for $* < 2n$. For any $-2n \leq - 2m \leq -2 $, the relevant part of our diagram is
\[ \begin{tikzcd} 
0 \ar[r] \ar[d] & \cdot \ar[d,"\cong"] \ar[r] & \underline{k}_{-2m +n \sigma}  \ar[r] \ar[d] & 0  \ar[r] \ar[d] & \cdot \ar[d,"\cong"] \ar[r]  & \underline{k}_{-2m - 1 + n\sigma} \ar[r] \ar[d] & 0 \ar[d] \\
0 \ar[r] & \cdot \ar[r] & \underline{R} \ar[r,"(\sigma - 1)^n"] & \underline{J}^m \ar[r]  & \cdot  \ar[r] & 0 \ar[r] & 0.
\end{tikzcd} \]
Exactness of the rows implies that 
\begin{align*}
\underline{k}_{-2m + n\sigma} = \text{ker}\left( \underline{R} \overset{(\sigma -1)^n}{\longrightarrow} \underline{J}^m \right) = \circ
\end{align*}
and 
\begin{align*}
\underline{k}_{-2m + n\sigma} = \text{coker}\left( \underline{R} \overset{(\sigma -1)^n}{\longrightarrow} \underline{J}^m \right) = \underline{J}^m/\underline{J}^n \cong m/n.
\end{align*}
Finally, by applying $\underline{k}_{*\pm 2n\alpha}(-)$ to the cofiber sequence $C_{2+} \to S^0 \to S^\alpha$ we can deduce the structure of $\underline{k}_{*\pm(2n+1)\alpha}$ from that of $\underline{k}_{*+2n\alpha}$.
\end{proof}

\subsection{The $RO(C_2)$-graded coefficients of $\Omega_{C_2}$}

Next, we calculate the $RO(C_2)$-graded coefficients of the geometric complex cobordism spectrum $\Omega_{C_2}$. The good range
\[
\Omega^{C_2}_\diamond = \bigoplus_{n \geq 0} \Omega^{C_2}_{*-n\sigma}
\]
was already calculated in section \ref{extendedsection}. To calculate the remaining piece, we need the following lemma, which we have already used several times in this paper.
\begin{lemma} \label{halemma}
If 
\[ \begin{tikzcd} 
0 \ar[r] & A \ar[r,"\iota"] \ar[d,"="] & B \ar[r,"\kappa"] \ar[d,"\beta"] & C \ar[r,"\lambda"] \ar[d,"\gamma"] & D \ar[r,"\mu"] \ar[d,"="] & E \ar[r] \ar[d] & 0\\
0 \ar[r] & A \ar[r,"\phi"] & B' \ar[r,"\chi"] & C' \ar[r,"\psi"] & D \ar[r] & 0 
\end{tikzcd} \]
is a commutative diagram of abelian groups whose rows are exact, then 
\[
B \cong \text{ker}\left( \begin{tikzcd} B' \oplus C \ar[r,"\chi - \gamma"] & C' \end{tikzcd} \right)
\]
and
\[
D \cong \text{coker}\left( \begin{tikzcd} B' \oplus C \ar[r,"\chi - \gamma"] & C' \end{tikzcd} \right).
\]
\end{lemma}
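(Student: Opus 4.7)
The plan is to prove both isomorphisms by explicit diagram chase, using the standard Four-Lemma trick of lifting an element and correcting by something in $A$. For the kernel statement, define $\eta\colon B \to B' \oplus C$ by $b \mapsto (\beta(b), \kappa(b))$; commutativity of the second square shows $(\chi - \gamma) \circ \eta = 0$, so $\eta$ factors through $\ker(\chi - \gamma)$. For the cokernel statement, I would identify $\im(\chi - \gamma) \subseteq C'$ as $\im(\chi) + \im(\gamma)$, and then read off $C'$ modulo this image using the two long exact sequences.

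The key step is surjectivity of $\eta$. Given $(b',c)$ with $\chi(b') = \gamma(c)$, commutativity of the third square together with exactness of the bottom row at $C'$ yields $\lambda(c) = \psi\gamma(c) = \psi\chi(b') = 0$, so top-row exactness at $C$ writes $c = \kappa(b_1)$ for some $b_1 \in B$. Then $\chi(\beta(b_1) - b') = \gamma\kappa(b_1) - \chi(b') = \gamma(c) - \gamma(c) = 0$, so bottom-row exactness at $B'$ produces $a \in A$ with $\phi(a) = \beta(b_1) - b'$, and $b := b_1 - \iota(a)$ satisfies $\beta(b) = b'$ and $\kappa(b) = c$. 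Injectivity of $\eta$ is the easier half: if $\beta(b) = \kappa(b) = 0$, then top-row exactness at $B$ gives $b = \iota(a)$ for some $a$, and $\phi(a) = \beta\iota(a) = 0$ forces $a = 0$.

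For the cokernel, using $\im(\chi) = \ker(\psi)$ and the relation $\psi\gamma = \lambda$ from the third square, one gets
\[
C'/\im(\chi - \gamma) \;\cong\; D/\psi(\im(\gamma)) \;=\; D/\im(\lambda) \;=\; D/\ker(\mu) \;\cong\; E,
\]
where the last step uses surjectivity of $\mu$ from top-row exactness at $E$. I suspect the ``$D$'' in the lemma is a typographical slip for ``$E$''; the chase produces this identification either way, so the main ``obstacle'' isn't one: the argument is pure bookkeeping of Four-Lemma flavor, with the one substantive move being the double lift in the surjectivity check for $\eta$.
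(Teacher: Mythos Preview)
Your argument is correct and is precisely the elementary diagram chase the paper alludes to (the paper's proof reads, in its entirety, ``This is an elementary diagram chase''). Your observation about the typo is also right: the cokernel is $E$, not $D$, as your computation $C'/(\operatorname{im}\chi + \operatorname{im}\gamma) \cong D/\operatorname{im}\lambda \cong E$ shows, and indeed the paper's own application of the lemma in Section~6.2 uses it to identify a cokernel with the group in the $E$-position of the diagram.
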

\begin{proof}
This is an elementary diagram chase.
\end{proof} 

We can now finish our calculation of the complete $RO(C_2)$-graded coefficients of the geometric complex cobordism spectrum $\Omega_{C_2}$.
\begin{theorem}
If $n \geq 0$, then 
\begin{enumerate}
\item 
\[\Omega^{C_2}_{*-2n\alpha}  \cong \dfrac{\Omega^{C_2}_* \{ 1 , \dots , u^n\}}{\begin{matrix} u^k(d_{i,j}-c_{i,j}) = u^{k+1}d_{i,j+1} \\ u^k(q_j - p_j) = u^{k+1}q_{j+1} \end{matrix} } \hspace{0.3in} \begin{matrix} i \geq 1 \text{ and }j \geq 0\\ 0 \leq k < n \end{matrix}\]
\item
\[\Omega^{C_2}_{*-(2n+1)\alpha} \cong \Omega^{C_2}_{*-2n\alpha}/q_1.\]
\item
\[ \Omega^{C_2}_{*+2n\alpha} \cong \Omega^{C_2}_{\text{even} + 2n\alpha} \oplus \Omega^{C_2}_{\text{odd} + 2n\alpha}\]
where
\[ 
\Omega^{C_2}_{\text{even} + 2n\alpha} \cong MU_*\{q_1\} \oplus \left((u^n) \cap \Omega^{C_2}_* \right)\]
and
\[
\Omega^{C_2}_{\text{odd}+2n\alpha} \cong  \frac{MU_{*-1}[u] }{ \begin{pmatrix} u^n \; , \; \sum_{\ell = 0}^{n-1} c_{i,j + \ell} u^\ell \; , \; \sum_{\ell = 0}^{n-1} p_{j + \ell} u^\ell \end{pmatrix}}\]
\item
\[\Omega^{C_2}_{*+ (2n+1)\alpha} \cong  \Omega^{C_2}_{*+2n\alpha} /q_1\]
\end{enumerate}
\end{theorem}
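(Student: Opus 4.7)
The approach is to treat Part (1) as a direct restatement and Parts (2), (4) as formal consequences of the cofiber sequence $C_{2+}\to S^0\to S^{\alpha}$, leaving Part (3) as the main computation via the cofiber sequence $S(n\sigma)_+\to S^0\to S^{n\sigma}$. Part (1) reproduces Theorem~3.2 (the extended coefficients of $\Omega_{C_2}$) under the identification $-2n\alpha=-n\sigma$ in $RO(C_2)$.

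For Parts (2) and (4), I would smash $C_{2+}\to S^0\to S^{\alpha}$ with $S^{\mp 2n\alpha}$ and apply $\pi^{C_2}_*(\Omega_{C_2}\wedge-)$. Via the Wirthm\"uller identification $\Omega^{C_2}_V(C_{2+})=\Omega_{|V|}$, this yields long exact sequences
\[
\cdots\to\Omega_{*\mp 2n}\xrightarrow{\mathrm{tr}}\Omega^{C_2}_{*\mp 2n\alpha}\to\Omega^{C_2}_{*\mp(2n+1)\alpha}\to\Omega_{*-1\mp 2n}\to\cdots.
\]
The key algebraic input is that $q_1=\mathrm{tr}(1)\in\Omega^{C_2}_0$: one checks $\mathrm{res}(q_1)=p_1=2=\mathrm{res}(\mathrm{tr}(1))$ and uses injectivity of the transfer (axiom (1) of a geometric orientation, which $\Omega_{C_2}$ satisfies). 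Frobenius reciprocity then gives $q_1\cdot\Omega^{C_2}_V=\mathrm{tr}(\mathrm{res}(\Omega^{C_2}_V))$, and since $\Omega_{C_2}$ is split, the restriction is surjective in these degrees, so the image of $\mathrm{tr}$ coincides with $q_1\cdot\Omega^{C_2}_{*\mp 2n\alpha}$. Injectivity of $\mathrm{tr}$ also forces the next connecting map to vanish, giving the claimed $\Omega^{C_2}_{*\mp(2n+1)\alpha}\cong\Omega^{C_2}_{*\mp 2n\alpha}/q_1$.

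For Part (3), I would apply the homological long exact sequence associated to $\Sigma^{-n\sigma}S(n\sigma)_+\to S^{-n\sigma}\to S^0$ smashed with $\Omega_{C_2}$:
\[
\Omega^{C_2}_{*+1}\xrightarrow{\delta}\Omega^{C_2}_{*+n\sigma}(S(n\sigma)_+)\to\Omega^{C_2}_{*+n\sigma}\xrightarrow{\mu^n}\Omega^{C_2}_*\xrightarrow{\delta}\Omega^{C_2}_{*-1+n\sigma}(S(n\sigma)_+),
\]
where $\mu\in\Omega^{C_2}_{-\sigma}$ is the equivariant Euler class of $\sigma$. The image of $\mu^n$ is identified with the ideal $(u^n)\cap\Omega^{C_2}_*\subset\Omega^{C_2}_*$ via the stabilization $\Omega_{C_2}\to MU_{C_2}$, under which $\mu\mapsto u\tau$ with $\tau$ invertible. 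To compute the boundary terms $\Omega^{C_2}_V(S(n\sigma)_+)$ I would smash with the isotropy separation sequence $EC_{2+}\to S^0\to\widetilde{EC}_2$: the Borel piece contributes $MU$-homology of the Borel construction $S(n\sigma)_{hC_2}$, controlled by the underlying non-equivariant calculation since $EC_{2+}\wedge S(n\sigma)_+$ is free; the geometric fixed-point piece reduces to $\Phi\Omega_{C_2}$-homology of $S(n\sigma)^{C_2}=S^{n-1}$. Assembling these via the resulting fracture pullback, and matching against the algebraic relations in $MU^{C_2}_*$, yields the stated presentation $MU_{*-1}[u]/(u^n,\sum c_{i,j+\ell}u^\ell,\sum p_{j+\ell}u^\ell)$ in odd $*$-degree. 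In even $*$-degree, the kernel of $\mu^n$ contains $MU_*\{q_1\}$ since $\mu q_1=0$ (from the relation $\tau(q_0-p_0)=\mu q_1$ in $\Omega^{C_2}_\diamond$ together with $q_0=0=p_0$), and the resulting short exact sequence splits via the natural $q_1$-section.

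\textbf{The main obstacle} will be verifying that the cokernel of $\delta:\Omega^{C_2}_{2k+2}\to\Omega^{C_2}_{2k+1+n\sigma}(S(n\sigma)_+)$ takes the precise form claimed. The relations $\sum_{\ell=0}^{n-1}c_{i,j+\ell}u^\ell$ and $\sum_{\ell=0}^{n-1}p_{j+\ell}u^\ell$ should arise as the truncations at $u^n$ of the identities $u^jd_{i,j}=d_{i,0}-\sum_{\ell<j}c_{i,\ell}u^\ell$ and $u^jq_j=-\sum_{\ell<j}p_\ell u^\ell$ in $MU^{C_2}_*$ (visible from the proof of Theorem~3.2), with the finite truncation reflecting the $(2n-1)$-dimensionality of $S(n\sigma)$. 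Tracking the connecting map $\delta$ through the isotropy separation pullback carefully enough to recover exactly these truncated algebraic relations is the delicate step of the argument.
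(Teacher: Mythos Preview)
Your treatment of Parts (1), (2), and (4) matches the paper's argument essentially verbatim: Part (1) is the extended-coefficients calculation, and Parts (2), (4) come from the cofiber sequence $C_{2+}\to S^0\to S^\alpha$ together with the identification of the transfer image with $MU_*\{q_1\}$.

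For Part (3), however, your route diverges from the paper and contains a concrete error. You claim $S(n\sigma)^{C_2}=S^{n-1}$, but the $C_2$-action on $\mathbf{C}^{n\sigma}$ is by $-1$, so the unit sphere $S(n\sigma)$ is a \emph{free} $C_2$-space and $S(n\sigma)^{C_2}=\varnothing$. This actually simplifies your proposed isotropy-separation analysis (the geometric fixed-point contribution vanishes and everything reduces to the Borel piece), but it also means your plan to extract the relations from ``$\Phi\Omega_{C_2}$-homology of $S^{n-1}$'' cannot work as stated. More seriously, even after correcting this, your approach leaves the hardest step---tracking the connecting map $\delta$ well enough to recover the precise truncated relations---as an acknowledged obstacle rather than a proof.

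The paper avoids this entirely by a cleaner device: instead of analyzing $\Omega^{C_2}_*(S(n\sigma)_+)$ directly, it maps the long exact sequence of $S(2n\alpha)_+\to S^0\to S^{2n\alpha}$ to the same sequence smashed with $EC_{2+}=S(\infty\alpha)_+$. The outer terms agree (since $S(2n\alpha)$ is already free), and the bottom row is the completion $MU_*[[u]]/[2]u$ with its explicit multiplication-by-$u^n$ map. An elementary five-term diagram-chase lemma (Lemma~\ref{halemma} in the paper) then identifies the even and odd parts of $\Omega^{C_2}_{*+2n\alpha}$ as the kernel and cokernel of the single map
\[
\Omega^{C_2}_*\oplus \frac{MU_*[[u]]}{[2]u}\longrightarrow \frac{MU_*[[u]]}{[2]u},
\]
from which the stated presentations drop out immediately: the cokernel is $MU_*[[u]]/([2]u,u^n)$ modulo the images of the generators $d_{i,j},q_j$, and the kernel is $MU_*\{q_1\}\oplus\bigl((u^n)\cap\Omega^{C_2}_*\bigr)$. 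This comparison-to-completion trick is what your proposal is missing; it replaces your delicate connecting-map analysis with a one-line computation.
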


\begin{proof}
Our presentation of $\Omega^{C_2}_{*-2n\alpha} = \Omega^{C_2}_{*-n\sigma}$ was calculated in section \ref{extendedsection}. To calculate $\Omega^{C_2}_{*+2n\alpha} = \Omega_{C_2}^{-*-2n\alpha}$, we apply $\Omega_{C_2}^*(-)$ to the diagram 
\[ \begin{tikzcd} 
S(2n\alpha)_+ \ar[r] \ar[d] & S^0 \ar[r] \ar[d] & S^{2n\alpha} \ar[d] \\
S(2n\alpha)_+ \wedge S(\infty \alpha)_+ \ar[r] & S(\infty \alpha)_+ \ar[r] & S^{2n\alpha} \wedge S(\infty \alpha)_+
\end{tikzcd} \]
which yields 
\[ \begin{tikzcd} 
0 & \ar[l] \ar[d] \Omega^{C_2}_{*-1+2n\alpha}  & \ar[l] \frac{MU_*[[u]]}{([2]u,u^n)} \ar[d,"="] & \Omega^{C_2}_* \ar[l] \ar[d] & \ar[l] \Omega^{C_2}_{*+2n\alpha} \ar[d] &\ar[l] MU_*\{q_1\} \ar[d,"="] & \ar[l] 0 \\
& 0 & \ar[l] \frac{MU_*[[u]]}{([2]u,u^n)}  & \ar[l] \frac{MU_*[[u]]}{[2]u} & \ar[l,"u^n"] \frac{MU_*[[u]]}{[2]u}& \ar[l] MU_*\{ q_1\} &  \ar[l] 0
\end{tikzcd} \]
so Lemma \ref{halemma} implies that the even and odd part of $\Omega^{C_2}_{*+2n\alpha}$ are isomorphic to 
\[
\text{ker} \left( \Omega^{C_2}_* \oplus \dfrac{MU_*[[u]]}{[2]u} \to \dfrac{MU_*[[u]]}{[2]u}\right)
\]
and 
\[
\text{coker} \left( \Omega^{C_2}_* \oplus \dfrac{MU_*[[u]]}{[2]u} \to \dfrac{MU_*[[u]]}{[2]u}\right),
\]
respectively. Our presentation of the cokernel is obtained by quotienting $MU_*[[u]]/([2]u,u^n)$ by the image of each of the generators $d_{i,j}, q_j \in \Omega^{C_2}_*$. To calculate a presentation of the kernel, we consider the pullback square 
\[ \begin{tikzcd} 
\Omega^{C_2}_{\text{even} + 2n\alpha} \ar[r] \ar[d] & \Omega^{C_2}_* \ar[d] \\
MU_*[[u]]/[2]u \ar[r,"u^n"] & MU_*[[u]]/[2]u.
\end{tikzcd} \]
Since the kernel of each horizontal arrow is $MU_*\{q_1\}$, we obtain a pullback square
\[ \begin{tikzcd} 
\Omega^{C_2}_{\text{even} + 2n\alpha}/q_1 \ar[r] \ar[d] & \Omega^{C_2}_* \ar[d] \\
(MU_*[[u]]/[2]u)/q_1 \ar[r,"u^n"] & MU_*[[u]]/[2]u.
\end{tikzcd} \]
by killing $q_1$ in the domain of each of the horizontal maps. This square identifies $\Omega^{C_2}_{\text{even}+2n\alpha}$ with the kernel of $\Omega^{C_2}_* \to MU_*[[u]]/[2]u$, and since $MU^{C_2}_* \to MU_*[[u]]/[2]u$ induces an isomorphism 
\[MU^{C_2}_*/(u^n) \cong MU_*[[u]]/([2]u,u^n),\]
the kernel of $\Omega^{C_2}_* \to MU_*[[u]]/([2]u,u^n)$ is the intersection of $(u^n) \subset MU^{C_2}_*$ with $ \Omega^{C_2}_*$. We provide generators for the intersection ideal $(u^n) \cap \Omega^{C_2}_*$ in Proposition \ref{generators}. Finally, to calculate $\Omega^{C_2}_{*\pm(2n+1)\alpha}$, we apply $\Omega^{C_2}_{* \pm 2n\alpha}(-)$ to the cofiber sequence 
\[
C_{2+} \to S^0 \to S^\alpha
\]
which yields 
\[
0 \to \Omega^{\{e\}}_{*\pm 2n\alpha} \to \Omega^{C_2}_{*\pm 2n\alpha} \to \Omega^{C_2}_{*+(2n+1)\alpha} \to 0
\]
and the result follows from the fact that $\Omega^{\{e\}}_{*\pm 2n\alpha} \cong MU_*\{q_1\}$.
\end{proof} 

The only part of $\Omega^{C_2}_\star$ that we have not yet described explicitly is the intersection ideal $(u^n) \cap \Omega^{C_2}_*$. The following proposition gives us a generating set for this ideal. Let $S$ be the set of all monomials in $ \{ d_{i,j} - c_{i,j} ,q_j - p_j: i \geq 1 \text{ and } j \geq 0\}$. Consider the map $\phi: MU_*[d_{i,j},q_j] \to MU_*[[u]]$ determined by 
\begin{align*}
\phi(d_{i,j} - c_{i,j}) & =  c_{i,j+1}u + c_{i,j+2} u^2 + \cdots ,\\
\phi(q_j - p_j) & = p_{j+1}u + p_{j+2}u^2+ \cdots .
\end{align*}
Define a function $\phi_n:S \to MU_*$ by letting $\phi_n(m)$ be the coefficient of $u^n$ in $\phi(m)$, i.e. so that 
\[
\phi(m) = \phi_0(m) + \phi_1(m) u + \phi_2(m) u^2 + \cdots.
\]
We write $|m|$ for the total degree of $m$, so for example $|(d_{i,j}-c_{i,j})| = |(q_j - p_j)| = 1$ and $|(d_{i,j}-c_{i,j})^4(q_\ell - p_\ell)^3| = 7$. 

\begin{proposition}\label{generators}
The ideal $(u^n) \cap \Omega^{C_2}_*$ is generated by the $n$th power $J^n$ of the augmentation ideal of $\Omega^{C_2}_*$, together with the collection of all elements of the form 
\[
\sum_{m\in S, |m| < n} \alpha_m m \in \Omega^{C_2}_* ,
\]
with $\alpha_m \in MU_*$ such that 
\[
\sum_{|m| \leq k} \phi_k(\alpha_m) = 0 \in MU_*/2.
\]
for each $1 \leq k \leq  n-1 $.
\end{proposition}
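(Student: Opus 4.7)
The plan is to reduce membership in $(u^n) \cap \Omega^{C_2}_*$ to a computation in $MU_*[[u]]/[2]u$ via the pullback description of $MU^{C_2}_*$ established in Section \ref{pullbacks}. First, any $f \in \Omega^{C_2}_*$ admits a decomposition $f = f_{\geq n} + f_{<n}$, where $f_{\geq n}$ is a sum of products of at least $n$ factors from the generating set $\{d_{i,j} - c_{i,j},\ q_j - p_j\}$ of the augmentation ideal, so $f_{\geq n} \in J^n$, and $f_{<n} = \sum_{m \in S,\, |m| < n} \alpha_m m$ collects the lower-degree terms. Using the defining relations of $MU^{C_2}_*$, each $d_{i,j} - c_{i,j} = u\, d_{i,j+1}$ and $q_j - p_j = u\, q_{j+1}$ lies in $(u) \subset MU^{C_2}_*$, so products of $n$ such generators lie in $(u^n)$, giving $J^n \subseteq (u^n) \cap \Omega^{C_2}_*$. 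The problem therefore reduces to determining when $f_{<n}$ lies in $(u^n)$.

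Next, I would translate this condition into $MU_*[[u]]/[2]u$. The pullback square for $MU^{C_2}_*$ shows that, since $u$ is invertible on the geometric-fixed-point side, an element of $MU^{C_2}_*$ lies in $(u^n)$ if and only if its image in $MU_*[[u]]/[2]u$ does. Using the lift $\phi$ of the proposition statement, compute
\[
\phi(f_{<n}) = \sum_{k \geq 0} \gamma_k u^k \in MU_*[[u]],
\qquad
\gamma_k = \sum_{|m| \leq k} \alpha_m \phi_k(m),
\]
so the membership condition becomes $\sum_{k < n} \gamma_k u^k \equiv [2]u \cdot g \pmod{u^n}$ in $MU_*[[u]]$, to be solved for $g = g_0 + g_1 u + \cdots \in MU_*[[u]]$. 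Expanding $[2]u = \sum_j p_j u^j$ gives a triangular $MU_*$-linear system $\gamma_k = \sum_{i+j = k} p_i g_j$ for $0 \leq k < n$.

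The main obstacle is the last step: extracting the clean mod-$2$ conditions $\gamma_k \equiv 0 \pmod 2$ for $1 \leq k \leq n-1$ from this system. A direct iterative solution produces nested obstructions coming from the higher coefficients $p_2, p_3, \ldots$, which do not appear to vanish from mod-$2$ information on the $\gamma_k$ alone. The resolution is to exploit the freedom to modify $f_{<n}$ by elements of $J^n$, which do not affect $\gamma_0, \ldots, \gamma_{n-1}$, together with the defining relations of $\Omega^{C_2}_*$ from Theorem \ref{geometriccobordism}. Relations such as $d_{i,j+1}(d_{k,\ell} - c_{k,\ell}) = (d_{i,j} - c_{i,j}) d_{k,\ell+1}$ exchange degree-$2$ monomials in the augmentation-ideal generators for $MU_*$-linear combinations of degree-$1$ ones, and this flexibility absorbs the higher-order obstructions into $J^n$. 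What remains are precisely the mod-$2$ conditions on $\gamma_k$, giving the claimed generating set for $(u^n) \cap \Omega^{C_2}_*$.
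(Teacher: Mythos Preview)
Your setup through the triangular system is sound, and the reduction to $MU_*[[u]]/[2]u$ is valid. The gap is your final paragraph. You yourself observe that modifying $f_{<n}$ by elements of $J^n$ leaves $\gamma_0,\ldots,\gamma_{n-1}$ unchanged; but then such modifications cannot possibly absorb obstructions living in those very coefficients. Invoking the defining relations of $\Omega^{C_2}_*$ does not help either: relations are identities, so rewriting changes only the presentation of $f_{<n}$, not the element. One checks directly that $\phi$ sends both sides of each relation $d_{i,j+1}(d_{k,\ell}-c_{k,\ell}) = (d_{i,j}-c_{i,j})d_{k,\ell+1}$ (and its $q$-analogues) to the same power series, so the $\gamma_k$ are unaffected by any rewriting short of using $q_0=0$; and using $q_0=0$ just shifts $\phi(f)$ by a multiple of $[2]u$, which is exactly the freedom already encoded in the unknown $g$. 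In short, the mechanism you propose for eliminating the higher $p_i$-terms does not exist.

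The paper sidesteps the triangular system entirely by working with the associated graded of the $(u)$-adic filtration on $MU^{C_2}_*$ rather than a lift to $MU_*[[u]]$. The key point is that $(u^k)/(u^{k+1}) \cong MU_*/2$ for every $k\geq 1$: from $uq_1=0$ in $MU^{C_2}_*$ together with $q_1\mapsto p_1=2$ under the augmentation one gets $2u^k\in(u^{k+1})$, while the $u$-torsion is exactly $MU_*\{q_1\}$. Thus membership in $(u^n)$ is checked one graded piece at a time, and each check already lives in $MU_*/2$ by construction. This is the missing idea: instead of fixing a lift to $MU_*[[u]]$ and fighting the coefficients $p_2,p_3,\ldots$, pass to the successive quotients $(u^k)/(u^{k+1})$, where those coefficients have been absorbed into the identification with $MU_*/2$.
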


\begin{proof}
By inspection of the corresponding quotient rings, we have $J = (u) \cap MU^{C_2}_*$, which implies  $J^n \subset (u^n) \cap \Omega^{C_2}_*$. For this reason, it suffices to calculate the kernel of 
\[
\Omega^{C_2}_*/J^n \to MU^{C_2}_*/(u^n).
\]
We can use the fact that  $f \in \Omega^{C_2}_*$ is in $(u^n)$ if and only if its image in each of 
\begin{align*}
 MU^{C_2}_*/(u) , (u)/(u^2) , (u^2)/(u^3), \dots , (u^{n-1})/(u^n) 
\end{align*}
is zero. Given any $f \in \Omega^{C_2}_*/J^n$ we can write 
\[
f =  \alpha_1 + \sum_{|m|<n} \alpha_m m
\]
for some coefficients $\alpha_1,\alpha_m \in MU_*$. The condition that $f$ is zero in each of the associated graded pieces is precisely the condition in the statement of the result, since $MU^{C_2}_*/(u) = MU_*$ and $(u^k)/(u^{k+1}) = MU_*/2 \{ u^k\}$.
\end{proof}

\section{Appendix}

The purpose of this appendix is two-fold. First, in section \ref{appendixC}, we prove a technical lemma from commutative algebra which was needed in order to calculate our presentation of the geometric cobordism ring $\Omega^{C_2}_*$. Second, in section \ref{AppendixD}, we review the theory of $G$-equivariant formal group laws, as defined in \cite{CGK1}, and prove that our new ``homological" formulation of $G$-equivariant formal group laws is equivalent to the original ``cohomological" formulation. We can consider this as an equivariant formal group theoretic version of Cartier duality, which asserts that a formal group is determined by its algebra of (continuous) functions, or by its coalgebra of (compactly supported) distributions.

%OLD PROOF It suffices to prove that $u$ is a regular element of $M/Q_1$. %For $k > 1$, we define rings $A_k$ and $B_k$ by 
%\begin{align*}
%A_k & = MU_*[R_{i,k},Q_k : i \geq 1]/Q_0\\
%B_k & = MU_*[P_{i,\ell},Q_\ell : i  \geq  1\text{, and }0 \leq \ell \leq k]/Q_0.
%\end{align*}

%We also define a polynomial $g_k(u) \in A_k[u] $ by 
%\[
%g_k(u) = \sum_{\ell = 0}^{k-1} q_{\ell +1} u^\ell + Q_k u^k.
%\]
%Since $g_k(0) = q_1 = 2$ is a regular element of $A_k$, it follows that $g_k(u)$ is a regular element of $A_k[u]$, and so $u$ is a regular element of $A_k[u] / g_k(u)$. Define $\pi:B_k \to A_k[u]$ by 
%\begin{align*}
%\pi(R_{i,\ell}) & = \sum_{m = 0}^{k-\ell-1} r_{i,\ell + m} u^m + R_{i,k} u^{k-\ell}\\
%\pi(Q_\ell) & = \sum_{m = 0}^{k-\ell-1} q_{\ell+m} u^m + Q_k u^{k-\ell}\\
%\pi(u) & = u
%\end{align*}

%The map $\pi$ induces a map $B_k/I_k \to A_k[u] / g_k(u)$, where $I_k \subset B_k$ is the ideal 
%\begin{align*}
%I_k & = (Q_1, uQ_{\ell+1} - Q_\ell+ q_\ell, uR_{i+1,\ell} - R_{i,\ell} + r_{i,\ell}),
%\end{align*}
%where the indices run over $i  \geq 1$ and $0 \leq \ell < k$. The induced map $B_k/I_k \to A_k[u]/g_k(u)$ is an isomorphism, with inverse induced by the map $A_k[u] \to B_k/I_k$ sending $R_{i,k} \mapsto R_{i,k}$, $Q_k \mapsto Q_k$, and $u \mapsto u$. We see that $R / Q_1 = \varinjlim B_k/I_k$, and since $u$ is a regular element of $B_k/I_k$ for every $k$, we deduce that $u$ is a regular element of $R/Q_1$. END OLD PROOF.

\subsection{Eliminating the euler class $u$}\label{appendixC}

In this section we prove the main technical result which allows us to calculate the relations among the generators $d_{i,j},q_j$ of the geometric cobordism ring $\Omega^{C_2}_*$. Let $R$ be a domain and consider the ring \[R[u,x_1,x_2,\dots] = R[u,x_i].\] By a monomial in $R[u,x_i]$, we mean an element of the form $u^mx_{i_1}^{n_1} \dots x_{i_k}^{n_k}$. We can order the variables $u,x_1,x_2,\dots$ by $x_1 \prec x_2 \prec \dots \prec u$, and this induces an order on the set of monomials in $R[u,x_i]$. If $q \in R[u,x_i]$ is any polynomial, then we write $M(q)$ for the greatest monomial that occurs in $q$, and we write $LT(q)$ for the leading term of $q$, which is just $M(q)$ together with its coefficient in $R$. 
 
\begin{lemma} Let $R$ be a domain and let $I \subset R[u,x_1,x_2,\dots]$ be the ideal
\[
I = (ux_i+ p_i \; : \; i \geq 1 )
\]
for some $p_1,p_2,\dots \in R[x_1,x_2,\dots]$. Then the intersection ideal $I \cap R[x_1,x_2,\dots]$
 is equal to 
\[
J = (x_ip_j - x_j p_i :  i,j \geq 1).
\]
\end{lemma}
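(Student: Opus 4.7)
The inclusion $J \subseteq I \cap R[x_1, x_2, \ldots]$ is immediate from the identity
\[
x_i p_j - x_j p_i = x_i(u x_j + p_j) - x_j(u x_i + p_i),
\]
so the substance of the lemma is the reverse inclusion. The plan is to prove it by induction on the minimum $u$-degree $N$ of a presentation $f = \sum_i h_i(u x_i + p_i)$ of a given element $f \in I \cap R[x_1, x_2, \ldots]$, where the $h_i \in R[u, x_1, x_2, \ldots]$ are almost all zero.

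Two inputs drive the argument. First, since $R$ is a domain, the indeterminates $x_1, x_2, \ldots$ form a regular sequence in $R[x_1, x_2, \ldots]$, so the Koszul complex is exact and the kernel of the map $\bigoplus_i R[x_1, x_2, \ldots]\cdot e_i \to R[x_1, x_2, \ldots]$ given by $e_i \mapsto x_i$ is generated by the relations $x_b e_a - x_a e_b$ for $a < b$. Second, I will use the ``swap'' identity
\[
u(x_a p_b - x_b p_a) = p_b(u x_a + p_a) - p_a(u x_b + p_b),
\]
obtained by cancelling the $u x_a x_b$ cross-terms; morally, this is the S-polynomial relation that would arise in a Buchberger computation, and it says each generator of $J$ absorbs one power of $u$ into coefficients of the $u x_i + p_i$'s of strictly lower $u$-degree.

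For the base case $N = 0$, we have $h_i \in R[x_1, x_2, \ldots]$, so $f = u \sum_i h_i x_i + \sum_i h_i p_i$; since $f$ has no $u$ we get $\sum_i h_i x_i = 0$, the Koszul input yields $h_i = \sum_{b > i} \lambda_{ib} x_b - \sum_{a < i} \lambda_{ai} x_a$ for some $\lambda_{ab} \in R[x_1, x_2, \ldots]$, and substitution gives $f = \sum_{a < b} \lambda_{ab}(x_b p_a - x_a p_b) \in J$. For the inductive step $N \geq 1$, split $h_i = \tilde h_i + u^N h_{i, N}$ with $\tilde h_i$ of $u$-degree $< N$; the coefficient of $u^{N+1}$ in $f$ is $\sum_i h_{i, N} x_i$, which must vanish. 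Koszul again gives $h_{i, N} = \sum_{b > i} \lambda_{ib} x_b - \sum_{a < i} \lambda_{ai} x_a$, and a direct computation shows $\sum_i u^N h_{i, N}(u x_i + p_i) = -u^N \eta$ where $\eta = \sum_{a < b} \lambda_{ab}(x_a p_b - x_b p_a) \in J$. Applying the swap identity summand-by-summand rewrites $-u^N \eta$ as $\sum_i h_i'(u x_i + p_i)$ with each $h_i'$ of $u$-degree at most $N - 1$, so $f = \sum_i(\tilde h_i + h_i')(u x_i + p_i)$ is a presentation of $u$-degree at most $N - 1$, closing the induction.

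The main obstacle is engineering the correct way to absorb the top-degree piece $u^N h_{i, N}$ into lower-degree corrections. The Koszul description of the syzygies forces the top piece to be a $J$-like combination, and the swap identity is exactly the mechanism that trades a $J$-element against one power of $u$; everything else is bookkeeping.
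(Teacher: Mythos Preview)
Your proof is correct, and it takes a genuinely different route from the paper's. The paper introduces a full monomial order with $x_1 \prec x_2 \prec \cdots \prec u$ and runs a leading-term reduction: given a presentation $f = \sum_t q_t(ux_{i_t} + p_{i_t})$, it identifies the terms sharing the greatest leading monomial, observes that their leading coefficients sum to zero (since the $u$-part must cancel), and uses an Abel-summation telescoping to rewrite that top piece as an explicit element of $J$; subtracting and iterating drives the leading monomial down. Your argument replaces the total monomial order by the single $u$-grading, and replaces the telescoping trick by the structural fact that $x_1, x_2, \ldots$ is a regular sequence, so syzygies among the $x_i$ are Koszul-generated.

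Both arguments are at heart Buchberger-style reductions, as you note. Your version is more conceptual: it isolates exactly where the hypothesis that $R$ is a domain enters (regularity of the $x_i$), and the swap identity you record is precisely the S-polynomial witnessing that no new relations appear. The paper's version is more elementary in that it avoids invoking Koszul exactness, doing the syzygy computation by hand via Abel summation; it also feeds directly into the follow-up module statement (their Lemma on the $R[x_i]$-submodule generated by $1,\dots,u^n$), where the same leading-term algorithm applies verbatim. One small phrasing point: your induction is really on the $u$-degree of \emph{a} presentation, not on the minimum over all presentations; the inductive step produces a strictly smaller presentation, so the argument goes through either way, but the latter phrasing is cleaner.
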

\begin{proof}
We know that $J \subseteq I \cap R[x_i]$ since for any $i,j \geq 1$ we have 
\[
x_ip_j - x_j p_i = x_i(ux_j+ p_j) - x_j(ux_i + p_i)\in I.
\]
It remains to show that $I \cap R[x_i ] \subseteq J.$ Suppose we have $q_1,\dots, q_m \in R[u,x_i]$ and
\[
f = \sum_{t=1}^m q_t(ux_{i_t} + p_{i_t}) \in R[x_i].
\]
We assume without loss of generality that $i_s \neq i_t$ for $s \neq t$. After reordering the terms in the sum we can assume that for some $1< k \leq m$, the terms $q_1(ux_{i_1} + p_{i_1}) , \dots , q_k(ux_{i_k} + p_{i_k})$ have the same leading monomial, and this is greater than the leading monomial in any of the terms $q_{k+1}(ux_{i_{k+1}} + p_{i_{k+1}}) , \dots , q_m(ux_{i_m} + p_{i_m})$. We have 
\[
LT(q_t(ux_{i_t} + p_{i_t})) = LT(q_t)ux_{i_t}
\]
since $u$ is the greatest element in our order. Let $c_t \in R$ be the coefficient of $LT(q_t)$, so that $LT(q_t) = c_t M(q_t)$. By assumption, we have $M(q_t)ux_{i_t} = M(q_s)ux_{i_s}$ for all $1 \leq s,t \leq k$. From this we can deduce the equality 
\[
\dfrac{M(q_t)}{x_{i_s}} = \dfrac{M(q_s)}{x_{i_t}}
\]
for all such $s \neq t$. Since all of the leading terms must cancel as they have $u$-degree $0$, we must have $c_1 + \dots + c_k = 0$. With these two equalities in mind, we can write 
\begin{align*}
\sum_{t=1}^k LT(q_t)(ux_{i_t} + p_{i_t}) = \sum_{t=1}^k LT(q_t)p_{i_t}  & = \sum_{t=1}^k c_tM(q_t)p_{i_t}\\
& = \sum_{t=1}^{k-1} \left( c_1 + \dots + c_t \right)( M(q_t)p_{i_t} - M(q_{t+1}) p_{i_{t+1}})\\
& = \sum_{t=1}^{k-1} \left( c_1 + \dots + c_t \right)\frac{M(q_t)}{x_{i_{t+1}}}( x_{i_{t+1}}p_{i_t} - x_{i_t}p_{i_{t+1}}).
\end{align*}
Call this polynomial $g$, and set $f' = f-g $. Then we have $f = f' + g$ where $M(f')  \prec  M(f)$ and $g \in J$. We can apply this algorithm to $f'$, and after finitely many iterations we will have written $f$ as a sum of elements of $J$, so we deduce that $f \in J$.
\end{proof}
The same algorithm as in the proof above yields the following result.
\begin{lemma}\label{eliminationlemma}
Let $R$ be a domain and let $I \subset R[u,x_1,x_2, \dots]$ be the ideal 
\[ I = (ux_i + p_i : i \geq 1) \]
for some $p_1, p_2, \dots \in R[x_1,x_2, \dots]$. Then the $R[x_i]$-submodule of $R[u,x_i]/I$ generated by $1 , \dots , u^n$ is given by 
\[
\dfrac{R[x_i] \{ 1, \dots , u^n\} }{ \begin{matrix} x_ip_j - x_j p_i \\ u^{k+1} x_i + u^k p_i \end{matrix} } \hspace{0.5in}  i,j\geq 1 \text{ and } 0 \leq k < n.
\]
\end{lemma}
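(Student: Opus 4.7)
The plan is to extend the reduction algorithm of the preceding lemma by adding a Koszul step that tracks the $u$-degree. First I would check that the natural map
\[
\phi \colon \dfrac{R[x_i]\{1,\ldots,u^n\}}{(\text{relations})} \longrightarrow \dfrac{R[u,x_i]}{I}
\]
is well defined and surjects onto the $R[x_i]$-submodule generated by $1,\ldots,u^n$. Well-definedness holds because $x_ip_j - x_jp_i = x_j(ux_i+p_i) - x_i(ux_j+p_j) \in I$ and $u^{k+1}x_i + u^kp_i = u^k(ux_i+p_i) \in I$, and surjectivity is immediate from the construction.

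For injectivity, I would take $y \in R[x_i]\{1,\ldots,u^n\} \cap I$ inside $R[u,x_i]$. Since $I$ is the $R[x_i]$-submodule of $R[u,x_i]$ generated by $\{u^{k+1}x_i+u^kp_i : i \geq 1,\, k \geq 0\}$, I can write
\[
y = \sum_{i,k} a_{i,k}(u^{k+1}x_i + u^kp_i), \qquad a_{i,k} \in R[x_i],
\]
with only finitely many nonzero terms, and set $D = \max\{k : a_{i,k} \neq 0\}$. The goal is to rearrange this combination into one using only listed relations (those with $k < n$), which I would carry out by induction on $\max(D-n+1,\,0)$; there is nothing to do when $D < n$.

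When $D \geq n$, the $u^{D+1}$-coefficient of $y$ vanishes because $\deg_u y \leq n \leq D$, yielding $\sum_i a_{i,D}x_i = 0$ in $R[x_i]$. Since $\{x_i\}$ is a regular sequence in the polynomial ring $R[x_i]$, the Koszul complex produces antisymmetric $b_{ij} \in R[x_i]$ with $a_{i,D} = \sum_j b_{ij}x_j$. A direct manipulation then gives the identities
\[
\sum_i a_{i,D}(u^{D+1}x_i + u^Dp_i) = u^D\sum_{i<j} b_{ij}(x_jp_i - x_ip_j)
\]
and
\[
u^D(x_jp_i - x_ip_j) = p_i(u^Dx_j + u^{D-1}p_j) - p_j(u^Dx_i + u^{D-1}p_i),
\]
so the degree-$D$ contribution may be rewritten as $\sum_m c_m(u^Dx_m + u^{D-1}p_m)$ for appropriate $c_m \in R[x_i]$. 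Substituting back into the expansion of $y$ drops the top index to at most $D-1$, and iterating brings it below $n$.

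The main obstacle is locating the right identity to push the $u$-degree down; this is where the Koszul argument enters. Once the two displayed identities are in hand, the induction is mechanical, and no additional ingredient beyond the regularity of $\{x_i\}$ in $R[x_i]$ is required.
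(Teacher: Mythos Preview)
Your argument is correct and takes a slightly different route from the paper. The paper simply asserts that ``the same algorithm as in the proof above yields the following result,'' meaning the monomial-ordering reduction of the preceding lemma: one fixes the order $x_1 \prec x_2 \prec \cdots \prec u$, looks at the largest leading monomial among the summands $q_t(ux_{i_t}+p_{i_t})$, and cancels it via an explicit telescoping identity, working one monomial at a time. You instead organize the reduction purely by $u$-degree and invoke the Koszul syzygies on the regular sequence $\{x_i\}\subset R[x_i]$ to kill the entire top $u$-coefficient in one step. The two arguments are closely related---the paper's leading-term cancellation \emph{is} the Koszul relation written out monomial by monomial---but your packaging is cleaner and makes the underlying reason (regularity of the $x_i$) explicit, while the paper's version stays elementary and avoids naming the Koszul complex. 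One incidental observation your proof makes visible: for $n\geq 1$ the relations $x_ip_j-x_jp_i$ are actually redundant, since $x_ip_j-x_jp_i = x_i(ux_j+p_j)-x_j(ux_i+p_i)$ is already an $R[x_i]$-combination of the $k=0$ relations; the paper lists them anyway so that the statement specializes to the previous lemma when $n=0$.
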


\subsection{Homological equivariant formal group laws}\label{AppendixD}
In this section we develop the theory of ``homological" equivariant formal group laws, and prove its equivalence to the definition given in \cite{CGK1}. While the body of this paper concerns the group $G = C_2$, in this section we work in the generality of an arbitrary finite abelian group $G$. We begin by recalling the definition of a $G$-equivariant formal group law as defined in \cite{CGK1}. Write $G^\vee = \text{Hom}(G,S^1)$ for the Pontrjagin-dual of $G$. Suppose $A$ is a commutative ring and $R$ is a complete topological $A$-algebra. The category of complete topological $A$-algebras is symmetric monoidal under the completed tensor product $\widehat{\otimes}= \widehat{\otimes}_A$ with unit $A$, regarded as a discrete $A$-algebra. For this reason, we can make sense of a cogroup object in the category of complete topological $A$-algebras, which we call a {\it complete topological} $A${\it -Hopf algebra}. An example of a complete topological Hopf algebra is the ring 
\[
A^{G^\vee} = \prod_{G^\vee} A
\] 
of $A$-valued functions on $G^\vee$, which is equipped with the product topology. If $R$ is a complete topological Hopf algebra equipped with a morphism $R \to A^{G^\vee}$, then we can define a $G^\vee$ action on $R$ by $r^\rho = (1 \otimes \text{ev}_{\rho^{-1}}) \Delta r$. For any $V = \rho_1 + \cdots + \rho_k$, we define 
\[
r^V = r^{\rho_1} \cdots r^{\rho_k}.
\]

\begin{definition}
A (cohomological) $G$-equivariant formal group law $(A,R)$ consists of a commutative ring $A$, a complete topological $A$-Hopf algebra $R$, a morphism $R \to A^{G^\vee}$, and an element $x \in R$, such that 
\begin{enumerate}
\item the sequence 
\[ \begin{tikzcd} 
0 \ar[r] & R \ar[r,"x"] & R \ar[r,"\epsilon"] & A \ar[r] & 0
\end{tikzcd} \]
is exact, and 
\item $R = \lim R/(x^{V})$.
\end{enumerate}
\end{definition}
If $E_G$ is a complex oriented $G$-spectrum, then $(A,R) = (E_{G}^*,E_{G}^*(\mathbf{CP}^\infty_G))$ is naturally a $G$-equivariant formal group law: The morphism $E_{G}^*(\mathbf{CP}^\infty_G) \to (E_{G}^*)^{G^\vee}$ is obtained by applying $E_{G}^*(-)$ to the inclusion
\[
G^\vee \cong \coprod_{\rho \in G^\vee} \mathbf{CP}(\rho) \to  \mathbf{CP}^\infty_G
\]
and the coordinate $x \in E^*_{G}(\mathbf{CP}^\infty_G)$ is the complex orientation of $E_G$. 

On the other hand, we can define a dual algebraic structure called a homological equivariant formal group law, which axiomatizes the algebraic structure of $(E^{G}_*,E^{G}_*(\mathbf{CP}^\infty_G))$. Before giving the definition, we'll review some necessary notation. Suppose $D$ is an $A$-Hopf algebra equipped with a map $A[G^\vee] \to D$. If $x$ is an  $A$-linear functional on $D$, we write $\langle d , x \rangle $ for the value of $\cap x$ at $d \in D$, and we write $x$ for the comultiplication-by-$x$ map
\[ \begin{tikzcd} 
D \ar[r,"\Delta"] & D \otimes D \ar[r,"\cap x "] & D \otimes A \cong D.
\end{tikzcd} \]
We can define a $G^\vee$ action on $\text{Hom}_A(D,A)$ by 
\[
\langle d , x^\rho \rangle = \langle \rho^{-1} d , x \rangle,
\]
and for any $V = \rho_1 + \cdots + \rho_k$, we can define $x^{V}$ by 
\[
\langle d , x^{V} \rangle = \langle \Delta d , x^{\rho_1} \otimes \cdots \otimes x^{\rho_k} \rangle.
\]
\begin{definition}
A homological $G$-equivariant formal group law $(A,D)$ consists of a commutative ring $A$, an $A$-Hopf algebra $D$, a morphism $A[G^\vee] \to D$, and an $A$-linear functional $x$ on $D$, such that 
\begin{enumerate}
\item the sequence 
\[ \begin{tikzcd} 
0 \ar[r] & A \ar[r,"\eta"] & D \ar[r,"\cap x"] & D \ar[r] & 0
\end{tikzcd} \]
is exact, and 
\item if $d \in D$, then there exists $V \in \text{Rep}(G)$ such that 
\[
d \cap x^{V} = 0.
\]
\end{enumerate}
\end{definition}
We will prove that the data of a homological $G$-equivariant formal group law is equivalent to that of a cohomological $G$-equivariant formal group law. We can do after proving some basic structural results about homological $G$-equivariant formal group laws. We begin by describing the additive and comultiplicative structure of such objects, which is relatively simple.

\begin{proposition}
If $(A,D)$ is a $G$-equivariant formal group law, then there exists a unique family of elements 
\[
\{\beta(\rho_1, \dots , \rho_n) \in D : n \geq 1, \rho_i \in G^\vee\}
\]
satisfying the following properties:
\begin{enumerate}
\item $\beta(\rho) \in D$ is the image of $\rho \in A[G^\vee]$ under the structure map $A[G^\vee] \to D$.
\item $\beta(\rho_1 , \dots , \rho_n) \cap x^{\rho_1} = \beta(\rho_2,\dots,\rho_n)$, and
\item \[ \epsilon(\beta(\rho_1,\dots,\rho_n)) = \begin{cases} 1 & n =1 \\ 0 & n  >  1.\end{cases}\]
\end{enumerate}
\end{proposition}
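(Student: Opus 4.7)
My plan is to proceed by induction on $n$. The base case $n=1$ is immediate: property (1) defines $\beta(\rho)$ as the image of $\rho$ in $D$, and property (3) holds because the structure map $A[G^\vee] \to D$ is a morphism of Hopf algebras and therefore preserves counits, giving $\epsilon(\beta(\rho)) = \epsilon_{A[G^\vee]}(\rho) = 1$. For the inductive step, the heart of the argument is the following lemma, which I would establish first: \emph{for every $\rho \in G^\vee$, the $A$-linear operator $\cap x^\rho: D \to D$ is surjective with kernel $A \cdot \rho^{-1}$}, where I use $\rho$ and $\rho^{-1}$ as shorthand for their images in $D$. To prove this, I would exploit the fact that $\rho \in D$ is group-like (being the image of a group-like element under a Hopf algebra morphism), so $\Delta(\rho d) = (\rho \otimes \rho)\Delta d$. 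Combining this with the defining identity $\langle d, x^\rho\rangle = \langle \rho d, x \rangle$ yields the key identity
\[ (\rho d) \cap x \;=\; \rho \cdot (d \cap x^\rho). \]
Since $\rho$ is invertible in $D$ with two-sided inverse $\rho^{-1}$, this identity exhibits $\cap x^\rho$ as a conjugate of $\cap x$ by left multiplication by $\rho$, so the claim follows from axiom (1), which states that $\cap x$ is surjective with kernel $A \cdot 1_D$.

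Given the lemma, the inductive step is straightforward. Assuming $\beta(\rho_2, \dots, \rho_n)$ has been constructed, surjectivity of $\cap x^{\rho_1}$ provides a lift $\tilde\beta \in D$ with $\tilde\beta \cap x^{\rho_1} = \beta(\rho_2, \dots, \rho_n)$. Then set
\[ \beta(\rho_1, \dots, \rho_n) \;:=\; \tilde\beta - \epsilon(\tilde\beta) \cdot \rho_1^{-1}. \]
Since $\rho_1^{-1} \in \ker(\cap x^{\rho_1})$, property (2) is preserved; and since $\epsilon(\rho_1^{-1}) = 1$, a direct computation gives $\epsilon(\beta(\rho_1, \dots, \rho_n)) = 0$, establishing property (3) for $n \geq 2$. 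Uniqueness is then automatic: the difference between two candidates satisfying (2) and (3) lies in $\ker(\cap x^{\rho_1}) = A \cdot \rho_1^{-1}$ and has counit zero, so the $A$-coefficient must vanish.

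I expect the main technical obstacle to be establishing the identity $(\rho d) \cap x = \rho \cdot (d \cap x^\rho)$ cleanly. While it is essentially a Sweedler-notation manipulation, it is the one point in the argument where one must reconcile the group-likeness of $\rho \in D$ with the slightly asymmetric definition $\langle d, x^\rho \rangle = \langle \rho d, x \rangle$; once this is in hand, the rest of the proposition reduces to routine bookkeeping on the Hopf algebra axioms together with the axioms (1) and (2) for a homological $G$-equivariant formal group law.
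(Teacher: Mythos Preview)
Your approach is essentially identical to the paper's: both argue by induction on $n$, lift along $\cap x^{\rho_1}$, subtract a correction term in the kernel to normalize the counit, and deduce uniqueness from the one-dimensionality of that kernel. You are in fact slightly more careful than the paper, which silently uses that $\cap x^{\rho_1}$ is surjective with free rank-one kernel for $\rho_1 \neq 1$; your conjugation argument via group-likeness of $\rho$ supplies exactly this justification.

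There is, however, a sign slip. You quote the defining identity as $\langle d, x^\rho\rangle = \langle \rho d, x\rangle$, but the paper's convention (stated just before the definition of a homological $G$-equivariant formal group law) is $\langle d, x^\rho\rangle = \langle \rho^{-1} d, x\rangle$. Redoing your Sweedler computation with this convention yields $(\rho^{-1} d)\cap x = \rho^{-1}\,(d\cap x^\rho)$, so $\ker(\cap x^{\rho}) = A\cdot\rho$, not $A\cdot\rho^{-1}$. Your correction term must therefore be $\epsilon(\tilde\beta)\cdot\rho_1$ rather than $\epsilon(\tilde\beta)\cdot\rho_1^{-1}$, and with that fix your construction becomes literally the paper's $\beta - \epsilon(\beta)\,\beta(\rho_1)$. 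For $G = C_2$ every character is self-inverse, so the discrepancy is invisible there; for general $G$ your version as written would not satisfy property~(2).
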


\begin{proof}
We construct the elements $\beta(\rho_1, \dots , \rho_n) \in D$ by induction on $n \geq 1$. We define $\beta(\rho_1) \in D$ to be the image of $\rho_1 \in A[G^\vee]$ under $A[G^\vee] \to D$. If we have defined $\beta(\rho_1, \dots , \rho_i)$ for all $ i  <   n$, then we define $\beta(\rho_1,\dots,\rho_n)\in D$ by first choosing any $\beta \in D$ such that $\beta \cap x^{\rho_1} = \beta(\rho_1, \dots , \rho_n)$, and then defining
\[
\beta(\rho_1, \dots , \rho_n)  = \beta - \epsilon(\beta) \beta(\rho_1).
\]

Properties (1), (2), and (3) are satisfied by construction. Suppose that we have another family of elements $\gamma(\rho_1, \dots , \rho_n) \in D$ satisfying properties (1), (2), and (3). We will prove that $\beta(\rho_1, \dots , \rho_n)= \gamma( \rho_1, \dots, \rho_n)$ by induction on $n$. The case $n = 1$ holds by property (1). Now 
\begin{align*}
\left( \beta(\rho_1, \dots , \rho_n) -  \gamma( \rho_1, \dots , \rho_n)  \right) \cap x^{\rho_1} & =\beta(\rho_1, \dots , \rho_n) \cap x^{\rho_1}- \gamma(\rho_1, \dots , \rho_n) \cap x^{\rho_1}\\
& = \beta(\rho_2, \dots , \rho_n)  - \gamma(\rho_2, \dots , \rho_n) = 0
\end{align*}
so $\beta(\rho_1, \dots , \rho_n) - \gamma(\rho_1, \dots , \rho_n) = a \beta(\rho_1) $ for some $a \in A$. But we compute 
\begin{align*}
a = \epsilon(a\beta(\rho_1)) = \epsilon(\beta(\rho_1, \dots , \rho_n) - \gamma(\rho_1, \dots , \rho_n) ) & = \epsilon(\beta(\rho_1, \dots , \rho_n)) - \epsilon(\gamma(\rho_1, \dots , \rho_n))\\
& = 0
\end{align*}
so $\beta(\rho_1, \dots , \rho_n) - \gamma(\rho_1, \dots , \rho_n)$.
\end{proof}

It turns out that the elements $\beta(\rho_1, \dots , \rho_i)$ associated to a complete flag $(\rho_i)_{i=1}^\infty$ form a free $A$-module basis for $D$.

\begin{lemma} \label{free}
If $(A,D)$ is a $G^\vee$-equivariant formal group law and $(\rho_i)_{i=1}^\infty$ is a complete flag, then the set
\[
 \{ \beta(\rho_1, \dots , \rho_n) : n \geq 1\}
\]
is an $A$-linear basis for $D$.
\end{lemma}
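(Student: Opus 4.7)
The plan is to filter $D$ by the nested $A$-submodules
\[
D_N := \ker\!\bigl(\cap x^{\rho_1 + \cdots + \rho_N} \colon D \to D\bigr),
\]
prove $D = \bigcup_N D_N$ using axiom (2) together with the completeness of the flag, and then induct on $N$ to identify $D_N$ with the $A$-span of $\beta(\rho_1), \beta(\rho_1,\rho_2), \ldots, \beta(\rho_1,\ldots,\rho_N)$. The crucial input is a \emph{translation identity}: for each $\rho \in G^\vee$,
\[
\cap x^\rho \;=\; \rho \circ (\cap x) \circ \rho^{-1}
\]
as $A$-linear operators on $D$. Since $\rho$ is group-like and $A[G^\vee]\to D$ is a Hopf algebra map, we have $\Delta(\rho d) = \rho d_{(1)} \otimes \rho d_{(2)}$ in Sweedler notation; combined with the definition $\langle d , x^\rho \rangle = \langle \rho^{-1} d , x \rangle$, this yields the identity. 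Because multiplication by $\rho$ is an $A$-linear automorphism of $D$, axiom (1) then gives $\ker(\cap x^\rho) = \rho \cdot \eta(A) = A\cdot\beta(\rho)$, which is the base case $N=1$.

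For the inductive step, coassociativity of $\Delta$ together with the defining formula for $x^V$ yields the composition law $\cap x^{V+W} = (\cap x^W)\circ(\cap x^V)$; in particular, $d \cap x^V = 0$ implies $d \cap x^{V+W}=0$ for all $W \in \mathrm{Rep}(G)$. Since a complete flag contains each irreducible character of $G^\vee$ infinitely often, any $V$ provided by axiom (2) is dominated as a multiset by $\rho_1+\cdots+\rho_N$ for $N$ large, so $D = \bigcup_N D_N$. Assuming inductively the claimed identification for every complete flag and every integer less than $N+1$, I would take $d \in D_{N+1}$ and note that $d \cap x^{\rho_1}$ is annihilated by $\cap x^{\rho_2+\cdots+\rho_{N+1}}$; by the inductive hypothesis applied to the shifted complete flag $(\rho_2,\rho_3,\ldots)$, it can be written $\sum_{k=2}^{N+1} a_k\beta(\rho_2,\ldots,\rho_k)$. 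Property (2) of the $\beta$'s then forces $d - \sum_{k=2}^{N+1} a_k\beta(\rho_1,\ldots,\rho_k)$ into $\ker(\cap x^{\rho_1}) = A\beta(\rho_1)$, completing the decomposition.

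Linear independence is handled separately and does not require completeness of the flag: applying $\epsilon$ to a relation $\sum_{k=1}^N a_k\beta(\rho_1,\ldots,\rho_k)=0$ and invoking property (3) forces $a_1 = 0$; applying $\cap x^{\rho_1}$ and invoking property (2) reduces to a shorter relation among $\beta(\rho_2,\ldots,\rho_k)$, and iteration peels off each successive coefficient. The main obstacle is the translation identity, where the Hopf compatibility between $A[G^\vee]\to D$ and the linear functional $x$ is really used; once this and the composition law for $\cap x^V$ are in hand, the rest is a clean induction using only axioms (1) and (2) and the defining properties of the $\beta$'s.
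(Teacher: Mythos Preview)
Your proof is correct and follows essentially the same strategy as the paper: filter $D$ by the kernels $D_N = \ker(\cap x^{\rho_1+\cdots+\rho_N})$, use completeness of the flag together with axiom (2) to exhaust $D$, and induct on $N$ using $\ker(\cap x^\rho) = A\beta(\rho)$. The only organizational differences are that you peel coefficients from the bottom (applying $\cap x^{\rho_1}$ and invoking the inductive hypothesis on the shifted flag) whereas the paper peels from the top (applying $\cap x^{\rho_1+\cdots+\rho_{n-1}}$ and staying within the same flag), and you make explicit the translation identity $\cap x^\rho = \rho\circ(\cap x)\circ\rho^{-1}$ and the composition law $\cap x^{V+W} = (\cap x^W)\circ(\cap x^V)$, both of which the paper uses tacitly.
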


\begin{proof}
The elements $\beta(\rho_1, \dots , \rho_n) \in D$ determine an $A$-module map 
\[ \psi:  A\{\beta(\rho_1, \dots , \rho_n) : n \geq 1 \} \to D\]
 which we claim is an isomorphism. First, let's show that $\psi$ is surjective. Since $(\rho_i)_{i = 1}^\infty$ is a complete flag, we know that for any $d \in D$, there is some $n \geq 1$ such that $d \cap x^{\rho_1 + \cdots \rho_n} = 0$. If $d \cap x^{\rho_1} = 0$ then $d =  a \beta(\rho_1)$ for some $a \in A$, so $d$ is in the image of $\psi$. Suppose next that $d \cap x^{\rho_1 + \cdots + \rho_n} = 0$. Then $d \cap x^{\rho_1 + \cdots + \rho_{n-1}}$ is in the kernel of $\cap x^{\rho_n}$, so $d \cap x^{\rho_1+ \cdots + \rho_{n-1}} = a \beta(\rho_n)$ for some $a \in A$. Then 
 \begin{align*}
 (d - a\beta(\rho_1, \dots , \rho_n)) \cap x^{\rho_1+ \cdots + \rho_{n-1}}
 & = a \beta(\rho_n) - a \beta(\rho_n) \\
 & = 0,
 \end{align*}
so by induction $d - a \beta(\rho_1,\dots,\rho_{n-1})$ is in the image of $\psi$, hence so is $d$. Next lets show that $\phi$ is injective. Suppose $a_1,\dots, a_n \in A$ and 
\[ d = a_1 \beta(\rho_1) + \dots + a_n\beta(\rho_1, \dots , \rho_n) = 0\]
 in $D$. If $n =1$, then $d = a_1\beta(\rho_1)$ which is zero in $D$ if and only if $a_1 = 0$. Suppose inductively that $n > 1$. Then $d \cap x^{\rho_1+\cdots +\rho_{n-1}} = a_n\beta(\rho_n)= 0$, so $a_n = 0$, and by induction this implies that $a_0 = \dots = a_{n -1} = 0$.
\end{proof}

Next, we prove that the elements $\beta(\rho_1, \dots , \rho_i) \in D$ are dual to the linear functionals $x^{\rho_1 + \cdots + \rho_{i-1}}$. If $n=0$, then the symbol $x^{\rho_1 + \dots + \rho_n}$ is understood to mean the counit $x^ 0 = \epsilon :D \to A$.

\begin{lemma}
If $(A,D)$ is a $G^\vee$-equivariant formal group law and $(\rho_i)_{i=1}^\infty$ is a complete flag, then for any $d \in D$ we have 
\begin{align*}
d & = \sum_{i \geq 1 } \left\langle d , x^{\rho_1 + \cdots + \rho_{i-1}} \right\rangle \beta(\rho_1, \dots , \rho_i).
\end{align*}
\end{lemma}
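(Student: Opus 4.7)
The plan is to prove the formula by showing that $\{\beta(\rho_1, \dots, \rho_i)\}_{i \geq 1}$ and $\{x^{\rho_1 + \cdots + \rho_{i-1}}\}_{i \geq 1}$ are dual bases of $D$ and $\Hom_A(D, A)$ under the pairing $\langle -, -\rangle$. Lemma \ref{free} already identifies the former as a free $A$-basis of $D$, so every $d \in D$ has a unique expansion $d = \sum_i a_i \beta(\rho_1, \dots, \rho_i)$, and the identity in question reduces to the Kronecker duality
\[
\langle \beta(\rho_1, \dots, \rho_j),\, x^{\rho_1 + \cdots + \rho_{i-1}}\rangle \;=\; \delta_{ij}.
\]

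The strategy is to pass from the pairing to the comultiplication-by-$x$ operation via the identity $\langle d, x^V\rangle = \epsilon(d \cap x^V)$, which falls out of the counit axiom applied to $(1 \otimes x^V)\Delta d$. A direct check using coassociativity and cocommutativity of $D$ shows that $(\cap x^{\rho}) \circ (\cap x^{\rho'}) = \cap x^{\rho + \rho'}$, so iterated pairing reduces to successively applying $\cap x^{\rho_k}$. For $i \leq j$, applying property (2) of the preceding proposition $(i-1)$ times produces $\beta(\rho_i, \rho_{i+1}, \dots, \rho_j)$, whose counit is $\delta_{ij}$ by property (3). For $i > j$, the same iteration reduces the expression to $\beta(\rho_j)$ after $(j-1)$ steps, and the crux becomes verifying that $\beta(\rho_j) \cap x^{\rho_j} = 0$. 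Since $\rho_j \in G^\vee$ is grouplike in $D$, $\Delta \beta(\rho_j) = \beta(\rho_j) \otimes \beta(\rho_j)$, which gives
\[
\beta(\rho_j) \cap x^{\rho_j} \;=\; x^{\rho_j}(\beta(\rho_j))\,\beta(\rho_j) \;=\; \langle \rho_j^{-1}\beta(\rho_j),\, x\rangle\,\beta(\rho_j) \;=\; x(1_D)\,\beta(\rho_j);
\]
and the exact sequence $0 \to A \xrightarrow{\eta} D \xrightarrow{\cap x} D \to 0$ forces $1_D \cap x = x(1_D)\,1_D$ to vanish, so $x(1_D) = 0$ by the injectivity of $\eta$.

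The main technical obstacle I expect is justifying the composition formula $(\cap x^{\rho}) \circ (\cap x^{\rho'}) = \cap x^{\rho + \rho'}$: unwinding in Sweedler notation, one needs to permute the middle legs of an iterated coproduct, which works precisely because $D$ is cocommutative. Once this is in hand, every remaining step is a direct manipulation using properties (1)--(3) of $\beta$, the definition $\langle d, x^\rho\rangle = \langle \rho^{-1}d, x\rangle$, and the exactness of the defining sequence of the homological equivariant formal group law.
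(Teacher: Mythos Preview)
Your proof is correct and proceeds by a somewhat different route than the paper's. The paper argues by a direct induction on the least $n$ with $d \cap x^{\rho_1 + \cdots + \rho_n} = 0$: one observes that $d \cap x^{\rho_1 + \cdots + \rho_{n-1}}$ lies in $\ker(\cap x^{\rho_n}) = A\,\beta(\rho_n)$, reads off its coefficient as $\langle d, x^{\rho_1 + \cdots + \rho_{n-1}}\rangle$ via $\epsilon$, subtracts the corresponding multiple of $\beta(\rho_1,\dots,\rho_n)$, and invokes the inductive hypothesis. Your argument instead cites Lemma~\ref{free} to get the basis up front and then proves the Kronecker duality $\langle \beta(\rho_1,\dots,\rho_j), x^{\rho_1+\cdots+\rho_{i-1}}\rangle = \delta_{ij}$ directly. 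Both approaches rest on the same ingredients (cocommutativity to compose cap operations, the grouplike property of $\beta(\rho)$ to identify $\ker(\cap x^\rho)$), so the difference is organizational rather than substantive. Your version has the minor advantage of isolating the Kronecker duality as a standalone fact, which the paper only states and uses later in the coproduct lemma.
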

\begin{proof}
Since $(\rho_i)_{i=1}^\infty$ is a complete flag, we know that if $d \in D$ then $d \cap x^{\rho_1 + \cdots + \rho_n} = 0$ for some $n \geq 1$. Suppose first that $d \cap x^{\rho_1} = 0$. Then $d = a \beta(\rho_1)$ for some $a \in A$, and we can compute 
\[
a = \epsilon( a \beta(\rho_1)) = \epsilon(d) = \left\langle d , x^0 \right\rangle,
\]
so $d = \left\langle d , x^0 \right\rangle \beta(\rho_1)$. If $i > 0$, then 
\[
\left\langle d , x^{\rho_1 + \cdots + \rho_i} \right\rangle = \left\langle d \cap x^{\rho_1}, x^{\rho_2 + \cdots + \rho_i} \right\rangle = \left\langle 0 , x^{\rho_2 + \cdots + \rho_i}\right\rangle = 0,
\]
so the formula holds in the case $n= 1$. Suppose inductively that $d \cap x^{\rho_1 + \cdots + \rho_n} = 0$. Then $d \cap x^{\rho_1 + \cdots + \rho_{n-1}}  \in \text{ker}(\cap x^{\rho_n})$, so $d \cap x^{\rho_1 + \cdots + \rho_{n-1}} = a \beta(\rho_n)$ for some $a \in A$, and applying $\epsilon$ shows that $a = \left\langle d, x^{\rho_1 + \cdots + \rho_{n-1}}\right\rangle$. We now have 
\[
d - \left\langle d , x^{\rho_1 + \cdots + \rho_{n-1}} \right\rangle \beta(\rho_1 , \dots , \rho_n)   \in \text{ker}(\cap x^{\rho_1 + \cdots + \rho_{n-1}}),
\] 
so by induction we have 
\begin{align*}
d - \left\langle d , x^{\rho_1 + \cdots + \rho_{n-1}} \right\rangle  \beta(\rho_1 , \dots , \rho_n) = \sum_{i \geq 1} a_i \beta(\rho_1, \dots , \rho_i)
\end{align*}
where 
\begin{align*}
a_i & = \left\langle d - \left\langle d , x^{\rho_1 + \cdots + \rho_{n-1}}) \right\rangle  \beta(\rho_1, \dots , \rho_n), x^{\rho_1 + \cdots + \rho_{i-1}}\right\rangle \\
& = \begin{cases}
\left\langle d , x^{\rho_1 + \cdots + \rho_{i-1}}\right \rangle & i < n\\
0 & i \geq n.
\end{cases}
\end{align*}
so
\begin{align*}
d = \sum_{i = 1}^n \left\langle d, x^{\rho_1 + \cdots + \rho_{i-1}} \right\rangle \beta(\rho_1,\dots,\rho_i)
\end{align*}
Our final step is to observe that $\left\langle d , x^{\rho_1 + \cdots + \rho_{i-1}} \right\rangle = 0$ if $i > n$.
\end{proof}

\begin{lemma}
If $(A,D)$ is a $G^\vee$-equivariant formal group law and $(\rho_i)_{i=1}^\infty$ is a complete flag, then for any $d' \otimes d'' \in D \otimes D$ we have 
\[
d' \otimes d'' = \sum_{i,j \geq 1} \left\langle d' \otimes d'' , x^{\rho_1 + \cdots + \rho_{i-1}} \otimes x^{\rho_i + \cdots + \rho_{i+j-1}}) \right\rangle \beta(\rho_1 , \dots , \rho_i) \otimes \beta(\rho_i , \dots , \rho_j).
\]
\end{lemma}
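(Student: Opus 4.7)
The plan is to apply the previous lemma twice, once to the left tensor factor and once to the right, and then to recognize the resulting product of pairings as a single pairing on $D \otimes D$. Concretely, I would first expand $d'$ using the given complete flag $(\rho_i)_{i=1}^{\infty}$ via the previous lemma:
\[
d' = \sum_{i \geq 1} \left\langle d', x^{\rho_1 + \cdots + \rho_{i-1}} \right\rangle \beta(\rho_1, \ldots, \rho_i).
\]

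Second, for each fixed $i \geq 1$, I would observe that the shifted sequence $(\rho_{i}, \rho_{i+1}, \ldots)$ is itself a complete flag. This is the only point that requires a brief check: since each irreducible representation of $G$ appears infinitely often in $(\rho_k)_{k=1}^\infty$ by definition of a complete flag, deleting the finitely many initial terms $\rho_1, \ldots, \rho_{i-1}$ does not change this, so the tail is again complete. Applying the previous lemma to $d''$ with respect to this shifted flag gives
\[
d'' = \sum_{j \geq 1} \left\langle d'', x^{\rho_i + \cdots + \rho_{i+j-2}} \right\rangle \beta(\rho_i, \ldots, \rho_{i+j-1}),
\]
with the convention that the exponent of $x$ in the $j=1$ term is the empty sum $0$, so that the pairing is $\epsilon(d'')$.

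Third, combining the two expansions by bilinearity of $\otimes$ over $A$ yields
\[
d' \otimes d'' = \sum_{i,j \geq 1} \left\langle d', x^{\rho_1 + \cdots + \rho_{i-1}} \right\rangle \left\langle d'', x^{\rho_i + \cdots + \rho_{i+j-2}} \right\rangle \, \beta(\rho_1, \ldots, \rho_i) \otimes \beta(\rho_i, \ldots, \rho_{i+j-1}).
\]
Finally, by the definition of the dual pairing on a tensor product, the product of scalars on the right collapses to a single pairing:
\[
\left\langle d', x^{\rho_1 + \cdots + \rho_{i-1}} \right\rangle \left\langle d'', x^{\rho_i + \cdots + \rho_{i+j-2}} \right\rangle = \left\langle d' \otimes d'', x^{\rho_1 + \cdots + \rho_{i-1}} \otimes x^{\rho_i + \cdots + \rho_{i+j-2}} \right\rangle,
\]
which produces the desired formula. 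There is no real obstacle here; the proof is a straightforward consequence of the single-factor expansion established in the preceding lemma together with the fact that any tail of a complete flag is again a complete flag.
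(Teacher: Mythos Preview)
Your proposal is correct and follows essentially the same route as the paper: expand $d'$ via the previous lemma, then for each $i$ expand $d''$ with respect to the shifted flag $(\rho_i,\rho_{i+1},\dots)$, and finally collapse the product of pairings into a single pairing on the tensor product. Your explicit remark that a tail of a complete flag is again a complete flag is a small addition the paper leaves implicit; note also that your indexing convention for the second factor differs harmlessly from the paper's (and the lemma as stated contains an evident typo in the second $\beta$).
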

\begin{proof}
If $d' \otimes d'' \in D \otimes D$, then 
\begin{align*}
d' \otimes d'' & = \left( \sum_{i \geq 1} \left\langle d' , x^{\rho_1 + \cdots + \rho_{i-1}} \right\rangle \beta(\rho_1, \dots , \rho_i) \right) \otimes d''\\
& = \sum_{i \geq 1} \left\langle d' , x^{\rho_1 + \cdots + \rho_{i-1}} \right\rangle \beta(\rho_1, \dots , \rho_i) \otimes d''\\
& = \sum_{i \geq 0} \left\langle d' , x^{\rho_1 + \cdots + \rho_{i-1}} \right\rangle  \beta(\rho_1 , \dots ,\rho_i) \otimes \left( \sum_{j \geq 1} \left\langle d'', x^{\rho_i , \dots , \rho_{i+j-1}}) \right\rangle \beta(\rho_i , \dots , \rho_{i+j})\right) \\
& = \sum_{i,j \geq 1} \left\langle d',x^{\rho_1 + \cdots + \rho_{i-1}} \right\rangle \left\langle d'', x^{\rho_i + \cdots + \rho_{i+j-1}} \right\rangle \beta(\rho_1 , \dots , \rho_i) \otimes \beta(\rho_i , \dots , \rho_{i+j})\\
& = \sum_{i,j \geq 1} \left\langle d' \otimes d'' , x^{\rho_1 + \cdots + \rho_{i-1}} \otimes x^{\rho_i + \cdots + \rho_{i+j-1}} \right\rangle \beta(\rho_1 , \dots , \rho_i) \otimes \beta(\rho_i , \dots , \rho_{i+j}).
\end{align*} 
and the result holds for sums of simple tensors by $k$-linearity.
\end{proof}

The preceding result allows us to determine the comultiplicative structure of $D$.

\begin{lemma}
The coproduct $\Delta:D \to D \otimes D$ is determined by 
\[
\Delta \beta(\rho_1, \dots , \rho_n) = \sum_{i=1}^n \beta(\rho_1, \dots , \rho_i) \otimes \beta(\rho_i , \dots , \rho_n).
\]
\end{lemma}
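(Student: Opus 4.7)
The plan is to compute $\Delta\beta(\rho_1,\ldots,\rho_n)$ by expanding it in the basis of $D \otimes D$ from the preceding lemma, and then evaluating each coefficient via coassociativity together with the orthonormality formula $\langle \beta(\rho_1,\ldots,\rho_i), x^{\rho_1+\cdots+\rho_{j-1}}\rangle = \delta_{i,j}$.

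More concretely, the preceding lemma expresses any element of $D \otimes D$ uniquely in the basis $\{\beta(\rho_1,\ldots,\rho_i)\otimes\beta(\rho_i,\ldots,\rho_{i+j})\}$, with the $(i,j)$-coefficient given by pairing against $x^{\rho_1+\cdots+\rho_{i-1}} \otimes x^{\rho_i+\cdots+\rho_{i+j-1}}$. Applied to $\Delta\beta(\rho_1,\ldots,\rho_n)$, the coefficient I would need to compute is
\[
\bigl\langle \Delta\beta(\rho_1,\ldots,\rho_n),\; x^{\rho_1+\cdots+\rho_{i-1}} \otimes x^{\rho_i+\cdots+\rho_{i+j-1}}\bigr\rangle.
\]
The first real step is to reduce this to the simpler expression $\langle \beta(\rho_1,\ldots,\rho_n), x^{\rho_1+\cdots+\rho_{i+j-1}}\rangle$. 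To do this cleanly I would observe that $V \mapsto x^V$ is multiplicative in $V$ with respect to the convolution product on $\Hom_A(D,A)$ induced by $\Delta$ — explicitly, $x^{U+W} = x^U \cdot x^W$ — so by the very definition of convolution one gets $\langle \Delta d, x^U \otimes x^W\rangle = \langle d, x^{U+W}\rangle$ for every $d \in D$.

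Once this reduction is in place, the orthonormality relation immediately forces the $(i,j)$-coefficient to equal $\delta_{n, i+j}$, so the only surviving terms are those with $i + j = n$. Reindexing by $k = i$ (so that $j = n-k$ ranges from $n-1$ down to $0$), the expansion collapses to
\[
\Delta\beta(\rho_1,\ldots,\rho_n) \;=\; \sum_{k=1}^{n} \beta(\rho_1,\ldots,\rho_k) \otimes \beta(\rho_k,\ldots,\rho_n),
\]
the endpoint $k = n$ contributing $\beta(\rho_1,\ldots,\rho_n)\otimes\beta(\rho_n)$ and $k=1$ contributing $\beta(\rho_1)\otimes\beta(\rho_1,\ldots,\rho_n)$. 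The one place to be careful is the multiplicativity of $V \mapsto x^V$ and the edge cases where one of $U$ or $W$ is the empty representation — so that $x^U$ or $x^W$ becomes the counit $\epsilon$ — but both are immediate from the recursive definition of $x^V$ and the counit axiom for $\Delta$.
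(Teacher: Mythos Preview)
Your proposal is correct and follows essentially the same approach as the paper: expand $\Delta\beta(\rho_1,\dots,\rho_n)$ using the preceding lemma's formula for $D\otimes D$, reduce $\langle \Delta d, x^U\otimes x^W\rangle$ to $\langle d, x^{U+W}\rangle$, and apply the orthonormality relation to collapse the sum. Your explicit mention of the multiplicativity of $V\mapsto x^V$ and the counit edge cases is slightly more careful than the paper's version, but the argument is the same.
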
 

\begin{proof}
First, note that if $(\rho_i)_{i=1}^\infty$ is a complete flag, then 
\[
\left\langle \beta(\rho_1, \dots , \rho_i) , x^{\rho_1 + \cdots + \rho_{j-1}} \right\rangle = 
\begin{cases} 
1 & i-j \\
0 & i \neq j.
\end{cases}
\]
We compute 
\begin{align*}
\Delta \beta(\rho_1, \dots , \rho_n) & = \sum_{i,j \geq 1} \left\langle \Delta \beta(\rho_1, \dots , \rho_n) , x^{\rho_1 + \cdots + \rho_{i-1}} \otimes x^{\rho_i + \cdots + \rho_{i+j-1}} \right\rangle \beta(\rho_1, \dots , \rho_i) \otimes \beta(\rho_i + \cdots + \rho_{i+j})\\
& = \sum_{i,j \geq 1} \left\langle \beta(\rho_1, \dots , \rho_n) , x^{\rho_1 + \cdots + \rho_{i+j-1}} \right\rangle \beta(\rho_1, \dots , \rho_i) \otimes \beta(\rho_i + \cdots + \rho_{i+j})\\
& = \sum_{i=1}^n \beta(\rho_1 , \dots , \rho_i) \otimes \beta(\rho_i , \dots , \rho_n).
\end{align*}
\end{proof}

Having developed some basic properties of homological $G^\vee$-equivariant formal group laws, we can prove our Cartier duality theorem for equivariant formal group laws.

\begin{theorem}
If $A$ is a commutative ring, then the functors
\[ \begin{tikzcd} 
\begin{Bmatrix} \text{Cohomological }G\text{-equivariant} \\ \text{ formal group laws over }A \end{Bmatrix}   \ar[rr,shift left = 4,"\text{Hom}^\text{cts}_A(- \text{,} A)"]
& & \ar[ll,shift left = 4,"\text{Hom}_A(- \text{,} A)"] \begin{Bmatrix} \text{Homological }G\text{-equivariant} \\ \text{ formal group laws over }A \end{Bmatrix} 
\end{tikzcd} \]
are inverse equivalences of categories.
\end{theorem}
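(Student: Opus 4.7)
The plan is to construct the two functors by $A$-linear duality, equipped with appropriate topologies, and check that the axioms and Hopf-algebraic structure transfer through the duality. For the forward functor, given $(A,R)$, set $D := \text{Hom}^{\text{cts}}_A(R, A)$ with $A$ discrete. Axiom (2) for $R$ yields
$$D \cong \varinjlim_V \text{Hom}_A(R/(x^V), A),$$
and iterating axiom (1) shows that each $R/(x^V)$ is a free $A$-module of rank $|V|$ (the extension of $R/(x^W)$ by $A$ splits, since $A$ is self-projective), so $D$ is a filtered colimit of finite free $A$-modules. The Hopf algebra structure on $D$ is obtained by $A$-linearly dualising that of $R$ (multiplication and comultiplication swap), and the map $A[G^\vee]\to D$ is dual to $R\to A^{G^\vee}$ (using that the finite Hopf algebras $A[G^\vee]$ and $A^{G^\vee}$ are $A$-linearly dual). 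Define $\tilde{x}\in \text{Hom}_A(D,A)$ by $\tilde{x}(d) := d(x)$. A direct Sweedler calculation $(d\cap \tilde{x})(r) = \sum d_{(1)}(r)\,d_{(2)}(x) = d(xr)$ identifies $\cap \tilde{x}$ with the $A$-linear dual of multiplication by $x$ on $R$, and more generally $\cap \tilde{x}^V$ with the dual of multiplication by $x^V$. Dualising axiom (1) for $R$ preserves exactness (each quotient is finite free) and yields axiom (1) for $D$, while axiom (2) for $D$ is exactly the statement that every continuous functional factors through some $R/(x^V)$.

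In the reverse direction, given $(A,D)$, set $R := \text{Hom}_A(D,A)$ with the linear topology generated by the ideals
$$I_V := \{\phi \in R : \phi\vert_{D_V}=0\}, \qquad D_V := \ker\bigl(\cap x^V\colon D\to D\bigr).$$
By Lemma \ref{free}, choosing a complete flag realises $D$ as a free $A$-module, and the $D_V$ exhaust $D$, so $R = \varprojlim R/I_V$ is complete. The Hopf algebra structure on $R$ and the reference map $R\to A^{G^\vee}$ are the $A$-linear duals of the corresponding structure on $D$, and the original functional $x$ serves as the coordinate of $R$. Axiom (1) for $R$ is dual to axiom (1) for $D$, and axiom (2) for $R$ reduces to the identification $I_V = (x^V)$: the inclusion $(x^V)\subseteq I_V$ is immediate from the duality between multiplication by $x^V$ on $R$ and $\cap x^V$ on $D$, while the reverse inclusion uses the dual-pairing identity $\langle \beta(\rho_1,\dots,\rho_i),\, x^{\rho_1+\cdots+\rho_{j-1}}\rangle = \delta_{ij}$ established in the preceding lemmas to produce an explicit generating set.

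Finally, both composites are naturally isomorphic to the identity: in each case the object in question is a filtered colimit or limit of finite free $A$-modules, and the classical level-wise isomorphism $M\cong \text{Hom}_A(\text{Hom}_A(M,A),A)$ assembles into a natural isomorphism of (topological) Hopf algebras that preserves the coordinate, the reference map, and the topology. The main obstacle I expect is the verification of $I_V = (x^V)$ in the reverse direction, since this identification simultaneously couples the multiplicative structure of $R$ with the coalgebraic filtration $D = \bigcup_V D_V$ and rests essentially on the explicit basis from Lemma \ref{free}; everything else is routine functorial bookkeeping once this identification is in hand.
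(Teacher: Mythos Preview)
Your proposal is correct and follows essentially the same route as the paper: both arguments construct the duality via $\text{Hom}_A(-,A)$ and $\text{Hom}^{\text{cts}}_A(-,A)$, rely on the free-basis description $D \cong \bigoplus_i A\{\beta(\rho_1,\dots,\rho_i)\}$ and its dual $R \cong \prod_i A\{x^{\rho_1+\cdots+\rho_{i-1}}\}$ from the preceding lemmas, and use the evaluation maps $r\mapsto \text{ev}_r$, $d\mapsto \text{ev}_d$ for the natural isomorphisms. The paper's write-up is considerably terser and does not isolate the identification $I_V=(x^V)$ you flag as the main obstacle, but that identification is indeed immediate from the dual-basis relation $\langle \beta(\rho_1,\dots,\rho_i),x^{\rho_1+\cdots+\rho_{j-1}}\rangle=\delta_{ij}$, exactly as you outline.
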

\begin{proof}
That the dual of a cohomological (resp. homological) $G$-equivariant formal group law carries the structure of a homological (resp. cohomological) $G$-equivariant formal group law follows from the fact that 
\[
\text{Hom}^\text{cts}_A(R \widehat{\otimes} R , A) \cong \text{Hom}^\text{cts}_A(R,A) \otimes \text{Hom}^\text{cts}_A(R,A)
\]
resp. 
\[
\text{Hom}_A(D \otimes D , A) \cong \text{Hom}_A(D,A) \; \widehat{\otimes} \; \text{Hom}_A(D,A).
\]
The assignments
\begin{align*}
R & \to \text{Hom}_A(\text{Hom}^\text{cts}_A(R,A),A)\\
r & \mapsto \text{ev}_r
\end{align*} 
and
\begin{align*}
D & \to \text{Hom}_A^\text{cts}(\text{Hom}_A(D,A),A)\\
d & \mapsto \text{ev}_d
\end{align*} 
define natural isomorphisms. This can be verified by observing that 
\[
R \cong \prod_{i=1}^\infty A\{ x^{\rho_1 + \cdots + \rho_{i-1}}\}\text{ and } D \cong \bigoplus_{i=1}^\infty A\{\beta(\rho_1 , \dots , \rho_i) \},\]
so the maps $r \mapsto \text{ev}_r$ and $d \mapsto \text{ev}_d$ are isomorphisms at the level of $A$-modules.
\end{proof}


\begin{thebibliography}{99}

\bibitem{abramskriz}W. Abram, I. Kriz, The equivariant complex cobordism ring of a finite abelian group, {\it Math. Res. Lett., }{\bf 22} (6) (2015), 1573-1588. 

\bibitem{Uribe} A. Angel, J. M. G\'{o}mez, B. Uribe, Equivariant complex bundles, fixed points and equivariant unitary bordism, {\it Algebr. Geom. Topol.} {\bf 18} (2018).

\bibitem{brockerhook} T. Brocker, E.C. Hook Stable equivariant bordism, {\it Math. Z. }{\bf 129} (1972), 269-277.

\bibitem{connerfloyd}P. E. Conner, E. E. Floyd, {\it Differentiable periodic maps}, Ergebnisse der Mathematik und ihrer Grenzgebiete, vol. 33, Springer, Berlin (1964).

\bibitem{CGK1} M. Cole, J. P. C. Greenlees, and I. Kriz, Equivariant formal group laws, {\it Proc. London Math. Soc.} (3), {\bf 81} (2) (2000), 355-386.

\bibitem{CGK2} M. Cole, J. P. C. Greenlees, and I. Kriz, The universality of equivariant complex bordism, {\it Math. Z.} {\bf 239} (3) (2002), 455-475.

\bibitem{Comezana} S. Comeza$\tilde{\text{n}}$a, \textit{Some calculations in equivariant bordism}, in: J. P. May (ed.) Equivariant homotopy and cohomology theory, CBMS Regional Conference Series in Mathematics. Published for the Conference Board of the Mathematical Sciences, Washington, DC, by the American Mathematical Society, Providence, RI (1996).

\bibitem{Greenlees1} J. P. C. Greenlees, Equivariant forms of connective $K$-theory, {\it Topology}, {\bf 38} (5) (1999), 1075-1092.

\bibitem{Greenlees2} J. P. C. Greenlees, Equivariant connective $K$-theory for compact Lie groups, {\it Journal of Pure and Applied Algebra}, {\bf 187} (1-3), 129-152.

\bibitem{Greenlees3} J. P. C. Greenlees, Multiplicative equivariant formal group laws, {\it Journal of Pure and Applied Algebra}, {\bf 165} (2) (2001), 183-200.

\bibitem{GreenleesMay1} J. P. C. Greenlees, J. P. May, Localization and completion theorems for $MU$-module spectra, {\it Ann. of Math.} (2) {\bf 146} (3) (1997), 509-544.

\bibitem{GreenleesMay2} J. P. C. Greenlees, J. P. May, {\it Generalized Tate cohomology}, Mem. Amer. Math. Soc. {\bf 113} (1995), no. 543, viii + 178pp.

\bibitem{Hanke}B. Hanke, Geometric versus homotopy theoretic equivariant bordism, {\it Math. Ann.}, {\bf 332} (3) (2005), 677 - 696.

\bibitem{HW}B. Hanke, M. Wiemeler, An equivariant Quillen theorem, {\it Adv. Math.}, {\bf 340} (2018), 48-75.

\bibitem{Hausmann} Markus Hausmann: Global group laws and equivariant bordism rings, arXiv preprint arXiv:1912.07583 (2019).

\bibitem{hazewinkel} M. Hazewinkel, {\it Formal groups and applications}, Academic Press, New York (1978).

\bibitem{symmetricspectra} M. Hovey, B. Shipley, J. Smith, Symmetric Spectra, {\it J. Amer. Math. Soc.}{\bf 13} (200), 149-208.

\bibitem{Kriz} I. Kriz, The $\ZZ/p$-equivariant complex cobordism ring, {\it Homotopy invariant algebraic structures }(Baltimore, MD, 1998), Contemp. Math., {\bf 239} (1999), 217 - 223. Amer. Math. Soc., Providence , RI.

\bibitem{Lewis} L. G. Lewis, The $RO(G)$-graded equivariant ordinary cohomology of complex projective spaces with linear $\ZZ$/p actions, in {\it Algebraic topology and transformation groups (Gottingen, 1987).} Lecture Notes in Mathematics, vol. 1361, Springer, Berlin (1988), pp.53-122.

\bibitem{LMS} L.G. Lewis, J. P. May, M. Steinberger, \textit{Equivariant stable homotopy theory}, Lecture Notes in Mathematics, vol. 1213, Springer-Verlag, New York (1986).

\bibitem{Loffler1} P. L\"{o}ffler. Equivariant unitary bordism and classifying spaces, {\it Proc. Int. Symp. Topology and its applications}, Budva, Yugoslavia (1973), 158 - 160.

\bibitem{Loffler2} P. L\"{o}ffler. Bordismengruppen unit\"{a}rer Torusmannigfaltigkeiten, {\it Manuscripta Math. } {\bf 12} (1974), 307-327.

\bibitem{orthogonalspectra} M. A. Mandell, J. P. May, S. Schwede, B. Shipley, Model categories of diagram spectra, {\it Proc. London Math. Soc.} {\bf 82} (2001), 441-512

\bibitem{May} J. P. May, Equivariant and nonequivariant module spectra, {\it Journal of Pure and Applied Algebra} {\bf 127} (1) (1998), 83 - 97.

\bibitem{Mischenko} A.S. Mischenko, S. P. Novikov, The methods of algebraic topology from the viewpoint of cobordism theory, {\it Izv. Akad. Nauk SSSR Ser. Mat.} {\bf 31} (1967), 855 - 951.

\bibitem{Pontrjagin1} L. Pontrjagin, Classification of continuous maps of a complex into a sphere, {\it Doklady Akademii Nauk SSSR} {\bf 19} (3) (1938), 147 - 149.

\bibitem{Pontrjagin2} L. Pontrjagin, Homotopy classification of mappings of an $(n+2)$-dimensional sphere into an $n$-dimensional one, {\it Doklady Akademii Nauk SSSR }{\bf 19} (1950), 957 - 959.

\bibitem{ravenel} D. Ravenel {\it Complex cobordism and the stable homotopy groups of spheres}, Academic Press (1986), 2nd edition, AMS (2003).

\bibitem{Schwede2} S. Schwede, \textit{Global homotopy theory}, New Mathematical Monographs {\bf 34} (2018), Cambridge University Press, Cambridge.

\bibitem{Sinha} D. Sinha, Computations of complex equivariant bordism rings, {\it Amer. J. Math. }{\bf 123} (4) (2001), 577-605. 

\bibitem{Strickland} N. P. Strickland, Complex cobordism of involutions, {\it Geom. Topol.}, {\bf 5} (1) (2001), 335-345.

\bibitem{Thom} R. Thom, Quelques propri\'{e}t\'{e}s globales des vari\'{e}t\'{e}s diff\'{e}rentiables, {\it Comment. Math. Helv.} {\bf 28} (1954), 17 - 86.

\bibitem{tomDieck} Tammo tom Dieck, Bordism of $G$-manifolds and integrality theorems, {\it Topology} {\bf 9} (1970), 345-358.

\bibitem{thevenaz} J. Thevenaz and. P. Webb, The structure of Mackey functors, {\it Transactions of the American Mathematical Society} {\bf 347} (6) (1995), 1865- 1961.

\bibitem{Quillen} D. Quillen, On the formal group laws of unoriented and complex cobordism theory, {\it Bulletin of the American Mathematical Society} {\bf 75} (1969), 1293 - 1298.

\end{thebibliography}
\end{document}